\documentclass[11pt,reqno]{amsart} % AMS article style with right equation numbering
\pdfoutput=1 % ensures that arXiv recognizes microtype package

\usepackage[mathlines]{lineno}
\usepackage{microtype}

\usepackage{pbox}

\usepackage[top=1.00in,bottom=1.00in,left=1.00in,right=1.00in]{geometry}

\usepackage{xcolor}

\definecolor{my-red}{rgb}{0.5,0.0,0.0}
\definecolor{my-blue}{rgb}{0.0,0.0,0.6}
\definecolor{my-green}{rgb}{0.0,0.5,0.0}
\definecolor{light-gray}{gray}{0.6}

\usepackage{enumitem}

\usepackage{tocvsec2}

\usepackage[labelfont=bf]{caption}

\usepackage[font={small},labelfont=md]{subfig}

\usepackage[noadjust,nosort]{cite}

\usepackage{amsmath, amsthm, amssymb, tikz, mathtools, array, mathrsfs, tensor, ifthen, xparse, graphicx}

\usepackage[foot]{amsaddr}

\usepackage{polynom}

\usepackage{esint}

\usepackage{multicol}

\usepackage{dsfont}
	\newcommand{\one}{\mathds{1}}

\usepackage{framed}

\usepackage{scalerel}
 
\usepackage{stackengine}

\usepackage{upref}

\usepackage[pagebackref=true, colorlinks=true, urlcolor=light-gray, linkcolor=my-blue, citecolor=my-green]{hyperref}

\usepackage{orcidlink}

\numberwithin{equation}{section}


\newcommand{\eq}[1]{\begin{linenomath}\postdisplaypenalty=0\begin{align*} #1 \end{align*}\end{linenomath}}

\NewDocumentCommand{\eeq}{om}{\begin{linenomath}\postdisplaypenalty=0\begin{align} \IfNoValueTF{#1}{}{\tag{#1}} \begin{split} #2 \end{split} \end{align}\end{linenomath}}

\newcommand{\stackref}[2]{
\readlist*\mylist{#1}
\stackrel{\mbox{\footnotesize\foreachitem\x\in\mylist[]{\ifnum\xcnt=1\else,\fi\eqref{\x}}}}{#2}
}
\newcommand{\stackrefp}[2]{
\readlist*\mylist{#1}
\stackrel{\hphantom{\mbox{\footnotesize\foreachitem\x\in\mylist[]{\ifnum\xcnt=1\else,\fi\eqref{\x}}}}}{#2}
}
\newcommand{\stackrefpp}[3]{
\readlist*\mylist{#1}
\readlist*\mylistt{#2}
\stackrel{\parbox{\widthof{\footnotesize\foreachitem\x\in\mylistt[]{\ifnum\xcnt=1\else,\fi\eqref{\x}}}}{\centering\footnotesize\foreachitem\x\in\mylist[]{{\ifnum\xcnt=1\else,\fi\eqref{\x}}}}}{#3}
}

\newcommand{\eps}{\varepsilon}

\newcommand{\vphi}{\varphi}

\newcommand{\E}{\mathbb{E}}

\renewcommand{\P}{\mathbb{P}}
\newcommand{\Q}{\mathbb{Q}}
\newcommand{\R}{\mathbb{R}}

\newcommand{\Z}{\mathbb{Z}}

\newcommand{\cC}{\mathcal{C}}
\newcommand{\cD}{\mathcal{D}}

\newcommand{\cF}{\mathcal{F}}

\newcommand{\cH}{\mathcal{H}}
\newcommand{\cI}{\mathcal{I}}

\newcommand{\cK}{\mathcal{K}}

\newcommand{\cM}{\mathcal{M}}

\newcommand{\cP}{\mathcal{P}}
\newcommand{\cQ}{\mathcal{Q}}
\newcommand{\cR}{\mathcal{R}}
\newcommand{\cS}{\mathcal{S}}
\newcommand{\cT}{\mathcal{T}}
\newcommand{\cU}{\mathcal{U}}

\newcommand{\cX}{\mathcal{X}}

\newcommand{\cZ}{\mathcal{Z}}

\newcommand{\sL}{\mathscr{L}}

\newcommand{\sP}{\mathscr{P}}

\newcommand{\nA}{\mathsf{A}}
\newcommand{\nB}{\mathsf{B}}

\newcommand{\nP}{\mathsf{P}}
\newcommand{\nQ}{\mathsf{Q}}

\newcommand{\nX}{\mathsf{X}}

\newcommand{\fC}{\mathfrak{C}}

\newcommand{\fa}{\mathfrak{a}}

\newcommand{\fc}{\mathfrak{c}}

\newcommand{\fp}{\mathfrak{p}}
\newcommand{\fq}{\mathfrak{q}}
\newcommand{\fm}{\mathfrak{m}}
\newcommand{\fn}{\mathfrak{n}}
\newcommand{\fu}{\mathfrak{u}}

\newcommand*{\qedrem}{\hfill\ensuremath{\square}}

\newcommand{\wt}[1]{\widetilde{#1}}
\newcommand{\wh}[1]{\widehat{#1}}

\DeclareMathOperator{\id}{id}

\DeclareMathOperator{\e}{e} % exponential
\DeclareMathOperator*{\argmax}{arg\,max}

\newcommand{\givenk}[3][]{#1[ #2 \: #1| \: #3 #1]} % with brackets
\newcommand{\givenp}[3][]{#1( #2 \: #1| \: #3 #1)} % with parentheses
\newcommand{\dd}{\mathrm{d}} % for differentials
\newcommand{\sing}{\mathsf{s}} % for ''singular'' superscripts
\newcommand{\ac}{\mathsf{ac}} % for ''absolutely continuous'' superscripts

\DeclarePairedDelimiter\ceil{\lceil}{\rceil}
\DeclarePairedDelimiter\floor{\lfloor}{\rfloor}

            \makeatletter
            \DeclareFontFamily{OMX}{MnSymbolE}{}
            \DeclareSymbolFont{MnLargeSymbols}{OMX}{MnSymbolE}{m}{n}
            \SetSymbolFont{MnLargeSymbols}{bold}{OMX}{MnSymbolE}{b}{n}
            \DeclareFontShape{OMX}{MnSymbolE}{m}{n}{
                <-6>  MnSymbolE5
               <6-7>  MnSymbolE6
               <7-8>  MnSymbolE7
               <8-9>  MnSymbolE8
               <9-10> MnSymbolE9
              <10-12> MnSymbolE10
              <12->   MnSymbolE12
            }{}
            \DeclareFontShape{OMX}{MnSymbolE}{b}{n}{
                <-6>  MnSymbolE-Bold5
               <6-7>  MnSymbolE-Bold6
               <7-8>  MnSymbolE-Bold7
               <8-9>  MnSymbolE-Bold8
               <9-10> MnSymbolE-Bold9
              <10-12> MnSymbolE-Bold10
              <12->   MnSymbolE-Bold12
            }{}
            
            \let\llangle\@undefined
            \let\rrangle\@undefined
            \DeclareMathDelimiter{\llangle}{\mathopen}%
                                 {MnLargeSymbols}{'164}{MnLargeSymbols}{'164}
            \DeclareMathDelimiter{\rrangle}{\mathclose}%
                                 {MnLargeSymbols}{'171}{MnLargeSymbols}{'171}
            \makeatother
            
    \DeclareFontFamily{U}{matha}{\hyphenchar\font45}
    \DeclareFontShape{U}{matha}{m}{n}{ <-6> matha5 <6-7> matha6 <7-8>
    matha7 <8-9> matha8 <9-10> matha9 <10-12> matha10 <12-> matha12 }{}
    \DeclareSymbolFont{matha}{U}{matha}{m}{n}
    \DeclareFontFamily{U}{mathx}{\hyphenchar\font45}
    \DeclareFontShape{U}{mathx}{m}{n}{ <-6> mathx5 <6-7> mathx6 <7-8>
    mathx7 <8-9> mathx8 <9-10> mathx9 <10-12> mathx10 <12-> mathx12 }{}
    \DeclareSymbolFont{mathx}{U}{mathx}{m}{n}
    
    \DeclareMathDelimiter{\llbrack} {4}{matha}{"76}{mathx}{"30}
    \DeclareMathDelimiter{\rrbrack} {5}{matha}{"77}{mathx}{"38}

\renewcommand{\le}{\leqslant}

\renewcommand{\ge}{\geqslant}

\newcommand{\Tri}{\mathsf{Tri}}

\newcommand{\unif}{\mathsf{unif}}
\newcommand{\Slope}{\mathrm{Slope}}
\newcommand{\Area}{\mathrm{Area}}
\newcommand{\Interior}{\mathrm{Interior}}
\newcommand{\irr}{\mathsf{irr}}
\newcommand{\reg}{\mathsf{reg}}
\newcommand{\far}{\mathsf{far}}

\newtheorem{theorem}{Theorem}[section]
\newtheorem{proposition}[theorem]{Proposition}
\newtheorem{corollary}[theorem]{Corollary}
\newtheorem{lemma}[theorem]{Lemma}
\newtheorem{claim}[theorem]{Claim}

\theoremstyle{definition} % uncomment to make the following environments non-italicized
\newtheorem{definition}[theorem]{Definition}

\newtheorem{remark}[theorem]{Remark}

\newenvironment{proofclaim}[1][Proof]
	{\begin{proof}[#1]}
	{\end{proof}}

\title[Longest convex chains]{Longest convex chains with i.i.d.\ points}

\subjclass[2020]{60K35, % interacting random processes; statistical mechanics type models; percolation theory
60K37, % processes in random environments
60D05, % geometric probability and stochastic geometry
82B44. % disordered systems
}

\keywords{Convex chains, last-passage percolation, i.i.d.\ points, convex position}

\author[E. Bates]{Erik Bates$^*$\,\orcidlink{0000-0002-3472-036X}}
\address{$^*$Department of Mathematics, North Carolina State University, \texttt{ebates@ncsu.edu}}
\author[A. Sen]{Arnab Sen$^\dagger$}
\address{$^\dagger$School of Mathematics, University of Minnesota, \texttt{arnab@umn.edu}} 
\begin{document}

% changes footnote labeling back to numbers
%\renewcommand{\thefootnote}{\arabic{footnote}} \setcounter{footnote}{0}

%: ABSTRACT
\begin{abstract}
Sample $n$ i.i.d.\ points from a triangle, according to some bounded density function.
Given two vertices $\nA,\nB$ of the triangle, what is the maximum number of samples that form a convex chain with initial point $\nA$ and terminal point $\nB$?
We show that to leading order, the answer is $cn^{1/3}$, generalizing a result of Ambrus and B\'{a}r\'{a}ny \cite{ambrus_barany09} that considered uniformly distributed points.
Furthermore, we express the constant $c$ using a variational formula whose maximizer (if unique) gives the limiting curve formed by the longest convex chain.
By comparison, for $n$ i.i.d.\ samples from the unit square, the length of the longest monotone chain is asymptotically $c'n^{1/2}$.
Despite the difference in scale, our formula is nicely connected to one established for $c'$ by Deuschel and Zeitouni \cite{deuschel_zeitouni95}.
\end{abstract}

\maketitle
%\setcounter{tocdepth}{1} %1 for just sections, 2 for subsections
%\vspace{-2\baselineskip}
\tableofcontents

% uncomment to display line numbers
%\linenumbers

\section{Introduction}
Let $\cT$ denote the triangle with vertices $(0,0)$, $(1,1)$, and $(1,0)$.
A finite set 
\eq{
\cC=\{(x_1,y_1),\ldots,(x_k,y_k)\}\subset\cT\setminus\{(0,0),(1,1)\}
}
is said to be a \textit{convex chain} of \textit{length} $k$ if $\cC\cup\{(0,0),(1,1)\}$ is in convex position, meaning none of the $k+2$ points lie in the convex hull of the others.
Equivalently, if the elements of $\cC$ are labeled such that $x_1\le\cdots\le x_k$, then the following sequence of slopes is strictly increasing (see Lemma~\ref{lem_position_slopes}):
\eeq{ \label{convex_chain}
\frac{y_1-0}{x_1-0} < \frac{y_2-y_1}{x_2-x_1} < \cdots < \frac{y_k-y_{k-1}}{x_k-x_{k-1}} < \frac{1-y_k}{1-x_k}.
}
Figure~\ref{fig_chain_illustration} (right) provides an illustration.

In this paper, we investigate the longest convex chains within a random collection of points.
Namely, we consider $n$ i.i.d.\ samples from $\cT$, and study the longest convex chain found as a subset of these samples.
Of primary interest are the length and shape of such a chain.
We will show that in the large-$n$ limit, these two quantities satisfy a variational relationship (Theorem~\ref{thm_var_formula}).
Moreover, the optimizers of the variational expression describe the limiting shape of longest convex chains (Theorem~\ref{thm_limit_curves}).
In fact, this last statement holds even when conditioning on the event that all $n$ points form a convex chain (Theorem~\ref{thm_conditional}).

The problem of longest convex chains is a natural extension of the problem of longest monotone chains in the unit square $[0,1]^2$, illustrated in Figure~\ref{fig_chain_illustration} (left).
Moreover, when the $n$ i.i.d.~samples are uniform in $[0,1]^2$, the longest monotone chain corresponds to the longest increasing subsequence of a uniformly random permutation on $n$ elements.
The latter problem has enjoyed a rich mathematical story \cite{romik15}, and together with its infinite-space version called Poissonian last-passage percolation \cite{aldous_diaconis95,baik_deift_johansson99,johansson00b}, has been an influential prototype for the Kardar--Parisi--Zhang (KPZ) universality class \cite{corwin16,dauvergne_virag21_arxiv,baik23}.
As far as we know, there is yet to be any understanding of---or even speculation on---the fluctuations theory for longest convex chains.
The present paper addresses the first-order behavior, setting the stage for future investigations in this direction.

\begin{figure}[h]

\tikzset{every picture/.style={line width=0.75pt}} %set default line width to 0.75pt        

\begin{tikzpicture}[x=0.75pt,y=0.75pt,yscale=-1,xscale=1]
%uncomment if require: \path (0,234); %set diagram left start at 0, and has height of 234

%Straight Lines [id:da9214667924512325] 
\draw [color={rgb, 255:red, 208; green, 2; blue, 27 }  ,draw opacity=1 ][line width=1.5]    (91,210) -- (151.5,196.5) ;
%Straight Lines [id:da09333397853114078] 
\draw [color={rgb, 255:red, 208; green, 2; blue, 27 }  ,draw opacity=1 ][line width=1.5]    (151.5,196.5) -- (174.5,171.5) ;
%Straight Lines [id:da3006669536268972] 
\draw [color={rgb, 255:red, 208; green, 2; blue, 27 }  ,draw opacity=1 ][line width=1.5]    (221.5,96.5) -- (234.5,46.5) ;
%Straight Lines [id:da23815827366184816] 
\draw [color={rgb, 255:red, 208; green, 2; blue, 27 }  ,draw opacity=1 ][line width=1.5]    (200.5,157.5) -- (221.5,96.5) ;
%Straight Lines [id:da4895776239447486] 
\draw [color={rgb, 255:red, 208; green, 2; blue, 27 }  ,draw opacity=1 ][line width=1.5]    (174.5,171.5) -- (200.5,157.5) ;
%Straight Lines [id:da9674630190559386] 
\draw [color={rgb, 255:red, 208; green, 2; blue, 27 }  ,draw opacity=1 ][line width=1.5]    (234.5,46.5) -- (266.5,45.5) ;
%Straight Lines [id:da21578866604766833] 
\draw [color={rgb, 255:red, 208; green, 2; blue, 27 }  ,draw opacity=1 ][line width=1.5]    (266.5,45.5) -- (290,10) ;
%Shape: Circle [id:dp39894076628191677] 
\draw  [fill={rgb, 255:red, 0; green, 0; blue, 0 }  ,fill opacity=1 ] (271,191.5) .. controls (271,189.57) and (272.57,188) .. (274.5,188) .. controls (276.43,188) and (278,189.57) .. (278,191.5) .. controls (278,193.43) and (276.43,195) .. (274.5,195) .. controls (272.57,195) and (271,193.43) .. (271,191.5) -- cycle ;
%Shape: Circle [id:dp7611187641720868] 
\draw  [fill={rgb, 255:red, 0; green, 0; blue, 0 }  ,fill opacity=1 ] (148,196.5) .. controls (148,194.57) and (149.57,193) .. (151.5,193) .. controls (153.43,193) and (155,194.57) .. (155,196.5) .. controls (155,198.43) and (153.43,200) .. (151.5,200) .. controls (149.57,200) and (148,198.43) .. (148,196.5) -- cycle ;
%Shape: Circle [id:dp5003200953300776] 
\draw  [fill={rgb, 255:red, 0; green, 0; blue, 0 }  ,fill opacity=1 ] (218,96.5) .. controls (218,94.57) and (219.57,93) .. (221.5,93) .. controls (223.43,93) and (225,94.57) .. (225,96.5) .. controls (225,98.43) and (223.43,100) .. (221.5,100) .. controls (219.57,100) and (218,98.43) .. (218,96.5) -- cycle ;
%Shape: Circle [id:dp8919244289704852] 
\draw  [fill={rgb, 255:red, 0; green, 0; blue, 0 }  ,fill opacity=1 ] (192,120.5) .. controls (192,118.57) and (193.57,117) .. (195.5,117) .. controls (197.43,117) and (199,118.57) .. (199,120.5) .. controls (199,122.43) and (197.43,124) .. (195.5,124) .. controls (193.57,124) and (192,122.43) .. (192,120.5) -- cycle ;
%Shape: Circle [id:dp8495533438982282] 
\draw  [fill={rgb, 255:red, 0; green, 0; blue, 0 }  ,fill opacity=1 ] (230,189.5) .. controls (230,187.57) and (231.57,186) .. (233.5,186) .. controls (235.43,186) and (237,187.57) .. (237,189.5) .. controls (237,191.43) and (235.43,193) .. (233.5,193) .. controls (231.57,193) and (230,191.43) .. (230,189.5) -- cycle ;
%Shape: Circle [id:dp7148908853532814] 
\draw  [fill={rgb, 255:red, 0; green, 0; blue, 0 }  ,fill opacity=1 ] (197,157.5) .. controls (197,155.57) and (198.57,154) .. (200.5,154) .. controls (202.43,154) and (204,155.57) .. (204,157.5) .. controls (204,159.43) and (202.43,161) .. (200.5,161) .. controls (198.57,161) and (197,159.43) .. (197,157.5) -- cycle ;
%Shape: Circle [id:dp9399546418933054] 
\draw  [fill={rgb, 255:red, 0; green, 0; blue, 0 }  ,fill opacity=1 ] (263,45.5) .. controls (263,43.57) and (264.57,42) .. (266.5,42) .. controls (268.43,42) and (270,43.57) .. (270,45.5) .. controls (270,47.43) and (268.43,49) .. (266.5,49) .. controls (264.57,49) and (263,47.43) .. (263,45.5) -- cycle ;
%Shape: Circle [id:dp7271170775739273] 
\draw  [fill={rgb, 255:red, 0; green, 0; blue, 0 }  ,fill opacity=1 ] (125,182.5) .. controls (125,180.57) and (126.57,179) .. (128.5,179) .. controls (130.43,179) and (132,180.57) .. (132,182.5) .. controls (132,184.43) and (130.43,186) .. (128.5,186) .. controls (126.57,186) and (125,184.43) .. (125,182.5) -- cycle ;
%Shape: Circle [id:dp3735228536137528] 
\draw  [fill={rgb, 255:red, 0; green, 0; blue, 0 }  ,fill opacity=1 ] (278,156.5) .. controls (278,154.57) and (279.57,153) .. (281.5,153) .. controls (283.43,153) and (285,154.57) .. (285,156.5) .. controls (285,158.43) and (283.43,160) .. (281.5,160) .. controls (279.57,160) and (278,158.43) .. (278,156.5) -- cycle ;
%Shape: Circle [id:dp21754725510069362] 
\draw  [fill={rgb, 255:red, 0; green, 0; blue, 0 }  ,fill opacity=1 ] (171,171.5) .. controls (171,169.57) and (172.57,168) .. (174.5,168) .. controls (176.43,168) and (178,169.57) .. (178,171.5) .. controls (178,173.43) and (176.43,175) .. (174.5,175) .. controls (172.57,175) and (171,173.43) .. (171,171.5) -- cycle ;
%Shape: Circle [id:dp9197423949664333] 
\draw  [fill={rgb, 255:red, 0; green, 0; blue, 0 }  ,fill opacity=1 ] (241,108.5) .. controls (241,106.57) and (242.57,105) .. (244.5,105) .. controls (246.43,105) and (248,106.57) .. (248,108.5) .. controls (248,110.43) and (246.43,112) .. (244.5,112) .. controls (242.57,112) and (241,110.43) .. (241,108.5) -- cycle ;
%Shape: Circle [id:dp46497147885609147] 
\draw  [fill={rgb, 255:red, 0; green, 0; blue, 0 }  ,fill opacity=1 ] (246,144.5) .. controls (246,142.57) and (247.57,141) .. (249.5,141) .. controls (251.43,141) and (253,142.57) .. (253,144.5) .. controls (253,146.43) and (251.43,148) .. (249.5,148) .. controls (247.57,148) and (246,146.43) .. (246,144.5) -- cycle ;
%Shape: Circle [id:dp6933498355223352] 
\draw  [fill={rgb, 255:red, 0; green, 0; blue, 0 }  ,fill opacity=1 ] (275,90.5) .. controls (275,88.57) and (276.57,87) .. (278.5,87) .. controls (280.43,87) and (282,88.57) .. (282,90.5) .. controls (282,92.43) and (280.43,94) .. (278.5,94) .. controls (276.57,94) and (275,92.43) .. (275,90.5) -- cycle ;
%Straight Lines [id:da8532986746618081] 
\draw [color={rgb, 255:red, 74; green, 144; blue, 226 }  ,draw opacity=1 ][line width=1.5]    (361,210) -- (421.5,196.5) ;
%Straight Lines [id:da24409245213865427] 
\draw [color={rgb, 255:red, 74; green, 144; blue, 226 }  ,draw opacity=1 ][line width=1.5]    (476.5,178.5) -- (520.5,144.5) ;
%Straight Lines [id:da7450861494633589] 
\draw [color={rgb, 255:red, 74; green, 144; blue, 226 }  ,draw opacity=1 ][line width=1.5]    (520.5,144.5) -- (549.5,90.5) ;
%Straight Lines [id:da08646463120144066] 
\draw [color={rgb, 255:red, 74; green, 144; blue, 226 }  ,draw opacity=1 ][line width=1.5]    (549.5,90.5) -- (561,10) ;
%Shape: Right Triangle [id:dp26180706924311115] 
\draw   (560,10) -- (361,210) -- (560,210) -- cycle ;
%Shape: Circle [id:dp9849648263113367] 
\draw  [fill={rgb, 255:red, 0; green, 0; blue, 0 }  ,fill opacity=1 ] (202,178.5) .. controls (202,176.57) and (203.57,175) .. (205.5,175) .. controls (207.43,175) and (209,176.57) .. (209,178.5) .. controls (209,180.43) and (207.43,182) .. (205.5,182) .. controls (203.57,182) and (202,180.43) .. (202,178.5) -- cycle ;
%Straight Lines [id:da023682792255474205] 
\draw [color={rgb, 255:red, 74; green, 144; blue, 226 }  ,draw opacity=1 ][line width=1.5]    (422.5,196.5) -- (476.5,178.5) ;
%Shape: Square [id:dp162894337611024] 
\draw   (90,10) -- (290,10) -- (290,210) -- (90,210) -- cycle ;
%Shape: Circle [id:dp9878751115424632] 
\draw  [fill={rgb, 255:red, 0; green, 0; blue, 0 }  ,fill opacity=1 ] (231,46.5) .. controls (231,44.57) and (232.57,43) .. (234.5,43) .. controls (236.43,43) and (238,44.57) .. (238,46.5) .. controls (238,48.43) and (236.43,50) .. (234.5,50) .. controls (232.57,50) and (231,48.43) .. (231,46.5) -- cycle ;
%Shape: Circle [id:dp8024432176927736] 
\draw  [fill={rgb, 255:red, 0; green, 0; blue, 0 }  ,fill opacity=1 ] (204,73.5) .. controls (204,71.57) and (205.57,70) .. (207.5,70) .. controls (209.43,70) and (211,71.57) .. (211,73.5) .. controls (211,75.43) and (209.43,77) .. (207.5,77) .. controls (205.57,77) and (204,75.43) .. (204,73.5) -- cycle ;
%Shape: Circle [id:dp3630068964191788] 
\draw  [fill={rgb, 255:red, 0; green, 0; blue, 0 }  ,fill opacity=1 ] (193,25.5) .. controls (193,23.57) and (194.57,22) .. (196.5,22) .. controls (198.43,22) and (200,23.57) .. (200,25.5) .. controls (200,27.43) and (198.43,29) .. (196.5,29) .. controls (194.57,29) and (193,27.43) .. (193,25.5) -- cycle ;
%Shape: Circle [id:dp4219952630545679] 
\draw  [fill={rgb, 255:red, 0; green, 0; blue, 0 }  ,fill opacity=1 ] (133,66.5) .. controls (133,64.57) and (134.57,63) .. (136.5,63) .. controls (138.43,63) and (140,64.57) .. (140,66.5) .. controls (140,68.43) and (138.43,70) .. (136.5,70) .. controls (134.57,70) and (133,68.43) .. (133,66.5) -- cycle ;
%Shape: Circle [id:dp7689317688709627] 
\draw  [fill={rgb, 255:red, 0; green, 0; blue, 0 }  ,fill opacity=1 ] (117,134.5) .. controls (117,132.57) and (118.57,131) .. (120.5,131) .. controls (122.43,131) and (124,132.57) .. (124,134.5) .. controls (124,136.43) and (122.43,138) .. (120.5,138) .. controls (118.57,138) and (117,136.43) .. (117,134.5) -- cycle ;
%Shape: Circle [id:dp8922176385586739] 
\draw  [fill={rgb, 255:red, 0; green, 0; blue, 0 }  ,fill opacity=1 ] (108,28.5) .. controls (108,26.57) and (109.57,25) .. (111.5,25) .. controls (113.43,25) and (115,26.57) .. (115,28.5) .. controls (115,30.43) and (113.43,32) .. (111.5,32) .. controls (109.57,32) and (108,30.43) .. (108,28.5) -- cycle ;
%Shape: Circle [id:dp5924537471927082] 
\draw  [fill={rgb, 255:red, 0; green, 0; blue, 0 }  ,fill opacity=1 ] (128,48.5) .. controls (128,46.57) and (129.57,45) .. (131.5,45) .. controls (133.43,45) and (135,46.57) .. (135,48.5) .. controls (135,50.43) and (133.43,52) .. (131.5,52) .. controls (129.57,52) and (128,50.43) .. (128,48.5) -- cycle ;
%Shape: Circle [id:dp8076467714615707] 
\draw  [fill={rgb, 255:red, 0; green, 0; blue, 0 }  ,fill opacity=1 ] (256,19.5) .. controls (256,17.57) and (257.57,16) .. (259.5,16) .. controls (261.43,16) and (263,17.57) .. (263,19.5) .. controls (263,21.43) and (261.43,23) .. (259.5,23) .. controls (257.57,23) and (256,21.43) .. (256,19.5) -- cycle ;
%Shape: Circle [id:dp45388744681255266] 
\draw  [fill={rgb, 255:red, 0; green, 0; blue, 0 }  ,fill opacity=1 ] (170,53.5) .. controls (170,51.57) and (171.57,50) .. (173.5,50) .. controls (175.43,50) and (177,51.57) .. (177,53.5) .. controls (177,55.43) and (175.43,57) .. (173.5,57) .. controls (171.57,57) and (170,55.43) .. (170,53.5) -- cycle ;
%Shape: Circle [id:dp6586442084443611] 
\draw  [fill={rgb, 255:red, 0; green, 0; blue, 0 }  ,fill opacity=1 ] (147,106.5) .. controls (147,104.57) and (148.57,103) .. (150.5,103) .. controls (152.43,103) and (154,104.57) .. (154,106.5) .. controls (154,108.43) and (152.43,110) .. (150.5,110) .. controls (148.57,110) and (147,108.43) .. (147,106.5) -- cycle ;
%Shape: Circle [id:dp575140655618047] 
\draw  [fill={rgb, 255:red, 0; green, 0; blue, 0 }  ,fill opacity=1 ] (105,93.5) .. controls (105,91.57) and (106.57,90) .. (108.5,90) .. controls (110.43,90) and (112,91.57) .. (112,93.5) .. controls (112,95.43) and (110.43,97) .. (108.5,97) .. controls (106.57,97) and (105,95.43) .. (105,93.5) -- cycle ;
%Shape: Circle [id:dp28791670716833573] 
\draw  [fill={rgb, 255:red, 0; green, 0; blue, 0 }  ,fill opacity=1 ] (144,138.5) .. controls (144,136.57) and (145.57,135) .. (147.5,135) .. controls (149.43,135) and (151,136.57) .. (151,138.5) .. controls (151,140.43) and (149.43,142) .. (147.5,142) .. controls (145.57,142) and (144,140.43) .. (144,138.5) -- cycle ;
%Shape: Circle [id:dp9104491449508699] 
\draw  [fill={rgb, 255:red, 0; green, 0; blue, 0 }  ,fill opacity=1 ] (542,191.5) .. controls (542,189.57) and (543.57,188) .. (545.5,188) .. controls (547.43,188) and (549,189.57) .. (549,191.5) .. controls (549,193.43) and (547.43,195) .. (545.5,195) .. controls (543.57,195) and (542,193.43) .. (542,191.5) -- cycle ;
%Shape: Circle [id:dp05674030791799678] 
\draw  [fill={rgb, 255:red, 0; green, 0; blue, 0 }  ,fill opacity=1 ] (419,196.5) .. controls (419,194.57) and (420.57,193) .. (422.5,193) .. controls (424.43,193) and (426,194.57) .. (426,196.5) .. controls (426,198.43) and (424.43,200) .. (422.5,200) .. controls (420.57,200) and (419,198.43) .. (419,196.5) -- cycle ;
%Shape: Circle [id:dp47767892774960774] 
\draw  [fill={rgb, 255:red, 0; green, 0; blue, 0 }  ,fill opacity=1 ] (489,96.5) .. controls (489,94.57) and (490.57,93) .. (492.5,93) .. controls (494.43,93) and (496,94.57) .. (496,96.5) .. controls (496,98.43) and (494.43,100) .. (492.5,100) .. controls (490.57,100) and (489,98.43) .. (489,96.5) -- cycle ;
%Shape: Circle [id:dp34129070332707223] 
\draw  [fill={rgb, 255:red, 0; green, 0; blue, 0 }  ,fill opacity=1 ] (463,120.5) .. controls (463,118.57) and (464.57,117) .. (466.5,117) .. controls (468.43,117) and (470,118.57) .. (470,120.5) .. controls (470,122.43) and (468.43,124) .. (466.5,124) .. controls (464.57,124) and (463,122.43) .. (463,120.5) -- cycle ;
%Shape: Circle [id:dp48170061514771945] 
\draw  [fill={rgb, 255:red, 0; green, 0; blue, 0 }  ,fill opacity=1 ] (501,189.5) .. controls (501,187.57) and (502.57,186) .. (504.5,186) .. controls (506.43,186) and (508,187.57) .. (508,189.5) .. controls (508,191.43) and (506.43,193) .. (504.5,193) .. controls (502.57,193) and (501,191.43) .. (501,189.5) -- cycle ;
%Shape: Circle [id:dp7752593026189388] 
\draw  [fill={rgb, 255:red, 0; green, 0; blue, 0 }  ,fill opacity=1 ] (468,157.5) .. controls (468,155.57) and (469.57,154) .. (471.5,154) .. controls (473.43,154) and (475,155.57) .. (475,157.5) .. controls (475,159.43) and (473.43,161) .. (471.5,161) .. controls (469.57,161) and (468,159.43) .. (468,157.5) -- cycle ;
%Shape: Circle [id:dp5143902564994175] 
\draw  [fill={rgb, 255:red, 0; green, 0; blue, 0 }  ,fill opacity=1 ] (534,45.5) .. controls (534,43.57) and (535.57,42) .. (537.5,42) .. controls (539.43,42) and (541,43.57) .. (541,45.5) .. controls (541,47.43) and (539.43,49) .. (537.5,49) .. controls (535.57,49) and (534,47.43) .. (534,45.5) -- cycle ;
%Shape: Circle [id:dp17325062368008226] 
\draw  [fill={rgb, 255:red, 0; green, 0; blue, 0 }  ,fill opacity=1 ] (396,182.5) .. controls (396,180.57) and (397.57,179) .. (399.5,179) .. controls (401.43,179) and (403,180.57) .. (403,182.5) .. controls (403,184.43) and (401.43,186) .. (399.5,186) .. controls (397.57,186) and (396,184.43) .. (396,182.5) -- cycle ;
%Shape: Circle [id:dp6083909073279076] 
\draw  [fill={rgb, 255:red, 0; green, 0; blue, 0 }  ,fill opacity=1 ] (549,156.5) .. controls (549,154.57) and (550.57,153) .. (552.5,153) .. controls (554.43,153) and (556,154.57) .. (556,156.5) .. controls (556,158.43) and (554.43,160) .. (552.5,160) .. controls (550.57,160) and (549,158.43) .. (549,156.5) -- cycle ;
%Shape: Circle [id:dp8340236741960988] 
\draw  [fill={rgb, 255:red, 0; green, 0; blue, 0 }  ,fill opacity=1 ] (442,171.5) .. controls (442,169.57) and (443.57,168) .. (445.5,168) .. controls (447.43,168) and (449,169.57) .. (449,171.5) .. controls (449,173.43) and (447.43,175) .. (445.5,175) .. controls (443.57,175) and (442,173.43) .. (442,171.5) -- cycle ;
%Shape: Circle [id:dp8590169408513914] 
\draw  [fill={rgb, 255:red, 0; green, 0; blue, 0 }  ,fill opacity=1 ] (512,108.5) .. controls (512,106.57) and (513.57,105) .. (515.5,105) .. controls (517.43,105) and (519,106.57) .. (519,108.5) .. controls (519,110.43) and (517.43,112) .. (515.5,112) .. controls (513.57,112) and (512,110.43) .. (512,108.5) -- cycle ;
%Shape: Circle [id:dp6091611881054928] 
\draw  [fill={rgb, 255:red, 0; green, 0; blue, 0 }  ,fill opacity=1 ] (517,144.5) .. controls (517,142.57) and (518.57,141) .. (520.5,141) .. controls (522.43,141) and (524,142.57) .. (524,144.5) .. controls (524,146.43) and (522.43,148) .. (520.5,148) .. controls (518.57,148) and (517,146.43) .. (517,144.5) -- cycle ;
%Shape: Circle [id:dp7622250265419095] 
\draw  [fill={rgb, 255:red, 0; green, 0; blue, 0 }  ,fill opacity=1 ] (546,90.5) .. controls (546,88.57) and (547.57,87) .. (549.5,87) .. controls (551.43,87) and (553,88.57) .. (553,90.5) .. controls (553,92.43) and (551.43,94) .. (549.5,94) .. controls (547.57,94) and (546,92.43) .. (546,90.5) -- cycle ;
%Shape: Circle [id:dp38857915819004496] 
\draw  [fill={rgb, 255:red, 0; green, 0; blue, 0 }  ,fill opacity=1 ] (473,178.5) .. controls (473,176.57) and (474.57,175) .. (476.5,175) .. controls (478.43,175) and (480,176.57) .. (480,178.5) .. controls (480,180.43) and (478.43,182) .. (476.5,182) .. controls (474.57,182) and (473,180.43) .. (473,178.5) -- cycle ;

% Text Node
\draw (449,86.4) node [anchor=north west][inner sep=0.75pt]    {$\mathcal{T}$};

\end{tikzpicture}
\caption{
\textit{Left}: A monotone chain of length $6$ among $26$ sample points.
\textit{Right}: A convex chain of length $4$ among $14$ sample points.
It is more natural to consider convex chains in the triangle $\cT$ than in the unit square $[0,1]^2$, since any sequence $\{(x_i,y_i)\}_{i=1}^k$ in $[0,1]^2$ that satisfies \eqref{convex_chain} is necessarily confined to $\cT$.
See also Remark~\ref{rem_convex_position}.
}
\label{fig_chain_illustration}
\end{figure}

\subsection{Definitions and assumptions} \label{subsec_defs_assumptions}
Let $\fp\colon\cT\to[0,\infty)$ be a probability density function on the triangle $\cT$.
Consider a collection $\cS_n = \{(X_1,Y_1),\ldots,(X_n,Y_n)\}$ of $n$ independent samples from $\fp$.
For convenience we assume $(\cS_n)_{n\ge1}$ are defined on the same probability space, but we make no assumptions about the coupling.
The greatest length of a convex chain in $\cS_n$ will be denoted by
\eq{
\sL_n \coloneqq \max\{\#\cC:\, \text{$\cC\subseteq\cS_n$ and $\cC$ is a convex chain}\}.
}
We make the following assumptions about the density function $\fp$:
\begin{gather}
\text{$\fp$ is continuous;} \label{p_cont} \tag{H1} \\
\text{there exists a constant $\fc>0$ such that $\inf \fp \ge 2\fc$.} \label{p_lower} \tag{H2}
\end{gather}
In some places we will be able to replace \eqref{p_cont} with a weaker assumption:
\begin{align} \label{p_upper}
\text{there exists a constant $\fC<\infty$ such that $\sup \fp \le 2\fC$.} \tag{H0}
\end{align}
Uniform sampling corresponds to the constant density function $\fp \equiv 1/\Area(\cT) = 2$, in which case one can take $\fc=\fC=1$.
In general, the constants $\fc$ and $\fC$ control how much $\fp$ differs from being uniform.

In order to capture the possible trajectories of convex chains, we consider the following class of functions:
\eeq{ \label{F_def}
\mathcal{F} \coloneqq \{f\colon[0,1]\to[0,1]:\,\text{$f$ is convex and continuous, $f(0) = 0$}\}.
}
The only reason for the continuity condition in \eqref{F_def} is to ensure that $f(1) = \lim_{x\nearrow1}f(x)$; after all, continuity in $[0,1)$ is automatic from convexity and $f(0)=0$.
Furthermore, it is a classical fact that every convex function is twice differentiable almost everywhere.
Therefore, we can define a functional $J\colon\mathcal{F}\to[0,\infty)$ by
\eeq{ \label{J_def1}
J(f) = J_{\fp}(f) \coloneqq \int_0^1 \big[f''(x)\cdot \fp(x,f(x))\big]^{1/3}\ \dd x.
}
This functional will be central in our main results.
We will say more about its derivation in Section~\ref{subsec_ideas}, but it also has an interpretation from affine differential geometry.
Namely, the integral $\int_0^1 f''(x)^{1/3}\, \dd x$ is the equi-affine arclength of the graph of $f$, so $J(f)$ may be viewed as a \textit{density-weighted} affine arclength.
As we will soon see, longest convex chains tend to resemble curves that maximize this arclength.

\subsection{Main results} \label{subsec_main_results}

Our first result gives the leading asymptotic behavior of $\sL_n$.

\begin{theorem}[Growth rate of longest convex chain] \label{thm_var_formula}
Assume \eqref{p_cont}.
There exists a universal constant $\alpha\in(0,\infty)$ (not depending on the density function $\fp$) such that
\eeq{ \label{variational_formula}
\lim_{n\to\infty}\frac{\sL_n}{n^{1/3}} = \frac{\alpha}{2}\sup_{f\in\cF}J_\fp(f) \quad \text{almost surely and in $L^p$ for every $p\in[1,\infty)$.}
}
\end{theorem}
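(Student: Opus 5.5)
The plan follows the Deuschel--Zeitouni strategy for monotone chains: a local universal constant, a coarse-graining into many small triangles, and matching upper and lower bounds via a variational principle.

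\emph{Step 1: the universal constant.} Take $\fp\equiv 2$ (uniform) and show that $\sL_n/n^{1/3}$ converges almost surely to a constant $\alpha'$. This is the Ambrus--B\'ar\'any regime. To get it, I would Poissonize, using a Poisson process of intensity $2n$ on $\cT$. An affine map sends $\cT$ to any triangle with vertices $\nA,\nB$ and apex $\nC$, and it preserves convex chains and ratios of areas. So the expected length in a triangle of area $a$ is a function of $na$ alone. Concatenating convex chains in two sub-triangles cut off by a tangent line through an intermediate point gives superadditivity, after choosing the cut so that the slopes match. Subadditivity from above is obtained by covering. These give $\E\sL_n\sim \alpha' n^{1/3}$. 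Concentration follows from Talagrand's inequality, since $\sL$ is $1$-Lipschitz and certifiable, and this upgrades the statement to almost sure convergence and, with the trivial bound $\sL_n\le n$, to convergence in $L^p$. We then set $\alpha$ so that the uniform case reads $\alpha'=\frac{\alpha}{2}\sup_f J_2(f)$. Here the supremum is attained by a parabola, by the affine isoperimetric inequality.

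\emph{Step 2: lower bound.} Fix $f\in\cF$, which we may assume smooth and strictly convex by approximation; $J$ is upper semicontinuous on convex functions, and a density argument handles general $f$. Partition $[0,1]$ into $m$ intervals $[x_{i-1},x_i]$. Along the graph, form the thin triangles $\cT_i$ bounded by the chord from $(x_{i-1},f(x_{i-1}))$ to $(x_i,f(x_i))$ and the two tangent lines. Any convex chains inside consecutive $\cT_i$ (joining their endpoints) concatenate into one global convex chain, because the tangent slopes interlace. On $\cT_i$ the density is nearly constant, equal to $\fp_i$ by (H1). The area of $\cT_i$ is approximately $f''(\xi_i)\Delta_i^3/8$. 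Step 1 then gives a chain of length about
\[
\alpha'' \bigl(n\,\fp_i\,\Area(\cT_i)\bigr)^{1/3}\approx \tfrac{\alpha}{2}\,n^{1/3}\bigl[f''\fp\bigr]^{1/3}\Delta_i .
\]
Here $\alpha$ is defined by the local constant, and I must check the normalization consistent with Step 1. Summing over $i$ and letting $m\to\infty$ gives $\liminf\ge \frac{\alpha}{2}J(f)$.

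\emph{Step 3: upper bound, which I expect to be the main obstacle.} Any convex chain traces a convex polygonal function in $\cF$. Discretize by choosing a finite $\delta$-net of $\cF$ (compact in sup norm by Arzel\`a--Ascoli). It then suffices to show that the longest chain staying within $\delta$ of a fixed $f$ has length at most $(\frac{\alpha}{2}J(f)+\eps)n^{1/3}$ with probability $1-e^{-cn^{\gamma}}$, and to take a union bound. For a fixed $f$, cut the chain at the points where its slope crosses the values $f'(x_i)$. Each piece then lies in a region close to a tangent triangle, enlarged slightly by $\delta$. The piece is bounded using Step 1 together with (H2) and (H0)/(H1) monotonicity under density comparison, giving a bound $\lesssim (n\,\fp\,\Area)^{1/3}$. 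The delicate part is handling pieces where $f$ is nearly linear or has kinks, and the boundary regions near $\nA$ and $\nB$. The issue is that $(\cdot)^{1/3}$ is concave, so many tiny triangles could in principle beat the prediction. I would control this by a uniform a priori bound: the number of points in convex position inside a region of area $a$ is $O((na)^{1/3})$ with superexponential tails. Applying Hölder's inequality to the sums $\sum \Area_i^{1/3}$ then shows that regions where $f''$ is small contribute $o(n^{1/3})$. Combining the bounds, taking the supremum over $f$, and sending $\delta,\eps\to0$ yields the almost sure limit. Finally, $L^p$ convergence follows from uniform integrability, since $\sL_n/n^{1/3}$ has uniformly bounded exponential moments by the same a priori bound and (H0).
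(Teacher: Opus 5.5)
Steps 1 and 2 are fine in outline. Step 1 re-derives the Ambrus--B\'ar\'any input that the paper simply cites, and Step 2 is essentially the paper's lower bound. Step 3, however, has a genuine gap, located exactly where the paper says the naive strategy fails.

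\textbf{The reduction is misstated.} First, $\cF$ is not totally bounded in the sup norm. The functions $f_k(x)=\max\{0,1-k(1-x)\}$ satisfy $\|f_k-f_{2k}\|_{[0,1]}\ge\tfrac12$, so no finite $\delta$-net exists. This reflects chains that hug the bottom edge and then shoot up near $x=1$; they must be removed separately, as the paper does with $\rho$-regularity and Lemma~\ref{lem_irreg_nonoptimal}. Second, the fixed-$f$ bound with $J(f)$ on the right is false, because $J$ is only upper semicontinuous. A piecewise-linear $f$ within $\delta/2$ of a maximizer has $J(f)=0$, yet by your Step 2 there are chains of length $\approx\frac{\alpha}{2}J_\star n^{1/3}$ within $\delta$ of it. The right-hand side must be $\sup\{J(g):\|g-f\|_{[0,1]}<\delta\}$. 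With that correction, the fixed-$f$ statement is a localized form of the full upper bound and is no easier.

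\textbf{Your sketch of the fixed-$f$ bound does not go through.} Cutting the chain where its slope crosses $f'(x_i)$ puts each piece in $\Tri_g(z_{i-1},z_i)$, where $g$ is the chain's \emph{own} polygon and the crossing points $z_i$ depend on the chain. So you only get $\Area\le\frac18\bigl(f'(x_i)-f'(x_{i-1})\bigr)(z_i-z_{i-1})^2$. Sup-norm closeness pins $z_i$ near $x_i$ only where $f''\gg\delta/h^2$, with $h$ the mesh. This cannot be arranged uniformly over a net that is chosen after $\delta$ and must contain nearly affine stretches and near-kinks. Moreover, since the regions are random, you would need a union bound over discretized triangles anyway. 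Your proposed patch also relies on a false lemma. The bound ``$O((na)^{1/3})$ points in convex position in a region of area $a$'' holds for convex regions (e.g.\ parallelograms via Valtr, Lemma~\ref{lem_lozenge}), not for thin curved tubes around $f$. A $\delta$-tube of length $\ell$ around an arc of curvature $\approx\kappa$ has area $\approx2\delta\ell$. Yet, by the tangency-triangle construction of your own Step 2, it carries convex chains of length $\asymp\ell(n\kappa)^{1/3}$, independent of $\delta$.

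\textbf{The missing idea.} Stop approximating chains by convex functions; the authors say they could not find a small enough such family. Instead, let each regular chain $\cC$ generate its own comparison function:
\begin{itemize}
\item Take the polygon $f_{\cC}$ and choose $L=\lfloor n^{1/157}\rfloor$ mesh points at distance $\ge n^{-2}$ from the abscissas of $\cC$.
\item Round the heights up to the grid $n^{-3}\Z$ and the one-sided slopes outward by at least $n^{-1}$. Then each piece of $\cC$ lies in the corresponding tangency triangle and is in convex position with its vertices (Claim~\ref{claim_capture}).
\item On each piece, let $\tilde f$ be the \emph{uniform-density optimizer} with that boundary data. Then $\int_{x_\ell}^{x_{\ell+1}}\tilde f''(x)^{1/3}\,\dd x=2\Area\bigl(\Tri_{\tilde f}(x_\ell,x_{\ell+1})\bigr)^{1/3}$ exactly (Corollary~\ref{cor_unif_case_characterization}). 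Hence $\sum_\ell(np_\ell)^{1/3}\approx n^{1/3}J(\tilde f)/2$ with no curvature hypothesis at all.
\item The family of possible $\tilde f$ has at most $n^{7L}$ members, which the $\exp(-cn^{1/13})$ concentration absorbs in a union bound.
\item The slope defects (jumps $\le 3/n$ at $L$ points) are repaired into a genuine $g\in\cF$ with $J(g)=J(\tilde f)+o(1)\le J_\star+o(1)$.
\item The two end intervals carry mass $O(\eta)$ and are negligible.
\end{itemize}

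\textbf{Two smaller corrections.} In Step 2, the smoothing needs $\liminf_m J(f_m)\ge J(f)$, which is the opposite of upper semicontinuity; it comes from mollification and Fatou (Lemma~\ref{lem_smooth_f}). Also, the theorem assumes only \eqref{p_cont}, but your Step 3 uses \eqref{p_lower}. The paper removes \eqref{p_lower} at the end by mixing $\fp$ with the uniform density and bounding the resulting loss and gain chains.
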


The intuition behind the variational formula \eqref{variational_formula} is given in Section~\ref{variational_formula}.
While establishing convergence requires considerable effort (indeed, that is the central goal of this paper), identifying the scale $n^{1/3}$ does not.
For instance, it is not difficult to obtain a crude upper bound (e.g.\ Proposition~\ref{prop_general}\ref{prop_general_tail}) using an exact expression from \cite{barany_rote_steiger_zhang00} for the likelihood that $n$ independent uniform points form a convex chain.
This expression is a cousin of Valtr's formula \cite{valtr96} and is quoted as \eqref{0o0nc}.
We give a short alternative proof in Remark~\ref{rem_alternative}.

The constant $\alpha$ comes from the uniform case $\fp\equiv2$, which was considered by Ambrus and B\'{a}r\'{a}ny \cite{ambrus_barany09}.
In that case, the right-hand side of \eqref{variational_formula} is equal to $\alpha$; see Theorem~\ref{thm_uniform_known} and the discussion that follows.
The novelty here is to extend that result to non-uniform densities and give a variational expression \eqref{variational_formula} for the growth constant. 

For convenience, we denote the value of the supremum in \eqref{variational_formula} by 
\eeq{ \label{Jstar_def}
J_\star = J_\star(\fp) \coloneqq \sup_{f\in\cF}J(f)
= \sup_{f\in\cF}\int_0^1\big[f''(x)\cdot\fp(x,f(x))\big]^{1/3}\ \dd x.
}
The set of maximizers will be denoted by
\eeq{ \label{argmax_def}
\argmax J \coloneqq \{f\in\cF:\, J(f) = J_\star\}.
}
Analyzing this set---even showing that it is nonempty---is made difficult by the fact that $f\mapsto J(f)$ is not continuous.
Nevertheless, we verify in Proposition~\ref{prop_usc} that $J$ is upper semicontinuous in a certain topology on $\cF$, from which it follows that $\argmax J$ is nonempty (Proposition~\ref{prop_maximizers}).
Furthermore, we prove that every maximizer is strictly convex and has an absolutely continuous first derivative (Proposition~\ref{prop_maximizer_properties}).
The supremum in \eqref{variational_formula} is unchanged if we restrict to $C^\infty$ functions (see Lemma~\ref{lem_smooth_f}), but genuine maximizers might not be so nice.

Following Theorem~\ref{thm_var_formula}, one expects that any longest convex chain should ``look like'' a maximizer of $J$.
This is the content of the next result.
Given $\eps>0$, define the quantity
\eeq{ \label{L_neps_def}
\sL_n^\far(\eps) \coloneqq \max\Big\{\#\cC:\, \text{$\cC\subseteq\cS_n$, $\cC$ is a convex chain,  $\displaystyle\inf_{f\in\argmax J}\max_{(x,y)\in\cC}|f(x)-y|\ge\eps$}\Big\}.
}
In words, $\sL_n^\far(\eps)$ is the maximum size of a convex chain that is $\eps$-far (at one or more points) from every optimizer $f$.
So we always have $\sL_n^\far(\eps)\le\sL_n$, and equality occurs if and only if there exists a longest convex chain that is $\eps$-far from every optimizer.

\begin{theorem}[Longest convex chains concentrate near optimizers] \label{thm_limit_curves}
Assume \eqref{p_cont} and \eqref{p_lower}.
Then for every $\eps>0$, there exists $\delta=\delta(\eps,\fp)>0$ such that
\eeq{ \label{limit_curves_prob}
\limsup_{n\to\infty}\frac{1}{n^{1/3}}\log\P\big(\sL_n^\far(\eps)\ge\sL_n-\delta n^{1/3}\big) < 0. 
}
In particular, the following statement holds with probability one.
For any choice of convex chain $\cC_n\subseteq\cS_n$ such that $\#\cC_n \ge \sL_n-\delta n^{1/3}$, we have
\eeq{ \label{410bxw}
\limsup_{n\to\infty}\inf_{f\in\argmax J}\max_{(x,y)\in\cC_n}|f(x)-y| \le \eps.
}
\end{theorem}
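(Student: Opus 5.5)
We reduce \eqref{limit_curves_prob} to two exponential estimates at scale $n^{1/3}$: a lower-tail bound for $\sL_n$, and an upper-tail bound for $\sL_n^\far(\eps)$ that sits strictly below the typical value $\frac{\alpha}{2}J_\star n^{1/3}$. The key deterministic input is a \emph{gap} in the variational problem. Let $\cF_\eps$ be the set of $f\in\cF$ with $\inf_{g\in\argmax J}\|f-g\|_\infty\ge\eps/2$. We claim
\[
\sup_{f\in\cF_\eps}J(f)\le J_\star-3\eta
\]
for some $\eta=\eta(\eps,\fp)>0$. To prove this we use compactness. By Arzel\`a--Ascoli, $\cF$ is compact in the uniform topology on compact subsets of $[0,1)$, together with control of the endpoint value, which is the topology in which Proposition~\ref{prop_usc} gives upper semicontinuity. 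Take a maximizing sequence in $\cF_\eps$ and extract a limit $f$. Upper semicontinuity gives $J(f)\ge\limsup J(f_k)$. If that limsup equalled $J_\star$, then $f\in\argmax J$. We also need $f$ to stay $\eps/2$-far from $\argmax J$, and this is the delicate point, since convergence may fail to be uniform near $x=1$. We plan to handle it by noting that convex functions with $f(1)\le1$ that converge on $[0,1)$ and at the endpoint in fact converge uniformly on $[0,1]$. If the paper's topology is weaker than this, we redefine ``far'' via that topology, using that maximizers are strictly convex and continuous (Proposition~\ref{prop_maximizer_properties}).

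Next comes the probabilistic upper bound for far chains. Any convex chain $\cC$, together with $(0,0)$ and $(1,1)$, determines a piecewise-linear convex interpolant $f_\cC\in\cF$. We cover $\cF$ by finitely many uniform $\rho$-neighbourhoods of functions $g_1,\dots,g_m$; here we use compactness plus the endpoint control again. Then we bound
\[
\P\bigl(\exists\,\cC\subseteq\cS_n \text{ convex},\ \|f_\cC-g_i\|_\infty\le\rho,\ \#\cC\ge (\tfrac{\alpha}{2}J(g_i)+\eta)n^{1/3}\bigr)\le e^{-cn^{1/3}}
\]
for each $i$. The intended mechanism is to partition $[0,1]$ into short intervals on which $g_i$ is nearly linear and $\fp$ is nearly constant by \eqref{p_cont}. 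On each piece, the chain lives in a thin parallelogram-like region, and its length is compared with the uniform affine-invariant result (Theorem~\ref{thm_uniform_known}), after an affine map to a triangle. The exponential tail comes from the B\'ar\'any--Rote--Steiger--Zhang formula, as in Proposition~\ref{prop_general}\ref{prop_general_tail}, together with a union bound over the $e^{o(n^{1/3})}$ ways of choosing the breakpoints. Upper semicontinuity of $J$ lets $\rho$ small give $J(g_i)+$error $\le\sup_{\cF_\eps}J+\eta$ for the $g_i$ near $\cF_\eps$. Every far chain has $f_\cC$ at sup-distance at least roughly $\eps$ from $\argmax J$, since the chain's points lie on $f_\cC$. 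Hence, with exponentially high probability, $\sL_n^\far(\eps)\le(\frac{\alpha}{2}(J_\star-3\eta)+\eta)n^{1/3}$. We shrink $\eta$ relative to $\alpha$ as needed.

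For the lower tail of $\sL_n$, we fix a smooth $f$ with $J(f)>J_\star-\eta$ (Lemma~\ref{lem_smooth_f}). We split its graph into $K$ disjoint small triangles, each tangent to $f$ at its endpoints. Within each triangle we get a convex chain of length about $\frac{\alpha}{2}(\ldots)n^{1/3}$. Concatenating these gives a global convex chain, thanks to the tangent-slope ordering. The number of points in each piece is a sum of independent contributions, and concentration for the longest chain at scale $n^{1/3}$ holds with probability $1-e^{-cn^{1/3}}$; for this we use \eqref{p_lower} and a Talagrand or bounded-differences estimate for chain length, which is $1$-Lipschitz in each point. Together the two bounds give \eqref{limit_curves_prob} with $\delta=\eta$. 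The almost-sure statement \eqref{410bxw} then follows by Borel--Cantelli, since $\sum_n e^{-cn^{1/3}}<\infty$.

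The main obstacle is the upper tail bound for $\sL_n^\far$: turning the variational gap into an exponential estimate requires uniform control of chains near an arbitrary convex $g$. Near $x=1$ or at points where $g''$ blows up, the local triangle comparison degenerates. We would first try to treat those regions crudely, bounding their contribution by the $n^{1/3}\cdot(\text{small area})^{1/3}$ tail from Proposition~\ref{prop_general}.
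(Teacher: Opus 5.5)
\textbf{Minor points.} Your lower-tail step and the Borel--Cantelli step are fine, provided you use the certifiable Talagrand bound of Lemma~\ref{lem_talagrand}; bounded differences gives nothing at deviation $\delta n^{1/3}$. The paper gets the lower tail in one line from Theorem~\ref{thm_var_formula} and Proposition~\ref{prop_general}\ref{prop_general_concentration} with $\beta=\tfrac13$.

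Your variational gap is Proposition~\ref{prop_far_from_optimizer}. Its endpoint issue is resolved by $h(1)=1$ for every $h\in\argmax J$ (Proposition~\ref{prop_maximizer_properties}\ref{prop_maximizer_1_at_1}, which gives Proposition~\ref{prop_equicontinuity}\ref{prop_equicontinuity_a}), not by strict convexity. ``Redefining far'' is not an option, since farness is fixed by the statement.

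\textbf{The gap: the upper tail for far chains.} First, $\cF$ has no finite uniform $\rho$-net. The functions $f_k(x)=\max\{0,1-k(1-x)\}$ satisfy $\|f_j-f_k\|_{[0,1]}\ge\frac12$ for $j\ge 2k$. So chains hugging the corner $(1,0)$ need a separate argument; this is the role of Definition~\ref{def_regular} and Lemma~\ref{lem_irreg_nonoptimal}.

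Second, and decisively, your per-$g_i$ estimate with threshold $\frac{\alpha}{2}J(g_i)$ is false. Uniform proximity does not control curvature, and $J$ is only upper semicontinuous. A piecewise-linear interpolant $g$ of a smooth near-maximizer $h$ is uniformly close to $h$ and has $J(g)=0$. Yet Proposition~\ref{prop_lower_bound}\ref{prop_lower_bound_typical} yields chains whose points lie uniformly close to $h$, hence to $g$, of length about $\frac{\alpha}{2}J_\star n^{1/3}$. The best you can hope for is the enlargement $J(g_i,2\rho)$ from \eqref{Jfeps_def}. But the statement that chains near $g$ have length at most $(\frac{\alpha}{2}J(g,2\rho)+o(1))n^{1/3}$ \emph{is} the hard upper bound.

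Your mechanism cannot supply that bound:
\begin{itemize}
\item Theorem~\ref{thm_uniform_known} concerns chains anchored at two vertices of a triangle. A strip of height $\rho$ around a fixed curve provides no anchors.
\item The unanchored bound (Lemma~\ref{lem_lozenge}), summed over $K$ strips of area of order $\rho/K$, is of order $K^{2/3}(\rho n)^{1/3}$, nowhere near $\frac{\alpha}{2}J$.
\item Proposition~\ref{prop_general}\ref{prop_general_tail} only controls exceedances of the non-sharp level $(4R\e^3 n)^{1/3}$.
\end{itemize}

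\textbf{The missing idea.} The localizing triangles must be read off from the chain itself, from its slopes as well as its positions. In Step~2 of the proof of Proposition~\ref{prop_upper_bound}, breakpoints $x_\ell$ avoiding the chain are chosen, and the chain's heights and tangent slopes there are discretized. On each $[x_\ell,x_{\ell+1}]$ one places the uniform optimizer of Proposition~\ref{prop_unif_case} with those boundary data. The resulting $\tilde f_{\cC,n}$:
\begin{itemize}
\item has tangency triangles capturing the chain (Claim~\ref{claim_capture});
\item ranges over a family $\wt\cF_n$ of size at most $n^{7L}$;
\item satisfies $\sum_\ell p_\ell^{1/3}\le\frac12J(\tilde f)+o(1)$ via Corollary~\ref{cor_unif_case_characterization};
\item is repaired to a convex $g_{\tilde f}$ with $J(g_{\tilde f})=J(\tilde f)+o(1)$, to which Proposition~\ref{prop_far_from_optimizer} applies (Claim~\ref{claim_distance_consequence}).
\end{itemize}
The paper's proof simply cites the resulting Proposition~\ref{prop_upper_bound}\ref{prop_upper_bound_far_typical}. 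It then gets the $n^{1/3}$ rate from Talagrand concentration for $\sL_n$ and for $\sL_n^\far(\eps)$.

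\textbf{A caution on that last step.} $\sL_n^\far(\eps)$ is not $1$-Lipschitz in the sense of Lemma~\ref{lem_talagrand}: deleting one point of a far chain can destroy farness. So the rate is most safely obtained along the lines you intended:
\begin{itemize}
\item apply Lemma~\ref{lem_talagrand} to each hereditary sum $\sum_\ell\sL_{n,\ell}^{\tilde f}$, whose mean is at most $(\frac{\alpha}{2}J(\tilde f)+o(1))n^{1/3}$ by Step~3;
\item take a union bound over $\wt\cF_n$, which has size $e^{o(n^{1/3})}$;
\item run the estimates of Lemma~\ref{lem_irreg_nonoptimal} at deviation of order $n^{1/3}$ instead of $n^{1/4}$.
\end{itemize}
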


If $J$ has a unique maximizer, then \eqref{410bxw} gives convergence to a single deterministic curve.
For instance, this is the case when $\fp$ is uniform (see Proposition~\ref{prop_unif_case}) or more generally when $\fp$ is concave in its second coordinate (Proposition~\ref{prop_unique_y_concave}).
In general, however, there may be multiple maximizers (see Proposition~\ref{prop_multiple_optimizers}), perhaps even uncountably many.
Consequently, it is not immediately clear that the random quantity $\sL_n^\far(\eps)$ in \eqref{L_neps_def} is measurable, but thankfully measurability follows from the compactness of $\argmax J$ (see Lemma~\ref{lem_measurable}).
Other properties of $\argmax J$ are given in Propositions~\ref{prop_maximizer_properties} and \ref{prop_equicontinuity}.

Note that Theorem~\ref{thm_limit_curves} is a result about ``typical'' longest convex chains, in the sense that we assume nothing about the value of $\sL_n$.
If we condition on the event $\{\sL_n=n\}$ (meaning the full sample set $\cS_n$ is a convex chain), then we obtain a stronger concentration result where the first factor of $n^{1/3}$ in \eqref{limit_curves_prob} is replaced by $n$ as in \eqref{thm_conditional_eq}.
This type of conditioning was also considered in \cite{barany_rote_steiger_zhang00,brosset25}.

\begin{theorem}[Conditioning on a full convex chain] \label{thm_conditional}
Assume \eqref{p_cont} and \eqref{p_lower}.
Then for every $\eps>0$, we have
\eeq{ \label{thm_conditional_eq}
\limsup_{n\to\infty}\frac{1}{n}\log\P\givenp[\Big]{\inf_{f\in\argmax J}\max_{(x,y)\in\cS_n}|f(x)-y| \ge \eps}{\text{$\cS_n$ is a convex chain}}<0.
}
\end{theorem}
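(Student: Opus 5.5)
The plan is to compare the probability of the bad event, namely that $\cS_n$ forms a convex chain lying $\eps$-far from $\argmax J$, with the probability that $\cS_n$ forms a convex chain at all. The key input is a large-deviation estimate at speed $n$ for the probability that $n$ i.i.d.\ points from $\fp$ form a convex chain near a prescribed curve $f$. For uniform points, the exact formula \eqref{0o0nc} of B\'ar\'any--Rote--Steiger--Zhang gives
\[
\P(\text{$\cS_n$ is a convex chain}) = \exp\bigl(-3n\log n + O(n)\bigr).
\]
I expect the general version to read
\[
\frac1n\log\bigl(n^{3}\P(\cdots)\bigr) \to 3\log\bigl(c\,J(f)\bigr)
\]
for an appropriate constant $c$ (with $n^3$ meaning $n^{3n}$ normalization). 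The reason is that, heuristically, the exponential rate of "all $n$ points form a chain near $f$" is governed by the same functional $J$: the $n$ points form roughly $J(f)^3$-weighted "volume" of admissible configurations, with scaling $(J/n)^{3n}$ up to $e^{O(n)}$ constants. Consequently the ratio of the bad probability to the total probability is controlled by
\[
\exp\Bigl(3n\bigl[\log \sup_{f\ \eps\text{-far}} J(f) - \log J_\star\bigr] + o(n)\Bigr).
\]

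The steps are as follows.

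First, I will establish the lower bound
\[
\P(\cS_n \text{ convex chain}) \ge \exp\bigl(-3n\log n + 3n\log J_\star - Cn - o(n)\bigr).
\]
To do this, take a smooth near-maximizer $f$ (Lemma~\ref{lem_smooth_f}) and partition $[0,1]$ into $m$ fixed pieces. On each piece, $f$ is nearly a parabola, and by an affine change of variables the local problem becomes a uniform-density triangle problem. I then require that $n_i \propto$ the local affine-arclength mass of the points fall in the $i$-th small triangle, each group forming a chain. Multinomial counting combined with the exact uniform formula gives the rate, and optimizing the allocation $n_i$ produces $J(f)$ through a Hölder-type identity (the exponent $1/3$ arises precisely from maximizing $\prod (a_i/n_i)^{3n_i}$-type quantities).

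Second, I will prove the matching upper bound, restricted to chains lying in a small neighborhood of any given $g\in\cF$, with rate $J(g)$ plus an error. Here the same partition is used: a convex chain near $g$ has its points in thin regions around $g$. The count is bounded by the sum over allocations $(n_i)$ of a product of local uniform-chain probabilities, using \eqref{p_cont} to replace $\fp$ by local constants and the exact formula for an upper bound. The number of allocations is only $e^{o(n)}$ for fixed $m$.

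Third, I will cover the compact set $\{g\in\cF : \eps\text{-far from }\argmax J\}$ (compact in the topology of Proposition~\ref{prop_usc}) by finitely many neighborhoods. Upper semicontinuity of $J$ then yields
\[
\sup_{\text{far}} J \le J_\star - \eta
\]
for some $\eta>0$, which makes the ratio $e^{-cn}$.

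The main obstacle is the upper bound in the second step. A chain being close to $g$ in sup norm does not force its local slopes or curvature to match $g''$, and $J$ is discontinuous: curvature can concentrate. This is why I will bound per-cell contributions by the affine-arclength of the chain's own convex hull arc inside each cell, rather than by that of $g$. I then use upper semicontinuity, namely that the limsup of $J$ over polygonal approximations near $g$ is at most $J(g)$, which is exactly the content of Proposition~\ref{prop_usc}. Handling the cells near the endpoints $(0,0)$ and $(1,1)$, where slopes can degenerate, will need \eqref{p_lower} and a crude tail bound such as Proposition~\ref{prop_general}.
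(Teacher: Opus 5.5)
Your outer logic matches the paper's. Theorem~\ref{thm_conditional} is the ratio of Proposition~\ref{prop_upper_bound}\ref{prop_upper_bound_far_full} to Theorem~\ref{thm_probability}, and your first step is Proposition~\ref{prop_lower_bound}\ref{prop_lower_bound_full}.

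Your second step, however, is where essentially all the difficulty lies, and it has a genuine gap. The ratio argument closes only if the upper bound is sharp: it must give $\sum_\ell p_\ell^{1/3}\le\frac12 J+o(1)$ with exactly the constants of the lower bound. That forces the region in each cell to be, up to $1+o(1)$, a tangency triangle, with the points in convex position together with its two vertices, so that the sharp formula \eqref{1o0nc} applies.

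Bands of fixed width $\delta$ around $g$ give neither. A band over a cell of length $1/m$ has mass $\asymp\delta/m$, so $\sum_i p_i^{1/3}\asymp m^{2/3}\delta^{1/3}$, which is unrelated to $J(g)$.

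Your remedy, using the chain's own tangency triangles, has the right shape. But those triangles are functions of the sample, whereas \eqref{1o0nc} and the conditional i.i.d.\ structure given cell counts hold only for a \emph{fixed} triangle with \emph{fixed} vertices. You must replace them by a deterministic family of size $e^{o(n)}$ and union-bound over it. This is the heart of the paper's proof of Proposition~\ref{prop_upper_bound}:
\begin{itemize}
\item cut points $x_\ell$ are placed at distance $\ge n^{-2}$ from all chain abscissas, so $f_\cC$ is differentiable there;
\item heights are rounded up to $n^{-3}\Z$ and slopes rounded outward to $n^{-1}\Z$, so the rounded triangles still capture $\cC_\ell$ in convex position with $\nA_\ell,\nB_\ell$ (Claim~\ref{claim_capture});
\item this gives $\#\wt\cF_n\le n^{7L}$.
\end{itemize}
The outward rounding creates concave kinks $s_\ell^->s_\ell^+$. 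So $\tilde f\notin\cF$, and it must be repaired (Step~4) before upper semicontinuity or Proposition~\ref{prop_far_from_optimizer} can be applied. Also, upper semicontinuity ``over polygonal approximations'' is not the right statement, since polygons have $J=0$. The relevant object is the piecewise-optimal interpolant, whose contribution on each cell is $2\Area(\triangle_\ell)^{1/3}$ by Corollary~\ref{cor_unif_case_characterization}.

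There are three smaller problems.
\begin{itemize}
\item Your scaling is off. By \eqref{1o0nc}, $\P(\cS_n\text{ is a convex chain})=\exp(-2n\log n+O(n))$, not $\exp(-3n\log n+O(n))$. An unspecified $-Cn$ in the lower bound also cannot be compared with the upper bound. The natural normalization is $\frac{(3n)!}{n!}$, under which both sides read $\log(27J^3/4)$.
\item At the boundary, a tail bound like Proposition~\ref{prop_general}\ref{prop_general_tail} lives at scale $n^{1/3}$ and says nothing on $\{\sL_n=n\}$. You need control at the scale $\epsilon^n\,\P(\cS_n\text{ is a convex chain})$, as in Lemmas~\ref{lem_not_too_dense}, \ref{lem_bad_unlikely}, \ref{lem_irreg_unlikely}. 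Alternatively, give a direct argument that boundary cells contribute $o(1)$ to $\sum_\ell(R_\ell p_\ell)^{1/3}$; these are the cells near $(0,0)$ and $(1,1)$, and those of chains hugging the corner $(1,0)$, where $\fp$ is not nearly constant on the triangle.
\item Your compactness is in $d$, which ignores $x=1$, while the far event is a statement in $\|\cdot\|_{[0,1]}$. Closing this gap uses $h(1)=1$ uniformly over maximizers (Proposition~\ref{prop_equicontinuity}\ref{prop_equicontinuity_a}), exactly as in the proof of Proposition~\ref{prop_far_from_optimizer}.
\end{itemize}
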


It should be stressed that conditioning on $\{\sL_n=n\}$ places the model in a qualitatively different regime.
The fluctuations of $\sL_n$ are trivially $0$ in this conditional regime, whereas unconditional fluctuations presumably diverge as $n\to\infty$ (no faster than $n^{1/6+o(1)}$ according to Proposition~\ref{prop_general}\ref{prop_general_concentration}).
Deviations of the longest chains from the limiting curves should also be different under the conditioning.
At least in the uniform case $\fp\equiv 2$, these conditional deviations admit a Gaussian central limit theorem at scale $n^{-1/2}$ \cite[Theorem~3]{barany_rote_steiger_zhang00}.
One expects the corresponding unconditional deviations to be asymptotically much larger, since the extreme upper tail event $\{\sL_n = n\}$ is most readily achieved by concentrating many points near the maximizer.
Despite these expected differences, Theorems~\ref{thm_limit_curves} and Theorem~\ref{thm_conditional} establish concentration to the same set of curves, namely the maximizers of $J$.

The proofs of Theorems~\ref{thm_var_formula} and \ref{thm_limit_curves} (unconditional regime) and Theorem~\ref{thm_conditional} (conditional regime) share many common elements, so our presentation is mostly unified.
In some ways the unconditional regime is more difficult, such as accounting for fluctuations on various scales and proving that sample points lying in certain regions are very unlikely to participate in a longest convex chain.
In other ways the conditional regime is more challenging, arising from the fact that we condition on a very rare event.
In fact, our arguments allow us to determine the rarity up to subexponential correction:

\begin{theorem}[Likelihood of a full convex chain] \label{thm_probability}
Assume \eqref{p_cont} and \eqref{p_lower}.
Then
\eeq{ \label{thm_probability_eq}
\lim_{n\to\infty}\frac{1}{n}\log\Big[\frac{(3n)!}{n!}\P(\text{$\cS_n$ is a convex chain})\Big] = \log\Big(\frac{27J_\star^3}{4}\Big),
}
where $J_\star$ is defined in \eqref{Jstar_def}.
\end{theorem}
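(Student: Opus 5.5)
The plan is to write $\P(\text{$\cS_n$ is a convex chain}) = n!\,\mathbb{P}(\text{ordered in $x$, slopes increasing})$ and to evaluate it as $n!$ times the $\fp^{\otimes n}$-integral over the set of $n$-point convex chains with $x_1<\cdots<x_n$. Throughout, the integral of $\prod_i \fp(x_i,y_i)$ over that set is treated as a ``volume'' functional. The calibration comes from the uniform case, via the exact formula \eqref{0o0nc} of \cite{barany_rote_steiger_zhang00}. For $\fp\equiv 2$ we have $J_\star=\alpha$ by Theorem~\ref{thm_uniform_known} (indeed $J_\star = 2^{1/3}\sup\int (f'')^{1/3}=2^{1/3}\cdot 3\cdot 2^{-4/3}\cdot\ldots$). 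I will check directly from \eqref{0o0nc} and Stirling that $\frac1n\log[\frac{(3n)!}{n!}\P]$ tends to $\log(27 J_\star^3/4)$ in that case. This fixes the constant and explains the form $(3n)!/n!$: the natural volume of $n$-point chains along a curve of affine length $J$ is $\approx (cJ^3)^n/(3n)!\cdot$(combinatorial factor).

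\textbf{Lower bound.} By Lemma~\ref{lem_smooth_f}, choose a smooth, strictly convex $f$ with $J(f)\ge J_\star-\eta$. Partition $[0,1]$ into $m$ intervals $I_j$, and approximate the graph of $f$ over $I_j$ by small triangles $\cT_j$: take the affine images of $\cT$ with vertices at consecutive points of $f$ and the tangent intersection. Any concatenation of convex chains in the $\cT_j$ (from vertex to vertex) is a convex chain in $\cT$. I then allocate $n_j\approx n\, J_j/J(f)$ points to triangle $j$, where $J_j$ is the contribution of $I_j$ to $J(f)$. In each triangle, $\fp$ is nearly constant by \eqref{p_cont}, so the uniform formula applies after the affine change of variables; an affine map scales area by $\det$, and $\Area(\cT_j)\approx f''\,|I_j|^3/8$. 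Multiplying the multinomial choice $n!/\prod n_j!$ by $\prod_j$ (uniform probability in $\cT_j$ times $(\fp\Area(\cT_j))^{n_j}$) and applying Stirling gives a product that is optimized exactly by $n_j\propto J_j$ (a Hölder/Lagrange computation). The result is $\frac1n\log[\tfrac{(3n)!}{n!}\P]\ge\log(27J(f)^3/4)-o_m(1)$. Then let $n\to\infty$, $m\to\infty$, $\eta\to0$.

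\textbf{Upper bound.} This is the main obstacle. Every convex chain in $\cS_n$ lies near \emph{some} convex curve, but the set of such curves is infinite, so I compactify it. Using the equicontinuity/compactness from Propositions~\ref{prop_usc} and \ref{prop_equicontinuity}, cover $\cF$ (in the relevant topology) by finitely many neighborhoods $U_1,\dots,U_K$, with $K$ subexponential in $n$, or even fixed for a fixed mesh. For each $k$, bound the probability that the chain is piecewise-consistent with $U_k$. To do this, fix a grid of $x$-breakpoints. The chain's polygonal line passes through cells, and the chain splits into sub-chains inside triangles determined by the supporting lines at the breakpoints; the tangent triangles are again where sub-chains must live, by convexity. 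On each cell, bound $\fp$ above by its local max (continuity), apply the exact uniform count \eqref{0o0nc} in each triangle, and sum over allocations $(n_j)$, of which there are only polynomially many. The same Stirling computation yields $\log(27 (\sum_j (\fp_j^{\max}\Area_j)^{1/3}\cdots)^3/4)$. Here I need the inequality that a Riemann-type sum $\sum_j [\fp_j\,\Area(\cT_j)]^{1/3}$ over tangent triangles is bounded by $J(\tilde f)+o(1)$ for some $\tilde f\in\cF$, hence by $J_\star+o(1)$. This comes from affine-arclength upper semicontinuity (Proposition~\ref{prop_usc}): the sum for an inscribed polygon's tangent triangles upper-bounds, and converges to, the affine length. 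Boundary effects near $x=0,1$ and near degenerate (flat) triangles are handled by \eqref{p_lower}-free crude bounds, since a degenerate triangle contributes area $\to0$. Combining the bounds over the $K$ neighborhoods, taking $\frac1n\log$, and refining the mesh gives the matching upper bound $\log(27J_\star^3/4)$.
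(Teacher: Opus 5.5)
Your lower bound is essentially the paper's Proposition~\ref{prop_lower_bound}\ref{prop_lower_bound_full} and is fine; with a fixed number of cells, direct Stirling can replace Lemma~\ref{lem_multinomial}. Your calibration is wrong, though. For $\fp\equiv2$, Proposition~\ref{prop_unif_case}\ref{prop_unif_case_a} gives $\sup_f\int_0^1 f''(x)^{1/3}\,\dd x=4^{1/3}$, so $J_\star=2$, not $\alpha$. The constant $\alpha$ does not enter this theorem at all, and with $J_\star=2$ the check via \eqref{0o0nc} gives $\log 54=\log(27\cdot 2^3/4)$, as it should.

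The upper bound has a genuine gap at its core. The triangles ``determined by the supporting lines at the breakpoints'' are functions of the sample. By contrast, \eqref{0o0nc} (equivalently Proposition~\ref{prop_general}\ref{prop_general_likelihood}) is a statement about i.i.d.\ points in a \emph{fixed} triangle, in convex position with two \emph{fixed} vertices, so it cannot be applied to random triangles. A finite cover of $(\cF,d)$ does not repair this. Knowing that the chain is $d$-close, or even uniformly close, to some $f_k$ does not place the sub-chain over $[x_j,x_{j+1}]$ inside $\Tri_{f_k}(x_j,x_{j+1})$; a chain point can sit slightly above the secant of $f_k$, for example. Nor does it put that sub-chain in convex position with deterministic anchors. (Also, Proposition~\ref{prop_equicontinuity} concerns only $\argmax J$.) What is needed is to discretize the chain's \emph{own} breakpoint data. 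The rounding must be fine enough that the rounded triangles still capture each sub-chain together with its anchors, yet coarse enough that only $e^{o(n)}$ outcomes need a union bound. This is the heart of the paper's Step~2:
breakpoints are placed at distance $\ge n^{-2}$ from every abscissa of the chain;
heights are rounded up to $n^{-3}\Z$;
slopes are rounded outward by at least $n^{-1}$.
These choices are exactly what make Claim~\ref{claim_capture} work, and they leave at most $n^{7L}$ configurations.

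Your justification of $\sum_j[\fp_j\Area(\cT_j)]^{1/3}\le J(\tilde f)/2+o(1)$ also runs in the wrong direction. By Proposition~\ref{prop_unif_case} and Lemma~\ref{lem_area}, every convex $g$ with nondegenerate $\Tri_g(a,b)$ satisfies $\int_a^b g''(x)^{1/3}\,\dd x\le 2\Area(\Tri_g(a,b))^{1/3}$. So tangent-triangle sums \emph{dominate} affine arclength (the chain's own polygon has affine arclength $0$), and upper semicontinuity of $J$ says nothing here.

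The right device is to let $\tilde f$ be, on each cell, the optimizer of Proposition~\ref{prop_unif_case} with that triangle's boundary data; then equality holds by Corollary~\ref{cor_unif_case_characterization}. With the exact (random) tangent triangles this $\tilde f$ would be $C^1$ and lie in $\cF$. After the rounding required above, however, it has a downward slope jump of size at most $3n^{-1}$ at each breakpoint, so it is not convex. You then need the paper's Step~4 (remove the jumps, rescale by $1+3L/n$) to obtain $g\in\cF$ with $J(g)=J(\tilde f)+o(1)$ uniformly over the family.

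Finally, replacing $\sup_{\triangle_j}\fp$ by $\fp(x,\tilde f(x))$ needs triangles of uniformly small diameter, and this fails near $x=1$, where slopes can be of order $(1-x)^{-1}$. Those cells are not degenerate but tall, so they must be isolated in a region of small area and bounded crudely. The paper does this with bulk cells of width $O(L^{-1})$, where $L^{-1}\ll\eta$, and end triangles lying in strips of width $O(\eta)$.
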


In the uniform case $\fp\equiv2$, one can compute $J_\star = 2$ using Proposition~\ref{prop_unif_case}\ref{prop_unif_case_a} with boundary data $(a,b,f_a,f_b,s_a,s_b) = (0,1,0,1,0,\infty)$.
In general, it follows from \eqref{p_upper} and \eqref{p_lower} that
\eeq{ \label{J_star_trivial}
2\fc^{1/3} \le J_\star \le 2\fC^{1/3}.
}

\begin{remark}[The underlying triangle does not matter] \label{rem_convex_position}
Given any nondegenerate triangle $\triangle$ with two distinguished vertices $\nA,\nB\in\R^2$, we can consider the affine transformation $\vphi$ such that $\vphi(\cT) = \triangle$, $\vphi(0,0) = \nA$, and $\vphi(1,1) = \nB$. 
Since being in convex position is invariant under bijective affine transformations, we have
\eq{
\text{$\{(0,0),(1,1)\}\cup\cC$ is in convex position}
\quad \iff \quad
\text{$\{\nA,\nB\}\cup\vphi(\cC)$ is in convex position}.
}
Moreover, $\vphi(\cS_n)$ is a set of $n$ independent samples from the density function
\eq{
\wh\fp \coloneqq \frac{\Area(\cT)}{\Area(\triangle)}\fp\circ\vphi^{-1} \quad \text{on $\triangle$.}
}
Therefore, we could have started with a set $\wh\cS_n$ of $n$ independent samples from $\wh \fp$, and defined
\eq{
\wh\sL_n \coloneqq \max\{\#\cC:\, \text{$\cC\subseteq\wh\cS_n$ and $\{\nA,\nB\}\cup\cC$ is in convex position}\}.
}
Since the random variables $\sL_n$ and $\wh\sL_n$ have the same law, any result we prove for convex chains on $\cT$ can be immediately transferred to general triangles.
The advantage of working on $\cT$ is that the relevant convex function theory is much easier to state.

At the same time, the proof strategies for Theorems~\ref{thm_var_formula}--\ref{thm_probability} will require us to work on certain sub-triangles of $\cT$.
Consequently, some intermediate results are worth stating in a general setting.
For the reader's convenience, we have placed all such results in Section~\ref{sec_general_triangles}.
The quantities $R$ and $r$ in \eqref{ratio_assumption} play the same role as $\fC$ and $\fc$ in \eqref{p_upper} and \eqref{p_lower}.
\qedrem
\end{remark}

\subsection{Related literature} \label{subsec_literature}

Our proof of Theorems~\ref{thm_var_formula} and \ref{thm_limit_curves} will use the following special case, which supplies the constant $\alpha$ in \eqref{variational_formula}.

\begin{theorem}[Longest convex chain in uniform case, {\cite[Theorem~1.1]{ambrus_barany09}}] \label{thm_uniform_known}
Let $\cU_n$ be a set of $n$ independent uniform samples from $\cT$.
Denote the longest convex chain in $\cU_n$ by
\eq{
\sL_n^\unif \coloneqq \max\{\#\cC:\, \text{$\cC\subseteq\cU_n$ and $\cC$ is a convex chain}\}.
}
There exists a constant $\alpha\in(0,\infty)$ such that
\eq{
\lim_{n\to\infty}\frac{\E\sL_n^\unif}{n^{1/3}} = \alpha.
}
\end{theorem}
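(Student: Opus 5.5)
The statement is the theorem of Ambrus and Bárány on uniform samples. I would prove it by a superadditivity (subadditive ergodic) argument, made workable by Poissonization and the affine invariance of Remark~\ref{rem_convex_position}.

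\emph{Step 1: Poissonization.} Replace $\cU_n$ by a Poisson process of intensity $2n$ on $\cT$. The number of points is Poisson$(n)$ and concentrates within $n^{1/2+o(1)}$ of $n$. Adding or removing $m$ points changes $\sL$ by at most $m$, but I want something much stronger than an additive error of $m$. Monotonicity of $\sL$ under inclusion of point sets gives the needed sandwich: the Poisson process with intensity $2(n\mp n^{2/3})$ is contained in, respectively contains, $\cU_n$ with high probability. This reduces the problem to proving that $g(t):=\E \sL^{\mathrm{Poi}}(t)$, for a Poisson process of intensity $t$ on $\cT$, satisfies $g(t)/t^{1/3}\to\alpha$. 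One also needs $g$ to vary slowly in $t$, which follows from the same sandwich.

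\emph{Step 2: approximate superadditivity from splitting the triangle.} Pick the point $\nP=(1/2,1/4)$ on the parabola $y=x^2$. The lines from $(0,0)$ and $(1,1)$ through $\nP$, together with the tangent to the parabola at $\nP$, cut out two subtriangles $\triangle_1$ (vertices $(0,0)$, $\nP$, and the intersection of the tangent with the $x$-axis) and $\triangle_2$ (vertices $\nP$, $(1,1)$, and the intersection of the tangent with $x=1$). Any convex chain from $(0,0)$ to $\nP$ inside $\triangle_1$, concatenated with one from $\nP$ to $(1,1)$ inside $\triangle_2$, is a convex chain in $\cT$, since slopes in $\triangle_1$ stay below the tangent slope and slopes in $\triangle_2$ stay above it. Each $\triangle_i$ has area $1/8$, namely one quarter of $\cT$; this is the classical affine self-similarity of the parabolic arc. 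By Remark~\ref{rem_convex_position}, the Poisson process restricted to $\triangle_i$ is affinely a Poisson process on $\cT$ with intensity $t/4$. The restrictions to $\triangle_1$ and $\triangle_2$ are independent, and the vertex $\nP$ need not be a sample point. Hence $g(t)\ge 2g(t/4)$. Iterating the split $k$ levels deep with $4^k$ triangles of equal area would give only $2^k g(t/4^k)$, which is the correct scaling since $(4^k)^{1/3}\ne 2^k$. That is why this dyadic split alone does not close the argument.

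\emph{Step 3: the main obstacle.} A scale-consistent superadditivity needs $m$ pieces, each of area $\asymp 1/m^3$, so that $m\cdot g(t/m^3)\approx \alpha m (t/m^3)^{1/3}=\alpha t^{1/3}$. I would therefore split the parabola into $m$ equal parameter arcs, with $m$ tangent triangles each of area $1/(4m^3)$ relative to $\cT$. The tangent triangles occupy a vanishing fraction of $\cT$, so this gives only $g(t)\ge m\, g(t/(4m^3)\cdot\text{const})$. That yields a $\liminf$ lower bound in terms of the value at a fixed scale, namely $\liminf g(t)/t^{1/3}\ge \sup_s g(s)/(cs)^{1/3}$ for a fixed constant $c$. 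This is a Fekete-type argument, but the constant $c$ coming from the geometry must be handled correctly. The hard part is the matching upper bound, $\limsup\le\liminf$. For that I would show that an optimal chain in $\cT$ is, with high probability, within $O(t^{-1/3+o(1)})$-scaled distance of the parabola. I would then cut it at $m$ points where the tangent directions are equally spaced; each piece then lies in a triangle only slightly larger than the corresponding tangent triangle. Controlling this enlargement is the crux. The tool is the tail bound from the exact convex-position probability \eqref{0o0nc}: it shows that chains deviating macroscopically from the parabola are exponentially unlikely to be long, via Proposition~\ref{prop_general}-type estimates with $\fc=\fC=1$. Combining the matching bounds with $m\to\infty$ gives existence of the limit $\alpha$. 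Finiteness of $\alpha$ follows from the crude upper bound, and positivity from Step 2 together with a trivial bound at a fixed scale.
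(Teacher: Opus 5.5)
The paper does not prove this theorem. It imports it from Ambrus--B\'ar\'any, remarking only that their argument is built on the tangency triangles of $f_\star(x)=(1-\sqrt{1-x})^2$ (their Corollary~2.1). Judged on its own, your plan has a genuine gap, and it comes from using the wrong curve.

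In Step~2, $y=x^2$ is tangent to the leg $y=0$ at $(0,0)$ but not to the leg $x=1$ at $(1,1)$. With $\nP=(1/2,1/4)$ and tangent $y=x-\tfrac14$, your two triangles have areas $1/32$ and $1/16$, not $1/8$ each. In fact no split can give $g(t)\ge 2g(t/4)$. Iterating it would force $g(4^k)\ge 2^k g(1)$, i.e.\ growth like $t^{1/2}$, contradicting the $O(t^{1/3})$ bound of Proposition~\ref{prop_general}\ref{prop_general_tail}.

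Because your $m$-piece split along $y=x^2$ loses a constant $c$, you conclude that a matching upper bound is needed, and that step is not supplied. The exact formula \eqref{0o0nc} and the tail bound it yields treat all chains alike, with the non-sharp constant $(4\e^3)^{1/3}$. So they cannot show that chains away from the parabola are ``exponentially unlikely to be long'' relative to $\alpha$, which is the very constant whose existence you are trying to prove. Localization of optimal chains at scale $t^{-1/3+o(1)}$ is also not delivered by anything you give.

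The missing idea is that for the correct parabola the superadditivity is exact, so no upper bound is needed at all.

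\begin{itemize}
\item Note that $\Tri_{f_\star}(0,1)=\cT$, that $f_\star''(x)^{1/3}=2^{-1/3}(1-x)^{-1/2}$, and that $\int_0^1 f_\star''^{1/3}=4^{1/3}=2\Area(\cT)^{1/3}$.
\item By Corollary~\ref{cor_unif_case_characterization}, if $0=x_0<\cdots<x_m=1$ are equally spaced in affine arclength (i.e.\ $1-\sqrt{1-x_i}=i/m$), then each $\Tri_{f_\star}(x_{i-1},x_i)$ has area exactly $\Area(\cT)/m^3$.
\item For $m=2$ this gives $\nP=(3/4,1/4)$. The tangent $y=x-\tfrac12$ passes through the midpoints of the two legs, and each piece has area $1/16=\Area(\cT)/8$.
\item Let $g(N)$ be the expected longest chain for a Poisson process on $\cT$ with mean $N$ points. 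Restricted to each piece and mapped affinely onto $\cT$, this is a Poisson process with mean $N/m^3$. Lemma~\ref{lem_concatenation} and Remark~\ref{rem_convex_position} then give $g(N)\ge m\,g(N/m^3)$ for every integer $m\ge 1$.
\item That the pieces cover only a fraction $m^{-2}$ of $\cT$ costs nothing, since $m\cdot(m^{-3})^{1/3}=1$.
\item The function $g$ is nondecreasing by the nested coupling. Taking $m=\lfloor (N/s)^{1/3}\rfloor$ gives $g(N)\ge m\,g(s)$, hence $\liminf_N g(N)/N^{1/3}\ge \sup_s g(s)/s^{1/3}\ge \limsup_N g(N)/N^{1/3}$.
\end{itemize}

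So the limit $\alpha$ exists. It is finite by the crude tail bound together with $g(s)\le s$ for small $s$, and positive trivially. Your Step~1 sandwich then transfers it to $\E\sL_n^\unif$.
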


Identifying the exact value of $\alpha$ remains an interesting open problem.
The following bounds were shown in \cite[Section~4]{ambrus_barany09}:
\eq{
0.6863 \approx \frac{1-\e^{-2}}{2^{1/3}} \le \alpha \le 2^{1/3}\e \approx 3.4248.
}
The same article provides a sketch for improving the lower bound to $\approx 1.5772$.
Based on numerical experiments \cite[Section~9]{ambrus_barany09}, it was conjectured that $\alpha=3$.
For our purposes it is enough to know $0 < \alpha < \infty$, and we will elaborate on our use of Theorem~\ref{thm_uniform_known} in Section~\ref{subsec_ideas}.

Our variational formula \eqref{variational_formula} is similar to one proved by Deuschel and Zeitouni \cite{deuschel_zeitouni95} for longest monotone chains.
They assume a continuously differentiable and strictly positive probability density $\bar\fp$ on $[0,1]^2$, and proceed to show in \cite[Theorem~2(i)]{deuschel_zeitouni95} that the longest monotone chain among $n$ independent samples from $\bar\fp$ is, to leading order,
\eeq{ \label{dz_formula}
2n^{1/2}\sup_{\substack{f\colon[0,1]\to[0,1] \\ \text{$f$ nondecreasing}}}\int_0^1\big[f'(x)\cdot\bar\fp(x,f(x))\big]^{1/2}\ \dd x, 
}
where monotonicity guarantees that $f'$ exists at almost every $x\in(0,1)$.
The constant $2$ is analogous to $\alpha$ from Theorem~\ref{thm_uniform_known}, coming from the fact that the length of the longest increasing subsequence in a \textit{uniformly} random permutation on $n$ elements is asymptotically $2n^{1/2}$ \cite{vershik_kerov77,logan_shepp77}.
An extension of \eqref{dz_formula} can be found in \cite{calder_esedoglu_hero14}, where a Hamilton--Jacobi equation is used to express a simultaneous law of large numbers for monotone chains with different endpoints.

Under the additional assumption that \eqref{dz_formula} admits only finitely many optimizers, it was shown in \cite[Theorems~2(ii) and 1]{deuschel_zeitouni95} that longest monotone chains converge in probability to the set of optimizers of \eqref{dz_formula}, similarly to our Theorems~\ref{thm_limit_curves} and \ref{thm_conditional} respectively.
While we were inspired by the results of \cite{deuschel_zeitouni95}, the proofs are quite different.
In a few places this is by choice---with the goal, for instance, of assuming only continuity of $\fp$ instead of continuous differentiability, or to strengthen convergence statements with the help of Lemma~\ref{lem_talagrand}---but mostly this is by necessity, owing to fundamental differences between monotone chains and convex chains.
For example, monotone chains are invariant under 1-to-1 monotone maps applied to either coordinate.  
That is, only the ordering of sample points matters rather than their actual values, which is not true for convex chains.

Prior to the uniform case studied by \cite{ambrus_barany09}, a lattice problem with some similar features was considered in \cite{barany95} and \cite{vershik94}.
These works studied the behavior of a uniformly random convex polygon whose vertices lie in $(\tfrac1n\Z^2)\cap[0,1]^2$, establishing a deterministic limit shape as $n\to\infty$.
The key realization was that the limit shape is the convex set that maximizes the so-called affine perimeter, which is given by the functional $J$ in our setting.
This fact was generalized to other confining sets (beyond the square $[0,1]^2$) in \cite{barany97}, and a related large deviation principle was obtained in \cite{vershik_zeitouni99}.
A statistical mechanical route to the results of \cite{barany95,vershik94} was given in \cite{sinai94}, which also proved a central limit theorem for deviations from the limit curve.
This approach was generalized in \cite{bogachev_zarbaliev11} to show that the limit curve is the same for a one-parameter family of measures on convex polygonal lines; see also \cite{bogachev14}.
The scenario in which the polygonal lines have a constrained number of vertices was considered in \cite{bureaux_enriquez17}.
The same article nicely recounts the arithmetic features arising in the lattice setting, which leads to some explicit asymptotic constants involving the Riemann zeta function.

There have been various other modifications of classical last-passage percolation, each one imposing a different type of constraint on paths: slope bounds \cite{basdevant_gerin22}, area below curve \cite{basu_ganguly_hammond18}, confinement to a narrow strip \cite{dey_joseph_peled24}, gaps between points \cite{basdevant_gerin19}, path entropy \cite{berger_torri19b}, and H\"{o}lder norms \cite{berger_torri21}.
Only the last two of these change leading order of growth, although laws of large numbers remain open; see \cite[Section~2.2]{berger_torri19b} and \cite[Section~2.3]{berger_torri21}.

\subsection{Idea behind variational formula} \label{subsec_ideas} 

\begin{figure}[th]
\tikzset{every picture/.style={line width=0.75pt}} %set default line width to 0.75pt        

\begin{tikzpicture}[x=0.75pt,y=0.75pt,yscale=-1,xscale=1]
%uncomment if require: \path (0,231); %set diagram left start at 0, and has height of 231

%Curve Lines [id:da12223896000633405] 
\draw [color={rgb, 255:red, 208; green, 2; blue, 27 }  ,draw opacity=1 ][line width=1.5]    (91,210) .. controls (201,200) and (281,120) .. (290,10) ;
%Shape: Circle [id:dp9349295619561607] 
\draw  [fill={rgb, 255:red, 0; green, 0; blue, 0 }  ,fill opacity=1 ] (492,151) .. controls (492,150.45) and (492.45,150) .. (493,150) .. controls (493.55,150) and (494,150.45) .. (494,151) .. controls (494,151.55) and (493.55,152) .. (493,152) .. controls (492.45,152) and (492,151.55) .. (492,151) -- cycle ;
%Shape: Right Triangle [id:dp26180706924311115] 
\draw   (559,10) -- (360,210) -- (559,210) -- cycle ;
%Shape: Polygon [id:ds182810395302484] 
\draw  [color={rgb, 255:red, 74; green, 144; blue, 226 }  ,draw opacity=1 ][fill={rgb, 255:red, 74; green, 144; blue, 226 }  ,fill opacity=0.1 ] (508,145) -- (470,170.5) -- (531,102.5) -- cycle ;
%Shape: Polygon [id:ds40077320249509096] 
\draw  [color={rgb, 255:red, 74; green, 144; blue, 226 }  ,draw opacity=1 ][fill={rgb, 255:red, 74; green, 144; blue, 226 }  ,fill opacity=0.1 ] (421,204.5) -- (360,210) -- (470,170.5) -- cycle ;
%Shape: Polygon [id:ds034665720020992774] 
\draw  [color={rgb, 255:red, 74; green, 144; blue, 226 }  ,draw opacity=1 ][fill={rgb, 255:red, 74; green, 144; blue, 226 }  ,fill opacity=0.1 ] (555,60.5) -- (531,102.5) -- (559,10) -- cycle ;
%Shape: Right Triangle [id:dp7058843382455791] 
\draw   (290,10) -- (91,210) -- (290,210) -- cycle ;
%Shape: Square [id:dp05017782518730518] 
\draw  [color={rgb, 255:red, 74; green, 144; blue, 226 }  ,draw opacity=1 ][fill={rgb, 255:red, 74; green, 144; blue, 226 }  ,fill opacity=1 ] (201,164) -- (207.5,164) -- (207.5,170.5) -- (201,170.5) -- cycle ;
%Shape: Square [id:dp16227207360583829] 
\draw  [color={rgb, 255:red, 74; green, 144; blue, 226 }  ,draw opacity=1 ][fill={rgb, 255:red, 74; green, 144; blue, 226 }  ,fill opacity=1 ] (258.75,99.25) -- (265.25,99.25) -- (265.25,105.75) -- (258.75,105.75) -- cycle ;
%Shape: Circle [id:dp24648499125471668] 
\draw  [fill={rgb, 255:red, 0; green, 0; blue, 0 }  ,fill opacity=1 ] (505,140) .. controls (505,139.45) and (505.45,139) .. (506,139) .. controls (506.55,139) and (507,139.45) .. (507,140) .. controls (507,140.55) and (506.55,141) .. (506,141) .. controls (505.45,141) and (505,140.55) .. (505,140) -- cycle ;
%Shape: Circle [id:dp8917074211371836] 
\draw  [fill={rgb, 255:red, 0; green, 0; blue, 0 }  ,fill opacity=1 ] (515.5,124.75) .. controls (515.5,124.2) and (515.95,123.75) .. (516.5,123.75) .. controls (517.05,123.75) and (517.5,124.2) .. (517.5,124.75) .. controls (517.5,125.3) and (517.05,125.75) .. (516.5,125.75) .. controls (515.95,125.75) and (515.5,125.3) .. (515.5,124.75) -- cycle ;
%Shape: Circle [id:dp07227077041562513] 
\draw  [fill={rgb, 255:red, 0; green, 0; blue, 0 }  ,fill opacity=1 ] (544,70) .. controls (544,69.45) and (544.45,69) .. (545,69) .. controls (545.55,69) and (546,69.45) .. (546,70) .. controls (546,70.55) and (545.55,71) .. (545,71) .. controls (544.45,71) and (544,70.55) .. (544,70) -- cycle ;
%Shape: Circle [id:dp21291975455343615] 
\draw  [fill={rgb, 255:red, 0; green, 0; blue, 0 }  ,fill opacity=1 ] (552,44) .. controls (552,43.45) and (552.45,43) .. (553,43) .. controls (553.55,43) and (554,43.45) .. (554,44) .. controls (554,44.55) and (553.55,45) .. (553,45) .. controls (552.45,45) and (552,44.55) .. (552,44) -- cycle ;
%Shape: Circle [id:dp31382048342983526] 
\draw  [fill={rgb, 255:red, 0; green, 0; blue, 0 }  ,fill opacity=1 ] (394,205) .. controls (394,204.45) and (394.45,204) .. (395,204) .. controls (395.55,204) and (396,204.45) .. (396,205) .. controls (396,205.55) and (395.55,206) .. (395,206) .. controls (394.45,206) and (394,205.55) .. (394,205) -- cycle ;
%Shape: Circle [id:dp10635640851615247] 
\draw  [fill={rgb, 255:red, 0; green, 0; blue, 0 }  ,fill opacity=1 ] (413,201.25) .. controls (413,200.7) and (413.45,200.25) .. (414,200.25) .. controls (414.55,200.25) and (415,200.7) .. (415,201.25) .. controls (415,201.8) and (414.55,202.25) .. (414,202.25) .. controls (413.45,202.25) and (413,201.8) .. (413,201.25) -- cycle ;
%Shape: Circle [id:dp985133678202805] 
\draw  [fill={rgb, 255:red, 0; green, 0; blue, 0 }  ,fill opacity=1 ] (426,195.75) .. controls (426,195.2) and (426.45,194.75) .. (427,194.75) .. controls (427.55,194.75) and (428,195.2) .. (428,195.75) .. controls (428,196.3) and (427.55,196.75) .. (427,196.75) .. controls (426.45,196.75) and (426,196.3) .. (426,195.75) -- cycle ;
%Shape: Circle [id:dp5802334365284875] 
\draw  [fill={rgb, 255:red, 0; green, 0; blue, 0 }  ,fill opacity=1 ] (439.75,188.25) .. controls (439.75,187.7) and (440.2,187.25) .. (440.75,187.25) .. controls (441.3,187.25) and (441.75,187.7) .. (441.75,188.25) .. controls (441.75,188.8) and (441.3,189.25) .. (440.75,189.25) .. controls (440.2,189.25) and (439.75,188.8) .. (439.75,188.25) -- cycle ;
%Straight Lines [id:da9952273398667464] 
\draw    (360,210) -- (395,205) ;
%Straight Lines [id:da12054539549750665] 
\draw    (395,205) -- (414,201.25) ;
%Straight Lines [id:da9569388948872829] 
\draw    (427,195.75) -- (440.75,188.25) ;
%Straight Lines [id:da4571972838740216] 
\draw    (414,201.25) -- (427,195.75) ;
%Straight Lines [id:da2473763502259968] 
\draw    (440.75,188.25) -- (470,170.5) ;
%Straight Lines [id:da9119064426137456] 
\draw    (470,170.5) -- (493,151) ;
%Straight Lines [id:da24772579988558896] 
\draw    (493,151) -- (506,140) ;
%Straight Lines [id:da7467594538971492] 
\draw    (506,140) -- (516.5,124.75) ;
%Straight Lines [id:da454791871863348] 
\draw    (516.5,124.75) -- (531,102.5) ;
%Straight Lines [id:da331912433908888] 
\draw    (531,102.5) -- (545,70) ;
%Straight Lines [id:da022932760188804413] 
\draw    (545,70) -- (553,44) ;
%Straight Lines [id:da7750899889861188] 
\draw    (553,44) -- (559,10) ;
%Shape: Square [id:dp17475774320302317] 
\draw  [color={rgb, 255:red, 74; green, 144; blue, 226 }  ,draw opacity=1 ][fill={rgb, 255:red, 74; green, 144; blue, 226 }  ,fill opacity=1 ] (555.75,6.75) -- (562.25,6.75) -- (562.25,13.25) -- (555.75,13.25) -- cycle ;
%Shape: Square [id:dp40646537300646235] 
\draw  [color={rgb, 255:red, 74; green, 144; blue, 226 }  ,draw opacity=1 ][fill={rgb, 255:red, 74; green, 144; blue, 226 }  ,fill opacity=1 ] (527.75,99.25) -- (534.25,99.25) -- (534.25,105.75) -- (527.75,105.75) -- cycle ;
%Shape: Square [id:dp9365868894633166] 
\draw  [color={rgb, 255:red, 74; green, 144; blue, 226 }  ,draw opacity=1 ][fill={rgb, 255:red, 74; green, 144; blue, 226 }  ,fill opacity=1 ] (466.75,167.25) -- (473.25,167.25) -- (473.25,173.75) -- (466.75,173.75) -- cycle ;
%Shape: Square [id:dp050568371963856995] 
\draw  [color={rgb, 255:red, 74; green, 144; blue, 226 }  ,draw opacity=1 ][fill={rgb, 255:red, 74; green, 144; blue, 226 }  ,fill opacity=1 ] (356.75,206.75) -- (363.25,206.75) -- (363.25,213.25) -- (356.75,213.25) -- cycle ;
%Shape: Polygon [id:ds7751573802770028] 
\draw  [color={rgb, 255:red, 74; green, 144; blue, 226 }  ,draw opacity=1 ][fill={rgb, 255:red, 74; green, 144; blue, 226 }  ,fill opacity=0.1 ] (239,145) -- (201,170.5) -- (262,102.5) -- cycle ;

% Text Node
\draw (263,48.4) node [anchor=north west][inner sep=0.75pt]  [color={rgb, 255:red, 208; green, 2; blue, 27 }  ,opacity=1 ]  {$f$};
% Text Node
\draw (203,173.9) node [anchor=north west][inner sep=0.75pt]  [color={rgb, 255:red, 74; green, 144; blue, 226 }  ,opacity=1 ]  {$( x,f( x))$};
% Text Node
\draw (272.25,93.9) node [anchor=north west][inner sep=0.75pt]  [color={rgb, 255:red, 74; green, 144; blue, 226 }  ,opacity=1 ]  {$( x+\varepsilon ,f( x+\varepsilon ))$};
% Text Node
\draw (246,140.4) node [anchor=north west][inner sep=0.75pt]  [color={rgb, 255:red, 74; green, 144; blue, 226 }  ,opacity=1 ]  {$\Tri_{f}( x,x+\varepsilon )$};
% Text Node
\draw (445,187.78) node [anchor=north west][inner sep=0.75pt]  [color={rgb, 255:red, 74; green, 144; blue, 226 }  ,opacity=1 ]  {$\triangle _{1}$};
% Text Node
\draw (510,62.4) node [anchor=north west][inner sep=0.75pt]  [color={rgb, 255:red, 74; green, 144; blue, 226 }  ,opacity=1 ]  {$\triangle_{L}$};

\end{tikzpicture}
\caption{Heuristic for variational formula \eqref{variational_formula}.
\textit{Left}: candidate function $f$ (shown in red) together with its tangency triangle between $x$ and $x+\eps$ (shown in blue).
\textit{Right}: a convex chain realized by concatenating smaller chains within the tangency triangles defined by $f$. Sample points that do not participate in the chain are not shown.}
\label{fig_sketch}
\end{figure}

Using Theorem~\ref{thm_uniform_known}, we now give an instructive heuristic for the variational formula \eqref{variational_formula}.
Suppose we wish to identify a convex chain that closely resembles some function $f\in\cF$.
One method is to concatenate many smaller chains that live in a sequence of ``tangency triangles'' of $f$; see Figure~\ref{fig_sketch}.
The tangency triangle between $x$ and $x+\eps$, denoted by $\Tri_f(x,x+\eps)$, is bounded from below by the tangent lines of $f$ at $x$ and $x+\eps$, and bounded from above by the secant line of $f$ between $x$ and $x+\eps$; see Definition~\ref{def_triangle} for precise details.
For simplicity, let us assume $f$ is twice continuously differentiable, in which case one can calculate (see Lemma~\ref{lem_area})
\eq{
\Area\big(\Tri_f(x,x+\eps)\big) = \frac{1}{8}f''(x)\eps^3 + o(\eps^3) \quad \text{as $\eps\searrow0$}.
}
Since $\cS_n$ consists of $n$ independent samples from a continuous density $\fp$, we can estimate the number of those samples that lie in a particular tangency triangle, as follows:
\eq{
\E\Big[\#\big(\cS_n\cap \Tri_f(x,x+\eps)\big)\Big] 
&= n\int_{\Tri_f(x,x+\eps)}\fp(u,v)\ \dd u\, \dd v \\
&\approx n\fp(x,f(x))\cdot\Area\big(\Tri_f(x,x+\eps)\big)
\approx \frac{n}{8}f''(x)\cdot \fp(x,f(x))\eps^3.
}
Furthermore, when $\eps$ is small, the sample set inside $\Tri_f(x,x+\eps)$ is approximately uniform, since $\fp$ looks locally like a constant.
Therefore, Theorem~\ref{thm_uniform_known} (together with Remark~\ref{rem_convex_position} to convert from $\cT$ to $\Tri_f(x,x+\eps)$) suggests
\eq{
\E\Big[\text{maximum size of a convex chain in $\Tri_f(x,x+\eps)$}\Big]
\approx \alpha\Big[\frac{n}{8}f''(x)\cdot \fp(x,f(x))\Big]^{1/3}\eps.
}
We stress that, by definition, a ``convex chain in $\Tri_f(x,x+\eps)$'' must begin at $(x,f(x))$ and end at $(x+\eps,f(x+\eps))$, making it  possible to concatenate with a convex chain in $\Tri_f(x+\eps,x+2\eps)$, and so on (see Lemma~\ref{lem_concatenation}).
By dividing $[0,1]$ into $L$ many subintervals $[x_0,x_1],[x_1,x_2],\ldots,[x_{L-1},x_L]$ of length $\eps = \frac{1}{L}$ (where $L\gg1$), and denoting the associated tangency triangles by $\triangle_0,\triangle_1,\ldots,\triangle_{L-1}$, we obtain
\eq{
\frac{1}{n^{1/3}}\sum_{\ell=0}^{L-1} \E\Big[\text{max size of a convex chain in $\triangle_\ell$}\Big]
&\approx \frac{\alpha}{2} \sum_{\ell=0}^{L-1} \big[f''(x_\ell)\cdot\fp(x_\ell,f(x_\ell))\big]^{1/3}\cdot\frac{1}{L}\\
&\approx \frac{\alpha}{2}\int_0^1 \big[f''(x)\cdot\fp(x,f(x))\big]^{1/3}\ \dd x
= \frac{\alpha}{2}J(f).
}
By making these estimates rigorous (as we do in Sections~\ref{sec_two_sided_inputs} and \ref{sec_lower_bound}), one realizes a convex chain whose length is $n^{1/3}(\alpha/2)J(f) - o(n^{1/3})$.

To prove a matching upper bound, one would like to show that the strategy just described is in some sense the \textit{only} way to realize a convex chain.
More precisely, one wishes to find a small number of functions $f\in\cF$ such that any convex chain must lie in the tangency triangles of at least one of these functions.
Here ``small'' means small enough that the estimates given above can be performed for all $f$ simultaneously, even as $L\to\infty$.
This task is significantly more difficult than the lower bound, so much so that we did not manage to accomplish it!
Instead, we construct a small number of \textit{piecewise} convex functions $\tilde f$ that fulfill these requirements.
By carefully controlling how much each $\tilde f$ fails to be convex, we are able to recover genuinely convex functions in the limit $n\to\infty$ while also preserving $J$ values.
A more complete sketch is provided in Section~\ref{subsec_upper_outline}.

Tangency triangles were also used in \cite{ambrus_barany09}, but only for a single function, namely
\eeq{ \label{f_star_def}
f_\star(x) \coloneqq (1-\sqrt{1-x})^2.
}
It was shown in \cite[Theorem~1.3]{ambrus_barany09} that longest convex chains (in the uniform case) concentrate around the graph of $f_\star$, which is a parabola that is tangent to the boundary of $\cT$ at both $(0,0)$ and $(1,1)$.
See Figure~\ref{fig_simulation} for a simulation.
In fact, the same limit curve appears in the lattice setting \cite{barany95,vershik94,sinai94,bogachev_zarbaliev11,bogachev14,bureaux_enriquez17} mentioned in Section~\ref{subsec_literature}.
Not coincidentally, $f_\star$ is the unique maximizer of $f\mapsto\int_0^1 f''(x)^{1/3}\, \dd x$, as shown in Proposition~\ref{prop_unif_case}\ref{prop_unif_case_a} with $(a,b,f_a,f_b,s_a,s_b) = (0,1,0,1,0,\infty)$.
This fact is implicitly used in \cite{ambrus_barany09} to prove Theorem~\ref{thm_uniform_known}, although the language there is entirely in terms of triangles; see \cite[Corollary~2.1]{ambrus_barany09}.
By comparison, our Proposition~\ref{prop_unif_case} offers a more analytic understanding of why the limit curves can be explicitly identified in the uniform case.
A challenge in this paper is that for general distributions, we do not know the optimal $f$---in fact, there may be multiple optimizers---which is why it so much harder here (than in \cite[Section~4]{ambrus_barany09}) to get the upper and lower bounds to match.

\begin{figure}[h]
\centering
\includegraphics[width=0.45\linewidth]{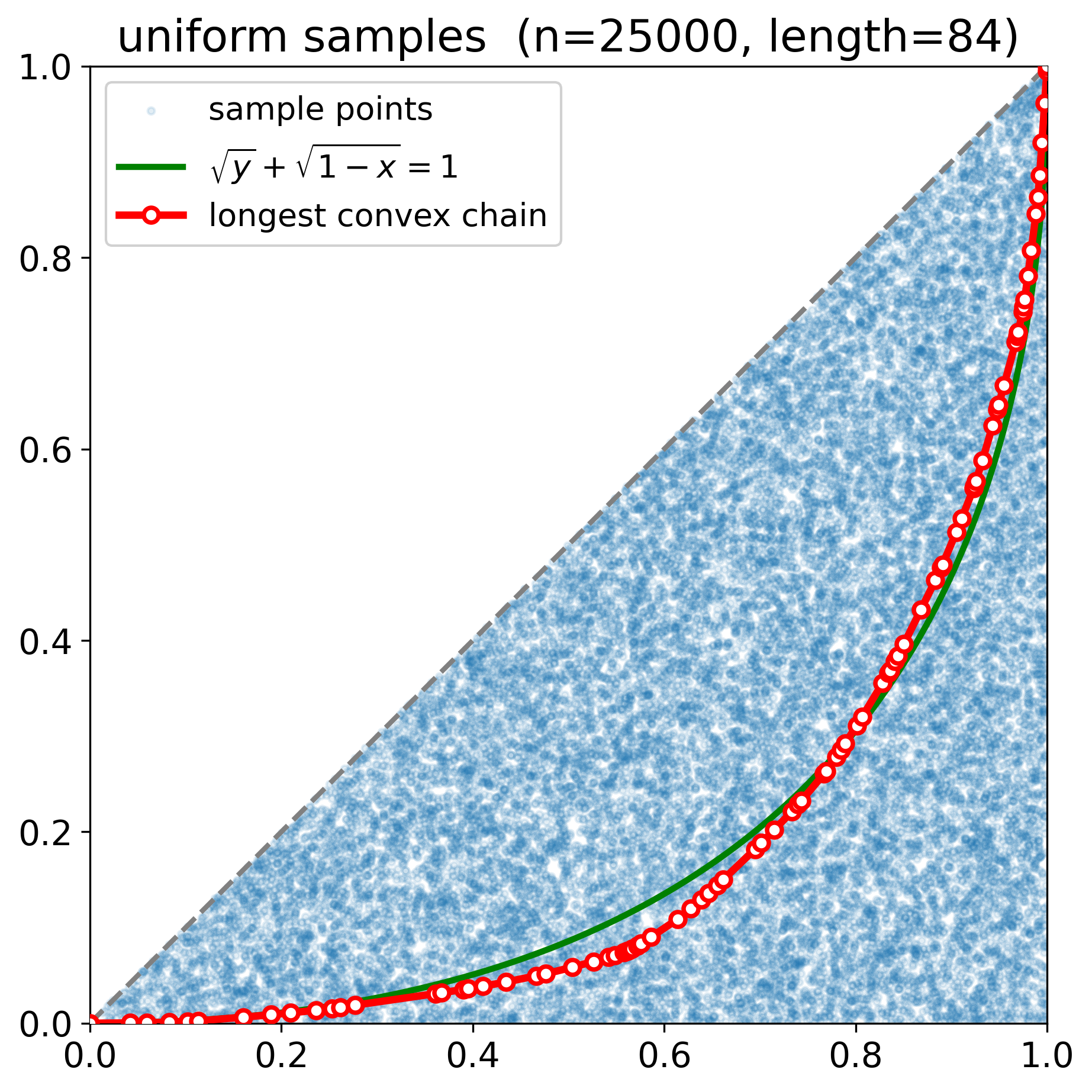}
\qquad
\includegraphics[width=0.45\linewidth]{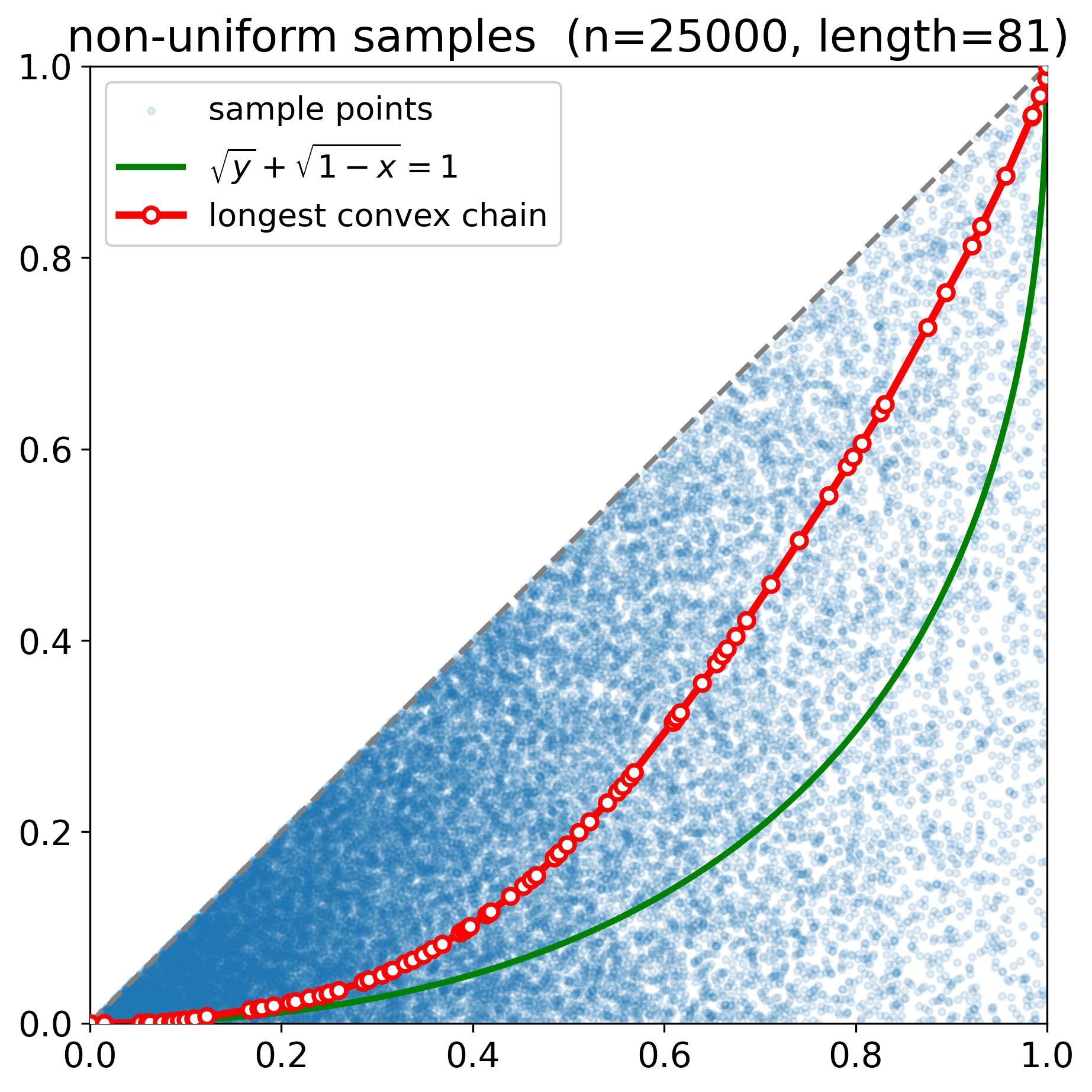}
\caption{Two realizations simulated by Nishant Ajitsaria.
\textit{Left:} $n=$ 25,000 sample points with uniform density $\fp\equiv2$, resulting in $\sL_n = 84$. The limit curve given by \eqref{f_star_def} is shown in solid green.
%\textit{Right:} $n=$ 25,000 sample points with density $\fp(x,y) = \frac{\lambda^2}{1-\e^{-\lambda}(1+\lambda)}\e^{-\lambda x}$ where $\lambda=4$, resulting in $\sL_n = 81$.
\textit{Right:} $n=$ 25,000 sample points with density $\fp(x,y) = \frac{16}{1-5\e^{-4}}\e^{-4x}$, resulting in $\sL_n = 81$.
The graph of \eqref{f_star_def} is shown only for comparison.
}
\label{fig_simulation}
\end{figure}

\subsection{Open problems} \label{subsec_open_problems}
Here we mention several questions left for future work.
\begin{enumerate}[label=\textup{\arabic*.}]

\item Is the constant $\alpha$ from Theorem~\ref{thm_uniform_known} equal to $3$, as conjectured in \cite{ambrus_barany09}?

\item Are there non-uniform densities $\fp$ for which a unique maximizer can be explicitly computed?

\item Is the number of maximizers finite under assumptions \eqref{p_cont} and \eqref{p_lower}?
Deuschel and Zeitouni \cite{deuschel_zeitouni95} believed this was true in their setting; see \cite[p.~854, Remark~1]{deuschel_zeitouni95}.

\item Given $f\in\cF$, can one find $\fp$ satisfying \eqref{p_cont} and \eqref{p_lower} such that $f\in\cF$ is a (unique) maximizer of $J_\fp$?
Proposition~\ref{prop_maximizer_properties} provides several necessary conditions on $f$; what about sufficient conditions?
For the lattice models mentioned in Section~\ref{subsec_literature}, a similar question is addressed in \cite{bogachev_zarbaliev99,bogachev_zarbaliev23}.

\item What is the order of fluctuations of $\sL_n$?  In this paper, we use Lemma~\ref{lem_talagrand} to give upper bounds on the scale $n^{1/6}$.

\item Can one compute large deviation probabilities for $\sL_n$, analogously to \cite{deuschel_zeitouni99}?

\item Can one prove a version of Theorem~\ref{thm_conditional} under the conditioning $\sL_n \approx \ell_n$ for any sequence $\ell_n$ that grows faster than $n^{1/3}$?
This is open even in the case of uniform samples, and the first theorem of \cite{bureaux_enriquez17} provides some evidence.
In the uniform case, does the central limit theorem from \cite{barany_rote_steiger_zhang00} still apply but at a potentially larger scale depending on $(\ell_n)_{n\ge1}$?
In the KPZ class, behavior of this type was identified in \cite{ganguly_hegde_zhang23_arxiv}.

\item Monotone chains require positivity of first-order differences, while convex chains require positivity of second-order differences. 
The case of $k^\text{th}$-order differences has been considered in \cite{ambrus20_arxiv, ambrus17} for uniform samples, the relevant scale being $n^{1/(k+1)}$. 
Is there a law of large numbers and associated variational formula when the samples come from a general density function?

\end{enumerate}

\subsection{Derivative notation} \label{subsec_derivative_notation}
Here we declare some notation that is used throughout the paper.
Consider a convex function $f\colon[a,b]\to\R$.
We denote its one-sided derivatives by
\eq{
\partial^+f(x) &\coloneqq \lim_{\eps\searrow0}\frac{f(x+\eps)-f(x)}{\eps} \quad \text{for $x\in[a,b)$}, \\
\partial^-f(x) &\coloneqq \lim_{\eps\searrow0}\frac{f(x)-f(x-\eps)}{\eps}
\quad \text{for $x\in(a,b]$}.
}
The functions $\partial^+ f$ and $\partial^- f$ are the right-continuous and left-continuous versions, respectively, of the same nondecreasing function.
For every $x\in(a,b)$ we have $-\infty < \partial^-f(x) \le \partial^+f(x)<\infty$, and $f$ is differentiable at $x$ if and only if $\partial^-f(x) = \partial^+f(x)$, in which case we write $f'(x) = \partial^-f(x) = \partial^+f(x)$.
It is possible that $\partial^+ f(a) = -\infty$ and/or $\partial^-f(b) = \infty$.

Since the one-sided derivatives are monotone, the following limits exist and are finite for almost every $x\in(a,b)$:
\eeq{ \label{second_deriv_def}
f''(x) = \lim_{r\to0}\frac{\partial^\pm f(x+r)-f'(x)}{r} < \infty.
}
We will only need $f''$ defined almost everywhere, so we use the notation $f''$ even when $f$ is not everywhere twice differentiable.
It may be that $f'$ is not absolutely continuous, so in general the following relationship is only an inequality:
\eeq{ \label{2_deriv_ineq}
\int_a^b f''(x)\ \dd x \le \partial^-f(b)-\partial^+f(a).
}
Nevertheless, we can characterize when equality occurs:
\eeq{ \label{2_deriv_eq}
&\int_a^b f''(x)\ \dd x = \partial^- f(b) - \partial^+ f(a) < \infty \\
&\iff
\text{$\partial^+ f$ is absolutely continuous on $[a,u]$ $\forall$ $u\in(a,b)$, and $\partial^-f(b)<\infty$} \\
&\iff
\text{$\partial^- f$ is absolutely continuous on $[u,b]$ $\forall$ $u\in(a,b)$, and $\partial^+f(a)>-\infty$}.
}
By comparison, \textit{convex} continuous functions are absolutely continuous, so we always have
\eeq{ \label{1_deriv_eq}
f(b)-f(a)
= \int_a^b \partial^+f(x)\ \dd x
= \int_a^b \partial^-f(x)\ \dd x.
}

\subsection{Organization of the paper} \label{subsec_organization}

Section~\ref{sec_analysis_of_variational_formula} is devoted to the deterministic analysis of the optimization problem in \eqref{variational_formula}, including a complete solution in the uniform case (Proposition~\ref{prop_unif_case}).
The proof of \eqref{variational_formula} is split into two parts: a lower bound (Section~\ref{sec_lower_bound}) and an upper bound (Section~\ref{sec_upper_bound}).
Some preliminary estimates used in both parts are recorded in Section~\ref{sec_two_sided_inputs}.
Finally, Section~\ref{sec_thm_proofs} synthesizes the various ingredients to prove all four theorems stated in Section~\ref{subsec_main_results}.

\section{Analysis of the variational formula} \label{sec_analysis_of_variational_formula} 

The purpose of this section is to analyze the variational problem $\sup_{f\in\cF} J_\fp(f)$.
For the constant density function $\fp\equiv2$, the problem can be solved explicitly, under any boundary conditions on $f$.
This is the content of Section~\ref{subsec_uniform}.
For general density functions, it is not clear \textit{a priori} that $J$ even admits a maximizer.
To resolve this, we equip the space $\cF$ with a suitable metric in Section~\ref{subsec_metric}.
Convergence in this metric can be characterized in terms of weak convergence of measures, as discussed in Section~\ref{subsec_space_measures}, and this characterization allows us prove upper semicontinuity of $J$ in Section~\ref{subsec_usc}.
Consequences for the set of maximizers are pursued in Section~\ref{subsec_maximizers}. 
Finally, in Section~\ref{subsec_approximation} we show that any $f\in\cF$ can be approximated by a smooth function $g$ such that $J(g)\approx J(f)$.

\subsection{Variational formula in the uniform case} \label{subsec_uniform}

The following proposition gives complete understanding of the variational problem $\sup_{f\in\cF} J_\fp(f)$ when $\fp$ is constant.
We consider general boundary conditions because they will arise in later arguments.

\begin{proposition}[Solution for uniform density]\label{prop_unif_case}
Consider any four position parameters $a,b,f_a,f_b\in\R$ and any two slope parameters $s_a,s_b\in\R\cup\{-\infty,\infty\}$ such that
\begin{equation} \label{eq:feasible}
a<b \quad \text{and} \quad s_a < \frac{f_b - f_a}{b- a} < s_b.
\end{equation}
Denote by $\cF_\partial$ the set of continuous convex $f\colon[a,b]\to\R$ with the following boundary data:
\begin{align} \label{boundary_data}
f(a)\ge f_a,\quad 
f(b)\le f_b,\quad 
\partial^+f(a)\ge s_a,\quad 
\partial^-f(b)\le s_b.
\end{align}
Denote by $\cF_\partial^=$ the set of continuous convex $f\colon[a,b]\to\R$ satisfying \eqref{boundary_data} with equalities:
\eq{
f(a)=f_a,\quad f(b)=f_b,\quad \partial^+f(a)= s_a,\quad \partial^-f(b)= s_b.
}
Then the following statements hold:
\begin{enumerate}[label=\textup{(\alph*)}]

\item \label{prop_unif_case_a}
If $-\infty<s_a<s_b=\infty$, then
%\begin{subequations} \label{eq_unif_case_a}
\begin{gather}
\label{eq_unif_case_a}
%\label{unif_case_a_sup}
\sup_{f \in \cF_\partial} \int_a^b f''(x)^{1/3}\ \dd x 
%= \sup_{f \in \cF_\partial^=} \int_a^b f''(x)^{1/3}\ \dd x =
= \big(4(b-a)\Delta_1\big)^{1/3},
%\label{Delta1_def}
\quad \text{where} \quad
\Delta_1 \coloneqq f_b-f_a-s_a(b-a) > 0.
\end{gather}
%\end{subequations}
The unique maximizer belongs to $\cF_\partial^=$ and has second derivative given by
\eeq{ \label{29cwq}
f''(x) &= \frac{\Delta_1}{2\sqrt{b-a}}(b-x)^{-3/2}.
}

\item \label{prop_unif_case_b}
If $-\infty=s_a<s_b<\infty$, then
%\begin{subequations} \label{eq_unif_case_b}
\begin{gather}
\label{eq_unif_case_b}
%\label{unif_case_b_sup}
\sup_{f \in \cF_\partial} \int_a^b f''(x)^{1/3}\ \dd x 
%= \sup_{f \in \cF_\partial^=} \int_a^b f''(x)^{1/3}\ \dd x
= \big(4(b-a)\Delta_2\big)^{1/3},
%\label{Delta2_def}
\quad \text{where} \quad
\Delta_2 \coloneqq s_b(b-a)-(f_b-f_a).
\end{gather}
%\end{subequations}
The unique maximizer belongs to $\cF_\partial^=$ and has second derivative given by
\eq{
f''(x) &= \frac{\Delta_2}{2\sqrt{b-a}}(x-a)^{-3/2}.
}

\item \label{prop_unif_case_c}
If $-\infty<s_a<s_b<\infty$, then
\eeq{ \label{unif_case_c_sup}
\sup_{f \in \cF_\partial} \int_a^b f''(x)^{1/3}\ \dd x = \Big(\frac{4\Delta_1\Delta_2}{s_b-s_a}\Big)^{1/3}.
}
The unique maximizer belongs to $\cF_\partial^=$ and has second derivative given by
\eeq{ \label{17xxih}
f''(x) = \frac{1}{3^{3/2}}\Big(u^2\cdot\frac{x-a}{b-a} + v^2\cdot\frac{b-x}{b-a}\Big)^{-3/2},
}
where $u$ and $v$ solve \eqref{both_crit}.
\end{enumerate}
\end{proposition}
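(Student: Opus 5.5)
The plan is to treat all three cases with one weighted H\"older argument applied to $g\coloneqq f''$. First I would record the linear constraints that the boundary data \eqref{boundary_data} impose on $g$. For $f\in\cF_\partial$, apply \eqref{2_deriv_ineq} on $[a,t]$ and integrate in $t$ using \eqref{1_deriv_eq} and Fubini. Together with $f(a)\ge f_a$, $f(b)\le f_b$ and $\partial^+f(a)\ge s_a$, this gives
\[
\int_a^b (b-x)\,g(x)\ \dd x \le f(b)-f(a)-\partial^+f(a)(b-a) \le \Delta_1 .
\]
Symmetrically, expanding around $b$ and using $\partial^-f(b)\le s_b$ gives
\[
\int_a^b (x-a)\,g(x)\ \dd x\le \Delta_2 .
\]
When a slope is infinite, the corresponding constraint is simply absent. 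In case (a) only the first bound is used, in case (b) only the second, and in case (c) both. Note that $\Delta_1+\Delta_2=(s_b-s_a)(b-a)$, and \eqref{eq:feasible} makes the relevant $\Delta$'s positive.

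Next, for any positive weight $w$ on $(a,b)$, H\"older with exponents $3$ and $3/2$ gives
\[
\int_a^b g^{1/3} = \int_a^b (wg)^{1/3}w^{-1/3} \le \Big(\int_a^b wg\Big)^{1/3}\Big(\int_a^b w^{-1/2}\Big)^{2/3}.
\]
Equality holds iff $g\propto w^{-3/2}$ almost everywhere.

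In case (a), take $w=b-x$. Then $\int_a^b (b-x)^{-1/2}\,\dd x=2\sqrt{b-a}$, which gives exactly $(4(b-a)\Delta_1)^{1/3}$. The equality condition forces $g\propto(b-x)^{-3/2}$, and requiring $\int_a^b (b-x)g=\Delta_1$ pins down the constant in \eqref{29cwq}. Case (b) is the mirror image with $w=x-a$.

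In case (c), take the general positive linear weight $w=\lambda(x-a)+\mu(b-x)$ with $\lambda,\mu\ge0$. Then $\int_a^b wg\le\lambda\Delta_2+\mu\Delta_1$ and $\int_a^b w^{-1/2}=2\sqrt{b-a}/(\sqrt\lambda+\sqrt\mu)$. This yields the upper bound
\[
\inf_{\lambda,\mu}(\lambda\Delta_2+\mu\Delta_1)^{1/3}\Big(\frac{2\sqrt{b-a}}{\sqrt\lambda+\sqrt\mu}\Big)^{2/3}.
\]
Minimizing (the expression is homogeneous, so fix $\sqrt\lambda+\sqrt\mu=1$) should give $\sqrt\lambda\propto\Delta_1$ and $\sqrt\mu\propto\Delta_2$. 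Using $\Delta_1+\Delta_2=(s_b-s_a)(b-a)$, this simplifies to $(4\Delta_1\Delta_2/(s_b-s_a))^{1/3}$. The equality case is $g\propto w^{-3/2}$, which has the form \eqref{17xxih} with $u^2,v^2$ proportional to $\lambda,\mu$.

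For attainment and uniqueness, I would define $f$ by integrating the candidate $g$ twice from $(a,f_a,s_a)$ (or from $b$ in case (b)). I would then check that $f\in\cF_\partial^=$, i.e.\ both integral constraints hold with equality. In (a) and (b) this is a direct computation; note that $\int_a^b g=\infty$ is consistent with $s_b=\infty$. Conversely, any maximizer must achieve equality in H\"older, which fixes $g$ a.e. It must also achieve equality in every step of the constraint chain. That forces $f(a)=f_a$, $\partial^+f(a)=s_a$, and equality in \eqref{2_deriv_ineq}, hence absolute continuity of $f'$ by \eqref{2_deriv_eq}. So $f$ is determined by $g$ and the data at $a$, which gives uniqueness.

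The main obstacle is case (c). There one must show that the system \eqref{both_crit} for $(u,v)$ has a solution with $u,v>0$ for which the resulting $f$ meets both constraints with equality simultaneously; equivalently, that the minimizing $(\lambda,\mu)$ of the dual problem is interior and its H\"older equality profile is feasible. I would first try the scaling $g=c\,w^{-3/2}$ and compute the two moments $\int_a^b(b-x)g$ and $\int_a^b(x-a)g$ in closed form in terms of $\sqrt\lambda,\sqrt\mu$. I would then solve for the ratio $\sqrt{\lambda/\mu}=\Delta_1/\Delta_2$ and the scale $c$ explicitly, matching this with the first-order conditions of the $\inf$ over $(\lambda,\mu)$.
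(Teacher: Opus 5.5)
Your proposal is correct and follows the paper's proof. Cases (a) and (b) are exactly the paper's argument: weighted H\"older with the Fubini moment bounds \eqref{29vc5}--\eqref{29vc6}, then the equality analysis, then direct verification of the boundary data. In case (c), the paper uses the pointwise tangent-line bound $t^{1/3}\le \beta t+\tfrac{2}{3^{3/2}}\beta^{-1/2}$ with $\beta$ linear in $x$ and minimizes $G(u,v)$, rather than H\"older with a linear weight. Optimizing that bound over the scale of $\beta$ recovers your H\"older bound, and the moment equations you plan to solve are equivalent to \eqref{both_crit}; they amount to the paper's closing verification that $f'(b)=s_b$ and $f(b)=f_b$.
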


\begin{proof}
Case~\ref{prop_unif_case_a}:
First we make a general observation.
For every convex function $f$, we have
\eeq{ \label{29vc4}
\int_a^b f''(x)^{1/3}\ \dd x
&\stackrel{\hphantom{\mbox{\footnotesize(H\"older)}}}{=} \int_a^b \big(f''(x)(b-x)\big)^{1/3}\cdot(b-x)^{-1/3}\ \dd x \\
&\stackrel{\mbox{\footnotesize(H\"older)}}{\le} \Big(\int_a^b f''(x)(b-x)\ \dd x\Big)^{1/3}\Big(\int_a^b (b-x)^{-1/2}\ \dd x\Big)^{2/3} \\
&\stackrel{\hphantom{\mbox{\footnotesize(H\"older)}}}{=} \Big(\int_a^b f''(x)(b-x)\ \dd x\Big)^{1/3}\big(2(b-a)^{1/2}\big)^{2/3}.
}
Furthermore, the above application of H\"older results in an equality if and only if
\begin{align} \label{2c7h4}
\text{$\exists$ a constant $\beta\ge0$ such that}
\quad f''(x) &= \beta(b-x)^{-3/2} \quad \text{for almost every $x\in(a,b)$.}
\intertext{A slightly stronger condition is}
\label{2c7h4_strong}
\text{$\exists$ a constant $\beta\ge0$ such that}
\quad f''(x) &= \beta(b-x)^{-3/2} \quad \text{for every $x\in(a,b)$.}
\end{align}
To control the integral on the last line of \eqref{29vc4}, we assume $f$ belongs to $\cF_\partial$, which implies the final inequality below:
\eeq{ \label{29vc5}
\int_a^b f''(x)(b-x)\ \dd x
&\stackrefp{1_deriv_eq}{=} \int_a^b \int_x^b f''(x)\ \dd u\, \dd x \\
&\stackrefp{1_deriv_eq}{=} \int_a^b \int_a^u f''(x)\ \dd x\, \dd u \\
&\stackref{2_deriv_ineq}{\le} \int_a^b\big(\partial^-f(u)-\partial^+f(a)\big)\ \dd u \\
&\stackref{1_deriv_eq}{=} f(b) - f(a) - (b-a)\partial^+ f(a) \\
&\stackrefpp{boundary_data}{1_deriv_eq}{\le} f_b - f_a - s_a(b-a) 
\stackref{eq_unif_case_a}{=} \Delta_1.
}
Insert \eqref{29vc5} into \eqref{29vc4} to obtain the upper bound $(\le)$ in \eqref{eq_unif_case_a}.
Note that the two inequalities in \eqref{29vc5} are equalities if and only if
\begin{subequations} \label{2c7h5}
\begin{gather}
\text{$\partial^-f$ is absolutely continuous on $[a,u]$ for every $u\in(a,b)$, and} \label{2c7h5_a} \\
f(a) = f_a, \quad
f(b) = f_b, \quad
\partial^+f(a) = s_a, \label{2c7h5_b}
\end{gather}
\end{subequations}
where in the case of \eqref{2c7h5_a} we are using \eqref{2_deriv_eq}.
Also notice that
\eq{
\text{\eqref{2c7h4} and \eqref{2c7h5_a} hold} \quad \iff \quad
\text{\eqref{2c7h4_strong} holds.}
}
Furthermore, by simply computing $\int_a^b\beta(b-x)^{-1/2}\,\dd x = 2\beta\sqrt{b-a}$, one sees that
\eq{
\text{\eqref{2c7h4_strong} holds and \eqref{29vc5} is an equality} \quad \iff \quad \text{\eqref{29cwq} holds.}
}
In summary, we have the following equivalence for any $f\in\cF_\partial$:
\eq{
\text{$f$ achieves RHS of \eqref{eq_unif_case_a}} \quad 
&\iff \quad \text{\eqref{29vc4} and \eqref{29vc5} are equalities} \\
&\iff \quad \text{\eqref{2c7h4} and \eqref{2c7h5_a} hold, \eqref{29vc5} is an equality} \\
&\iff \quad \text{\eqref{2c7h4_strong} holds and \eqref{29vc5} is an equality} \\
&\iff \quad \text{\eqref{29cwq} holds}.
}
To complete the proof of case~\ref{prop_unif_case_a}, we just need to show that \eqref{29cwq} actually generates an element of $\cF_\partial^=$. 
That is, we need to check that the initial conditions $f'(a)=s_a$ and $f(a)=f_a$ are consistent with the terminal conditions $f'(b)=\infty$ and $f(b)=f_b$.
The first requirement $f'(b) = \infty$ holds since
\eeq{ \label{qn7l4}
f'(x) &= s_a + \int_a^x f''(u)\ \dd u
\stackref{29cwq}{=} s_a + \int_a^x\frac{\Delta_1}{2\sqrt{b-a}}(b-u)^{-3/2}\ \dd u \\
&= s_a + \frac{\Delta_1}{\sqrt{b-a}}(b-u)^{-1/2}\Big|_{u=a}^{u=x}
\stackref{eq_unif_case_a}{=} 2s_a - \frac{f_b-f_a}{b-a} + \frac{\Delta_1}{\sqrt{b-a}}(b-x)^{-1/2}.
}
The second requirement $f(b) = f_b$ can then be verified as follows:
\eq{
f(b) 
= f_a + \int_a^b f'(x)\ \dd x
&\stackref{qn7l4}{=} 2f_a + 2s_a(b-a) - f_b + \frac{\Delta_1}{\sqrt{b-a}}\int_a^b (b-x)^{-1/2}\ \dd x \\
&\stackrefp{qn7l4}{=} 2f_a + 2s_a(b-a) - f_b + 2\Delta_1
\stackref{eq_unif_case_a}{=} f_b.
}

\medskip

\noindent Case~\ref{prop_unif_case_b}:
This is similar to case~\ref{prop_unif_case_a}, but in \eqref{29vc4} we replace $b-x$ with $x-a$, and we replace \eqref{29vc5} with
\eeq{ \label{29vc6}
\int_a^b f''(x)(x-a)\ \dd x \le s_b(b-a) - (f_b - f_a) \stackref{eq_unif_case_b}{=} \Delta_2.
}
Analogously to before, we note that \eqref{29vc6} is an equality if and only if
\begin{subequations} \label{2c7h6}
\begin{gather}
\text{$\partial^+f$ is absolutely continuous on $[u,b]$ for every $u\in(a,b)$, and} \\
f(a) = f_a, \quad
f(b) = f_b, \quad
\partial^-f(b) = s_b.
\end{gather}
\end{subequations}

\medskip

\noindent
Case~\ref{prop_unif_case_c}:
Consider any $f\in\cF_\partial$.
A direct computation shows that for any $\beta>0$ and $t\ge0$, we have
\begin{equation} \label{ineq:subproblem_unique_maximizer}
t^{1/3}-\beta t \le \frac{2}{3^{3/2}} \beta^{-1/2}, \quad \text{with equality if and only if 
$t = (3\beta)^{-3/2}$.}
\end{equation}
For any $u,v>0$ and $x \in (a, b)$, applying \eqref{ineq:subproblem_unique_maximizer} with $\beta =u^2\cdot\tfrac{x-a}{b-a}+v^2\cdot\frac{b-x}{b-a}>0$ and $t=f''(x) \ge 0$ yields
\begin{equation}\label{ineq:pointwise}
f''(x)^{1/3}
\le \frac{u^2(x-a)+v^2(b-x)}{b-a}f''(x)+\frac{2}{3^{3/2}}\Big(\frac{u^2(x-a)+v^2(b-x)}{b-a}\Big)^{-1/2},   
\end{equation}
with equality if and only if
\eeq{ \label{mn5x2}
f''(x)= \frac{1}{3^{3/2}}\Big(u^2\cdot\frac{x-a}{b-a}+v^2\cdot\frac{b-x}{b-a}\Big)^{-3/2}.
}
Integrating \eqref{ineq:pointwise} over $x\in(a,b)$, and using \eqref{29vc5} and \eqref{29vc6} to control the right-hand side, we obtain
\begin{align} \label{eq:ptwise_bound}
\int_a^b f''(x)^{1/3}\ \dd x
&\le \frac{\Delta_2}{b-a}u^2 + \frac{\Delta_1}{b-a}v^2 + \frac{4(b-a)}{3^{3/2}}\cdot\frac{1}{u+v} \eqqcolon G(u,v).
\end{align}
Before proceeding further, we note that
\eeq{ \label{1v7xe}
\text{\eqref{eq:ptwise_bound} is an equality}
\quad &\iff \quad \parbox{0.5\textwidth}{\centering \eqref{29vc5} and \eqref{29vc6} are equalities, and \\ \eqref{ineq:pointwise} is an equality for almost every $x\in(a,b)$} \\
&\iff \quad \parbox{0.5\textwidth}{\centering \eqref{2c7h5} and \eqref{2c7h6} hold, and \\ \eqref{mn5x2} holds for almost every $x\in(a,b)$}  \\
&\iff \quad \parbox{0.5\textwidth}{\centering $f\in\cF_\partial^=$ and \eqref{mn5x2} holds for every $x\in(a,b)$.}
}
The next step is to minimize the right-hand side of \eqref{eq:ptwise_bound}.

To this end, the critical point equations are
\begin{align} \label{eq:G_critical1}
\frac{\partial G}{\partial u} 
&=2\frac{\Delta_2}{b-a}u-\frac{4(b-a)}{3^{3/2}}\cdot\frac{1}{(u+v)^2}=0, \\
\frac{\partial G}{\partial v}
&= 2\frac{\Delta_1}{b-a}v-\frac{4(b-a)}{3^{3/2}}\cdot\frac{1}{(u+v)^2}=0. \notag %\label{eq:G_critical2}
\end{align}
Subtracting the first equation from the second allows us to solve for $v$ in terms of $u$:
\begin{subequations} \label{both_crit}
\begin{equation}\label{uv_ratio}
v
= \frac{\Delta_2}{\Delta_1}u.
\end{equation}
Plugging \eqref{uv_ratio} into \eqref{eq:G_critical1} allows us to solve for $u$:
\eeq{ \label{crit_u}
2\frac{\Delta_2}{b-a}u
=\frac{4(b-a)}{3^{3/2}}\cdot\frac{1}{\bigl(u+\tfrac{\Delta_2}{\Delta_1}u\bigr)^2}
\quad\Longrightarrow\quad
u^3=\frac{2(b-a)^2}{3^{3/2}}\cdot\frac{\Delta_1^2}{\Delta_2(\Delta_1+\Delta_2)^2}.
}
\end{subequations}
At this critical point, we have
\eeq{ \label{94jv3x}
\frac{\Delta_2}{b-a}u^2
&\stackref{crit_u}{=} \frac{\Delta_2}{b-a}\Big(\frac{2(b-a)^2}{3^{3/2}}\cdot\frac{\Delta_1^2}{\Delta_2(\Delta_1+\Delta_2)^2}\Big)^{2/3} \\
&\stackrefp{crit_u}{=} \frac{1}{3}\Big(\frac{4\Delta_1\Delta_2(b-a)}{\Delta_1+\Delta_2}\Big)^{1/3}\cdot\frac{\Delta_1}{\Delta_1+\Delta_2}.
}
Now use the relation
\eeq{ \label{slope_relation}
\Delta_1 + \Delta_2 = (s_b-s_a)(b-a)
}
to rewrite \eqref{94jv3x} as 
\eq{
\frac{\Delta_2}{b-a}u^2 = \frac{1}{3}\Big(\frac{4\Delta_1\Delta_2}{s_b-s_a}\Big)^{1/3}\cdot\frac{\Delta_1}{\Delta_1+\Delta_2}.
}
By similar arithmetic (just exchange $\Delta_1$ and $\Delta_2$), we also have
\eq{
\frac{\Delta_1}{b-a}v^2
= \frac{1}{3}\Big(\frac{4\Delta_1\Delta_2}{s_b-s_a}\Big)^{1/3}\cdot\frac{\Delta_2}{\Delta_1+\Delta_2}.
}
In addition, we can compute
\eq{
\frac{4(b-a)}{3^{3/2}}\cdot\frac{1}{u+v}
&\stackrefpp{uv_ratio}{crit_u}{=} \frac{4(b-a)}{3^{3/2}}\cdot\frac{\Delta_1}{\Delta_1+\Delta_2}\cdot\frac{1}{u} \\
&\stackref{crit_u}{=} \frac{4(b-a)}{3^{3/2}}\cdot\frac{\Delta_1}{\Delta_1+\Delta_2}\cdot\Big(\frac{3^{3/2}}{2(b-a)^2}\cdot\frac{\Delta_2(\Delta_1+\Delta_2)^2}{\Delta_1^2}\Big)^{1/3} \\
&\stackrefp{crit_u}{=} \frac{2}{3}\Big(\frac{4\Delta_1\Delta_2(b-a)}{\Delta_1+\Delta_2}\Big)^{1/3}
\stackref{slope_relation}{=} \frac{2}{3}\Big(\frac{4\Delta_1\Delta_2}{s_b-s_a}\Big)^{1/3}.
}
Now add the three previous displays to obtain the value of $G$ at the critical point:
\eeq{ \label{value_at_crit}
G(u,v) = \Big(\frac{4\Delta_1\Delta_2}{s_b-s_a}\Big)^{1/3}.
}
In light of \eqref{eq:ptwise_bound}, this proves the upper bound ($\le$) for \eqref{unif_case_c_sup}.

To achieve the lower bound ($\ge$) and identify a unique maximizer, we fix the critical point $(u,v)$ from above so that \eqref{value_at_crit} holds.
Because of \eqref{1v7xe}, we just need to verify that there is a unique $f\in\cF_\partial^=$ that satisfies \eqref{mn5x2}.
Equivalently, we need to show that the initial conditions $f'(a) = s_a$ and $f(a) = f_a$ are consistent with the terminal conditions $f'(b) = s_b$ and $f(b) = f_b$.
Indeed, $f$ has the desired slope at $b$:
\eq{
f'(b)
&\stackrefp{uv_ratio}{=} s_a + \int_a^b f''(x)\ \dd x
= s_a + \frac{1}{3^{3/2}}\int_a^b\Big(u^2\cdot\frac{x-a}{b-a}+v^2\cdot\frac{b-x}{b-a}\Big)^{-3/2}\ \dd x \\ 
&\stackrefp{uv_ratio}{=} s_a + \frac{2(b-a)}{3^{3/2}}\cdot\frac{1}{uv(u+v)} \\
&\stackref{uv_ratio}{=} s_a + \frac{2(b-a)}{3^{3/2}}\cdot\frac{1}{u^3}\cdot\frac{\Delta_1^2}{\Delta_2(\Delta_1+\Delta_2)}
\stackref{crit_u}{=} s_a + \frac{\Delta_1+\Delta_2}{b-a} 
\stackref{slope_relation}{=} s_b.
}
To see that $f$ has the desired value at $b$, we compute
\eeq{ \label{wib8x}
f(b) 
&= f(a) + (b-a)f'(a) + \int_a^b f''(x)(b-x)\ \dd x \\
&= f_a + s_a(b-a) + \frac{1}{3^{3/2}}\int_a^b\Big(u^2\cdot\frac{x-a}{b-a}+v^2\cdot\frac{b-x}{b-a}\Big)^{-3/2}(b-x)\ \dd x  \\
&= f_a + s_a(b-a) + \frac{(b-a)^2}{3^{3/2}}\int_0^1\big(u^2(1-y)+v^2y\big)^{-3/2}y\ \dd y.
}
Compute the integral on the final line using integration by parts:
\eq{
&\int_0^1\big((v^2-u^2)y + u^2\big)^{-3/2}y\ \dd y \\
&= \frac{-2\big((v^2-u^2)y + u^2\big)^{-1/2}}{v^2-u^2}\cdot y\Big|_{y=0}^{y=1}
+ 2\int_0^1\frac{\big((v^2-u^2)y + u^2\big)^{-1/2}}{v^2-u^2}\ \dd y \\
&= \frac{-2v^{-1}}{v^2-u^2} + \frac{4(v-u)}{(v^2-u^2)^2}
= \frac{2}{v(u+v)^2}
\stackref{uv_ratio}{=} \frac{1}{u^3}\cdot\frac{2\Delta_1^3}{\Delta_2(\Delta_1+\Delta_2)^2}
\stackref{crit_u}{=} \frac{3^{3/2}\Delta_1}{(b-a)^2}.
}
Inserting this computation into \eqref{wib8x}, we obtain the desired value:
\[
f(b) = f_a + s_a(b-a) + \Delta_1 \stackref{eq_unif_case_a}{=} f_b. \qedhere
\]
\end{proof}

\subsection{Metrizing the function space} \label{subsec_metric}

Recall from \eqref{F_def} the space $\cF$ of admissible functions. 
We equip $\cF$ with a metric $d$ defined as follows.
For any subinterval $[a,b]\subset[0,1)$, denote the uniform norm on $[a,b]$ by
\eq{
\|f\|_{[a,b]} \coloneqq \sup_{x\in[a,b]}|f(x)|.
}
Now define the following metric on $\cF$:
\eeq{ \label{metric_def}
d(f,g) \coloneqq \sum_{k=1}^\infty 2^{-k}(\|f-g\|_{[0,1-1/k]}\wedge 1), \quad f,g\in\cF.
}
The key point is that $d$ generates the topology of locally uniform convergence on $[0,1)$.
This topology is coarser than that of uniform convergence on $[0,1]$, but has the advantage of compactness:

\begin{lemma}[Compactness] \label{lem_compact}
$(\cF,d)$ is a compact metric space.
\end{lemma}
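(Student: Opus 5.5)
We have to show that $(\cF,d)$ is compact. Since $d$ is a metric, it suffices to show sequential compactness: every sequence $(f_n)$ in $\cF$ has a subsequence converging in $d$ to some element of $\cF$. By \eqref{metric_def}, $d(f_{n},f)\to0$ if and only if $\|f_n-f\|_{[0,1-1/k]}\to0$ for every $k$. Indeed, if uniform convergence holds on each $[0,1-1/k]$, then the tail $\sum_{k>K}2^{-k}$ is small and the first $K$ terms tend to zero. So the task is to extract a locally uniformly convergent subsequence on $[0,1)$ whose limit lies in $\cF$.

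\textbf{Step 1: equicontinuity on $[0,1-1/k]$.} Every $f\in\cF$ takes values in $[0,1]$ and is convex. For $0\le x<y\le 1-1/k$, convexity gives slope bounds
\[
\partial^+ f(x)\ \ge\ \frac{f(x)-f(0)}{x}\ \ge\ 0,
\qquad
\partial^- f(y)\ \le\ \frac{f(1)-f(y)}{1-y}\ \le\ k.
\]
The first bound uses $f(0)=0$ and $f\ge0$ (when $x=0$, use $\partial^+f(0)\ge0$, which holds because $f\ge0=f(0)$). Hence $f$ is nondecreasing, and it is $k$-Lipschitz on $[0,1-1/k]$. The family $\cF$ is therefore uniformly bounded and equicontinuous on each $[0,1-1/k]$.

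\textbf{Step 2: diagonal extraction.} Apply Arzel\`a--Ascoli on $[0,1-1/k]$ for $k=1,2,\ldots$ and take a diagonal subsequence. This gives a subsequence (still written $f_n$) and a function $g\colon[0,1)\to[0,1]$ with $f_n\to g$ locally uniformly on $[0,1)$. Pointwise limits preserve convexity, monotonicity, the value $g(0)=0$, and the range $[0,1]$. Moreover $g$ is continuous on $[0,1)$.

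\textbf{Step 3: the endpoint $x=1$ (the main subtlety).} The limit $g$ is not yet defined at $1$, and $f_n(1)$ need not converge to anything related to $g$. Since $d$ never looks at the value at $1$, we are free to set
\[
g(1)\coloneqq\lim_{x\nearrow1}g(x).
\]
This limit exists and lies in $[0,1]$ because $g$ is nondecreasing and bounded.

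It remains to check that the extended $g$ is convex on $[0,1]$. For $x<1$ and $\lambda\in(0,1)$, the inequality
\[
g\big(\lambda x+(1-\lambda)y\big)\le\lambda g(x)+(1-\lambda)g(y)
\]
holds for every $y<1$. Letting $y\nearrow1$ and using continuity of $g$ on $[0,1)$ on the left-hand side, together with the definition of $g(1)$ on the right-hand side, gives the inequality with $y=1$. By construction $g$ is continuous at $1$. Hence $g\in\cF$ and $d(f_n,g)\to0$.

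We do not expect any step to be a serious obstacle. The only point needing care is the endpoint, handled in Step 3: the topology is deliberately blind to $x=1$, which is exactly what yields compactness despite possible jumps of $f_n(1)$. Note that $d$ is a genuine metric on $\cF$, not merely a pseudometric, because elements of $\cF$ are continuous at $1$ and are therefore determined by their values on $[0,1)$.
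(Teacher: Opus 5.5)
Your proof is correct and follows essentially the same route as the paper: a uniform Lipschitz bound on $[0,1-1/k]$ from convexity and $f\in[0,1]$, Arzel\`a--Ascoli with diagonalization, and defining the limit at $1$ by $\lim_{x\nearrow1}$. You also check explicitly that convexity survives the endpoint extension and that $d$ separates points of $\cF$, which fills in details the paper leaves implicit.
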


\begin{proof}
Checking that $d$ is a metric is straightforward and thus omitted.
Here we verify compactness.
For every $f\in\cF$ and $x<y$ in $[0,1]$, convexity implies
\eeq{ \label{trivial_deriv_bound}
\frac{f(y) - f(x)}{y-x} \le \frac{f(1) - f(x)}{1-x} \le \frac{1 - 0}{1-x}.
}
Now consider any sequence $(f_n)_{n\ge1}$ in $\cF$.
For every $t\in[0,1)$, the restrictions $\big(f_n\big|_{[0,t]}\big)_{n\ge1}$ are uniformly Lipschitz by \eqref{trivial_deriv_bound}, and thus uniformly equicontinuous.
By Arzel\`a--Ascoli and diagonalization, we can pass to a subsequence and assume that for every $k\in\Z_{\ge1}$, the sequence $(f_n)_{n\ge1}$ converges pointwise uniformly on $[0,1-1/k]$.
Denote the limit by
\eq{
f(x) \coloneqq \lim_{n\to\infty} f_n(x) \quad \text{for $x\in[0,1)$}, \qquad
f(1) \coloneqq \lim_{x\nearrow1} f(x).
}
By construction, we have
\eq{
\lim_{n\to\infty}\|f_n - f\|_{[0,1-1/k]} \quad \text{for every $k\in\Z_{\ge1}$},
}
so $d(f_n,f)\to 0$ as $n\to\infty$.
Since each $f_n$ belongs to $\cF$, the limiting function $f$ also belongs to $\cF$.
We have thus proved (sequential) compactness.
\end{proof}

Although $d$ is defined in terms of uniform norms, convergence can be phrased without any uniformity:

\begin{lemma}[Characterization of convergence] \label{lem_convergence}
For any $f\in\cF$ and any sequence $(f_n)_{n\ge1}$ in $\cF$, the following statements are equivalent:
\begin{enumerate}[label=\textup{(\alph*)}]

\item \label{lem_convergence_a}
$d(f_n,f)\to0$ as $n\to\infty$.

\item \label{lem_convergence_b}
$f_n(x)\to f(x)$ as $n\to\infty$, for every $x\in[0,1)$.

\item \label{lem_convergence_c}
$\partial^+f_n(t)\to f'(t)$ as $n\to\infty$, for every differentiability point $t\in(0,1)$ of $f$.

\end{enumerate}
\end{lemma}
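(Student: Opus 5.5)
I will prove the implications (a)$\Rightarrow$(b)$\Rightarrow$(c)$\Rightarrow$(b)$\Rightarrow$(a). The first is immediate: for $x\in[0,1)$, choose $k$ with $x\le 1-1/k$. Then $2^{-k}(|f_n(x)-f(x)|\wedge 1)\le d(f_n,f)\to0$.

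For (b)$\Rightarrow$(a), I will use the fact that all elements of $\cF$ restricted to $[0,1-1/k]$ are uniformly Lipschitz with constant $k$ by \eqref{trivial_deriv_bound}. Pointwise convergence of an equi-Lipschitz family on a compact interval is uniform. Concretely, take a finite $\eta$-net of $[0,1-1/k]$: pointwise convergence on the net plus the Lipschitz bounds for $f_n$ and $f$ gives $\limsup_n\|f_n-f\|_{[0,1-1/k]}\le 2k\eta$. Hence each term of the series \eqref{metric_def} tends to zero, and dominated convergence (terms bounded by $2^{-k}$) gives $d(f_n,f)\to0$.

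For (b)$\Rightarrow$(c), I will use the standard convex sandwich. Fix a differentiability point $t\in(0,1)$ of $f$ and small $h>0$ with $t+h<1$. Convexity of $f_n$ gives
\[
\frac{f_n(t)-f_n(t-h)}{h}\le \partial^-f_n(t)\le\partial^+f_n(t)\le \frac{f_n(t+h)-f_n(t)}{h}.
\]
Letting $n\to\infty$ using (b), the $\liminf$ and $\limsup$ of $\partial^+f_n(t)$ lie between the two difference quotients of $f$. Letting $h\searrow0$ then yields $f'(t)$.

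The step I expect to need most care is (c)$\Rightarrow$(b). Differentiability points of $f$ are of full measure in $(0,1)$, since $f$ is convex. For $x\in(0,1)$, use \eqref{1_deriv_eq} with $f_n(0)=f(0)=0$ to write
\[
f_n(x)=\int_0^x\partial^+f_n(s)\,\dd s,\qquad f(x)=\int_0^x f'(s)\,\dd s.
\]
I then want dominated convergence on $[0,x]$. The upper bound is $\partial^+f_n(s)\le 1/(1-s)\le 1/(1-x)$ by \eqref{trivial_deriv_bound}. For a lower bound, monotonicity gives $\partial^+f_n(s)\ge\partial^+f_n(0)$, which could be very negative; this is the main obstacle. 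I would handle it using the range constraint $f_n\ge0$ together with $f_n(0)=0$, which forces $\partial^+f_n(0)\ge0$. Hence $0\le\partial^+f_n(s)\le 1/(1-x)$ on $[0,x]$, and since (c) holds almost everywhere, dominated convergence gives $f_n(x)\to f(x)$. The case $x=0$ is trivial. This closes the cycle.
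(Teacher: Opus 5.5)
Your proof is correct, but it is organized differently from the paper's. The paper runs the single cycle (a)$\Rightarrow$(b)$\Rightarrow$(c)$\Rightarrow$(a). For (b)$\Rightarrow$(c) it simply cites a standard convexity fact; your sandwich of difference quotients is exactly the argument behind that citation. For (c)$\Rightarrow$(a) it writes $|f_n(x)-f(x)|\le\int_0^x|\partial^+f_n(t)-\partial^+f(t)|\,\dd t$. The right-hand side is nondecreasing in $x$, so one application of dominated convergence on $[0,1-\delta]$ already gives $\|f_n-f\|_{[0,1-\delta]}\to0$. Uniformity on compacts thus comes for free, with no separate equicontinuity step.

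You instead stop at pointwise convergence in (c)$\Rightarrow$(b). You then upgrade it via the equi-Lipschitz/finite-net argument in (b)$\Rightarrow$(a), which is essentially the Arzel\`a--Ascoli reasoning the paper uses in Lemma~\ref{lem_compact}. Your route costs one extra implication. In exchange it isolates the useful fact that pointwise convergence on $[0,1)$ within $\cF$ is automatically locally uniform, and it is fully self-contained.

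One point in your favor: you state explicitly the lower bound $\partial^+f_n\ge\partial^+f_n(0)\ge0$. The paper's dominated-convergence step needs it too but leaves it implicit, recording only the upper bound $\partial^+g\le 1/\delta$.

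One point to tidy up: \eqref{trivial_deriv_bound} bounds difference quotients only from above. Your Lipschitz claim needs the two-sided bound $0\le f(y)-f(x)\le k(y-x)$ for $x<y$ in $[0,1-1/k]$. So note that elements of $\cF$ are nondecreasing, which follows from convexity together with $f(0)=0\le f$.
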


\begin{proof}
\ref{lem_convergence_a}$\implies$\ref{lem_convergence_b}: This implication is straightforward and does not rely on convexity:
\eq{
\lim_{n\to\infty}d(f_{n},f) = 0 \quad 
\iff \quad &\lim_{n\to\infty}\|f_{n}-f\|_{[0,1-1/k]} = 0 \quad \text{for every $k$} \\
\implies \quad &\mathrlap{\lim_{n\to\infty}|f_{n}(x)-f(x)| = 0} \hphantom{\lim_{n\to\infty}\|f_{n}-f\|_{[0,1-1/k]} = 0}\, \quad \text{for every $x\in[0,1)$}.
}

\medskip

\noindent \ref{lem_convergence_b}$\implies$\ref{lem_convergence_c}:
Consider any differentiability point $t\in(0,1)$ of $f$.
By convexity, the assumed pointwise convergence $f_n\to f$ implies $\partial^+f_n(t)\to f'(t)$, e.g.\ see \cite[Theorem~B.12(7)]{friedli_velenik17}.

\medskip

\noindent\ref{lem_convergence_c}$\implies$\ref{lem_convergence_a}:
For every $x\in[0,1]$, we have
\eeq{ \label{2bcj4}
|f_n(x) - f(x)| 
\stackref{1_deriv_eq}{=} \Big|\int_0^x \partial^+f_n(t)\ \dd t - \int_0^x \partial^+f(t)\ \dd t\Big|
\le \int_0^x |\partial^+f_n(t) - \partial^+f(t)|\ \dd t.
}
By assumption, we have $\partial^+f_n(t)\to \partial^+f(t)$ for all $t\in(0,1)$ except possibly countably many discontinuity points of $\partial^+f$.
Furthermore, for every $\delta>0$ and $t\in[0,1-\delta]$, convexity implies
\eq{
\partial^+g(t) \le \frac{g(1)-g(t)}{1-t} \le \frac{1-0}{1-t} \le \frac{1}{\delta} \quad \text{for every $g\in\cF$.}
}
Therefore, dominated convergence gives
\eeq{ \label{2kb8cz}
\lim_{n\to\infty}\int_0^{1-\delta} |\partial^+f_n(t) - \partial^+f(t)|\ \dd t = 0 \quad \text{for every $\delta>0$.}
}
Now \eqref{2bcj4} and \eqref{2kb8cz} together imply $\|f_n-f\|_{[0,1-1/k]}\to0$ as $n\to\infty$, for every $k$.
Hence $d(f_n,f)\to0$.
\end{proof}

The following consequence of the two previous lemmas will be used much later, in the proof of Lemma~\ref{lem_measurable}.
It may appear as though we are asserting that $(\cF,d)$ is separable, but the statement is actually stronger since we demand pointwise convergence on $[0,1]$ rather than $[0,1)$.

\begin{proposition}[Dense subsets] \label{prop_separable}
Every closed set $\cK$ in the metric space $(\cF,d)$ contains a countable subset $\cD\subseteq\cK$ such that the following statement holds.
For every $f\in\cK$, there exists a sequence $(f_n)_{n\ge1}$ in $\cD$ such that $\lim_{n\to\infty}f_n(x) = f(x)$ for every $x\in[0,1]$.
\end{proposition}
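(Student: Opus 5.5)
The plan is to augment the metric $d$ so that it also sees the value at $x=1$. Consider the map $\Phi\colon\cF\to\cF\times[0,1]$, $\Phi(f)=(f,f(1))$. On its image, use the metric
\[
\rho\big((f,s),(g,t)\big)\coloneqq d(f,g)+|s-t|.
\]
By Lemma~\ref{lem_compact}, $(\cF,d)$ is a compact metric space, hence separable. The product $(\cF\times[0,1],\rho)$ is then a separable metric space. Every subset of a separable metric space is separable, so $\Phi(\cK)$ has a countable $\rho$-dense subset. We write this subset as $\Phi(\cD)$ with $\cD\subseteq\cK$ countable; this is possible because $\Phi$ is injective and the dense subset may be chosen inside $\Phi(\cK)$.

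Now fix $f\in\cK$. By $\rho$-density we can choose $f_n\in\cD$ with $\rho(\Phi(f_n),\Phi(f))<1/n$. This gives $d(f_n,f)\to0$ and $f_n(1)\to f(1)$. The implication \ref{lem_convergence_a}$\implies$\ref{lem_convergence_b} of Lemma~\ref{lem_convergence} converts the first limit into $f_n(x)\to f(x)$ for every $x\in[0,1)$. Together with the second limit, this is pointwise convergence on all of $[0,1]$, as required.

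Closedness of $\cK$ is not really needed for this argument; separability of arbitrary subsets suffices. The only point needing care is that density is taken within $\Phi(\cK)$ and not within $\Phi(\cF)$, so that $\cD\subseteq\cK$. No real obstacle is expected. The one conceptual step is recognising that $d$ alone ignores $f(1)$, since $d$-limits only control $[0,1)$; the extra coordinate $|f(1)-g(1)|$ repairs exactly this defect.
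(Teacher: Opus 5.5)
Your argument is correct, and it follows a different route from the paper's.

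\textbf{The paper's route.} The paper stays inside $(\cF,d)$ throughout.
\begin{itemize}
\item It first proves (Claim~\ref{claim_lsc}) that $f\mapsto f(1)$ is lower semicontinuous with respect to $d$, using a Fatou argument on the right derivatives $\partial^+ f$ that relies on convexity.
\item Consequently each sublevel set $\{f\in\cK:\, f(1)\le r\}$ is closed, hence compact and therefore separable.
\item It takes the union of countable dense subsets of these sets over rational $r$.
\item For a given $f$, it approximates from sublevel sets with $r_n\searrow f(1)$. Membership bounds the approximant's value at $1$ above by $r_n$, and lower semicontinuity supplies the matching lower bound.
\end{itemize}

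\textbf{Your route.} You add the term $|f(1)-g(1)|$ to the metric, so convergence at $x=1$ is built in from the start. You then appeal to hereditary separability of separable metric spaces.

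\textbf{Comparison.} Your argument is shorter and needs no convexity input (no analogue of Claim~\ref{claim_lsc}). As you note, it also drops the closedness hypothesis on $\cK$. The paper needs closedness only because it gets separability of $\{f\in\cK:\, f(1)\le r\}$ from compactness inside $(\cF,d)$ rather than from hereditary separability.

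\textbf{A more direct version.} Every element of $\cF$ is continuous on $[0,1]$ by definition, so $\cF\subseteq C([0,1])$. That space is separable under $\|\cdot\|_{[0,1]}$. A countable sup-norm-dense subset of $\cK$ then gives uniform, and hence pointwise, convergence on all of $[0,1]$. This version uses neither Lemma~\ref{lem_compact} nor Lemma~\ref{lem_convergence}.
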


\begin{proof}
We begin with the following observation, which relies on convexity.

\begin{claim}[Lower semicontinuity] \label{claim_lsc}
If $\lim_{m\to\infty}d(g_m,f)=0$, then $f(1)\le \liminf_{m\to\infty}g_m(1)$.
\end{claim}

\begin{proofclaim}
Since $d(g_m,f)\to0$, we have $\partial^+g_m(x)\to f'(x)$ for every differentiability point $x\in(0,1)$ of $f$, by Lemma~\ref{lem_convergence}.
This justifies the penulimate equality below:
\begin{align*}
\liminf_{m\to\infty}g_m(1)
&\stackrel{\parbox{\widthof{\footnotesize(Fatou)}}{\centering\footnotesize\eqref{1_deriv_eq}}}{=} \liminf_{m\to\infty} \int_0^1 \partial^+g_m(x)\ \dd x \\
&\stackrel{\mbox{\footnotesize(Fatou)}}{\ge} \int_0^1 \liminf_{m\to\infty}\partial^+g_m(x)\ \dd x \\
&\stackrel{\hphantom{\mbox{\footnotesize(Fatou)}}}{=} \int_0^1 f'(x)\ \dd x 
\stackref{1_deriv_eq}{=} f(1). \qedhere
\end{align*}
\end{proofclaim}

For each rational $r\in\Q\cap[0,1]$, define $\cK_r = \{f\in\cK:\, f(1)\le r\}$.
Since $\cK$ is closed with respect to the metric $d$, so too is $\cK_r$ thanks to Claim~\ref{claim_lsc}.
Given that the ambient space $(\cF,d)$ is compact (Lemma~\ref{lem_compact}), it follows that $\cK_r$ is compact and hence separable.
Choose a countable subset $\cD_r\subseteq\cK_r$ that is dense in $\cK_r$, and let $\cD = \bigcup_{r\in\Q\cap[0,1]}\cD_r$.
Clearly $\cD$ is countable and a subset of $\cK$, so we just need to verify the final sentence of the proposition.

To this end, consider any $f\in\cK$.
Choose rationals $(q_n)_{n\ge1}$ and $(r_n)_{n\ge1}$ in $[0,1]$ such that $q_n \le f(1) \le r_n$ for every $n$, and 
\eeq{ \label{rationals_converge}
\lim_{n\to\infty}q_n = \lim_{n\to\infty}r_n = f(1).
}
Since $f\in\cK_{r_n}$, there is a sequence $(g_{n,m})_{m\ge1}$ in $\cD_{r_n}$ such that $d(g_{n,m},f)\to0$ as $m\to\infty$.
Furthermore, by Claim~\ref{claim_lsc} we have
\eq{
q_n \le f(1) \le \liminf_{m\to\infty} g_{n,m}(1).
}
So choose $m_n$ large enough that $d(g_{n,m_n},f) \le n^{-1}$ and $q_n - n^{-1} \le g_{n,m_n}(1) \le r_n$.
By setting $f_n = g_{n,m_n}$, we have $d(f_n,f)\to0$ (meaning $f_n(x)\to f(x)$ for all $x\in[0,1)$ by Lemma~\ref{lem_convergence}), as well as $f_n(1)\to f(1)$ thanks to \eqref{rationals_converge}.
\end{proof}

\subsection{Associated space of measures} \label{subsec_space_measures}

In upcoming arguments, it will be helpful to think of $\partial^+f$ as the distribution function of a measure on $[0,1)$.
This leads to an identification scheme between $\cF$ and a space of measures, which is made precise below.

Let $\cM$ denote the set of locally finite positive Borel measures $\mu$ on $[0,1)$ such that
\begin{align} \label{meas_constraint}
\int_0^1 \mu[0,t]\ \dd t \le 1.
\end{align}
Every $\mu\in\cM$ is $\sigma$-finite and thus admits a unique decomposition $\mu = \mu^\ac + \mu^\sing$, where $\mu^\ac$ is absolutely continuous with respect to Lebesgue measure, and $\mu^\sing$ is mutually singular with Lebesgue measure.
We denote the Radon--Nikodym derivative of $\mu^\ac$ with respect to Lebesgue measure by
\eeq{ \label{RN_deriv}
D_\mu \coloneqq \frac{\dd\mu^\ac}{\dd x}.
}
Given $\mu\in\mathcal{M}$, define the function $f_\mu\colon[0,1]\to[0,1]$ by
\begin{align} \label{f_from_meas}
f_\mu(x) \coloneqq \int_0^x \mu[0,t]\ \dd t, \quad x\in[0,1].
\end{align}

\begin{lemma}[Identification scheme] \label{lem_identification_scheme}
The following statements hold.
\begin{enumerate}[label=\textup{(\alph*)}]

\item \label{scheme_bijection} The map $\mu \mapsto f_\mu$ defined in \eqref{f_from_meas} is a bijection $\mathcal{M}\to\mathcal{F}$.

\item \label{scheme_right_deriv} $\partial^+f_\mu(x) = \mu[0,x] \le (1-x)^{-1}$ for every $x\in[0,1)$.

\item \label{scheme_left_deriv} $\partial^-f_\mu(x) = \mu[0,x)$ for every $x\in(0,1)$.

\item \label{scheme_2nd_deriv} $f_\mu''(x) = D_\mu(x)$ for almost every $x\in(0,1)$.

\end{enumerate}
\end{lemma}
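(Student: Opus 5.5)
I would prove the parts in the order (b), (c), (d), (a), because the bijection in (a) is easiest to verify once the derivative formulas are available.

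\emph{Part (b).} Fix $\mu\in\cM$ and set $F(t)\coloneqq\mu[0,t]$. This function is finite on $[0,1)$ by local finiteness, nondecreasing, and right-continuous by continuity from above of measures. Its left limit at $x$ is $\mu[0,x)$.
\begin{itemize}
\item Since $f_\mu(x)=\int_0^x F(t)\,\dd t$ with $F$ nondecreasing, $f_\mu$ is convex with $f_\mu(0)=0$.
\item Right-continuity of $F$ gives $\partial^+f_\mu(x)=\lim_{\eps\searrow0}\eps^{-1}\int_x^{x+\eps}F=F(x)$.
\item The constraint \eqref{meas_constraint} says exactly that $f_\mu(1)\le1$. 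Monotone convergence makes $f_\mu$ continuous at $1$, and since $f_\mu$ is nondecreasing and nonnegative, it maps $[0,1]$ into $[0,1]$. Hence $f_\mu\in\cF$.
\item For the bound, $1\ge\int_x^1F(t)\,\dd t\ge(1-x)F(x)$ by monotonicity of $F$, so $F(x)\le(1-x)^{-1}$. This is also \eqref{trivial_deriv_bound}.
\end{itemize}

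\emph{Parts (c) and (d).} For (c), the same averaging from the left, $\eps^{-1}\int_{x-\eps}^xF\to F(x-)=\mu[0,x)$, gives $\partial^-f_\mu(x)=\mu[0,x)$. For (d), I would use the Lebesgue differentiation theorem for Radon measures on $(0,1)$. At Lebesgue-a.e.\ $x$:
\begin{itemize}
\item the quotient $\mu^\ac(x,x+r]/r$ tends to $D_\mu(x)$;
\item the singular part satisfies $\mu^\sing(x-r,x+r)/r\to0$.
\end{itemize}
Hence $(F(x+r)-F(x))/r\to D_\mu(x)$ from both sides. This is exactly the limit in \eqref{second_deriv_def} with $\partial^\pm f_\mu=F$ or $F(\cdot-)$. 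The left-limit version differs only by atoms, which are covered by the singular-part estimate, and the differentiability points of $f_\mu$ are a.e.

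\emph{Part (a).} Here I need injectivity and surjectivity.
\begin{itemize}
\item \emph{Injectivity.} If $f_\mu=f_\nu$, then by (b) we have $\mu[0,x]=\nu[0,x]$ for all $x\in[0,1)$. These intervals form a $\pi$-system generating the Borel sets of $[0,1)$, and both measures are finite on each $[0,x]$. So $\mu=\nu$ by uniqueness of extension, applied on each $[0,1-1/k]$.
\item \emph{Surjectivity.} Given $f\in\cF$, the function $G=\partial^+f$ is nondecreasing, right-continuous and finite on $[0,1)$ by \eqref{trivial_deriv_bound}. Also $G(0)\ge0$, since $f\ge0=f(0)$.
\item I then take the Lebesgue--Stieltjes measure $\mu$ with $\mu[0,t]=G(t)$: put an atom $G(0)$ at $0$ and give $(s,t]$ mass $G(t)-G(s)$. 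This $\mu$ is a positive, locally finite Borel measure on $[0,1)$.
\item By \eqref{1_deriv_eq}, $f(x)=\int_0^xG=f_\mu(x)$ for $x<1$. At $x=1$ the identity follows by continuity of both functions, and then $f(1)\le1$ gives \eqref{meas_constraint}. So $\mu\in\cM$ and $f=f_\mu$.
\end{itemize}

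\emph{Main obstacle.} The only delicate step is (d): making sure the singular part, including atoms and the Cantor-type component, contributes nothing to the difference quotient at Lebesgue-a.e.\ point. I would cite the standard fact that $\mu^\sing(x-r,x+r)/(2r)\to0$ for Lebesgue-a.e.\ $x$ rather than reprove it. The remaining steps are bookkeeping with one-sided limits.
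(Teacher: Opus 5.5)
Your proposal is correct and follows essentially the same route as the paper: (b) and (c) by averaging the nondecreasing, right-continuous function $t\mapsto\mu[0,t]$, the bound from $\int_x^1\mu[0,t]\,\dd t\le 1$, injectivity from agreement of the distribution functions, surjectivity via the Lebesgue--Stieltjes measure of $\partial^+f$, and (d) from the Lebesgue differentiation theorem for measures. The only cosmetic difference is in (d): the paper symmetrizes the quotient to $\mu(x-r,x+r)/(2r)$ and cites the differentiation theorem for $\mu$ directly, whereas you work with one-sided quotients and treat the absolutely continuous and singular parts separately.
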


\begin{proof}
We first check that $f_\mu$ belongs to $\mathcal{F}$, provided $\mu$ belongs to $\mathcal{M}$.
From \eqref{meas_constraint}, it is trivial that $f_\mu(x)\in[0,1]$ for all $x\in[0,1]$, and $f_\mu(0)=0$.
To see that $f_\mu$ is convex, consider the following difference quotient for any $x<y$ in $[0,1]$:
\begin{align} \label{diff_quotient}
\frac{f_\mu(y) - f_\mu(x)}{y-x}
= \frac{1}{y-x}\int_x^y \mu[0,t]\ \dd t.
\end{align}
Since $t\mapsto\mu[0,t]$ is nondecreasing, the average on the right-hand side is nondecreasing in both $x$ and $y$, meaning that $f_\mu$ is convex.
We have thus verified that $f_\mu$ belongs to $\mathcal{F}$.

Sending $y\searrow x$ in \eqref{diff_quotient} yields the first equality below:
\eq{
\partial^+f_\mu(x) = \lim_{t\searrow x}\mu[0,t] = \mu[0,x] \le \frac{1}{1-x}\int_x^1 \mu[0,t]\ \dd t
\le \frac{1}{1-x}\int_0^1 \mu[0,t]\ \dd t \stackref{meas_constraint}{\le} \frac{1}{1-x}.
}
This proves part~\ref{scheme_right_deriv}.
Part~\ref{scheme_left_deriv} follows because
\eq{
\partial^-f_\mu(x)
= \lim_{t\nearrow x}\partial^+f_\mu(t)
= \lim_{t\nearrow x}\mu[0,t]
= \mu[0,x).
}

We next check that $\mu\mapsto f_\mu$ is a bijection.
For injectivity, suppose $\mu,\nu\in\mathcal{M}$ are such that $f_\mu(x) = f_\nu(x)$ for every $x\in[0,1]$.
It follows from part~\ref{scheme_right_deriv} that $\mu[0,t] = \nu[0,t]$ for every $t\in[0,1)$.
Hence $\mu = \nu$.
For surjectivity, consider any $f\in\cF$.
Since $f(0)=0$, we have
\begin{align} \label{f_via_deriv}
f(x) = \int_0^x \partial^+f(t)\ \dd t \quad \text{for all $x\in[0,1]$.}
\end{align}
Furthermore, since $\partial^+f$ is nonnegative, nondecreasing, and right-continuous, it is the distribution function of some measure. 
That is, there exists a Borel measure $\mu$ on $[0,1)$ satisfying
\begin{align} \label{meas_from_f}
\mu[0,t] = \partial^+f(t)<\infty \quad \text{for all $t\in[0,1)$}.
\end{align}
This measure belongs to $\mathcal{M}$ since
\begin{align*}
\int_0^1 \mu[0,t]\ \dd t = \int_0^1 \partial^+f(t)\ \dd t \stackref{f_via_deriv}{=} f(1) \le 1.
\end{align*}
The identities \eqref{f_via_deriv} and \eqref{meas_from_f} together show $f = f_\mu$.
This completes the proof of part~\ref{scheme_bijection}.

Finally, for part~\ref{scheme_2nd_deriv} recall that \eqref{second_deriv_def} holds for almost every $x\in(0,1)$.
Also $f_\mu'(x)$ exists for almost every $x\in(0,1)$.
If $x$ has both of these properties, then
\eq{
f_\mu''(x) 
= \frac{f_\mu''(x) - (-f_\mu''(x))}{2}
&= \lim_{r\searrow0}\frac{\partial^- f_\mu(x+r)-f_\mu'(x)}{2r} - \lim_{r\searrow0}\frac{\partial^+ f_\mu(x-r)-f_\mu'(x)}{2r} \\
&= \lim_{r\searrow0}\frac{\partial^- f_\mu(x+r)-\partial^+ f_\mu(x-r)}{2r} \\
&= \lim_{r\searrow0}\frac{\mu(x-r,x+r)}{2r}.
}
The Lebesgue differentiation theorem \cite[Theorem~5.8.8]{bogachev07} implies that for almost every $x\in(0,1)$, the limit on the last line is equal to $D_\mu(x)$.
\end{proof}

We can now rewrite Lemma~\ref{lem_convergence} as follows.

\begin{corollary}[Characterization of convergence, version 2] \label{cor_convergence}
For any $\mu\in\cM$ and any sequence $(\mu_n)_{n\ge1}$ in $\cM$, the following statements are equivalent:
\begin{enumerate}[label=\textup{(\alph*)}]

\item \label{cor_convergence_a}
$d(f_{\mu_n},f_\mu)\to0$ as $n\to\infty$.

\item \label{cor_convergence_b}
$f_{\mu_n}(x)\to f_\mu(x)$ as $n\to\infty$, for every $x\in[0,1)$.

\item \label{cor_convergence_c}
$\mu_n[0,t]\to\mu[0,t]$ as $n\to\infty$, for every continuity point $t\in(0,1)$ of $\mu$.

\end{enumerate}
\end{corollary}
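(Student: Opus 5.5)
This is Lemma~\ref{lem_convergence} rewritten through the identification scheme of Lemma~\ref{lem_identification_scheme}, so the plan is to translate each item rather than reprove anything. Set $f_n \coloneqq f_{\mu_n}$ and $f \coloneqq f_\mu$. These lie in $\cF$ by Lemma~\ref{lem_identification_scheme}\ref{scheme_bijection}. Items \ref{cor_convergence_a} and \ref{cor_convergence_b} are then literally items \ref{lem_convergence_a} and \ref{lem_convergence_b} of Lemma~\ref{lem_convergence} applied to $(f_n)$ and $f$, so they are equivalent. It remains to show that \ref{cor_convergence_c} is equivalent to \ref{lem_convergence_c} of Lemma~\ref{lem_convergence}.

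The first step is to identify the two index sets. By Lemma~\ref{lem_identification_scheme}\ref{scheme_right_deriv},\ref{scheme_left_deriv}, for $t\in(0,1)$ we have $\partial^+f(t)-\partial^-f(t)=\mu[0,t]-\mu[0,t)=\mu\{t\}$. Hence $t$ is a differentiability point of $f$ exactly when $\mu\{t\}=0$, which is what ``continuity point of $\mu$'' means here: a continuity point of the distribution function $t\mapsto\mu[0,t]$. At such $t$ we also have $f'(t)=\mu[0,t]$.

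The second step is to identify the convergent quantities. Lemma~\ref{lem_identification_scheme}\ref{scheme_right_deriv} gives $\partial^+f_n(t)=\mu_n[0,t]$ for every $t\in[0,1)$. So the statement ``$\partial^+f_n(t)\to f'(t)$ at every differentiability point $t\in(0,1)$ of $f$'' is verbatim the statement ``$\mu_n[0,t]\to\mu[0,t]$ at every continuity point $t\in(0,1)$ of $\mu$''. This gives \ref{lem_convergence_c}$\iff$\ref{cor_convergence_c}, and chaining with Lemma~\ref{lem_convergence} completes the argument.

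No step is a genuine obstacle. The only point needing care is the meaning of ``continuity point of $\mu$''. I will read it as $\mu\{t\}=0$, which is equivalent to continuity of the distribution function, and I will note the one-line identity $\mu\{t\}=\partial^+f(t)-\partial^-f(t)$ so that the sets of $t$ in Lemma~\ref{lem_convergence}\ref{lem_convergence_c} and in \ref{cor_convergence_c} match exactly.
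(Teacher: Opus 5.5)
Your proposal is correct and follows the same route as the paper. The paper also uses Lemma~\ref{lem_identification_scheme}(b,c) to identify the continuity points of $\mu$ with the differentiability points of $f_\mu$, and then reads the corollary off directly from Lemma~\ref{lem_convergence}.
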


\begin{proof}
By Lemma~\ref{lem_identification_scheme}\hyperref[scheme_right_deriv]{(b,c)}, the continuity points of $\mu$ are precisely the points of differentiability of $f_\mu$.
Therefore, the statement of the corollary is equivalent to Lemma~\ref{lem_convergence}.
\end{proof}

\subsection{Upper semicontinuity of objective function} \label{subsec_usc}

The goal of this section is to prove the following result, which will be essential for studying optimizers in Section~\ref{subsec_maximizers}.

\begin{proposition}[Upper semicontinuity] \label{prop_usc}
Assume \eqref{p_cont}.
Then the functional $J\colon\cF\to[0,\infty)$ given by \eqref{J_def1} is upper semicontinuous with respect to the metric $d$ from \eqref{metric_def}.
\end{proposition}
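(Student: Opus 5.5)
The plan is to show that if $d(f_n,f)\to0$, then $\limsup_n J(f_n)\le J(f)$. I will write $f_n=f_{\mu_n}$ and $f=f_\mu$ using Lemma~\ref{lem_identification_scheme}. By part~\ref{scheme_2nd_deriv} of that lemma, $J(f_\mu)=\int_0^1 \big[D_\mu(x)\,\fp(x,f_\mu(x))\big]^{1/3}\,\dd x$. Since $\fp$ is continuous on the compact set $\cT$, it is bounded, say $\fp\le 2\fC$.

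\textbf{Step 1 (cut off near $x=1$).} I first remove a neighbourhood of the right endpoint, uniformly over all of $\cF$. For $g\in\cF$ and $a=1-\delta$, I repeat the H\"older computation \eqref{29vc4} with $b=1$, and bound $\int_a^1 g''(x)(1-x)\,\dd x$ as in \eqref{29vc5}:
\[
\int_a^1 g''(x)(1-x)\,\dd x\le g(1)-g(a)-(1-a)\partial^+g(a)\le 1,
\]
which uses $\partial^+g\ge0$. This gives $\int_{1-\delta}^1 [g''\fp(\cdot,g)]^{1/3}\le (2\fC)^{1/3}(4\delta)^{1/3}$ for every $g\in\cF$. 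It is therefore enough to prove upper semicontinuity of $J_\delta(g)\coloneqq\int_0^{1-\delta}[g''\fp(x,g(x))]^{1/3}\,\dd x$ for each fixed $\delta$, and then let $\delta\to0$.

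\textbf{Step 2 (freeze the weight).} On $[0,1-\delta]$ we have $f_n\to f$ uniformly, and $\fp$ is uniformly continuous. Hence $\fp(x,f_n(x))^{1/3}\to \fp(x,f(x))^{1/3}\eqqcolon w(x)$ uniformly. The resulting error is at most $o(1)\cdot\int_0^{1-\delta} D_{\mu_n}^{1/3}$. By Jensen this integral is at most $\mu_n[0,1-\delta]^{1/3}\le\delta^{-1/3}$, using Lemma~\ref{lem_identification_scheme}\ref{scheme_right_deriv}. So it suffices to show that $\limsup_n\int_0^{1-\delta} w\,D_{\mu_n}^{1/3}\le\int_0^{1-\delta} w\,D_\mu^{1/3}$, where $w\ge0$ is continuous.

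\textbf{Step 3 (partitions).} Take a partition $P$ of $[0,1-\delta]$ into intervals $I$ whose interior endpoints are continuity points of $\mu$. This is possible because $\mu$ has only countably many atoms; I will also take $1-\delta$ to be a continuity point, adjusting $\delta$ slightly if needed. On each $I$, Jensen (or H\"older) gives
\[
\int_I w\,D_{\mu_n}^{1/3}\le (\max_I w)\,|I|^{2/3}\mu_n(I)^{1/3}.
\]
By Corollary~\ref{cor_convergence}\ref{cor_convergence_c}, $\mu_n(I)\to\mu(I)$. Consequently
\[
\limsup_n \int_0^{1-\delta} w\,D_{\mu_n}^{1/3}\le \Phi_P\coloneqq\sum_{I\in P}(\max_I w)\,|I|^{2/3}\mu(I)^{1/3}.
\]

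\textbf{Step 4 (main obstacle).} It remains to show that $\Phi_P\to\int_0^{1-\delta} w\,D_\mu^{1/3}$ along a sequence of nested partitions whose mesh tends to $0$. This is where the singular part must be shown to contribute nothing. I split $\mu=\mu^\ac+\mu^\sing$ and use $(s+t)^{1/3}\le s^{1/3}+t^{1/3}$.
\begin{itemize}
\item \emph{Absolutely continuous part.} Define $h_P=\mu^\ac(I)/|I|$ on each $I\in P$. Then $\sum_{I\in P}|I|^{2/3}\mu^\ac(I)^{1/3}=\int h_P^{1/3}$. Along nested partitions, $h_P\to D_\mu$ almost everywhere, by martingale convergence or Lebesgue differentiation. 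The family $h_P^{1/3}$ is bounded in $L^3$, hence uniformly integrable, so Vitali's theorem gives $\int h_P^{1/3}\to\int D_\mu^{1/3}$.
\item \emph{Singular part.} Choose a compact set $K$ with $\mathrm{Leb}(K)=0$ and $\mu^\sing([0,1-\delta]\setminus K)<\eta$. By H\"older, the intervals meeting $K$ contribute at most $(\text{total length of those intervals})^{2/3}\mu^\sing[0,1-\delta]^{1/3}$, and this tends to $0$ as the mesh goes to $0$. The remaining intervals contribute at most $(1-\delta)^{2/3}\eta^{1/3}$.
\item \emph{Weight.} Uniform continuity of $w$ lets me replace $\max_I w$ by $w$ at a cost of $o(1)\cdot\delta^{-1/3}$.
\end{itemize}
Letting the mesh go to $0$, then $\eta\to0$, then $\delta\to0$ completes the argument.
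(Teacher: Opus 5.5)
Your proof is correct, and its core step takes a different route from the paper's. Steps~1 and~2 coincide with the paper: the tail bound comes from Proposition~\ref{prop_unif_case}\ref{prop_unif_case_a}, and $\fp(x,f_n(x))$ is swapped for $\fp(x,f(x))$ at a cost controlled by $\mu_n[0,1-\delta]\le\delta^{-1}$.

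For the remaining inequality, the paper proves Lemma~\ref{lem_usc} by convex duality. It writes $y^{1/3}=\inf_{a>0}\{\tfrac13 a^{-2/3}y+\tfrac23 a^{1/3}\}$ and inserts a \emph{continuous} positive test function $A$. The resulting upper bound is a continuous linear functional of $\mu_n$, so weak convergence of $\mu_n|_{[0,t]}$ applies. The cost is showing that the infimum over continuous $A$ recovers $\int w\,D_\mu^{1/3}$. That requires a mollified density, a compact Lebesgue-null set carrying most of $\mu^\sing$, an open neighbourhood of it, a bump function, and a uniform-integrability argument.

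Your cellwise H\"older bound is the same duality with a \emph{piecewise-constant} $A$, optimized on each cell. You make these discontinuous test functions legitimate by placing the cell endpoints at continuity points of $\mu$, so Corollary~\ref{cor_convergence}\ref{cor_convergence_c} gives $\mu_n(I)\to\mu(I)$ directly. The recovery step then reduces to standard facts:
\begin{itemize}
\item a.e.\ convergence of the cell averages $h_P$, which holds at every Lebesgue point of $D_\mu$ because a cell of length $\ell$ containing $x$ lies in $[x-\ell,x+\ell]$, so nestedness is not even needed;
\item Vitali's theorem via the $L^3$ bound;
\item a short H\"older estimate for the singular part, using the same null-set device as the paper.
\end{itemize}
Your route is therefore more elementary and avoids the mollifier and bump-function construction. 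The paper's route instead packages the key estimate as a self-contained lemma about weakly convergent measures with an arbitrary continuous weight.
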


Because of Lemma~\ref{lem_identification_scheme}, the functional $J$ can be rewritten as follows.
Given $f$, let $\mu\in\cM$ be such that $f_\mu = f$. Then
\eq{
J(f) = \int_0^1\big[D_\mu(x)\cdot\fp(x,f(x))\big]^{1/3}\ \dd x.
}
This motivates the following lemma.

\begin{lemma}[Precursor to upper semicontinuity]\label{lem_usc}
Let $\mu_n, \mu \in \mathcal{M}$.
Fix $t\in[0,1)$.
If $\mu_n\big|_{[0,t]}$ converges weakly to $\mu\big|_{[0,t]}$ as $n\to\infty$, then for any continuous function $w\colon[0,t] \to [0, \infty)$, we have
\eeq{ \label{usc_eta}
\limsup_{n\to\infty}\int_0^{t} w(x)\cdot D_{\mu_n}(x)^{1/3}\ \dd x  \le  \int_0^{t} w(x)\cdot D_\mu(x)^{1/3}\ \dd x.
}
\end{lemma}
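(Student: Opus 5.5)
We plan to prove \eqref{usc_eta} with a local, concave Jensen-type estimate on a fine partition of $[0,t]$. The key fact is that $s\mapsto s^{1/3}$ is concave, so on any interval $I$,
\[
\int_I D_{\nu}(x)^{1/3}\,\dd x \le |I|^{2/3}\Big(\int_I D_\nu(x)\,\dd x\Big)^{1/3} \le |I|^{2/3}\,\nu(I)^{1/3},
\]
for every $\nu\in\cM$, by H\"older/Jensen together with $\nu^\ac\le\nu$. This bound lets us control $D_{\mu_n}$ through the measures $\mu_n(I)$, which weak convergence handles.

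\textbf{Step 1 (partition).} Fix $\eps>0$. Since $w$ is uniformly continuous on $[0,t]$, we choose a partition $0=x_0<x_1<\cdots<x_K=t$ with mesh small enough that $w$ oscillates by less than $\eps$ on each cell $I_k=[x_{k-1},x_k]$. We also choose the interior partition points to be non-atoms of $\mu$. This is possible because $\mu|_{[0,t]}$ is finite and so has at most countably many atoms. Weak convergence then gives $\mu_n(I_k)\to\mu(I_k)$ for each $k$; the endpoint $t$ itself causes no trouble, since we are working with the restrictions to $[0,t]$ as the ambient space. Put $M_k=\max_{I_k}w$.

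\textbf{Step 2 (upper bound for $\mu_n$).} Using the display above,
\[
\int_0^t w\,D_{\mu_n}^{1/3} \le \sum_k M_k |I_k|^{2/3}\mu_n(I_k)^{1/3} \longrightarrow \sum_k M_k|I_k|^{2/3}\mu(I_k)^{1/3}.
\]
Hence it remains to show that this Riemann-type sum is at most $\int_0^t w\,D_\mu^{1/3}+o(1)$ as the mesh tends to $0$.

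\textbf{Step 3 (the main obstacle).} We must show that the singular part of $\mu$ contributes nothing in the limit. We write $\mu(I_k)^{1/3}\le \mu^\ac(I_k)^{1/3}+\mu^\sing(I_k)^{1/3}$, by subadditivity of the cube root.

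For the singular part, H\"older gives
\[
\sum_k|I_k|^{2/3}\mu^\sing(I_k)^{1/3}\le \Big(\sum_{k\in S}|I_k|\Big)^{2/3}\mu^\sing[0,t]^{1/3}+\sum_{k\notin S}|I_k|^{2/3}\mu^\sing(I_k)^{1/3}.
\]
Here $S$ is a family of cells covering most of the $\mu^\sing$-mass but having small total length. Such a family exists because $\mu^\sing$ is concentrated on a Lebesgue-null set $N$: pick an open set $U\supset N\cap[0,t]$ of tiny Lebesgue measure, with $\mu^\sing([0,t]\setminus U)$ also tiny after restricting to a compact $K\subset N$. For a fine enough partition, the cells meeting $K$ have total length $\le 2|U|$ plus boundary terms. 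The remaining cells carry little $\mu^\sing$-mass, and H\"older bounds their sum by $t^{2/3}\big(\text{small mass}\big)^{1/3}$. This step is the delicate one, because the partition has to be chosen after $K$ and $U$.

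For the absolutely continuous part, $\sum_k M_k|I_k|^{2/3}\big(\int_{I_k}D_\mu\big)^{1/3}$ converges to $\int_0^t w\,D_\mu^{1/3}$ as the mesh tends to $0$. We will prove this by approximating $D_\mu$ in $L^1[0,t]$ by a step function $g$. The sum is Lipschitz-controlled: by H\"older, the error is at most $\|w\|_\infty t^{2/3}\|D_\mu-g\|_{L^1}^{1/3}$, and the same bound holds for $\int w\,|D_\mu^{1/3}-g^{1/3}|$. For step functions the convergence is elementary.

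Letting $n\to\infty$, then refining the mesh, then letting $\eps\to0$ gives \eqref{usc_eta}.
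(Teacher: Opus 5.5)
Your proposal is correct, and it follows a genuinely different route from the paper.

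The paper argues dually. It writes $y^{1/3}=\inf_{a>0}\{\tfrac13 a^{-2/3}y+\tfrac23 a^{1/3}\}$. Then, for each strictly positive continuous $A$ on $[0,t]$, it bounds $\int_0^t wD_{\mu_n}^{1/3}$ by $\int_{[0,t]} w\,(\tfrac13A^{-2/3}\,\mu_n(\dd x)+\tfrac23A^{1/3}\,\dd x)$. This bound is linear in $\mu_n$ with a continuous integrand, so weak convergence passes it to the limit directly. All the work then goes into building near-optimal $A_m$: a mollification of $D_\mu$ floored at $m^{-1}$, glued by a bump function to the constant $m$ on a compact Lebesgue-null set $K_m$ carrying most of $\mu^\sing$, plus a uniform integrability argument.

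You argue primally. Jensen on cells reduces the $\mu_n$-side to the finite-dimensional quantity $\sum_k M_k|I_k|^{2/3}\mu_n(I_k)^{1/3}$. Weak convergence handles this once the interior partition points avoid the countably many atoms of $\mu$ (Portmanteau on $[0,t]$). What remains is the deterministic fact that the cell sums for $\mu$ come down to $\int_0^t wD_\mu^{1/3}$ as the mesh shrinks.

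Both proofs dispose of the singular part with the same device: a compact Lebesgue-null set carrying most of $\mu^\sing$, inside an open set of small length. However, you need only the discrete H\"older inequality there, and on the absolutely continuous part you use a step-function approximation instead of mollifiers and uniform integrability. So your argument is more elementary. The paper's duality argument is more structural: it makes upper semicontinuity visible as an infimum of weakly continuous functionals, and it extends verbatim to other concave integrands written as infima of affine ones.

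Two small points to tighten, neither of which affects correctness. In Step~3, take the mesh below $\mathrm{dist}(K,\R\setminus U)$, so that every cell meeting $K$ lies in $U$; then those cells have total length at most $|U|$, with no boundary terms. Also, your error estimate $\|w\|_\infty t^{2/3}\|D_\mu-g\|_{L^1}^{1/3}$, obtained from $|a^{1/3}-b^{1/3}|\le|a-b|^{1/3}$ with $g\ge0$, is a H\"older-$\tfrac13$ bound rather than a Lipschitz one.
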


\begin{proof}
On the domain $[0,\infty)$, we can express the concave function $y\mapsto y^{1/3}$ as the infimum of its tangent lines:
\begin{align}
\label{eq_cube_root_var}
y^{1/3}=\inf_{a>0}\Big\{\tfrac13 a^{-2/3} y +\tfrac23 a^{1/3}\Big\}.
\end{align}
%For $y>0$ the minimum is attained at $a=y$, while for $y=0$ the infimum is $0$, obtained by taking $a\downarrow0$. 
Now fix a continuous function $w\colon [0,t] \to [0, \infty)$, which is necessarily bounded since $[0,t]$ is compact.
Consider any strictly positive continuous function $A\colon [0,t] \to (0, \infty)$, which is necessarily bounded away from $0$ since $[0,t]$ is compact.
We have
\begin{align*}
\limsup_{n\to\infty}\int_0^t w \cdot (D_{\mu_n})^{1/3}\ \dd x 
&\stackref{eq_cube_root_var}{\le}  \limsup_{n\to\infty}\int_0^t w\cdot \big( \tfrac13 A^{-2/3} D_{\mu_n} + \tfrac23 A^{1/3} \big)\ \dd x\\
&\stackrefpp{RN_deriv}{eq_cube_root_var}{\le}   \limsup_{n\to\infty}\int_{[0,t]} w\cdot \big( \tfrac13 A^{-2/3}\ \mu_n(\dd x) + \tfrac23 A^{1/3}\ \dd x \big) \\
&\stackrefp{eq_cube_root_var}{=} \int_{[0,t]} w \cdot \big( \tfrac13 A^{-2/3}\ \mu(\dd x) + \tfrac23 A^{1/3}\ \dd x \big),
\end{align*}
where the last equality uses the assumption of weak convergence $\mu_n\big|_{[0,t]}\to\mu\big|_{[0,t]}$.
To complete the proof of \eqref{usc_eta}, it suffices to show 
\begin{equation}\label{inf_cont_A}
  \inf_{ A} \int_{[0,t]} w \cdot \big( \tfrac13 A^{-2/3}\ \mu(\dd x) + \tfrac23 A^{1/3}\ \dd x \big) 
  \le \int_0^t w\cdot D_\mu^{1/3}\ \dd x,  
\end{equation}
where the infimum is taken over all strictly positive continuous functions $A\colon [0,t] \to (0, \infty)$. 
We decompose the integral on the left-hand side of \eqref{inf_cont_A}
as $\cI_1(A) + \cI_2(A)$, where
\begin{align*}
\cI_1(A) &\coloneqq \int_{[0,t]} w \cdot \tfrac13 A^{-2/3}\ \mu^\sing(\dd x), \\
\cI_2(A) &\coloneqq  \int_0^t w \cdot\big( \tfrac13 A^{-2/3} D_\mu  + \tfrac23 A^{1/3} \big)\ \dd x.
\end{align*}
To proceed, we approximate $D_\mu$ by a continuous function $\phi_\eps$.
Define $\phi\colon\R\to[0,\infty)$ by
\eeq{ \label{phi_Dmu_def}
\phi(x) \coloneqq \begin{cases}
D_\mu(x) &\text{if $x\in [0,t]$}\\
0 &\text{otherwise.}
\end{cases}
}
Note that $\phi$ is integrable since $\int_{-\infty}^\infty \phi\, \dd x = \mu^\ac[0,t]<\infty$.
Let $(\psi_\eps)_{\eps>0}$ be a family of smooth mollifiers on $\mathbb R$ that approximate the Dirac measure at $0$ as $\eps\searrow0$, such as $\psi_\eps(t) = \frac{1}{\sqrt{2\pi\eps}}\exp(\frac{-t^2}{2\eps})$.
Let $\phi_\eps \coloneqq \phi*\psi_\eps$.
Then $\phi_\eps$ is continuous and nonnegative, and as $\eps\searrow0$, we have $\phi_\eps\to\phi$ almost everywhere, and also $\int_{-\infty}^\infty |\phi_\eps - \phi|\,\dd x\to0$.
For each $m\in\Z_{\ge1}$, choose $\eps_m\in(0,m^{-1}]$ such that $\int_{-\infty}^\infty |\phi_{\eps_m} - \phi | \le m^{-1}$. 
Let $\vphi_m \coloneqq \phi_{\eps_m}.$

Since $\mu^\sing\big|_{[0,t]}$ is singular and finite, there exists a compact set $K_m\subset [0,t]$ of Lebesgue measure zero such that 
\eeq{ \label{nc4khp}
\mu^\sing([0,t]\setminus K_m) \le m^{-1}.
}
Next choose an open set $U_m \supset K_m$ such that
\eeq{ \label{U_m_properties}
\mathrm{Leb}(U_m) \le m^{-1} \quad \text{and} \quad \int_{U_m}\phi\ \dd x = \mu^\ac\big|_{[0,t]}(U_m)\le m^{-1}.
}
Combining the latter inequality with $\int_{-\infty}^\infty |\vphi_m  - \phi| \le m^{-1}$, we obtain
\begin{equation}\label{gm_over_um_int_bdd}
    \int_{U_m} \vphi_m\ \dd x \le 2m^{-1}.
\end{equation}
Let $\chi_m\colon [0,t]\to[0,1]$ be a  continuous bump function such that 
$\chi_m \equiv 1$ on $K_m$, and $\chi_m \equiv 0 $ on $[0,t]\setminus U_m$.
Define the following continuous function on $[0,t]$:
\[ A_m \coloneqq (\vphi_m \vee m^{-1} ) (1 -\chi_m)  + m \chi_m. \]
Note that
\eeq{ \label{A_m_bounds}
m^{-1} \le A_m \le \vphi_m + m.
}
Since $A_m\equiv m$ on $K_m$ and $A_m \ge m^{-1}$ otherwise, we have %$\mu^\sing([0,t]\setminus K_m)\le m^{-1}$, we have
\eeq{ \label{I_2_to_zero}
    \cI_1(A_m) 
    &\le \int_{K_m} w \cdot  \tfrac13 m^{-2/3}\ \mu^\sing(\dd x) 
    + \int_{[0,t]\setminus K_m} w\cdot \tfrac{1}{3} m^{2/3}\ \mu^\sing(\dd x) \\
    &\le  \tfrac{1}{3}\| w \|_\infty\Big(m^{-2/3}\mu^\sing(K_m) + m^{2/3}\mu^\sing([0,t]\setminus K_m)\Big)  \xrightarrow[m\to\infty]{\mbox{\footnotesize\eqref{nc4khp}}} 0.
}
Meanwhile, since $A_m = \vphi_m \vee m^{-1}$ on $[0,t]\setminus U_m$, we have
\eeq{ \label{I_1_control}
    \cI_2(A_m) &= \int_{[0,t]\setminus U_m}w \cdot\Big( \tfrac13 (\vphi_m \vee m^{-1} )^{-2/3} \phi  + \tfrac23 (\vphi_m \vee m^{-1} )^{1/3} \Big)\ \dd x  \\
    &\phantom{=}+ \int_{[0,t]\cap U_m}w \cdot\Big( \tfrac13 A_m^{-2/3} \phi  + \tfrac23 A_m^{1/3} \Big)\ \dd x.
}
The integral over $[0,t]\cap U_m$ is controlled as follows:
\begin{align*}
&\int_{[0,t]\cap U_m}w \cdot\Big( \tfrac13 A_m^{-2/3} \phi  + \tfrac23 A_m^{1/3} \Big)\ \dd x 
\stackref{A_m_bounds}{\le} \tfrac13\| w \|_\infty\int_{[0,t]\cap U_m} \Big(m^{2/3} \phi + 2(\vphi_m+m)^{1/3}\Big)\ \dd x \\
&\stackrefp{U_m_properties}{\le} \tfrac13\| w \|_\infty\Big[m^{2/3}\int_{U_m}\phi\ \dd x + 2\int_{U_m}(\vphi_m^{1/3} + m^{1/3})\ \dd x\Big] \\
&\stackref{U_m_properties}{\le} \tfrac13\| w \|_\infty\Big[m^{-1/3} + 2\Big(\int_{U_m} \vphi_m\ \dd x\Big)^{1/3} + 2m^{-2/3}\Big] 
\xrightarrow[m\to\infty]{\mbox{\footnotesize\eqref{gm_over_um_int_bdd}}}0.
%&\stackref{gm_over_um_int_bdd}{\le} \tfrac{1}{3}m^{-1/3} + \tfrac{2^{4/3}}{3}m^{-1} + \tfrac{2}{3}m^{-2/3}.
\end{align*}
Meanwhile, the integral over $[0,t]\setminus U_m$ in \eqref{I_1_control} is bounded from above by
\begin{align*}
    \int_0^t w \cdot \Big( \tfrac13 (\vphi_m \vee m^{-1} )^{-2/3} \vphi_m  + \tfrac23 (\vphi_m \vee m^{-1} )^{1/3} \Big)\ \dd x +  \tfrac13  m^{2/3} \| w \|_\infty  \int_0^t  |\vphi_m - \phi|\ \dd x.
\end{align*}
The second integral above vanishes as $m\to\infty$ since $\int_{-\infty}^\infty |\vphi_m - \phi|\, \dd x \le m^{-1}.$
Concerning the first integral, note that the almost everywhere convergence $\vphi_m\to\phi$ implies
\[ 
\lim_{m\to\infty} w \cdot \Big( \tfrac13 (\vphi_m \vee m^{-1} )^{-2/3} \vphi_m  + \tfrac23 (\vphi_m \vee m^{-1} )^{1/3} \Big) = w \cdot\phi^{1/3}  \quad \text{a.e.}
\]
Moreover, the prelimiting sequence is dominated as follows:
\begin{align*}
&w \cdot \Big( \tfrac13 (\vphi_m \vee m^{-1} )^{-2/3} \vphi_m  + \tfrac23 (\vphi_m \vee m^{-1} )^{1/3} \Big) \\
&\le
     \| w\|_\infty\Big(  \tfrac13 (\vphi_m \vee m^{-1} )^{-2/3} (\vphi_m \vee m^{-1})  + \tfrac23 (\vphi_m \vee m^{-1} )^{1/3} \Big) \\
     &=\| w\|_\infty  (\vphi_m \vee m^{-1})^{1/3}  \le \| w\|_\infty (\vphi_m^{1/3} +1).
\end{align*}
Note that $\sup_m \int_0^t  (\vphi_m^{1/3})^3\, \dd x = \sup_m \int_0^t  \vphi_m\, \dd x \le \int_0^t \phi\, \dd x + \sup_m\int_{-\infty}^\infty|\vphi_m-\phi|\,\dd x < \infty$. 
This $L^3$ bound implies uniform integrability of the prelimiting sequence, from which we conclude
\begin{align*}
\lim_{m\to\infty}\int_0^t w\cdot \Big(\tfrac13 (\vphi_m \vee m^{-1})^{-2/3} \vphi_m  + \tfrac23 (\vphi_m \vee m^{-1})^{1/3} \Big)\ \dd x = \int_0^t w\cdot \phi^{1/3}\ \dd x.
\end{align*}
Putting together all the observations from \eqref{I_1_control} onward, we obtain
\eq{
\limsup_{m\to\infty}\cI_2(A_m) 
\le \int_0^t w\cdot\phi^{1/3}\ \dd x
\stackref{phi_Dmu_def}{=} \int_0^t w\cdot D_\mu^{1/3}\ \dd x.
}
Together with \eqref{I_2_to_zero}, this implies \eqref{inf_cont_A}.
\end{proof}

\begin{proof}[Proof of Proposition~\ref{prop_usc}]
Consider any convergent sequence $f_n\to f$ in the metric space $(\cF,d)$.
Our goal is to show 
\eeq{ \label{usc_to_show}
\limsup_{n\to\infty} J(f_n) \le J(f).
}
For any $t\in[0,1)$, we have the following sequence of inequalities:
\eq{ %\label{2c85kb}
\int_t^1 \big[f_n''(x)\cdot \fp(x,f_n(x))\big]^{1/3}\ \dd x
&\stackrefpp{p_upper}{eq_unif_case_a}{\le} (2\fC)^{1/3}\int_t^1 f_n''(x)^{1/3}\ \dd x \\
&\stackref{eq_unif_case_a}{\le} \big[8\fC(1-t)\big(f_n(1)-f_n(t)-(1-t)\cdot \partial^+f_n(t)\big)\big]^{1/3} \\
&\stackrefp{eq_unif_case_a}{\le} \big[8\fC(1-t)\big]^{1/3}.
}
It follows that
\eq{
\limsup_{n\to\infty} J(f_n) 
\le \big[8\fC(1-t)\big]^{1/3} + \limsup_{n\to\infty}\int_0^t \big[f_n''(x)\cdot \fp(x,f_n(x))\big]^{1/3}\ \dd x.
}
Sending $t\nearrow1$ will yield \eqref{usc_to_show}, provided we prove
\eeq{ \label{usc_suffices}
\limsup_{n\to\infty}\int_0^t \big[f_n''(x)\cdot \fp(x,f_n(x))\big]^{1/3}\ \dd x \le J(f) \quad \text{for every $t\in(0,1)$}.
}
The rest of the proof is showing \eqref{usc_suffices}.

As in Lemma~\ref{lem_identification_scheme}, let $\mu_n,\mu\in\cM$ be such that $f_{\mu_n} = f_n$ and $f_\mu = f$.
By part~\ref{scheme_2nd_deriv} of that lemma, we have almost sure equalities $D_{\mu_n} = f_n''$ and $D_{\mu} = f''$.
The implication \ref{cor_convergence_a}$\implies$\ref{cor_convergence_c} in Corollary~\ref{cor_convergence} gives $\lim_{n\to\infty}\mu_n[0,t] = \mu[0,t]$ for every continuity point $t\in(0,1)$ of $\mu$.
It follows that $\mu_n\big|_{[0,t]}\to\mu\big|_{[0,t]}$ weakly for every continuity point $t\in(0,1)$ of $\mu$.
For such $t$ we may apply Lemma~\ref{lem_usc} with $w(x) = \fp(x,f(x))^{1/3}$, to obtain
\eeq{ \label{ebc7b}
\limsup_{n\to\infty} \int_0^t \big[f_n''(x)\cdot \fp(x,f(x))\big]^{1/3}\ \dd x
&\le \int_0^t \big[f''(x)\cdot \fp(x,f(x))\big]^{1/3}\ \dd x 
\le J(f).
}
This is almost \eqref{usc_suffices}, but we still need to replace $f$ with $f_n$ in the leftmost integral.
To this end, let $\eps>0$ be arbitrary.
By continuity of $\fp$ from \eqref{p_cont}, there exists $\delta>0$ such that
\eq{
\big|\fp(x,y)^{1/3}-\fp(x,y')^{1/3}\big| \le \eps(1-t)^{1/3}
\quad \text{whenever $|y-y'|\le\delta$.}
}
Since $d(f_n,f)\to0$ as $n\to\infty$, we have $\|f_n - f\|_{[0,t]}\le\delta$ for all large $n$.
This justifies the first inequality below:
\eq{
&\limsup_{n\to\infty}
\Big|\int_0^t \big[f_n''(x)\cdot \fp(x,f_n(x))\big]^{1/3}\ \dd x
- \int_0^t \big[f_n''(x)\cdot \fp(x,f(x))\big]^{1/3}\ \dd x\Big| \\
&\stackrel{\parbox{\widthof{\footnotesize(Lemma~\ref{lem_identification_scheme}\ref{scheme_2nd_deriv})}}{\hphantom{x}}}{\le}
\eps(1-t)^{1/3}\int_0^t f_n''(x)^{1/3}\ \dd x
\le \eps(1-t)^{1/3}\Big(\int_0^t f_n''(x)\Big)^{1/3}\ \dd x \\
&\stackrel{\parbox{\widthof{\footnotesize(Lemma~\ref{lem_identification_scheme}\ref{scheme_2nd_deriv})}}{\centering\footnotesize (Lemma~\ref{lem_identification_scheme}\ref{scheme_2nd_deriv})}}{=}
\eps(1-t)^{1/3}(\mu_n^\ac[0,t])^{1/3} %\\
\stackrel{\parbox{\widthof{\footnotesize(Lemma~\ref{lem_identification_scheme}\ref{scheme_2nd_deriv})}}{\centering\footnotesize (Lemma~\ref{lem_identification_scheme}\ref{scheme_right_deriv})}}{\le}
\eps(1-t)^{1/3} (1-t)^{-1/3} 
= \eps.
}
As $\eps$ is arbitrary, we conclude
\eeq{ \label{4bcjh}
\limsup_{n\to\infty}\int_0^t \big[f_n''(x)\cdot \fp(x,f_n(x))\big]^{1/3}\ \dd x
= \limsup_{n\to\infty}\int_0^t \big[f_n''(x)\cdot \fp(x,f(x))\big]^{1/3}\ \dd x.
}
Combining \eqref{4bcjh} and \eqref{ebc7b} yields 
\eeq{ \label{usc_cont}
\limsup_{n\to\infty}\int_0^t\big[f_n''(x)\cdot \fp(x,f_n(x))\big]^{1/3}\ \dd x \le J(f)
}
for every continuity point $t\in(0,1)$ of $\mu$.
Since continuity points are dense, and the left-hand side of \eqref{usc_cont} is monotone in $t$, \eqref{usc_suffices} follows.
\end{proof}

\subsection{Properties of the set of maximizers} \label{subsec_maximizers}
Recall the optimal value $J_\star$ from \eqref{Jstar_def}, and the set of maximizers $\argmax J$ from \eqref{argmax_def}.

\begin{proposition}[Optimizers exist] \label{prop_maximizers}
Assume \eqref{p_cont}.
Then the set $\argmax J$ is nonempty and compact in the metric space $(\cF,d)$.
\end{proposition}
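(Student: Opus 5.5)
This is the standard argument that an upper semicontinuous function on a compact space attains its maximum; the only things to supply are Lemma~\ref{lem_compact} and Proposition~\ref{prop_usc}.

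\emph{Step 1: $J_\star$ is finite.} By \eqref{J_star_trivial}, or directly from the bound displayed in the proof of Proposition~\ref{prop_usc} with $t=0$, every $f\in\cF$ satisfies
\[
J(f)\le\big(2\sup\fp\big)^{1/3}\Big(\int_0^1 f''(x)^{1/3}\,\dd x\Big)\le\big(8\sup\fp\big)^{1/3}<\infty .
\]
Here $\sup\fp<\infty$ because $\fp$ is continuous on the compact triangle $\cT$ under \eqref{p_cont}, so \eqref{p_upper} holds automatically. Hence $J_\star=\sup_{f\in\cF}J(f)\in[0,\infty)$.

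\emph{Step 2: a maximizer exists.} Choose a maximizing sequence $(f_n)_{n\ge1}$ in $\cF$ with $J(f_n)\to J_\star$. Since $(\cF,d)$ is compact by Lemma~\ref{lem_compact}, we may pass to a subsequence with $d(f_n,f)\to0$ for some $f\in\cF$. Proposition~\ref{prop_usc} then gives
\[
J(f)\ge\limsup_{n\to\infty}J(f_n)=J_\star ,
\]
and $J(f)\le J_\star$ holds by definition. Thus $f\in\argmax J$, so the set is nonempty.

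\emph{Step 3: $\argmax J$ is compact.} We have $\argmax J=\{f\in\cF:\,J(f)\ge J_\star\}$. This is a superlevel set of an upper semicontinuous function, so it is closed in $(\cF,d)$: if $f_n\in\argmax J$ and $d(f_n,f)\to0$, then $J(f)\ge\limsup_n J(f_n)=J_\star$. A closed subset of the compact metric space $(\cF,d)$ is compact.

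There is no real obstacle here. All the analytic difficulty sits in Proposition~\ref{prop_usc}, which we take as given.
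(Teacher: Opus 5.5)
Your proof is correct and follows the paper's argument essentially step for step. You bound $J$ using $\int_0^1 f''(x)^{1/3}\,\dd x\le 4^{1/3}$, extract a $d$-convergent maximizing subsequence via Lemma~\ref{lem_compact}, and then use Proposition~\ref{prop_usc} to obtain both that the limit lies in $\argmax J$ and that $\argmax J$ is closed (hence compact). You also point out that \eqref{p_cont} already gives \eqref{p_upper} on the compact triangle $\cT$, which makes explicit a point the paper's proof uses without stating.
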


\begin{proof}
Note that $J_\star$ must be finite by the following inequalities:
\eq{
J(f) = \int_0^1 \big[f''(x)\cdot \fp(x,f(x))\big]^{1/3}\ \dd x 
\stackref{p_upper}{\le} (2\fC)^{1/3}\int_0^1 f''(x)^{1/3}\ \dd x
\stackref{eq_unif_case_a}{\le} 2\fC^{1/3}.
}
Now consider any sequence $(f_n)_{n\ge1}$ in $\cF$ such that $J(f_n)\to J_\star$ as $n\to\infty$.
By compactness (Lemma~\ref{lem_compact}), we may pass to a subsequence and assume $d(f_n,f)\to0$ for some $f\in\cF$.
By upper semicontinuity (Proposition~\ref{prop_usc}), we must have $J(f) = J_\star$.
This verifies that $\argmax J$ is nonempty and closed.
Since the ambient space $(\cF,d)$ is compact, it follows that $\argmax J$ is compact.
\end{proof}

The next proposition gives several properties shared by every optimizer.
It is needed for several results that come after, but also interesting in its own right.

\begin{proposition}[Properties of optimizers] \label{prop_maximizer_properties}
Assume \eqref{p_upper} and \eqref{p_lower}.
Then every $f\in\argmax J$ has the following properties.
\begin{enumerate}[label=\textup{(\alph*)}]

\item \label{prop_maximizer_1_at_1} $f(1)=1$.

\item \label{prop_maximizer_no_linear} $f$ is strictly convex.

\item \label{prop_maximizer_abs_cont} $x\mapsto\partial^+f(x)$ is absolutely continuous on $[0,u)$ for every $u\in(0,1)$.

\end{enumerate}
\end{proposition}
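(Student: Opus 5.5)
Each of the three properties will be proved by contradiction: if an optimizer $f$ failed the property, we would build an explicit competitor $g\in\cF$ with $J(g)>J(f)=J_\star$. The competitor is obtained by locally replacing $f$ on a small interval $[a,b]$ with the maximizer from Proposition~\ref{prop_unif_case}, under boundary data read off from $f$. Assumption \eqref{p_lower} guarantees that the weight $\fp(x,\cdot)^{1/3}$ is bounded below, so any gain in $\int f''^{1/3}$ counts. Assumption \eqref{p_upper} lets us control the loss caused by changing the second argument $f(x)$ of $\fp$. Since we only assume \eqref{p_upper} and \eqref{p_lower}, not continuity, we avoid any step that needs $\fp(x,g(x))\approx\fp(x,f(x))$. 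Instead the constructions either leave $f$ unchanged where the density matters or only increase $f''$ pointwise on a set.

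\textbf{Part (a).} Suppose $f(1)<1$. Take $g=f+\eta h$, where $h(x)=x^2$ (or any strictly convex function with $h(0)=0$) and $\eta>0$ is small enough that $g(1)\le1$. Then $g''\ge f''+2\eta$. The catch is that $\fp(x,g(x))$ may differ from $\fp(x,f(x))$ in an uncontrolled way, since $\fp$ is not assumed continuous.

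To avoid this, I would use the transformation $g(x)=f(x)+\eta x$ together with a change of variables, or better a ``vertical shear'' that preserves $J$ up to how $\fp$ is evaluated. I would first try an alternative: on a final interval $[1-\delta,1]$, replace $f$ by the Proposition~\ref{prop_unif_case}\ref{prop_unif_case_a} maximizer with data $f_a=f(1-\delta)$, $s_a=\partial^+f(1-\delta)$, $f_b=1$, $s_b=\infty$. Its value $(4\delta\Delta_1)^{1/3}$ with $\Delta_1\ge 1-f(1)>0$ is of order $\delta^{1/3}$. The original contribution over $[1-\delta,1]$ is at most $(2\fC)^{1/3}(4\delta\Delta_1^f)^{1/3}$ with $\Delta_1^f=f(1)-f(1-\delta)-\delta\partial^+f(1-\delta)\to0$ as $\delta\to0$. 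The new contribution is at least $(2\fc)^{1/3}(4\delta(1-f(1))/2)^{1/3}$ once $\delta$ is small. The gain therefore dominates, and $J(g)>J(f)$, a contradiction.

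\textbf{Part (b).} If $f$ is affine on some interval $[a,b]$ with $a<b$, then $f''=0$ there. Replace $f$ on $[a,b]$ by the case~\ref{prop_unif_case_c} maximizer with data $s_a=\partial^-f(a)$ (left slope) and $s_b=\partial^+f(b)$. If these equal the affine slope, first shrink to an interior subinterval where the tangent slopes differ. The loss is $0$ and the gain is at least $(2\fc)^{1/3}$ times a positive quantity, provided the boundary data are feasible. The main subtlety is feasibility: on a strictly affine piece the strict inequalities $s_a<(f_b-f_a)/(b-a)<s_b$ may fail. I would handle this by enlarging the interval to $[a-\epsilon,b+\epsilon]$. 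By (a) and $f(0)=0$, $f$ cannot be affine on all of $[0,1]$ unless $f(x)=x$, and that case is handled directly by comparison with $f_\star$-type competitors. Hence a slightly enlarged interval has strictly increasing slopes. The loss on the added pieces $[a-\epsilon,a]$ and $[b,b+\epsilon]$ is controlled by \eqref{p_upper} and Proposition~\ref{prop_unif_case} and is $o(1)$ relative to the gain as $\epsilon\to0$.

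\textbf{Part (c).} If $\partial^+f$ fails absolute continuity on $[0,u)$, then $\mu^\sing[0,u)>0$. Choose an interval $[a,b]\subset(0,u)$ on which the singular mass is a fixed fraction of $\partial^-f(b)-\partial^+f(a)$, using density arguments for the singular part. On $[a,b]$, compare $\int_a^b f''^{1/3}$, which only sees $\mu^\ac$, against the case~\ref{prop_unif_case_c} value $(4\Delta_1\Delta_2/(s_b-s_a))^{1/3}$. The singular mass makes the latter strictly larger by an amount comparable to the whole. I expect this to be the main obstacle: showing the gain beats the factor $(\fC/\fc)^{1/3}$ in the density. The ratio of the optimal value to the value of $f$ must tend to infinity on small intervals around a point of the singular support, and arranging this requires choosing $[a,b]$ where $\mu^\ac[a,b]\ll\mu^\sing[a,b]$.
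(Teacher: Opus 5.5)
Your part~(a) is correct and essentially matches the paper. The paper adds the uniform mass $2\eta\eps^{-2}\,\dd x$ on $[1-\eps,1)$ instead of splicing in the maximizer of Proposition~\ref{prop_unif_case}\ref{prop_unif_case_a}, but in both versions a gain of order $(\fc(1-f(1))\eps)^{1/3}$ beats a loss that is $o(\eps^{1/3})$.

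Part~(b) has a gap at its decisive step. Once you enlarge the interval, the gain is \emph{not} a fixed positive quantity. If $f$ has no kink at $a$ or $b$, the new boundary slopes $\partial^+f(a-\epsilon)$ and $\partial^-f(b+\epsilon)$ both tend to the affine slope $m$. Then $s_b-s_a\to0$, and the case~(c) value $(4\Delta_1\Delta_2/(s_b-s_a))^{1/3}$ vanishes along with the loss. So ``the loss is $o(1)$ relative to the gain'' is exactly the claim that needs proof, and your sketch gives none.

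The paper supplies a matched estimate. Take $[a,b]$ to be a maximal affine interval, enlarge on one side only (say to $[a-\eps,b]$), and keep the datum $\partial^-f(b)=m$ at $b$. Set $X_\eps\coloneqq\eps\,\partial^-f(a)-(f(a)-f(a-\eps))$. Proposition~\ref{prop_unif_case}\ref{prop_unif_case_b} bounds the old contribution on $[a-\eps,a]$ by $(8\fC\eps X_\eps)^{1/3}$. Meanwhile $\Delta_1\ge(b-a)(s_b-s_a)$ and $\Delta_2\ge X_\eps$ bound the new contribution below by $(8\fc(b-a)X_\eps)^{1/3}$. The same small quantity appears on both sides, and the factor $((b-a)/\eps)^{1/3}$ decides the comparison. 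Two smaller points: shrinking to an interior subinterval with different tangent slopes cannot work on an affine piece, since feasibility comes from maximality of $[a,b]$. Also, at $a=0$ or $b=1$ only a one-sided enlargement is available.

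In part~(c) you identify the right strategy and the right obstacle, but your criterion for choosing $[a,b]$ is not sufficient. Dominance of $\mu^\sing$ over $\mu^\ac$ on $[a,b]$ says nothing about \emph{where} the singular mass sits inside the interval, and that is the missing idea. Since $\Delta_1=\int_{(a,b]}(b-x)\,\mu(\dd x)$ and $\Delta_2=\int_{[a,b)}(x-a)\,\mu(\dd x)$, mass near an endpoint barely counts.

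For example, take an atom of mass $M$ at $a+\tau(b-a)$, plus absolutely continuous mass $\eps M$ spread uniformly over $[a,b]$. The replacement value then has cube at most $4(b-a)^2M(\tau+\eps/2)$, while $f$ already contributes $(b-a)^{2/3}(\eps M)^{1/3}$. For $\tau\le\eps$ the ratio is at most $6^{1/3}$, however small $\eps$ is. So it cannot beat $(\fC/\fc)^{1/3}$ once $\fC/\fc\ge6$.

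The paper handles this with Lemma~\ref{lem:middle_third_singular_mass}. It uses that, for $\mu^\sing$-a.e.\ $x$, both $\mu^\ac[x-r,x+r]/\mu^\sing[x-r,x+r]\to0$ and there are arbitrarily small $(3,\beta)$-doubling radii (a lemma of Tolsa). This yields $I=[a,b]$ with middle third $K$ such that $\mu^\sing(I)\le\beta\mu^\sing(K)$ and $\mu^\ac(I)\le\eps\mu^\sing(K)$. Hence $\Delta_1,\Delta_2\ge\frac{b-a}{3}\mu^\sing(K)$ and $s_b-s_a\le(\beta+\eps)\mu^\sing(K)$. The new contribution is then at least $\bigl(8\fc/(9(\beta+\eps))\bigr)^{1/3}(b-a)^{2/3}\mu^\sing(K)^{1/3}$. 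By H\"older, the contribution of $f$ is at most $(2\fC\eps)^{1/3}(b-a)^{2/3}\mu^\sing(K)^{1/3}$, and taking $\eps$ small finishes. Without a localization step of this kind, part~(c) is not yet a proof.
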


In general we do not know how many optimizers there are, but the following result offers a sufficient condition for uniqueness.

\begin{proposition}[Uniqueness from concavity]\label{prop_unique_y_concave}
Assume \eqref{p_upper}, \eqref{p_lower}, and that
$y\mapsto \fp(x,y)$ is concave on $[0,x]$ for every $x\in[0,1]$.
Then $J$ is concave on $\cF$, and $\argmax J$ is a singleton.
\end{proposition}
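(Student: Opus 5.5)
The plan is to show that $J$ is concave along linear interpolations $f_\lambda=(1-\lambda)f_0+\lambda f_1$ in $\cF$, which is convex as a set. Uniqueness will then come from strictness of the relevant inequalities together with Proposition~\ref{prop_maximizer_properties}. Fix $f_0,f_1\in\cF$ and $\lambda\in(0,1)$. Convex functions are twice differentiable almost everywhere, and where both $f_0''$ and $f_1''$ exist we have $f_\lambda''=(1-\lambda)f_0''+\lambda f_1''$ almost everywhere. Moreover $f_\lambda(x)\in[0,x]$, since every $f\in\cF$ satisfies $0\le f(x)\le x$ by convexity, $f(0)=0$ and $f(1)\le1$. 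The concavity hypothesis on $y\mapsto\fp(x,y)$ over $[0,x]$ therefore gives
\[
\fp(x,f_\lambda(x))\ge(1-\lambda)\fp(x,f_0(x))+\lambda\fp(x,f_1(x)).
\]

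Next I will use that the map $(a,b)\mapsto(ab)^{1/3}$ on $[0,\infty)^2$ is concave (a geometric-mean-type function) and nondecreasing in each argument. Write $a_i=f_i''(x)$ and $b_i=\fp(x,f_i(x))$. Then pointwise
\[
\bigl[f_\lambda''\,\fp(x,f_\lambda)\bigr]^{1/3}\ge\bigl[((1-\lambda)a_0+\lambda a_1)((1-\lambda)b_0+\lambda b_1)\bigr]^{1/3}\ge(1-\lambda)(a_0b_0)^{1/3}+\lambda(a_1b_1)^{1/3}.
\]
Integrating over $x$ gives $J(f_\lambda)\ge(1-\lambda)J(f_0)+\lambda J(f_1)$, so $J$ is concave. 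I will note that concavity of $(ab)^{1/3}$ holds because its Hessian is negative semidefinite; in fact $(ab)^{1/2}$ is concave, and composing it with the concave nondecreasing map $t\mapsto t^{2/3}$ preserves concavity.

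For uniqueness, suppose $f_0\ne f_1$ are both in $\argmax J$. Nonemptiness is not needed for this part, though it is available from Proposition~\ref{prop_maximizers} if \eqref{p_cont} holds. By concavity, $f_{1/2}$ is also a maximizer, so both inequalities in the display above must be equalities almost everywhere. The function $(ab)^{1/3}$ is strictly concave except along rays through the origin. Hence equality in the second step forces $(a_0,b_0)$ and $(a_1,b_1)$ to be proportional for almost every $x$ (or one of them to be zero).

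The main obstacle is turning this pointwise equality information into $f_0=f_1$. I would argue as follows. By \eqref{p_upper} and \eqref{p_lower}, $b_i\in[2\fc,2\fC]$. By Proposition~\ref{prop_maximizer_properties}\ref{prop_maximizer_abs_cont}, $f_0$ and $f_1$ are determined by their second derivatives together with initial slopes. Proportionality gives $f_1''/f_0''=\fp(x,f_1)/\fp(x,f_0)$ wherever both are positive.

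In addition, the strict part of the geometric-mean concavity says equality in $[(1-\lambda)a_0+\lambda a_1]^{1/3}[\cdots]$ requires $a_0/b_0=a_1/b_1$. If I instead view $J$ as concave along the path in which the first-factor interpolation is exact, I need a strict inequality somewhere.

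The fallback I would try first is an upper-level argument. Compare $J(f_0)$, $J(f_1)$ and $J(f_{1/2})$ using the equality case of Hölder. Equality of all three values, combined with $f_0(1)=f_1(1)=1$ (Proposition~\ref{prop_maximizer_properties}\ref{prop_maximizer_1_at_1}) and $f_0(0)=f_1(0)=0$, should force $f_0''=f_1''$ almost everywhere. Absolute continuity of the derivatives then gives $f_0-f_1$ affine, and the boundary values force this affine function to be $0$.

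If that step resists, a concave functional on a convex set still has a convex set of maximizers, and I would then look for a strictly concave perturbation argument to rule out a nontrivial segment of maximizers.
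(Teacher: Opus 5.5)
The concavity of $J$ is handled correctly and in the same way as the paper: pointwise concavity and monotonicity of $(a,b)\mapsto(ab)^{1/3}$, plus concavity of $\fp(x,\cdot)$ on $[0,x]$. Your endgame also matches the paper. Once $f_0''=f_1''$ a.e., Proposition~\ref{prop_maximizer_properties}\ref{prop_maximizer_abs_cont} makes $\partial^+f_1-\partial^+f_0$ constant on $[0,1)$, so $f_1-f_0=cx$, and $f_0(1)=f_1(1)=1$ forces $c=0$.

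\textbf{The gap.} You never prove $f_0''=f_1''$ a.e.; you only say a H\"older comparison ``should force'' it and then list fallbacks. What blocks you is a false claim: $(ab)^{1/3}$ is \emph{not} affine along rays through the origin. It is homogeneous of degree $2/3<1$, so along a ray it is $s\mapsto s^{2/3}(a_0b_0)^{1/3}$, which is strictly concave when $a_0b_0>0$. So equality in
$((1-\lambda)a_0+\lambda a_1)^{1/3}((1-\lambda)b_0+\lambda b_1)^{1/3}\ge(1-\lambda)(a_0b_0)^{1/3}+\lambda(a_1b_1)^{1/3}$
with $b_0,b_1>0$ is much more rigid than proportionality. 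It forces $(a_0,b_0)=(a_1,b_1)$ or $a_0=a_1=0$, so in every case $a_0=a_1$. Mere proportionality, i.e.\ your relation $f_1''/f_0''=\fp(x,f_1)/\fp(x,f_0)$, would not give $f_0=f_1$.

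\textbf{The fix.} Your own factorization yields the rigid equality case if you track both steps. Write $(ab)^{1/3}=\phi(\psi(a,b))$ with $\psi(a,b)=(ab)^{1/2}$ and $\phi(t)=t^{2/3}$.
\begin{itemize}
\item Equality requires equality in the outer step, where $\phi$ is strictly concave, so $a_0b_0=a_1b_1$.
\item Since $\phi$ is strictly increasing, equality also requires equality in the inner Cauchy--Schwarz step. When $a_0b_0=a_1b_1>0$, this gives $a_0/b_0=a_1/b_1$, hence $a_0^2=a_1^2$.
\item When $a_0b_0=a_1b_1=0$, the bound $b_i\ge2\fc>0$ gives $a_0=a_1=0$ directly.
\end{itemize}
The paper reaches the same conclusion via the three-factor H\"older inequality
$\sum_i\lambda_i(a_ib_i)^{1/3}=\sum_i(\lambda_ia_i)^{1/3}(\lambda_ib_i)^{1/3}\lambda_i^{1/3}\le(\sum_i\lambda_ia_i)^{1/3}(\sum_i\lambda_ib_i)^{1/3}(\sum_i\lambda_i)^{1/3}$.
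Its equality case forces $(\lambda_ia_i)_i$ and $(\lambda_ib_i)_i$ to be proportional to $(\lambda_i)_i$.

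Apply this at a.e.\ $x$ with $a_i=f_i''(x)$ and $b_i=\fp(x,f_i(x))$. Equality holds there because $f_{1/2}$ is also a maximizer. This yields $f_0''=f_1''$ a.e., and your final paragraph then completes the proof.
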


In general, however, optimizers are not necessarily unique.
In the following, we say a function $\vphi\colon\cT\to\R$ is \textit{smooth} if it is the restriction of a smooth function defined on all of $\R^2$.

\begin{proposition}[Multiple optimizers] \label{prop_multiple_optimizers}
For any $k\in\Z_{\ge1}$, there exists a smooth probability density $\fp$ on $\cT$ satisfying \eqref{p_lower}, such that $\argmax J$ contains at least $k$ elements.
\end{proposition}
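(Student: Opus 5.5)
The plan is to build $\fp$ as a small uniform background plus $k$ tall ``ridges'' along $k$ well-separated convex curves. The ridge heights are tuned one at a time so that the best value attainable near each ridge is the same number $v$, and nothing away from the ridges does better. Exact equality of the $k$ local optima is the delicate point. I intend to get it from an intermediate value argument, which works because each local optimum will depend on only one tuning parameter.

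\emph{Construction.} First I will fix $k$ distinct smooth, strictly convex curves $g_1,\dots,g_k\in\cF$ running from $(0,0)$ to $(1,1)$ inside $\cT$. On the middle window $x\in[1/4,3/4]$ their graphs should be pairwise separated vertically by some gap $4\eta>0$. Let $R_i=\{(x,y): x\in[1/4,3/4],\ |y-g_i(x)|<\eta\}$; these are disjoint and lie in the interior of $\cT$. Choose smooth bumps $b_i\colon\R^2\to[0,1]$ with support in $R_i$, equal to $1$ on the sub-strip where $|y-g_i(x)|\le\eta/2$ and $x\in[3/8,5/8]$. For parameters $\lambda=(\lambda_1,\dots,\lambda_k)\in[0,\infty)^k$, set
\[
\fp_\lambda=Z_\lambda^{-1}\Big(1+\sum_i\lambda_i b_i\Big),
\]
where $Z_\lambda$ normalizes. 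This density is smooth and bounded below. Since $J_{c\fp}=c^{1/3}J_\fp$, the normalization rescales every value of $J$ by the same factor, so I can ignore it when comparing values.

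\emph{Localization.} Let $N_i$ be the closed set of $f\in\cF$ with $|f-g_i|\le\eta$ on $[1/4,3/4]$. It is closed in $d$ because $d$-convergence is locally uniform on $[0,1)$. For $f\in N_i$, the graph avoids every $R_j$ with $j\neq i$. Hence $J_{\fp_\lambda}(f)$, up to the factor $Z_\lambda^{-1/3}$, depends on $\lambda_i$ alone.

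Define $h_i(\lambda_i)=\sup_{N_i}J_{1+\lambda_ib_i}$. This supremum is attained, by Lemma~\ref{lem_compact} and Proposition~\ref{prop_usc}. Each $h_i$ is nondecreasing, and it is continuous because $|J_p(f)-J_q(f)|\le\sup|p^{1/3}-q^{1/3}|\cdot\int_0^1 f''^{1/3}\le 2\sup|p^{1/3}-q^{1/3}|$ by Proposition~\ref{prop_unif_case}. Moreover $h_i(\lambda)\to\infty$: evaluating at $f=g_i$ gives a lower bound of order $\lambda^{1/3}\int_{3/8}^{5/8}g_i''^{1/3}$.

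I will fix a large target $v$ and choose $\lambda_i$ with $h_i(\lambda_i)=v$, using the intermediate value theorem. Then each $N_i$ contains a maximizer of $J$ restricted to $N_i$, with common value $v$ (times $Z_\lambda^{-1/3}$). These maximizers are distinct, since the $N_i$ are disjoint.

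\emph{Main obstacle.} It remains to show that every $f\notin\bigcup_iN_i$ has $J(f)<v$ once $v$ is large. Then the $k$ local maximizers are global, and $\#\argmax J\ge k$.

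Such an $f$ deviates from each $g_i$ by more than $\eta$ somewhere in $[1/4,3/4]$. To bound $J(f)$ I will split $\int f''^{1/3}p^{1/3}$ over the set where the graph of $f$ lies in $\bigcup_jR_j$ and its complement. On the complement the density is $O(1)$, contributing at most a constant.

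On the ridge part I will use H\"older as in \eqref{29vc4} together with the tangency-type bound of Proposition~\ref{prop_unif_case}. The aim is to show that a convex curve spending $x$-length $\ell_j$ inside $R_j$ collects at most $(1+\lambda_j)^{1/3}c_j(\ell_j)$. Here $c_j(\ell)$ stays strictly below the full value $c_j(\text{full})$ unless the curve tracks $g_j$ closely across the whole plateau.

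I expect this quantitative gap to be the hard step. I would first try a compactness argument: if $f_m\notin N_i$ had normalized value $J(f_m)/\lambda_i^{1/3}\to h_i(\lambda_i)/\lambda_i^{1/3}$, pass to a limit by Lemma~\ref{lem_compact}. Upper semicontinuity (Proposition~\ref{prop_usc}) applied to the limiting density $\sum_jb_j$-weighted problem should force the limit to be a maximizer of the pure ridge functional, which lies in the interior of $N_i$. That is a contradiction, provided $g_i$ is chosen as the unique maximizer of that ridge problem (arranged by a concavity condition as in Proposition~\ref{prop_unique_y_concave}).
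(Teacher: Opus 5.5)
The localization, the continuity and monotonicity of $h_i$, the intermediate-value tuning and the distinctness of the local maximizers are all fine. The gap is exactly the step you flag as the main obstacle, global optimality, and that step carries essentially all of the difficulty.

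Your ridges are stacked along curves that all run from $(0,0)$ to $(1,1)$, so nothing stops a convex graph from collecting ridge mass from two of them. For $g_i<g_j$ with a vertical gap small compared with the curvature of $g_j$, a convex $f$ can bend inside $R_i$ on the left part of the plateau, follow a chord, and bend again inside $R_j$ on the right part. Each $\lambda_j$ was tuned separately so that the single-ridge optimum equals $v$. You would therefore have to show that such mixed curves stay at or below $v$, and you give no argument for this; if they do not, your $k$ local maximizers are not global.

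Separately, you have not shown that the maximizer over $N_i$ lies in the interior of $N_i$. If the constraint $|f-g_i|\le\eta$ binds (for instance where $b_i$ tapers and the background term pulls $f$ away), then functions just outside $N_i$ beat $v$.

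The compactness remedy does not close either hole. The limiting weighted problem involves all ridges at once, with weights $\lambda_j/\lambda_i$, so its maximizers may themselves be mixed curves. Proposition~\ref{prop_unique_y_concave} cannot supply uniqueness either. A weight vanishing off the ridges violates \eqref{p_lower}, and it is not concave in $y$: a nonnegative concave function cannot vanish on an interval and be positive beyond it. Such a weight also makes $J$ blind to $f$ off the ridges, so its maximizers are far from unique. Finally, asking $g_i$ to maximize a ridge problem built around $g_i$ itself is circular.

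The idea you are missing is to arrange the supports so that \emph{no nondecreasing graph can meet two of them}. The paper takes disjoint open sets $\cQ_1,\dots,\cQ_k$ such that every point of $\cQ_i$ lies strictly to the left of and strictly above every point of $\cQ_j$ whenever $i<j$; see \eqref{3ibnl1}.

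With this arrangement, for each $f\in\cF$ at most one bump term is nonzero along its graph. Hence $J(f)\le\max_i W_i(\lambda_i)$, where $W_i(\lambda)$ is the single-bump supremum over \emph{all} of $\cF$, not over a neighbourhood. Tune $W_i(\lambda_i)=L$ for a common $L>4^{1/3}=\sup_{f\in\cF}\int_0^1 f''(x)^{1/3}\,\dd x$. This forces every maximizer $f_i$ of $W_i(\lambda_i)$ to meet $\cQ_i$, and therefore no other $\cQ_j$. So $J(f_i)=L$ equals the global bound, and $f_1,\dots,f_k$ are distinct elements of $\argmax J$.

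Your intermediate-value machinery carries over unchanged, even with your weight $1+\sum_i\lambda_ib_i$; the paper's choice $(1+\sum_i\lambda_i\varphi_i)^3$ merely makes the functional linear in each $\lambda_i$. No quantitative gap estimate and no uniqueness statement is needed.
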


Next we state an important regularity among the optimizers.

\begin{proposition}[Uniform equicontinuity] \label{prop_equicontinuity}
Assume \eqref{p_cont} and \eqref{p_lower}.
\begin{enumerate}[label=\textup{(\alph*)}]

\item \label{prop_equicontinuity_a}
For every $\eps>0$, there exists $\eta>0$ such that $f(1-\eta) \ge 1-\eps$ for every $f\in\argmax J$.

\item \label{prop_equicontinuity_b} 
For every $\eps>0$, there exists $\eta>0$ such that the following implication holds for all $x,y\in[0,1]$ and $f\in\argmax J$:
\eeq{ \label{nka3c}
|x-y|\le\eta \quad \implies \quad |f(x)-f(y)| \le \eps.
}

\end{enumerate}
\end{proposition}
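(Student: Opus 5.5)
Part (a) will follow from compactness of $\argmax J$ (Proposition~\ref{prop_maximizers}) together with Proposition~\ref{prop_maximizer_properties}\ref{prop_maximizer_1_at_1}, and part (b) will follow from part (a) combined with uniform Lipschitz bounds away from $x=1$.

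For (a), I argue by contradiction. Suppose there are $\eps>0$ and maximizers $f_k\in\argmax J$ with $f_k(1-1/k)<1-\eps$. By Proposition~\ref{prop_maximizers}, $\argmax J$ is compact in $(\cF,d)$, so after passing to a subsequence $d(f_k,f)\to0$ for some $f\in\argmax J$. Proposition~\ref{prop_maximizer_properties}\ref{prop_maximizer_1_at_1} gives $f(1)=1$, and continuity of $f$ at $1$ yields $\eta_0>0$ with $f(1-\eta_0)>1-\eps/2$. For $k>1/\eta_0$, monotonicity of $f_k$ (it is convex, nonnegative and $f_k(0)=0$) gives $f_k(1-\eta_0)\le f_k(1-1/k)<1-\eps$. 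On the other hand, Lemma~\ref{lem_convergence} gives pointwise convergence $f_k(1-\eta_0)\to f(1-\eta_0)>1-\eps/2$. This is a contradiction. The point is that $d$-convergence does not control $f(1)$, and monotonicity is what lets us evaluate at a fixed interior point instead.

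For (b), fix $\eps>0$ and apply (a) with $\eps/2$ to get $\eta_1$ such that $f(1-\eta_1)\ge1-\eps/2$ for all $f\in\argmax J$. By \eqref{trivial_deriv_bound}, every $f\in\cF$ is $(1/\eta_1)$-Lipschitz on $[0,1-\eta_1]$ (the paper proves this for $x<y\le 1-\eta_1$). On $[1-\eta_1,1]$, monotonicity together with $f\le1$ and $f(1)=1$ gives oscillation at most $\eps/2$. Now take $\eta=\eps\eta_1/2$ and $x<y$ with $y-x\le\eta$. If both points lie in $[0,1-\eta_1]$, the Lipschitz bound gives $|f(x)-f(y)|\le\eps/2$. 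If both lie in $[1-\eta_1,1]$, the oscillation bound gives $\eps/2$. If $x<1-\eta_1<y$, split at $1-\eta_1$ and add the two bounds to get at most $\eps$. Hence \eqref{nka3c} holds.

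The only delicate step is (a), where the argument relies on Proposition~\ref{prop_maximizer_properties}\ref{prop_maximizer_1_at_1} and on closedness of $\argmax J$. Both are available from earlier results: \eqref{p_cont} together with compactness of $\cF$ implies \eqref{p_upper}, so Proposition~\ref{prop_maximizer_properties} applies under the present hypotheses.
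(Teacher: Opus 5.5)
Your proof is correct and follows the paper's argument: part (a) is the same compactness-plus-$f(1)=1$ contradiction (the paper uses the same monotonicity step implicitly when passing from $f_n(1-\eta_n)$ to $f_n(1-\eta)$). In part (b) the paper avoids your case split by bounding every secant slope starting left of $1-\eta$ by the slope $\frac{f(1)-f(1-\eta)}{\eta}\le\eps/\eta$ of the final chord, which lets it reuse the $\eta$ from (a) directly, whereas you use the cruder Lipschitz constant $1/\eta_1$ from \eqref{trivial_deriv_bound} and shrink $\eta$ to compensate. One small slip: \eqref{p_upper} follows from \eqref{p_cont} because $\fp$ is continuous on the compact triangle $\cT$, not because $\cF$ is compact.
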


The following consequence of Proposition~\ref{prop_equicontinuity}  will be crucial in Section~\ref{sec_upper_bound}.

\begin{proposition}[Penalty for separation from optimizers] \label{prop_far_from_optimizer}
Assume \eqref{p_cont} and \eqref{p_lower}.
Then for every $\eps>0$, there exists $\theta>0$ such that the following implication holds for every $f\in\cF$:
\eeq{ \label{far_implication}
\inf_{h\in\argmax J}\|f-h\|_{[0,1]} \ge \eps \quad \implies \quad J(f) \le J_\star - \theta.
}
\end{proposition}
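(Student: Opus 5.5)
We argue by contradiction, combining compactness of $(\cF,d)$ (Lemma~\ref{lem_compact}), upper semicontinuity of $J$ (Proposition~\ref{prop_usc}), and the uniform equicontinuity of optimizers (Proposition~\ref{prop_equicontinuity}). Suppose the implication \eqref{far_implication} fails for some $\eps>0$. Then there is a sequence $(f_n)_{n\ge1}$ in $\cF$ with $\inf_{h\in\argmax J}\|f_n-h\|_{[0,1]}\ge\eps$ for every $n$ and $J(f_n)\to J_\star$. By Lemma~\ref{lem_compact} we pass to a subsequence with $d(f_n,f)\to0$ for some $f\in\cF$. Proposition~\ref{prop_usc} gives $J(f)\ge\limsup_n J(f_n)=J_\star$, so $f\in\argmax J$. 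The remaining task is to upgrade $d$-convergence, which is only locally uniform on $[0,1)$, to uniform convergence on all of $[0,1]$. That would give $\|f_n-f\|_{[0,1]}\to0$, contradicting the separation $\ge\eps$.

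The upgrade is the main obstacle, since near $x=1$ the $f_n$ can misbehave; for instance $f_n$ could stay low and jump up at the end. Proposition~\ref{prop_maximizer_properties}\ref{prop_maximizer_1_at_1} gives $f(1)=1$. By Proposition~\ref{prop_equicontinuity}\ref{prop_equicontinuity_a}, for a given $\eps'>0$ there is $\eta>0$ with $f(1-\eta)\ge1-\eps'$. Since $f_n(1-\eta)\to f(1-\eta)$, we get $f_n(1-\eta)\ge1-2\eps'$ for large $n$.

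Monotonicity of convex $f_n\in\cF$ with $f_n(0)=0$ and $f_n\ge0$ then controls the whole tail. Indeed $f_n$ is nondecreasing: its right derivative is nondecreasing and cannot be negative anywhere, since then $f_n(0)=0$ would force negative values. Hence on $[1-\eta,1]$ we have $1-2\eps'\le f_n\le1$ and likewise $1-\eps'\le f\le1$, so $|f_n-f|\le2\eps'$ there. On $[0,1-\eta]$ the uniform convergence comes directly from $d(f_n,f)\to0$.

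Together these give $\limsup_n\|f_n-f\|_{[0,1]}\le2\eps'$. Since $\eps'$ is arbitrary, $\|f_n-f\|_{[0,1]}\to0$, which contradicts $\|f_n-f\|_{[0,1]}\ge\eps$ with $f\in\argmax J$. Hence some $\theta>0$ works. The only delicate input is Proposition~\ref{prop_equicontinuity}\ref{prop_equicontinuity_a}, together with $f(1)=1$ for the limit, which rules out the end-point escape. In fact only the single limit $f$ needs the bound $f(1-\eta)\ge1-\eps'$, so part \ref{prop_maximizer_1_at_1} of Proposition~\ref{prop_maximizer_properties} alone would already suffice here.
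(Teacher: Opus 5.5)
Your proof is correct. It uses the same ingredients as the paper: compactness of $(\cF,d)$, upper semicontinuity of $J$, the endpoint property $h(1)=1$ for maximizers, and monotonicity of elements of $\cF$ together with the ceiling $1$. The difference is in how they are organized.

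The paper first proves the $d$-version of the statement, \eqref{29gicb}: if $f$ is $d$-separated from $\argmax J$ by at least $\delta$, then $J(f)\le J_\star-\theta$. It then converts sup-norm separation into $d$-separation by a purely deterministic comparison. It invokes Proposition~\ref{prop_equicontinuity}\ref{prop_equicontinuity_a} to get one $\eta$ with $h(1-\eta)\ge1-\eps/2$ for \emph{all} $h\in\argmax J$ at once. A case split on where $|f-h|\ge\eps$ occurs then shows $\|f-h\|_{[0,1-\eta]}\ge\eps/2$.

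You instead run a single contradiction argument and extract one limiting maximizer $f$. You then show directly that $d$-convergence to a function with $f(1)=1$ upgrades to uniform convergence on $[0,1]$. The endpoint mechanism is the same as the paper's second case: values are trapped between a level near $1$ and the ceiling $1$, and monotonicity propagates this over the whole tail.

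What your route buys: you only need Proposition~\ref{prop_maximizer_properties}\ref{prop_maximizer_1_at_1} and continuity for the one limit function, not the uniform-over-$\argmax J$ bound. That uniform bound is itself proved by a second compactness argument, so you have effectively merged two compactness arguments into one.

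What the paper's route buys: a reusable intermediate statement \eqref{29gicb}, and a clean separation between the compactness step and a deterministic sup-norm-versus-$d$ comparison near maximizers. Also, Proposition~\ref{prop_equicontinuity}\ref{prop_equicontinuity_a} is needed later anyway, in Claim~\ref{claim_distance_consequence}, so your saving is only local.
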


The argument for Proposition~\ref{prop_maximizer_properties} is the most technical, so we postpone it to the end of the section.
We now prove the other results assuming Proposition~\ref{prop_maximizer_properties}.

\begin{proof}[Proof of Proposition~\ref{prop_unique_y_concave}]
Let $\Phi(a,q)\coloneqq(aq)^{1/3}$ for $a\ge0$ and $q>0$, so that we can write the functional $J$ as
\eeq{ \label{J_rewrite}
J(f) = \int_0^1 \Phi\big(f''(x),\fp(x,f(x))\big)\ \dd x.
}
\begin{claim}[Strong concavity] \label{claim_monomial}
For every $\theta\in(0,1)$, $a_0,a_1\ge0$, and $q_0,q_1>0$, we have
\eeq{ \label{Phi_concave}
\Phi\big((1-\theta)a_0+\theta a_1,(1-\theta)q_0+\theta q_1\big)
\ge (1-\theta)\Phi(a_0,q_0)+\theta\Phi(a_1,q_1).
}
Furthermore, equality implies either $(a_0,q_0)=(a_1,q_1)$ or $a_0=a_1=0$.
In particular, equality always implies $a_0=a_1$.
\end{claim}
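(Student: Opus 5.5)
The plan is to deduce the inequality from concavity of the geometric mean. Write $\Phi(a,q)=(aq)^{1/3}$. Then $\Phi$ is positively homogeneous of degree $2/3$, and its restriction to the cone $[0,\infty)\times(0,\infty)$ is concave. A convenient way to see the concavity is through the weighted AM--GM inequality. For $a,q\ge0$ and any $\lambda,\kappa>0$,
\eq{
(aq)^{1/3} = \big((\lambda a)(\kappa q)(\lambda\kappa)^{-1}\big)^{1/3}\cdot 1 \le \inf_{\lambda,\kappa>0}\Big\{\tfrac13\lambda a+\tfrac13\kappa q+\tfrac13(\lambda\kappa)^{-1}\Big\}.
}
Here we use $xyz\le((x+y+z)/3)^3$ with $x=\lambda a$, $y=\kappa q$, $z=(\lambda\kappa)^{-1}$. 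When $a,q>0$, equality holds for the choice $\lambda a=\kappa q=(\lambda\kappa)^{-1}$, which solves to $\lambda=a^{-2/3}q^{1/3}$, $\kappa=a^{1/3}q^{-2/3}$. Each expression inside the infimum is affine in $(a,q)$, so $\Phi$ is an infimum of affine functions and hence concave. This gives \eqref{Phi_concave}.

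For the equality case, set $(\bar a,\bar q)=((1-\theta)a_0+\theta a_1,(1-\theta)q_0+\theta q_1)$. Suppose first that $\bar a>0$. Take $(\lambda,\kappa)$ to be the tangent parameters at $(\bar a,\bar q)$, and let $\ell(a,q)=\frac13\lambda a+\frac13\kappa q+\frac13(\lambda\kappa)^{-1}$ be the corresponding affine majorant. Then
\eq{
\Phi(\bar a,\bar q)=\ell(\bar a,\bar q)=(1-\theta)\ell(a_0,q_0)+\theta\ell(a_1,q_1)\ge(1-\theta)\Phi(a_0,q_0)+\theta\Phi(a_1,q_1).
}
Equality in \eqref{Phi_concave} therefore forces $\ell(a_i,q_i)=\Phi(a_i,q_i)$ for $i=0,1$. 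Equality in AM--GM needs $\lambda a_i=\kappa q_i=(\lambda\kappa)^{-1}$. The last constraint fixes both $a_i=1/(\lambda^2\kappa)$ and $q_i=1/(\lambda\kappa^2)$. So $(a_0,q_0)=(a_1,q_1)$. If instead some $a_i=0$, then $\ell(a_i,q_i)>0=\Phi(a_i,q_i)$ whenever $q_i>0$, so equality is impossible.

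It remains to treat $\bar a=0$. Since $a_0,a_1\ge0$ and $\theta\in(0,1)$, this forces $a_0=a_1=0$, which is the second alternative. In every case equality implies $a_0=a_1$.

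The only delicate point is the boundary $a=0$, where $\Phi$ is not differentiable. We avoid differentiating there by using the tangent at the interior point $(\bar a,\bar q)$ and by handling $\bar a=0$ directly. The strict AM--GM equality condition is what rules out equality along a ray of homogeneity. This is important because $\Phi$ is \emph{not} strictly concave along rays. The rigidity comes instead from the third term $(\lambda\kappa)^{-1}$, which pins down the scale.
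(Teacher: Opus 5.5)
Your proof is correct. It reaches the result by a somewhat different route from the paper's, although the core mechanism is the same.

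\textbf{How the two routes compare.} The paper writes the right-hand side as $\sum_i(\lambda_i a_i)^{1/3}(\lambda_i q_i)^{1/3}\lambda_i^{1/3}$, with $(\lambda_0,\lambda_1)=(1-\theta,\theta)$. It then quotes the three-factor H\"older inequality. The equality case is read off from H\"older's equality condition (constants, not all zero, with $\beta\lambda_i a_i=\beta'\lambda_i q_i=\beta''\lambda_i$), followed by a case split on whether the $a_i$ vanish. You instead take the supporting affine majorant at the barycenter and use the strict equality condition in three-term AM--GM.

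The two arguments are closer than they look. With your tangent parameters, $\ell(a,q)=\Phi(\bar a,\bar q)\bigl(\tfrac{a}{3\bar a}+\tfrac{q}{3\bar q}+\tfrac13\bigr)$. So averaging $\ell(a_i,q_i)\ge\Phi(a_i,q_i)$ over $i$ is exactly the Young/AM--GM step in the standard proof of that H\"older inequality. Your term $(\lambda\kappa)^{-1}$ plays the role of the paper's third factor $\lambda_i^{1/3}$: both encode the hidden constant $1$ in $\Phi(a,q)=(a\cdot q\cdot 1)^{1/3}$, and both pin down the scale.

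What your version buys is a self-contained equality analysis. It never needs the precise form of H\"older's equality condition, and it isolates the degenerate case cleanly as $\bar a=0$. The paper's version buys brevity.

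\textbf{Two small corrections.}

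First, your opening paragraph checks $\Phi=\inf_{\lambda,\kappa}\ell_{\lambda,\kappa}$ only for $a,q>0$. It also holds at $a=0$: take $\kappa=t$, $\lambda=t^{-2}$, and let $t\searrow0$. In any case, your second paragraph already proves the inequality independently (via the tangent at $(\bar a,\bar q)$ when $\bar a>0$, and because both sides vanish when $\bar a=0$), so nothing is actually missing.

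Second, your closing remark is wrong. Since $\Phi$ is homogeneous of degree $2/3<1$, the map $t\mapsto\Phi(ta,tq)=t^{2/3}\Phi(a,q)$ is strictly concave along rays. In fact the Hessian of $\Phi$ is negative definite on the open quadrant. The only place $\Phi$ fails to be strictly concave is along the edge $\{a=0\}$, where $\Phi\equiv0$. That is exactly the second alternative $a_0=a_1=0$, with $q_0\neq q_1$ allowed. The remark plays no role in your argument, so it should be corrected or dropped.
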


\begin{proofclaim}
Using the notation $\lambda_0\coloneqq 1-\theta$ and $\lambda_1 \coloneqq \theta$, we have
\eq{
\text{RHS of \eqref{Phi_concave}}
= \sum_{i=0}^1 \lambda_i(a_iq_i)^{1/3}
&\stackrel{\hphantom{\mbox{\footnotesize(H\"older)}}}{=} \sum_{i=0}^1 (\lambda_i a_i)^{1/3}(\lambda_i q_i)^{1/3}\lambda_i^{1/3} \\
&\stackrel{\mbox{\footnotesize(H\"older)}}{\le} \Big(\sum_{i=0}^1\lambda_i a_i\Big)^{1/3}
\Big(\sum_{i=0}^1\lambda_i q_i\Big)^{1/3}\Big(\sum_{i=0}^1\lambda_i\Big)^{1/3} \\
&\stackrel{\hphantom{\mbox{\footnotesize(H\"older)}}}{=} \text{LHS of \eqref{Phi_concave}}.
}
Furthermore, the step using H\"older is an equality if and only if there exist constants $\beta,\beta',\beta''$, not all zero, such that
\eeq{ \label{h2dcds}
\beta\lambda_ia_i = \beta'\lambda_iq_i = \beta''\lambda_i \quad \text{for each $i\in\{1,2\}$.}
}
If both $a_1$ and $a_2$ are strictly positive, then every quantity in \eqref{h2dcds} must be strictly positive, and we can solve for $(a_i,q_i) = (\beta''/\beta,\beta''/\beta')$.
If $a_1 = 0$, then \eqref{h2dcds} implies $\beta' = \beta'' = 0$ (since $\lambda_1q_1 > 0$), which forces $\beta \neq 0$, which in turn forces $a_2 = 0$ (since $\beta\lambda_2\ne 0$ while $\beta\lambda_2a_2=\beta'=0$).
By analogous reasoning, if $a_2 = 0$, then $a_1 = 0$.
\end{proofclaim}

Fix $\theta\in(0,1)$ and consider any $f_0,f_1\in\cF$.
Let $f_\theta \coloneqq (1-\theta)f_0 + \theta f_1$, which also belongs to $\cF$.
Define
\[
q_i(x)\coloneqq \fp(x,f_i(x)) \qquad \text{and} \qquad
q_\theta(x)\coloneqq(1-\theta)q_0(x)+\theta q_1(x).
\]
Since $y\mapsto \fp(x,y)$ is concave, we have
\eeq{ \label{ndvc09}
\fp(x,f_\theta(x))\ge q_\theta(x) \quad \text{for every $x\in[0,1]$.}
}
Since $q\mapsto\Phi(a,q)=(aq)^{1/3}$ is nondecreasing, it follows from \eqref{ndvc09} that
\eeq{ \label{pre_J_concave}
\Phi\big(f_\theta''(x),\fp(x,f_\theta(x))\big)
&\stackrefp{Phi_concave}{\ge} \Phi\big(f_\theta''(x),q_\theta(x)\big) \\
&\stackref{Phi_concave}{\ge} (1-\theta)\Phi\big(f_0''(x),q_0(x)\big)
+\theta\Phi\big(f_1''(x),q_1(x)\big)
}
for every $x\in(0,1)$ at which $f_\theta'',f_0,f_1$ are defined.
Integrating over $x$ and recalling \eqref{J_rewrite} yields
\eeq{ \label{J_concave}
J(f_\theta)\ge (1-\theta)J(f_0)+\theta J(f_1),
}
so $J$ is concave.

Now suppose $f_0,f_1\in\argmax J$.
Then \eqref{J_concave} is an equality, which implies \eqref{pre_J_concave} is an equality for almost every $x\in(0,1)$.  
By the final sentence of Claim~\ref{claim_monomial}, we conclude that
\eeq{ \label{wjb6cn}
f_0''(x)=f_1''(x)
\quad\text{for almost every $x\in(0,1)$}.
}
Now consider any $t\in(0,1)$.
By Proposition~\ref{prop_maximizer_properties}\ref{prop_maximizer_abs_cont}, the function
\[
H(x) \coloneqq \partial^+f_1(x)-\partial^+f_0(x)
\]
is absolutely continuous on $[0,t]$, and by \eqref{wjb6cn} its derivative satisfies
\[
H'(x)=0 \quad \text{for almost every $x\in(0,1)$.}
\]
Therefore, $H$ is constant on $[0,t]$. 
Since $t\in(0,1)$ is arbitrary, there is a constant $c$ such that
\[
\partial^+f_1(x)-\partial^+f_0(x)=c
\quad\text{for all $x\in[0,1)$}.
\]
Since $f_0(0)=f_1(0)=0$, we have
\eeq{ \label{7gj9bp}
f_1(x)-f_0(x)
\stackref{1_deriv_eq}{=}
\int_0^x\big(\partial^+f_1(u)-\partial^+f_0(u)\big)\,\dd u
=
cx
\quad\text{for every $x\in[0,1)$}.
}
Finally, $f_0(1)=f_1(1)=1$ by Proposition~\ref{prop_maximizer_properties}\ref{prop_maximizer_1_at_1}, so
\[
0=f_1(1)-f_0(1)
=\lim_{x\nearrow1}[f_1(x)-f_0(x)]
\stackref{7gj9bp}{=}\lim_{x\nearrow1}cx=c.
\]
Now \eqref{7gj9bp} shows $f_0=f_1$, so $\argmax J$ is a singleton.
\end{proof}

\begin{proof}[Proof of Proposition~\ref{prop_multiple_optimizers}]
\begin{figure}[t]

\tikzset{every picture/.style={line width=0.75pt}} %set default line width to 0.75pt        

\begin{tikzpicture}[x=0.75pt,y=0.75pt,yscale=-1,xscale=1]
%uncomment if require: \path (0,262); %set diagram left start at 0, and has height of 262

%Shape: Right Triangle [id:dp5049256193891675] 
\draw   (400,20) -- (201,220) -- (400,220) -- cycle ;
%Shape: Square [id:dp624527214260693] 
\draw  [fill={rgb, 255:red, 184; green, 181; blue, 181 }  ,fill opacity=0.3 ][dash pattern={on 0.75pt off 0.75pt}] (300.5,120) -- (320,120) -- (320,139.5) -- (300.5,139.5) -- cycle ;
%Shape: Square [id:dp315743498444888] 
\draw  [fill={rgb, 255:red, 184; green, 181; blue, 181 }  ,fill opacity=0.3 ][dash pattern={on 0.75pt off 0.75pt}] (320.5,140) -- (340,140) -- (340,159.5) -- (320.5,159.5) -- cycle ;
%Shape: Square [id:dp7727112392373703] 
\draw  [fill={rgb, 255:red, 184; green, 181; blue, 181 }  ,fill opacity=0.3 ][dash pattern={on 0.75pt off 0.75pt}] (340.5,160) -- (360,160) -- (360,179.5) -- (340.5,179.5) -- cycle ;
%Shape: Square [id:dp31712855112723237] 
\draw  [fill={rgb, 255:red, 184; green, 181; blue, 181 }  ,fill opacity=0.3 ][dash pattern={on 0.75pt off 0.75pt}] (360.5,180) -- (380,180) -- (380,199.5) -- (360.5,199.5) -- cycle ;
%Shape: Square [id:dp14626083247795085] 
\draw  [fill={rgb, 255:red, 184; green, 181; blue, 181 }  ,fill opacity=0.3 ][dash pattern={on 0.75pt off 0.75pt}] (380.5,200.5) -- (400,200.5) -- (400,220) -- (380.5,220) -- cycle ;
%Shape: Circle [id:dp7638594272563132] 
\draw  [fill={rgb, 255:red, 0; green, 0; blue, 0 }  ,fill opacity=1 ] (297,120) .. controls (297,118.07) and (298.57,116.5) .. (300.5,116.5) .. controls (302.43,116.5) and (304,118.07) .. (304,120) .. controls (304,121.93) and (302.43,123.5) .. (300.5,123.5) .. controls (298.57,123.5) and (297,121.93) .. (297,120) -- cycle ;
%Shape: Circle [id:dp8260125430355363] 
\draw  [fill={rgb, 255:red, 0; green, 0; blue, 0 }  ,fill opacity=1 ] (396.5,220) .. controls (396.5,218.07) and (398.07,216.5) .. (400,216.5) .. controls (401.93,216.5) and (403.5,218.07) .. (403.5,220) .. controls (403.5,221.93) and (401.93,223.5) .. (400,223.5) .. controls (398.07,223.5) and (396.5,221.93) .. (396.5,220) -- cycle ;
%Straight Lines [id:da723028871647449] 
\draw [color={rgb, 255:red, 155; green, 155; blue, 155 }  ,draw opacity=1 ][fill={rgb, 255:red, 155; green, 155; blue, 155 }  ,fill opacity=1 ]   (350,130) -- (323,129.77) ;
\draw [shift={(320,129.75)}, rotate = 0.48] [fill={rgb, 255:red, 155; green, 155; blue, 155 }  ,fill opacity=1 ][line width=0.08]  [draw opacity=0] (8.93,-4.29) -- (0,0) -- (8.93,4.29) -- cycle    ;
%Straight Lines [id:da6386885880712486] 
\draw [color={rgb, 255:red, 155; green, 155; blue, 155 }  ,draw opacity=1 ][fill={rgb, 255:red, 155; green, 155; blue, 155 }  ,fill opacity=1 ]   (370,151) -- (343,150.77) ;
\draw [shift={(340,150.75)}, rotate = 0.48] [fill={rgb, 255:red, 155; green, 155; blue, 155 }  ,fill opacity=1 ][line width=0.08]  [draw opacity=0] (8.93,-4.29) -- (0,0) -- (8.93,4.29) -- cycle    ;
%Straight Lines [id:da44028036724307484] 
\draw [color={rgb, 255:red, 155; green, 155; blue, 155 }  ,draw opacity=1 ][fill={rgb, 255:red, 155; green, 155; blue, 155 }  ,fill opacity=1 ]   (430,212) -- (403,211.77) ;
\draw [shift={(400,211.75)}, rotate = 0.48] [fill={rgb, 255:red, 155; green, 155; blue, 155 }  ,fill opacity=1 ][line width=0.08]  [draw opacity=0] (8.93,-4.29) -- (0,0) -- (8.93,4.29) -- cycle    ;

% Text Node
\draw (260,90) node [anchor=north west][inner sep=0.75pt]    {$\left(\tfrac{1}{2} ,\tfrac{1}{2}\right)$};
% Text Node
\draw (228,156.4) node [anchor=north west][inner sep=0.75pt]    {$\mathcal{T}$};
% Text Node
\draw (380.5,229.4) node [anchor=north west][inner sep=0.75pt]    {$( 1,0)$};
% Text Node
\draw (353,120.4) node [anchor=north west][inner sep=0.75pt]  [color={rgb, 255:red, 155; green, 155; blue, 155 }  ,opacity=1 ]  {$\mathcal{Q}_{1}$};
% Text Node
\draw (373,141.4) node [anchor=north west][inner sep=0.75pt]  [color={rgb, 255:red, 155; green, 155; blue, 155 }  ,opacity=1 ]  {$\mathcal{Q}_{2}$};
% Text Node
\draw (433,202.4) node [anchor=north west][inner sep=0.75pt]  [color={rgb, 255:red, 155; green, 155; blue, 155 }  ,opacity=1 ]  {$\mathcal{Q}_{k}$};
% Text Node
%\draw (401,160) node [anchor=north west][inner sep=0.75pt]  [color={rgb, 255:red, 155; green, 155; blue, 155 }  ,opacity=1 ]  {$\ddots $};

\end{tikzpicture}
\caption{A possible choice of the open sets $\cQ_1,\ldots,\cQ_k$, in the case $k=5$.
Each $\cQ_i$ supports a smooth function $\vphi_i$ that is used to place additional mass inside $\cQ_i$, as in \eqref{unnorm_q_def}.  By appropriately tuning the mass sizes via parameters $\lambda_1,\ldots,\lambda_k$, as in \eqref{eq_choose_lambdas}, we find a density function \eqref{normalized_p_def} whose $J$-functional has at least $k$ maximizers, one passing through each $\cQ_i$.}
\label{fig_squares}
\end{figure}
Let $\cQ_1,\ldots,\cQ_k\subset\cT$ be nonempty disjoint open sets such that the following implication holds for all $i<j$ in $\{1,\ldots,k\}$: 
\eeq{ \label{3ibnl1}
(x_1,y_1)\in \cQ_i \text{ and } (x_2,y_2)\in \cQ_j
\quad \implies \quad
x_1<x_2 \text{ and } y_1>y_2.
}
For example, one can take the following open squares (depicted in Figure~\ref{fig_squares}):
\eq{
\cQ_i=\Big(\frac{1}{2}+\frac{i-1}{2k},\frac{1}{2}+\frac{i}{2k}\Big)\times\Big(\frac{1}{2}-\frac{i}{2k},\frac{1}{2}-\frac{i-1}{2k}\Big),
\quad i\in\{1,\ldots,k\}.
}
For each $i$, choose a smooth function $\varphi_i\colon\cT\to\R$ that satisfies $0\le \varphi_i\le 1$, is not identically zero, and whose support is contained in $\cQ_i$.
For $f\in\cF$, define
\[
K_0(f) \coloneqq \int_0^1 f''(x)^{1/3}\ \dd x
\quad \text{and} \quad
K_i(f) \coloneqq \int_0^1 f''(x)^{1/3}\cdot \varphi_i(x,f(x))\ \dd x,
\quad i\in\{1,\ldots,k\}.
\]
Since every $f\in\cF$ is nondecreasing, \eqref{3ibnl1} implies that the graph of $f$ intersects at most one of $\cQ_1,\ldots,\cQ_k$.
Consequently,
\begin{equation}\label{eq_K_incompat}
\text{$K_i(f)>0$ and $i\in\{1,\ldots,k\}$} \quad \Longrightarrow \quad K_j(f)=0\text{ for all $j\in\{1,\ldots,k\}\setminus\{i\}$}.
\end{equation}
%Set $B \coloneqq \sup_{f\in\cF}K_0(f) < \infty.$ 
Next define, for $i\in\{1,\ldots,k\}$ and $\lambda\ge0$, the following optimal value:
\eeq{ \label{Wi_def}
W_i(\lambda)  \coloneqq
\sup_{f\in\cF} \big( K_0(f)+\lambda K_i(f)\big) = \int_0^1\big[f''(x)\cdot \fq_{i,\lambda}(x,f(x))\big]^{1/3} \,\dd x,
}
where $\fq_{i,\lambda}(x,y)\coloneqq(1+\lambda \varphi_i(x,y))^3$. 
Since $\fq_{i,\lambda}$ is continuous, Proposition~\ref{prop_maximizers} guarantees (after we normalize $\fq_{i,\lambda}$ to be a probability density) that the supremum is attained.

We make three observations about the map $\lambda\mapsto W_i(\lambda)$.
First, we know its value at $\lambda=0$:
\eq{
W_i(0) = \sup_{f\in\cF} K_0(f) \stackref{eq_unif_case_a}{=} 4^{1/3}.
}
Second, the map $\lambda\mapsto W_i(\lambda)$ is continuous.
To see this, observe that $0\le K_i(f)\le K_0(f)\le 4^{1/3}$ for every $f\in\cF$, so we have
\[
|W_i(\lambda)-W_i(\lambda')|\le 4^{1/3}|\lambda-\lambda'| \quad \text{for every $\lambda,\lambda'\ge0$}.
\]
Third and finally, we claim $W_i(\lambda)\to\infty$ as $\lambda\to\infty$. 
To verify this, choose a point $(x_0,y_0)\in \cQ_i$ such that $\varphi_i(x_0,y_0)>0$. Since $0<y_0<x_0<1$, the quadratic function
\[
f(x)\coloneqq cx^2+\Big(\frac{y_0}{x_0}-cx_0\Big)x
\]
belongs to $\cF$ for all sufficiently small $c>0$, and satisfies $f(x_0)=y_0$. Since $f''(x)=2c>0$, it follows from continuity of $\vphi_i$ that $K_i(f)>0$. Therefore, as $\lambda \to \infty$ we have
\[
W_i(\lambda)\ge K_0(f)+\lambda K_i(f)\to\infty.
\]

Now choose any number $L>4^{1/3}$. 
Because of the three observations from the previous paragraph, there exist $\lambda_1,\ldots,\lambda_k>0$ such that
\begin{equation}\label{eq_choose_lambdas}
W_i(\lambda_i) = L \quad \text{for every $i\in\{1,\ldots,k\}$}.
\end{equation}
Consider the following (unnormalized) smooth strictly positive function:
\eeq{ \label{unnorm_q_def}
\fq(x,y) \coloneqq
\Big(1+\sum_{i=1}^k\lambda_i \varphi_i(x,y)\Big)^3, \quad (x, y) \in \cT.
}
Denote the corresponding functional by
\[
J_{\fq}(f)
\coloneqq \int_0^1 \bigl[f''(x)\cdot \fq(x,f(x))\bigr]^{1/3}\,\dd x
= K_0(f)+\sum_{i=1}^k\lambda_iK_i(f), \quad f\in\cF.
\]
Using \eqref{eq_K_incompat}, we get the following global upper bound: 
\eeq{ \label{eq_global_upper_bound}
J_{\fq}(f) \le K_0(f) + \max_{i\in\{1,\ldots,k\}}\lambda_i K_i(f)
\stackref{Wi_def}{\le} \max_{i\in\{1,\ldots,k\}} W_i(\lambda_i)
\stackref{eq_choose_lambdas}{=} L
\quad \text{for every $f\in\cF$.}
}
For each $i$, let $f_i\in\cF$ be a maximizer for $W_i(\lambda_i)$ from \eqref{Wi_def}.
In light of \eqref{eq_choose_lambdas}, this means
\eq{ 
K_0(f_i) + \lambda_i K_i(f_i) = L.
}
Since $L>4^{1/3}\ge K_0(f_i)$ we must have $K_i(f_i)>0$.
Thus \eqref{eq_K_incompat} forces $K_j(f_i)=0$ for all $j\in\{1,\ldots,k\}\setminus i$.
Consequently,
\[
J_{\fq}(f_i)
=
K_0(f_i)+\lambda_iK_i(f_i)
=
L.
\]
Together with \eqref{eq_global_upper_bound}, this shows that $f_1,\ldots,f_k$ are maximizers of $J_{\fq}$. They are distinct, since $K_i(f_i)>0$ but $K_j(f_i)=0$ for $j\in\{1,\ldots,k\}\setminus\{i\}$. 
To complete the proof, we normalize $\fq$ by setting
\eeq{ \label{normalized_p_def}
\fp(x,y) \coloneqq \frac{\fq(x,y)}{\int_{\cT}\fq(x,y)\,\dd x\,\dd y}.
}
Then $\fp$ is smooth and strictly positive on $\cT$. 
Since the normalization does not change the set of maximizers, $f_1,\ldots,f_k$ remain distinct maximizers of $J_{\fp}$.
\end{proof}

\begin{proof}[Proof of Proposition~\ref{prop_equicontinuity}]
\noindent Part~\ref{prop_equicontinuity_a}: Let $\eps>0$ be arbitrary.
Suppose toward a contradiction that for every $\eta>0$, there exists $f\in\argmax J$ such that $f(1-\eta) < 1-\eps$.
Choose any sequence $\eta_n\searrow0$, and take $f_n\in\argmax J$ such that $f_n(1-\eta_n) < 1-\eps$.
By compactness (Lemma~\ref{lem_compact}), we can pass to a subsequence and assume $(f_n)_{n\ge1}$ converges to some $f\in\cF$ with respect to the metric $d$.
By Lemma~\ref{lem_convergence}, this means $f_n(x)\to f(x)$ for every $x\in[0,1)$.
On one hand, continuity of $f$ implies the existence of $\eta>0$ such that $f(1) \le f(1-\eta)+\eps/2$, from which it follows that
\eeq{ \label{3yoh9c}
f(1) \le \lim_{n\to\infty} f_n(1-\eta) + \frac{\eps}{2} \le \limsup_{n\to\infty} f_n(1-\eta_n) + \frac{\eps}{2} \le 1 - \frac{\eps}{2}.
}
On the other hand, closedness of $\argmax J$ (from Proposition~\ref{prop_maximizers}) implies $f\in\argmax J$, hence $f(1) = 1$ by Proposition~\ref{prop_maximizer_properties}\ref{prop_maximizer_1_at_1}.
This contradicts \eqref{3yoh9c}.

\medskip

\noindent Part~\ref{prop_equicontinuity_b}: We prove \eqref{nka3c} with the $\eta$ from part~\ref{prop_equicontinuity_a}.
Consider any $f\in\argmax J$ and any $x,y\in[0,1]$ such that $0<y-x\le\eta$.
If $x\ge1-\eta$, then we have the desired result:
\eq{
f(y) - f(x) \le f(1) - f(1-\eta) \stackrel{\mbox{\footnotesize(part~\ref{prop_equicontinuity_a})}}{\le} \eps.
}
If instead $x\le 1-\eta$, then convexity gives
\eq{
\frac{f(y)-f(x)}{y-x} \le \frac{f(1)-f(1-\eta)}{1-(1-\eta)} \stackrel{\mbox{\footnotesize(part~\ref{prop_equicontinuity_a})}}{\le} \frac{\eps}{\eta}.
}
Multiplying by $y-x$ yields the desired result:
\[
f(y) - f(x) \le \eps\cdot\frac{y-x}{\eta}\le \eps. \qedhere
\]
\end{proof}

\begin{proof}[Proof of Proposition~\ref{prop_far_from_optimizer}]
We first claim a weaker result: For every $\delta>0$, there exists $\theta>0$ such that following implication holds for all $f\in\cF$:
\eeq{ \label{29gicb}
\inf_{h\in\argmax J} d(f,h) \ge \delta \quad \implies \quad J(f) \le J_\star - \theta.
}
Indeed, this follows from compactness (Lemma~\ref{lem_compact}) and upper semicontinuity of $J$ (Proposition~\ref{prop_usc}), similar to the proof of Proposition~\ref{prop_maximizers}.
The remainder of the argument is to show that \eqref{29gicb} is sufficient.

Let $\eps>0$ be given.
By Proposition~\ref{prop_equicontinuity}\ref{prop_equicontinuity_a}, there exists $\eta>0$ such that $h(1-\eta)\ge1-\eps/2$ for every $h\in\argmax J$.
By definition of the metric $d$ in \eqref{metric_def}, there exists $\delta>0$ such that the following implication holds for all $f,g\in\cF$:
\eeq{ \label{2bk3xss}
\|f-g\|_{[0,1-\eta]} \ge \eps/2 \quad \implies \quad d(f,g)\ge\delta.
}
By the previous paragraph, there exists $\theta>0$ such that \eqref{29gicb} is true.
Combining \eqref{2bk3xss} and \eqref{29gicb} yields the following implication:
\eeq{ \label{tuc7n}
\inf_{h\in\argmax J}\|f-h\|_{[0,1-\eta]} \ge \eps/2 \quad \implies \quad J(f) \le J_\star - \theta.
}
Now suppose the hypothesis of \eqref{far_implication} holds, meaning that for every $h\in\argmax J$, there is $x\in[0,1]$ such that $|f(x)-h(x)|\ge\eps$.
We consider two cases:
\begin{itemize}
\item If $x\le1-\eta$, then we trivially have $\|f-h\|_{[0,1-\eta]}\ge\eps$.
\item If $x\ge 1-\eta$, then $h(x) \ge h(1-\eta) \ge 1-\eps/2$, which forces $f(x)\le1-\eps$. Hence $f(1-\eta)\le1-\eps$, so $|f(1-\eta)-h(1-\eta)|\ge\eps/2$.
\end{itemize}
In either case, $\|f-h\|_{[0,1-\eta]}\ge\eps/2$.
Therefore, \eqref{tuc7n} yields the desired conclusion.
\end{proof}

The proof of Proposition~\ref{prop_maximizer_properties}, specifically part~\ref{prop_maximizer_abs_cont}, requires the following measure-theoretic lemma.

\begin{lemma}[Middle-third localization of singular mass]\label{lem:middle_third_singular_mass}
Fix $z>0$.
Let $\nu_1$ and $\nu_2$ be positive Borel measures on $(0,z)$ such that
$\nu_1\perp\nu_2$ and $0<\nu_2((0,z))<\infty$.
For every $\beta>3$ and $\eps>0$, there exists a compact interval $I=[a,b]\subset (0,z)$ whose middle third $K=\bigl[a+\tfrac13(b-a),\, b-\tfrac13(b-a)\bigr]$ satisfies
\[
\nu_2(I) \le \beta \nu_2(K), \qquad
\nu_1(I)\le \eps\nu_2(K), \qquad \text{and} \qquad
\nu_2(K)>0.
\]
\end{lemma}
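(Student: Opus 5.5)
The idea is to center the interval at a well-chosen point $x$ and take $I=[x-3r,x+3r]$, whose middle third is $K=[x-r,x+r]$. I then need a radius $r$ for which three things hold at once: the doubling-type bound $\nu_2(B(x,3r))\le\beta\,\nu_2(B(x,r))$, the smallness of the ratio $\nu_1/\nu_2$ on $B(x,3r)$, and $\nu_2(B(x,r))>0$. Here $B(x,s)=[x-s,x+s]$. I would show that $\nu_2$-almost every $x\in(0,z)$ admits such radii arbitrarily small. Since $\nu_2((0,z))>0$, such an $x$ exists, and taking $r$ small enough gives $I\subset(0,z)$.

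The first ingredient is the differentiation of $\nu_1$ with respect to $\nu_2$. Because $\nu_1\perp\nu_2$, the Besicovitch differentiation theorem on $\R$ gives
\[
\lim_{r\searrow0}\frac{\nu_1(B(x,r))}{\nu_2(B(x,r))}=0 \quad\text{for $\nu_2$-a.e.\ $x$}.
\]
This is the derivative of the singular part of $\nu_1$ relative to $\nu_2$. The theorem requires $\nu_1$ to be locally finite on $(0,z)$. That holds in the application, where $\nu_1$ is a restriction of some $\mu\in\cM$ or of Lebesgue measure. If needed, I would add this as a harmless standing assumption, or restrict to the set where $\nu_1$ is locally finite, noting that $\nu_1$ is infinite on every neighborhood of each point of its complement. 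Given such an $x$, I pick $r_0$ so that $\nu_1(B(x,3r))\le(\eps/\beta)\,\nu_2(B(x,3r))$ for all $r\le r_0$. Once the doubling bound also holds, this gives
\[
\nu_1(I)\le(\eps/\beta)\,\nu_2(I)\le\eps\,\nu_2(K).
\]

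The second ingredient, and the step I expect to be the main obstacle, is the doubling bound along some small scale; this is where $\beta>3$ enters. Suppose that for all $k\ge k_0$ we had $\nu_2(B(x,3^{-k+1}r_0))>\beta\,\nu_2(B(x,3^{-k}r_0))$. Iterating gives $\nu_2(B(x,3^{-k}r_0))\le C\beta^{-k}$. Since $\beta>3$, this is $o(3^{-k})$, and by monotonicity between consecutive dyadic-type scales it follows that $\nu_2(B(x,r))/r\to0$.

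I would then show that the set $Z=\{x:\limsup_{r\to0}\nu_2(B(x,r))/r=0\}$ is $\nu_2$-null, using a Vitali covering argument. Fix $c>0$ and a compact $F\subset Z_c=\{x:\limsup_{r\to0}\nu_2(B(x,r))/r<c\}$ inside an open set $U$. Covering $F$ by small balls on which $\nu_2(B)<c\cdot\mathrm{radius}$ and extracting a disjoint subfamily whose 5-fold enlargements cover $F$ yields $\nu_2(F)\le 10c\,\mathrm{Leb}(U)$. Letting $c\to0$ with $U$ bounded gives $\nu_2(Z)=0$.

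So for $\nu_2$-a.e.\ $x$ the doubling bound holds for infinitely many scales $r=3^{-k}r_0$, in particular for some $r\le r_0$ with $B(x,3r)\subset(0,z)$. Finally, $\nu_2(K)>0$ for $\nu_2$-a.e.\ $x$, because $\nu_2$-a.e.\ point lies in the support of $\nu_2$; at a good $x$ it also follows from $\limsup\nu_2(B(x,r))/r>0$ combined with the doubling bound. Intersecting the three full-$\nu_2$-measure sets and picking any point in the intersection completes the construction.
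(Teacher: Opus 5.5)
Your proposal is correct and takes essentially the same route as the paper. Besicovitch differentiation gives $\nu_1[x-r,x+r]/\nu_2[x-r,x+r]\to0$ at $\nu_2$-a.e.\ $x$, and you take $I=[x-3r,x+3r]$, $K=[x-r,x+r]$ at an arbitrarily small $(3,\beta)$-doubling scale, exactly as the paper does. The only difference is that the paper cites the existence of such doubling scales (Tolsa's Lemma~1 with $d=1$, $\alpha=3$), whereas you prove it inline via the $\beta^{-k}=o(3^{-k})$ iteration and a Vitali argument showing that $\{x:\limsup_{r\to0}\nu_2(B(x,r))/r=0\}$ is $\nu_2$-null; your remark that $\nu_1$ must be locally finite (true in the application, left implicit in the paper) is also well taken.
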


\begin{proof}
Since $\nu_1 \perp \nu_2$, the following limit holds for $\nu_2$-almost every $x\in(0,z)$:
\eeq{ \label{eq:lebes_diff}
 \lim_{r\searrow 0} \frac{\nu_1[x-r,x+r]}{\nu_2[x-r,x+r]} = 0.   
}
Since $\nu_2$ is a finite Borel measure, it is Radon. 
Therefore, by \cite[Lemma~1]{tolsa11} with dimension $d=1$ and scale factor $\alpha=3$, the following statement holds for $\nu_2$-almost every $x\in(0,z)$.
There exists a sequence of closed intervals $[x - r_n, x+ r_n]$ with $\lim_{n\to\infty}r_n= 0$, that are $(3, \beta)$-doubling, meaning 
\begin{equation}\label{eq:doubling}
\nu_2[x - 3r_n, x+ 3r_n]\le \beta \nu_2[x - r_n, x+ r_n].
\end{equation}
Now choose $x\in (0,z)\cap \operatorname{supp}(\nu_2)$ such that both \eqref{eq:lebes_diff} and \eqref{eq:doubling} hold.
Since $r_n\to 0$, we may assume $n$ is sufficiently large that $[x - 3r_n, x+ 3r_n] \subset (0,z)$, and
\begin{equation}\label{eq:lebes_diff2}
    \nu_1[x - 3r_n, x+ 3r_n]\le \frac{\varepsilon}{\beta} \nu_2[x - 3r_n, x+ 3r_n].
\end{equation}
Set $I=[x - 3r_n, x+ 3r_n]$ and $K = [x - r_n, x+ r_n]$ so that \eqref{eq:doubling} says $\nu_2(I)\le\beta\nu_2(K)$, while \eqref{eq:lebes_diff2} says $\nu_1(I) \le (\eps/\beta)\nu_2(I)$.
Combining the two inequalities yields $\nu_1(I)\le\eps\nu_2(K)$.
Finally, since $x\in \operatorname{supp}\nu_2$ and $K$ is centered at $x$, we have $\nu_2(K)>0$.
\end{proof}

\begin{proof}[Proof of Proposition~\ref{prop_maximizer_properties}]
Throughout the proof, let $\mu\in\cM$ be such that $f_\mu = f$, as in Lemma~\ref{lem_identification_scheme}.

\medskip

\noindent Part~\ref{prop_maximizer_1_at_1}:
Suppose $f(1)<1$.
Our goal is to find some $g\in\cF$ such that $J(f) < J(g)$, so that $f$ is not a maximizer.
Let $\eta \coloneqq 1-f(1) > 0$.
Given $\eps\in(0,1)$, we add to $\mu$ an appropriately scaled uniform measure on $[1-\eps,1)$, thereby obtaining a perturbed measure $\mu_\eps$ given by
\eq{
\mu_\eps(B) \coloneqq \mu(B) + \frac{2\eta}{\eps^2}\mathrm{Leb}(B\cap[1-\eps,1)) \quad \text{for Borel $B\subseteq[0,1)$.}
}
Note that
\eq{
\int_0^1\mu_\eps[0,t]\ \dd t 
= \int_0^1\mu[0,t]\ \dd t
+ \frac{2\eta}{\eps^2}\int_{1-\eps}^1 \big(t-(1-\eps)\big)\ \dd t
\stackref{f_from_meas}{=} f(1) + \eta = 1,
}
so $\mu_\eps$ belongs to $\cM$.
Since the perturbation only affects the interval $[1-\eps,1]$, we have
\eq{
J(f_{\mu_\eps}) - J(f)
&\stackref{eq_unif_case_a}{=} \int_{1-\eps}^1 \big[D_{\mu_\eps}(x)\cdot \fp(x,f_{\mu_\eps}(x))\big]^{1/3}\ \dd x
- \int_{1-\eps}^1 \big[f''(x)\cdot \fp(x,f(x))\big]^{1/3}\ \dd x \\
&\stackrefpp{p_lower}{eq_unif_case_a}{\ge} (2\fc)^{1/3}\int_{1-\eps}^1 \Big(\frac{2\eta}{\eps^2}\Big)^{1/3}\ \dd x
- (2\fC)^{1/3}\int_{1-\eps}^1 f''(x)^{1/3}\ \dd x \\
&\stackref{eq_unif_case_a}{\ge} (2\fc)^{1/3}\int_{1-\eps}^1 \Big(\frac{2\eta}{\eps^2}\Big)^{1/3}\ \dd x
- (8\fC\eps)^{1/3}\big(f(1)-f(1-\eps)-\eps\partial^+f(1-\eps)\big)^{1/3}.
}
The integral on the final line is equal to $(2\eta\eps)^{1/3}$, while the second term is $o(\eps^{1/3})$ as $\eps\searrow0$ because $f$ is continuous at $1$.
Therefore, $J(f_{\mu_\eps}) - J(f) > 0$ for all sufficiently small $\eps$.

\medskip

\noindent Part~\ref{prop_maximizer_no_linear}:
Assume $f\in\argmax J$.
Suppose toward a contradiction that $\partial^+ f$ is constant on $[a,b)$ for some $a<b$, or equivalently
\eeq{ \label{line_seg}
f(b) - f(a) = (b-a)\partial^+f(a) = (b-a)\partial^-f(b).
}
We assume the interval $[a,b)$ is maximal in the sense that for every $\eps>0$, $\partial^+ f$ is not constant on $[a-\eps,b)$ nor on $[a,b+\eps)$.
We cannot have $[a,b) = [0,1)$ since the function $f(x)=x$ is not a maximizer of $J$.
Therefore, we must have $a>0$ or $b<1$.

First consider the case $a>0$.
In what follows, we always assume $a-\eps \ge 0$.
On one hand, we have
\eeq{ \label{kbix0}
\int_{a-\eps}^{b} \big[f''(x)\cdot \fp(x,f(x))\big]^{1/3}\ \dd x
&\stackrefp{eq_unif_case_b}{=} \int_{a-\eps}^{a} \big[f''(x)\cdot \fp(x,f(x))\big]^{1/3}\ \dd x \\
&\stackrefpp{p_upper}{eq_unif_case_b}{\le} (2\fC)^{1/3}\int_{a-\eps}^{a} f''(x)^{1/3}\ \dd x \\
&\stackref{eq_unif_case_b}{\le} (8\fC\eps )^{1/3}\big[\eps\partial^-f(a)-(f(a)-f(a-\eps))\big]^{1/3}.
}
On the other hand, consider the function $g_\eps\colon[a-\eps,b]\to[f(a-\eps),f(b)]$ that achieves the supremum in \eqref{unif_case_c_sup}, subject to the boundary data
\eq{
g_\eps(a-\eps) = f(a-\eps), \quad
g_\eps(b) = f(b), \quad
\partial^+g_\eps(a-\eps) = \partial^+f(a-\eps), \quad
\partial^-g_\eps(b) = \partial^-f(b).
}
The value of the supremum being achieved is
\eeq{ \label{dcu309}
\int_{a-\eps}^b g_\eps''(x)\ \dd x
&= \Big(\frac{4\Delta_1\Delta_2}{\partial^-f(a)-\partial^+f(a-\eps)}\Big)^{1/3},
}
where
\eq{
\Delta_1 
&\stackrefpp{eq_unif_case_a}{line_seg}{=} f(b) - f(a-\eps) - (b-a+\eps)\partial^+f(a-\eps) \\
&\stackref{line_seg}{=} (b-a)\big(\partial^+f(a)-\partial^+f(a-\eps)\big) + f(a) - f(a-\eps) - \eps\partial^+f(a-\eps) \\
&\stackrefp{line_seg}{\ge} (b-a)\big(\partial^-f(a)-\partial^+f(a-\eps)\big), \\
\text{and} \qquad \Delta_2 
&\stackrefpp{eq_unif_case_b}{line_seg}{=} (b-a+\eps)\partial^-f(b)-(f(b)-f(a-\eps)) \\
&\stackref{line_seg}{=} f(b)-f(a) + \eps\partial^+f(a) - (f(b)-f(a-\eps)) \\
&\stackrefp{line_seg}{\ge} \eps\partial^-f(a) - (f(a)-f(a-\eps)).
}
Using these inequalities in \eqref{dcu309} and also invoking \eqref{p_lower}, we find
\eeq{ \label{kbix4}
\int_{a-\eps}^b \big[g_\eps''(x)\cdot \fp(x,g_\eps(x))\big]^{1/3}\ \dd x 
\ge \big(8\fc(b-a)\big)^{1/3}\big[\eps\partial^-f(a) - (f(a)-f(a-\eps))\big]^{1/3}.
}
Finally, define $f_\eps\colon[0,1]\to[0,1]$ by replacing the portion of $f$ on $[a-\eps,b]$ with $g_\eps$, as follows:
\eq{
f_\eps(x) \coloneqq \begin{cases}
g_\eps(x) &\text{if $x\in[a-\eps,b]$} \\
f(x) &\text{otherwise.}
\end{cases}
}
Since $g_\eps$ and $f$ have the same boundary data at $a-\eps$ and $b$, this new function $f_\eps$ is convex and continuous, so $f_\eps\in\cF$.
Comparing \eqref{kbix0} and \eqref{kbix4}, we see that $J(f) < J(f_\eps)$ for all sufficiently small $\eps$.
This contradicts the hypothesis $f\in\argmax J$.

The case $b<1$ is analogous, with the replacement taking place on the interval $[a,b+\eps]$ instead of $[a-\eps,b]$.

\medskip

\noindent Part~\ref{prop_maximizer_abs_cont}:
Let $f\in\argmax J$.
We wish to show that $x\mapsto\partial^+f(x)$ is absolutely continuous on every compact subinterval of $[0,1)$.
By Lemma~\ref{lem_identification_scheme}\ref{scheme_right_deriv}, it suffices to show $\mu\big|_{(0,1)}$ is absolutely continuous with respect to Lebesgue measure.
We write the usual decomposition $\mu=\mu^\ac+\mu^\sing$, and suppose toward a contradiction that $\mu^\sing(0,1)>0$.
Choose $z<1$ sufficiently close to $1$ that $\mu^\sing(0,z) > 0$.
Note that $\mu^\sing(0,z) \le \mu(0,z] \le (1-z)^{-1}<\infty$ by Lemma~\ref{lem_identification_scheme}\ref{scheme_right_deriv}.

Fix $\beta>3$, and choose $\eps>0$ so small that
\begin{equation} \label{eq:eps_choice_ac}
(2\fC\eps)^{1/3}
<
\Big(\frac{8\fc}{9(\beta+\eps)}\Big)^{1/3}.
\end{equation}
By Lemma~\ref{lem:middle_third_singular_mass} with $\nu_1=\mu^\ac\big|_{(0,z)}$ and $\nu_2=\mu^\sing\big|_{(0,z)}$,
there exists a compact interval $I=[a,b]\subset(0,z)$ whose middle third $K=\bigl[a+\tfrac13(b-a),\, b-\tfrac13(b-a)\bigr]$ satisfies
\begin{equation} \label{eq:alpha_small_ac}
\mu^\sing(I)\le \beta \mu^\sing(K), \qquad
\mu^\ac(I)\le \eps \mu^\sing(K), \qquad 
\text{and} \qquad
\mu^\sing(K) > 0.
\end{equation}
Denote the boundary data of $f$ on $I$ by
\[
f_a=f(a), \qquad f_b=f(b), \qquad s_a=\partial^+f(a), \qquad s_b=\partial^-f(b).
\]
Recall from Lemma~\ref{lem_identification_scheme} that $\partial^+f(x) = \mu[0,x]$ and $\partial^-f(x) = \mu[0,x)$ for every $x\in(0,1)$.
In particular,
\begin{equation} \label{eq:slope_gap_ub_ac}
s_b-s_a 
= \mu[0,b) - \mu[0,a]
= \mu(a,b)
\le \mu(I)
= \mu^\ac(I)+\mu^\sing(I)
\stackref{eq:alpha_small_ac}{\le} (\eps+\beta)\mu^\sing(K).
\end{equation}
The quantities $\Delta_1$ and $\Delta_2$ from Proposition~\ref{prop_unif_case} can be expressed as
\eq{
\Delta_1
&\stackrefpp{eq_unif_case_a}{1_deriv_eq}{=} f_b - f_a - s_a(b-a) \\
&\stackref{1_deriv_eq}{=} \int_a^b\bigl(\mu[0,t]-\mu[0,a]\bigr)\ \dd t \\
&\stackrefp{1_deriv_eq}{=} \int_a^b\int_{(a,b]}\one\{a<x\le t\}\ \mu(\dd x)\,\dd t
 = \int_{(a,b]} (b-x)\ \mu(\dd x), \\
\text{and} \qquad \Delta_2
&\stackrefpp{eq_unif_case_b}{1_deriv_eq}{=}  s_b(b-a) - (f_b - f_a) \\
&\stackref{1_deriv_eq}{=} \int_a^b\bigl(\mu[0,b)-\mu[0,t]\bigr)\ \dd t \\
&\stackrefp{1_deriv_eq}{=} \int_a^b\int_{[a,b)} \one\{t< x<b\}\ \mu(\dd x)\, \dd t
 = \int_{[a,b)} (x-a)\ \mu(\dd x).
}
Since every $x\in K$ satisfies
\[
b-x\ge \frac{b-a}{3}
\qquad \text{and} \qquad
x-a\ge \frac{b-a}{3},
\]
we deduce from these expressions that
\begin{equation} \label{eq:Delta_lb_ac}
\Delta_1 \ge \frac{b-a}{3}\mu^\sing(K)
\qquad \text{and} \qquad
\Delta_2 \ge \frac{b-a}{3}\mu^\sing(K).
\end{equation}
In particular, we have $\Delta_1>0$ and $\Delta_2>0$ thanks to the third inequality in \eqref{eq:alpha_small_ac}.
This observation implies
\eq{
s_a<\frac{f_b-f_a}{b-a}<s_b,
}
which allows us to invoke Proposition~\ref{prop_unif_case} below.

Let $g\colon [a,b]\to\R$ be the unique maximizer from
Proposition~\ref{prop_unif_case}\ref{prop_unif_case_c} with boundary data
\eq{
g(a)=f_a,\quad g(b)=f_b,\quad \partial^+g(a)= s_a,\quad \partial^-g(b)= s_b.
}
Define $\hat f\colon[0,1]\to[0,1]$ by
\[
\hat f(x)\coloneqq
\begin{cases}
g(x) & \text{if } x\in [a,b]\\
f(x) & \text{if } x\notin [a,b].
\end{cases}
\]
Since $g$ and $f$ have the same boundary data at $a$ and $b$, this new function $\hat f$ is convex and continuous, so $\hat f\in\cF$.
We claim $J(\hat f)>J(f)$, contradicting the maximality of $f$.
To prove this claim, it is enough to compare integrals over $[a,b]$, since $\hat f$ and $f$ agree outside this interval.

To this end, we observe that our choice of $g$ yields
\eeq{ \label{3xh4c}
\int_a^b g''(x)^{1/3}\ \dd x
\stackref{unif_case_c_sup}{=} \Big(\frac{4\Delta_1\Delta_2}{s_b-s_a}\Big)^{1/3} 
&\stackref{eq:slope_gap_ub_ac,eq:Delta_lb_ac}{\ge} \Big(\frac{4}{9(\eps+\beta)}\Big)^{1/3}(b-a)^{2/3}\mu^\sing(K)^{1/3}.
}
Using \eqref{p_lower} in addition to \eqref{3xh4c}, we obtain
\begin{equation} \label{eq:new_local_lb_ac}
\int_a^b \bigl[g''(x)\cdot\fp(x,g(x))\bigr]^{1/3}\ \dd x
\ge\Big(\frac{8\fc}{9(\eps+\beta)}\Big)^{1/3}(b-a)^{2/3}\mu^\sing(K)^{1/3}.
\end{equation}
On the other hand, $f''=D_{\mu} = D_{\mu^\ac}$ almost everywhere by Lemma~\ref{lem_identification_scheme}\ref{scheme_2nd_deriv}, so we have
\eeq{ \label{yx9nk}
\int_a^b f''(x)\ \dd x = \mu^\ac(I) \stackref{eq:alpha_small_ac}{\le} \eps\mu^\sing(K).
}
Now Hölder's inequality leads to
\begin{align}
\int_a^b f''(x)^{1/3}\ \dd x
&\le
(b-a)^{2/3}\Big(\int_a^b f''(x)\ \dd x\Big)^{1/3}
\stackref{yx9nk}{\le} \eps^{1/3}(b-a)^{2/3}\mu^\sing(K)^{1/3}. \label{eq:f_upper_ac}
\end{align}
Using \eqref{p_upper} in addition to \eqref{eq:f_upper_ac}, we obtain
\begin{equation} \label{eq:old_local_ub_ac}
\int_a^b \bigl[f''(x)\cdot\fp(x,f(x))\bigr]^{1/3}\ \dd x
\le
(2\fC\eps)^{1/3}(b-a)^{2/3}\mu^\sing(K)^{1/3}.
\end{equation}
By comparing \eqref{eq:new_local_lb_ac} and
\eqref{eq:old_local_ub_ac}, and recalling our choice of $\eps$ from \eqref{eq:eps_choice_ac}, we see that
\[
\int_a^b \bigl[g''(x)\cdot\fp(x,g(x))\bigr]^{1/3}\ \dd x
>
\int_a^b \bigl[f''(x)\cdot\fp(x,f(x))\bigr]^{1/3}\ \dd x.
\]
Hence $J(\hat f)>J(f)$ as claimed.
\end{proof}

\subsection{Approximation by smooth functions} \label{subsec_approximation}

In Proposition~\ref{prop_maximizer_properties}\ref{prop_maximizer_abs_cont}, we showed that every maximizer of $J$ has an absolutely continuous first derivative.
In general we do not know if maximizers have higher-order derivatives, but the following lemma---which will be used in Section~\ref{sec_lower_bound}---implies that maximizers can be approximated by smooth functions to arbitrary precision.
We say that a function $f\colon[0,1]\to\R$ is \textit{smooth} if it is the restriction of a smooth function defined on all of $\R$.

\begin{lemma}[Approximation by smooth functions] \label{lem_smooth_f}
Assume \eqref{p_cont}.
Then for every $f\in\cF$, there exists a sequence $(f_n)_{n\ge1}$ in $\cF$ with the following properties:
\begin{enumerate}[label=\textup{(\roman*)}]

\item \label{lem_smooth_f_i}
$d(f_n,f)\to0$ as $n\to\infty$.

\item \label{lem_smooth_f_ii}
For every $n$, $f_n$ is smooth with $f_n''(x)>0$ for all $x\in(0,1)$.

\item \label{lem_smooth_f_iii}
$J(f_n) \to J(f)$ as $n\to\infty$.

\end{enumerate}
\end{lemma}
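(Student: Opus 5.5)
Given $f\in\cF$ with measure $\mu$ (so $f=f_\mu$), the plan is to build $f_n$ in three stages: a mixing step that makes the second derivative strictly positive, a truncation near $x=1$, and a mollification of the measure.

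\emph{Mixing.} First mix with the parabola $f_\star$ from \eqref{f_star_def}, for which $f_\star''>0$ on $(0,1)$. Set $g_\lambda\coloneqq(1-\lambda)f+\lambda f_\star\in\cF$. Since $g_\lambda''=(1-\lambda)f''+\lambda f_\star''\ge(1-\lambda)f''$, we get
\[
J(g_\lambda)\ge\int_0^1\big[(1-\lambda)f''(x)\,\fp(x,g_\lambda(x))\big]^{1/3}\,\dd x .
\]
The pointwise bound $|g_\lambda-f|\le\lambda$ and uniform continuity of $\fp$ then give $\liminf_{\lambda\to0}J(g_\lambda)\ge J(f)$. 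Indeed, $\fp^{1/3}$ varies by $o(1)$ uniformly, and $\int_0^1 f''^{1/3}$ is finite by \eqref{eq_unif_case_a}. Together with upper semicontinuity (Proposition~\ref{prop_usc}) and $d(g_\lambda,f)\to0$, this yields $J(g_\lambda)\to J(f)$. So we may assume $f''\ge c\,f_\star''$ a.e.\ for some $c>0$.

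\emph{Truncation and mollification.} Fix $t<1$. On $[0,t]$, approximate the measure $\mu\big|_{[0,t+\eta]}$ by $\mu*\psi_\eps$, with $\psi_\eps$ a smooth mollifier and $\mu$ extended suitably to the left of $0$, for instance by reflecting its density. Add a small multiple of $f_\star''$ so the density stays strictly positive and smooth. Integrating twice gives a smooth $h_{\eps}$ on $[0,t]$ with $h_\eps(0)=0$. Close the function off on $[t,1]$ by gluing a smooth strictly convex piece, for example a rescaled copy of the Proposition~\ref{prop_unif_case}\ref{prop_unif_case_a} maximizer smoothed at the junction, that keeps $h\le1$ and ends with value at most $1$. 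The contribution of $[t,1]$ to $J$ is at most $[8\fC(1-t)]^{1/3}$, by the same bound used in the proof of Proposition~\ref{prop_usc}, which is small.

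On $[0,t]$ we need
\[
\int_0^t (D_\mu*\psi_\eps+\mu^\sing*\psi_\eps)^{1/3}\,\fp^{1/3}\;\ge\;\int_0^t(D_\mu*\psi_\eps)^{1/3}\,\fp^{1/3},
\]
and $D_\mu*\psi_\eps\to D_\mu$ in $L^1$, hence $(D_\mu*\psi_\eps)^{1/3}\to D_\mu^{1/3}$ in $L^3$ and a fortiori in $L^1$. So $\liminf J(h_\eps)\ge J(f)-o_t(1)$. Convergence $d(h_\eps,f)\to0$ follows from Corollary~\ref{cor_convergence}, since the mollified measures converge weakly on $[0,t]$. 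Upper semicontinuity then upgrades $\liminf$ to a limit. A diagonal choice $t\to1$, $\eps\to0$ produces the sequence $(f_n)$.

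\emph{Main obstacle.} The delicate point is keeping $f_n\in\cF$, i.e.\ $f_n\le1$ with $f_n(0)=0$, while preserving convexity across the gluing at $t$ and near the endpoints, since mollification shifts mass. I would first handle this by a slight vertical and horizontal rescaling, $f_n(x)=(1-\delta)h(x)$, which costs only a factor $(1-\delta)^{1/3}$ in $J$ plus a continuity error in $\fp$. For the endpoint $0$, I would convolve only on $[\eta,t]$ and use a one-sided mollifier, with supports shifted rightward, so that no mass leaks to negative $x$.
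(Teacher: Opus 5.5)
Your plan is correct in outline and has the same skeleton as the paper's proof. Both write $f=f_\mu$, truncate $\mu$ near $x=1$, mollify with a kernel that moves mass only to the right, and add a small smooth density that is positive on $(0,1)$. Both then get $d$-convergence from Corollary~\ref{cor_convergence}, prove $\liminf J(f_n)\ge J(f)$, and conclude with Proposition~\ref{prop_usc}.

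Where you differ is in how membership in $\cF$ is kept. The paper truncates to $\mu_\delta=\mu|_{[0,1-\delta]}$ and convolves with $\kappa_\eps$, a smooth density supported on $[0,\eps]$. It then adds back exactly the deficit $h_{\delta,\eps}=\int_0^1\mu_\delta[0,t]\,\dd t-\int_0^1(\mu_\delta*\kappa_\eps)[0,t]\,\dd t>0$, as a multiple of a fixed bump $\kappa_1$ whose density is positive on $(0,1)$. Since $\int_0^1\nu[0,t]\,\dd t=\int(1-x)\,\nu(\dd x)=f_\nu(1)$, this gives $f_n(1)=f_{\mu_\delta}(1)\le f(1)\le1$ exactly. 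The resulting second derivative is automatically smooth on all of $\R$. So the paper needs no gluing at $t$, no rescaling, and no preliminary mixing with $f_\star$; you never use the mixing conclusion $f''\ge c f_\star''$ anyway.

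For the lower bound, the paper shows $\liminf f_n''\ge f''$ a.e.\ by Lebesgue differentiation of the one-sided average, then applies Fatou on all of $(0,1)$. Your route also works: $L^1$ convergence of the mollified absolutely continuous density, the bound $|a^{1/3}-b^{1/3}|\le|a-b|^{1/3}$, and the tail bound $[8\fC(1-t)]^{1/3}$.

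If you keep your gluing version, fix three details:
\begin{enumerate}
\item A Proposition~\ref{prop_unif_case}\ref{prop_unif_case_a} maximizer on an interval ending at $1$ has $\partial^-f(1)=\infty$. It is therefore not smooth at $1$ and would violate (ii). The simplest fix is to continue with the mollified density, which vanishes just past $t$, plus a small positive smooth function.
\item Convolving only on $[\eta,t]$ would discard a possible atom $\mu\{0\}=\partial^+f(0)>0$, which does not vanish as $\eta\to0$. There is no need to exclude $[0,\eta)$ once the kernel shifts mass rightward.
\item You should justify that your rescaling factor $1-\delta$ tends to $1$. One way: a rightward kernel, with supports staying inside $[0,1)$, can only decrease $\int(1-x)\,\nu(\dd x)$. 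Hence $h(1)\le f(1)+o(1)\le 1+o(1)$.
\end{enumerate}
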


\begin{proof}
Consider the unique measure $\mu\in\cM$ such that $f_\mu = f$, as guaranteed by Lemma~\ref{lem_identification_scheme}\ref{scheme_bijection}.
If $\mu$ is the zero measure, then $f \equiv 0$, and it suffices to take $f_n(x) = n^{-1}x^2$.
So henceforth we assume $\mu$ is not the zero measure.

We first restrict $\mu$ as follows.
For $\delta>0$, define $\mu_\delta$ by $\mu_\delta(B) \coloneqq \mu(B\cap[0,1-\delta]) \le \delta^{-1}$, where the inequality comes from Lemma~\ref{lem_identification_scheme}\ref{scheme_right_deriv}.
Now $\mu_\delta$ is a finite measure (as opposed to just $\sigma$-finite), and we assume $\delta$ is sufficiently small that $\mu_\delta$ is not the zero measure.
We next smoothen $\mu_\delta$ as follows.
Let $\psi\colon\R\to[0,\infty)$ be a smooth probability density function supported on $[0,1]$ and everywhere positive on $(0,1)$, such as
\eq{
\psi(x) = \begin{cases} 
\frac{1}{Z}\exp\big(\frac{-1}{x(1-x)}\big) &\text{if $x\in(0,1)$} \\
0 &\text{if $x\in\R\setminus(0,1)$},
\end{cases}
}
where $Z$ is chosen such that $\int_0^1\psi(x)\,\dd x = 1$.
For $\eps>0$, let $\kappa_\eps$ denote the probability measure whose density function is $\eps^{-1}\psi(x/\eps)$.
Since $\kappa_\eps$ has a smooth density and is supported on $[0,\eps]$, convolution with $\kappa_\eps$ results in the following properties:
\begin{subequations} \label{d6b43}
\begin{gather}
\text{$\mu_\delta * \kappa_\eps$ has a smooth density function;} \label{wik4bx} \\
\text{$(\mu_\delta * \kappa_\eps)[0,t] \nearrow \mu_\delta[0,t]$ as $\eps\searrow0$, for every continuity point $t\in[0,1)$ of $\mu_\delta$.} \label{2bniqe}
\end{gather}
We assume $\eps<\delta$ so that $\mu_\delta*\kappa_\eps$ is supported on $[0,1-\delta+\eps]\subset[0,1)$.
Since $\mu_\delta$ is not the zero measure, we have $\mu_\delta\neq\mu_\delta*\kappa_\eps$, so the following difference is strictly positive:
\eq{
h_{\delta,\eps} \coloneqq \int_0^1 \mu_\delta[0,t]\ \dd t - \int_0^1 (\mu_\delta * \kappa_\eps)[0,t]\ \dd t > 0.
}
On the other hand, \eqref{2bniqe} implies
\eeq{ \label{wdd39x}
\lim_{\eps\searrow0} h_{\delta,\eps} = 0.
}
\end{subequations}
We now perturb the convolution by adding a multiple of $\kappa_1$:
\eeq{ \label{mu_delta_eps_def}
\mu_{\delta,\eps} \coloneqq \mu_\delta * \kappa_\eps + h_{\delta,\eps}\cdot \Big(\int_0^1 \kappa_1[0,t]\ \dd t\Big)^{-1}\kappa_1.
}
The definition of $h_{\delta,\eps}$ ensures
\eq{
\int_0^1\mu_{\delta,\eps}[0,t]\ \dd t
= \int_0^1\mu_\delta[0,t]\ \dd t 
\le\int_0^1\mu[0,t]\ \dd t \le 1,
}
so $\mu_{\delta,\eps}$ belongs to $\cM$.
Furthermore, the density function of $\kappa_1$ is $\psi$, which is bounded and strictly positive on $(0,1)$.
This fact together with \eqref{d6b43} implies that
\begin{subequations}
\begin{gather}
\text{$\mu_{\delta,\eps}$ has a smooth density function that is strictly positive on $(0,1)$, and} \label{dqb7x} \\
\text{$\lim_{\eps\searrow0}\mu_{\delta,\eps}[0,t] = \mu_\delta[0,t]$ for every continuity point $t\in[0,1)$ of $\mu_\delta$.} \label{wib7rh}
\end{gather}
\end{subequations}

We are now ready to define the desired sequence $(f_n)_{n\ge1}$.
Clearly $\mu_\delta[0,t]\to\mu[0,t]$ for every $t\in[0,1)$ as $\delta\searrow0$, so Corollary~\ref{cor_convergence} implies $d(f_{\mu_\delta},f_\mu)\to 0$.
Let $\delta_n\in(0,n^{-1}]$ be sufficiently small that $d(f_{\mu_{\delta_n}},f_\mu)\le n^{-1}$.
Having selected $\delta_n$, we invoke \eqref{wib7rh} (again followed by Corollary~\ref{cor_convergence}) to obtain $d(f_{\mu_{\delta_n,\eps}},f_{\mu_{\delta_n}})\to0$ as $\eps\searrow0$.
So choose $\eps_n\in(0,\delta_n)$ sufficiently small that 
$d(f_{\mu_{\delta_n,\eps_n}},f_{\mu_{\delta_n}})\le n^{-1}$.
Set $f_n = f_{\mu_{\delta_n,\eps_n}}$.
Since $d$ satisfies the triangle inequality, we have $d(f_n,f)\le 2n^{-1}$, so property~\ref{lem_smooth_f_i} holds.
Hence $f_n\to f$ pointwise on $[0,1)$, by Lemma~\ref{lem_convergence}.
By continuity of $\fp$ from \eqref{p_cont}, it follows that
\eeq{ \label{ni4oo9}
\lim_{n\to\infty} \fp(x,f_n(x)) = \fp(x,f(x)) \quad \text{for every $x\in[0,1)$.}
}
Lemma~\ref{lem_identification_scheme}\ref{scheme_2nd_deriv} implies that $f_n''$ is equal to the density function of $\mu_{\delta_n,\eps_n}$, so property~\ref{lem_smooth_f_ii} is a consequence of \eqref{dqb7x}.
Furthermore, for every $x\in(0,1)$ we have
\eeq{ \label{nm3xf}
f_n''(x) 
= D_{\mu_{\delta_n,\eps_n}}(x)
\stackref{mu_delta_eps_def}{\ge} D_{\mu_{\delta_n}*\kappa_{\eps_n}}(x)
&\stackrel{\hphantom{\mbox{\footnotesize(Lemma~\ref{lem_identification_scheme}\ref{scheme_2nd_deriv})}}}{=}\int_{[0,1-\delta_n]} \eps_n^{-1}\psi\Big(\frac{x-y}{\eps_n}\Big)\ \mu(\dd y) \\
&\stackrel{\hphantom{\mbox{\footnotesize(Lemma~\ref{lem_identification_scheme}\ref{scheme_2nd_deriv})}}}{\ge}\int_{[0,1-\delta_n]} \eps_n^{-1}\psi\Big(\frac{x-y}{\eps_n}\Big)\ \mu^\ac(\dd y) \\
&\stackrel{\mbox{\footnotesize (Lemma~\ref{lem_identification_scheme}\ref{scheme_2nd_deriv})}}{=} \int_0^{1-\delta_n} \eps_n^{-1}\psi\Big(\frac{x-y}{\eps_n}\Big)\cdot f''(y)\ \dd y.
}
The integral on the final line is estimated as follows.
Observe that $y\mapsto \eps^{-1}\psi((x-y)/\eps)$ is supported on $[x-\eps,x]$ and integrates to $1$.
So whenever $[x-\eps,x]\subseteq[0,1-\delta]$, we have
\eeq{ \label{3bon9y}
\Big|\int_0^{1-\delta} \eps^{-1}\psi\Big(\frac{x-y}{\eps}\Big)\cdot f''(y)\ \dd y
- f''(x)\Big|
&= \Big|\int_{x-\eps}^x\eps^{-1}\psi\Big(\frac{x-y}{\eps}\Big)\cdot[f''(y)-f''(x)]\ \dd y\Big| \\
&\le \|\psi\|_\infty\cdot\eps^{-1}\int_{x-\eps}^x |f''(y)-f''(x)|\ \dd y.
\raisetag{1.5\baselineskip}
}
Note that $f''\big|_{[0,x]}$ is integrable for every $x\in[0,1)$, since Lemma~\ref{lem_identification_scheme} gives $\int_0^{x}f''(t)\, \dd t = \mu^\ac[0,x] \le \mu[0,x] \le (1-x)^{-1}$.
Therefore, we may use the Lebesgue differentiation theorem to say that as $\eps\searrow0$, the final line of \eqref{3bon9y} tends to zero for almost every $x\in(0,1)$.
It now follows from \eqref{nm3xf} that
\eq{
\liminf_{n\to\infty} f_n''(x) \ge f''(x) \quad \text{for almost every $x\in(0,1)$.} 
}
This observation, together with Fatou's lemma and \eqref{ni4oo9}, yields
\eq{
\liminf_{n\to\infty} \int_0^1 \big[f_n''(x)\cdot \fp(x,f_n(x))\big]^{1/3}\ \dd x
\ge \int_0^1 \big[f''(x)\cdot \fp(x,f(x))\big]^{1/3}\ \dd x.
}
We have thus shown $\liminf J(f_n) \ge J(f)$, while Proposition~\ref{prop_usc} gives $\limsup J(f_n) \le J(f)$.
Hence property~\ref{lem_smooth_f_iii} holds, thereby completing the proof.
\end{proof}

\section{Two-sided inputs} \label{sec_two_sided_inputs}
In this section we collect several estimates that will be used in both the lower bound arguments (Section~\ref{sec_lower_bound}) and the upper bound arguments (Section~\ref{sec_upper_bound}).

\subsection{Tangency triangles} \label{subsec_tangency_triangles}
We begin with a crucial definition.

\begin{definition}[Tangency triangles] \label{def_triangle}
Let $f\colon[a,b]\to\R$ be continuous and convex.
Let us write $\nA = (a,f(a))$ and $\nB = (b,f(b))$.
\begin{itemize}

\item (Typical case) 
If $\partial^+f(a) < \partial^-f(b)$ and at least one of these two derivatives is finite, then we denote by $\Tri_f(a,b)$ the triangle bounded by the following three lines: the tangent line of $f$ at $a$ with slope $\partial^+f(a)$, the tangent line of $f$ at $b$ with slope $\partial^-f(b)$, and the secant line of $f$ between $a$ and $b$.
The vertices of $\Tri_f(a,b)$ are thus $\nA$, $\nB$, and the unique intersection point of the two tangent lines.
See Figure~\ref{fig_tangency_triangle} (left).

\item (Degenerate case 1: flat)
If $\partial^+f(a)=\partial^-f(b)<\infty$, then we let $\Tri_f(a,b)$ denote the line segment between $\nA$ and $\nB$.

\item (Degenerate case 2: doubly infinite)
If $\partial^+f(a)=-\infty$ and $\partial^-f(b)=\infty$, then we let $\Tri_f(a,b)$ denote the infinite region below the secant line, defined in \eqref{below_secant}.
\qedrem

\end{itemize}
\end{definition}

Following this definition, we note two basic consequences of convexity.
First, the graph of $f$ remains inside the tangency triangle: 
\eeq{ \label{graph_in_triangle}
\{(x,f(x)):\,x\in[a,b]\} \subseteq \Tri_f(a,b).
}
Second, tangency triangles respect inclusion (see Figure~\ref{fig_tangency_triangle}):
\eeq{ \label{triangle_inclusion}
\Tri_f(a,x) \subseteq \Tri_f(a,b) \quad \text{and} \quad \Tri_f(x,b) \subseteq \Tri_f(x,b) \quad \text{for every $x\in(a,b)$.}
}
These facts will be useful in several places.

In the following lemma and beyond, we use the notation
\eq{
\Slope\big((x,y),(w,z)\big) \coloneqq \begin{cases} 
\frac{z-y}{w-x} &\text{if $x\neq w$} \\
+\infty &\text{if $x=w$ and $y<z$} \\
-\infty &\text{if $x=w$ and $y>z$.}
\end{cases}
}

\begin{lemma}[Convex position in terms of slopes] \label{lem_position_slopes}
Let $f\colon[a,b]\to\R$ be continuous and convex.
Let us write $\nA = (a,f(a))$ and $\nB = (b,f(b))$.
Given a set
\eq{
\cC = \{(x_1,y_1),\ldots,(x_n,y_n)\}\subset\Tri_f(a,b)\setminus\{\nA,\nB\} \quad \text{such that} \quad
a \le x_1 \le \cdots \le x_n \le b,
}
let us write $\nP_i = (x_i,y_i)$.
Then the following two statements are equivalent:
\begin{enumerate}[label=\textup{(\alph*)}]

\item \label{lem_position_slopes_a}
$\{\nA,\nB\}\cup\cC$ is in convex position.

\item \label{lem_position_slopes_b}
$\Slope(\nA,\nP_1)<\Slope(\nP_1,\nP_2)<\cdots<\Slope(\nP_{n-1},\nP_n)<\Slope(\nP_n,\nB)$.

\end{enumerate}
\end{lemma}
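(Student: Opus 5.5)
The plan is to set $\nP_0 = \nA$ and $\nP_{n+1} = \nB$, so that \ref{lem_position_slopes_b} says the consecutive slopes $\sigma_i \coloneqq \Slope(\nP_{i-1},\nP_i)$, $i=1,\ldots,n+1$, are strictly increasing. The basic tool is this: for three points $\nP,\nQ,\nX$ with nondecreasing $x$-coordinates, $\nQ$ lies strictly below the segment $\nP\nX$ if and only if $\Slope(\nP,\nQ)<\Slope(\nQ,\nX)$. This is the sign of a $2\times2$ determinant, and it extends to vertical ties via the conventions $\pm\infty$. The triangle $\Tri_f(a,b)$ enters in two ways. It lies below the secant line through $\nA,\nB$ and above the two tangent lines. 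Hence every $\nP_i$ has slope from $\nA$ at least $\partial^+f(a)$ and slope to $\nB$ at most $\partial^-f(b)$. Also, every point lies weakly below the line $\nA\nB$.

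For \ref{lem_position_slopes_b}$\Rightarrow$\ref{lem_position_slopes_a}, I would show that the closed polygonal path $\nP_0\nP_1\cdots\nP_{n+1}$ followed by the segment $\nB\nA$ bounds a convex polygon in which every $\nP_i$ is a vertex. Strictly increasing slopes make the piecewise linear interpolant $g$ of $\nP_0,\ldots,\nP_{n+1}$ a strictly convex function. Vertical edges can occur only at the ends, because they carry slope $\pm\infty$ and the slopes are strictly increasing; I will treat these edges separately. Then $g\le$ the chord $\nA\nB$, and the region between $g$ and the chord is convex. Strictness of the slope inequality at each $\nP_i$ ($1\le i\le n$) means $\nP_i$ is an extreme point of this region, since it lies strictly below the segment joining its neighbors. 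The points $\nA,\nB$ are extreme because they are the leftmost and rightmost points, with ties at $x=a$ or $x=b$ handled by the slope conventions. A set of points that are all extreme points of the convex hull of the set is in convex position.

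For \ref{lem_position_slopes_a}$\Rightarrow$\ref{lem_position_slopes_b}, I would argue by contrapositive. Suppose $\sigma_i\ge\sigma_{i+1}$ for some $1\le i\le n$. Then $\nP_i$ lies weakly above the segment $\nP_{i-1}\nP_{i+1}$. All points lie weakly below line $\nA\nB$. I claim $\nP_i$ lies in the convex hull of $\{\nP_{i-1},\nP_{i+1},\nA,\nB\}$: it is sandwiched vertically between the segment $\nP_{i-1}\nP_{i+1}$ and the chord $\nA\nB$, with $x_{i-1}\le x_i\le x_{i+1}$. This contradicts convex position, since $\nP_i$ is distinct from the other points whenever the points are distinct. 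A collision $\nP_i=\nP_{i\pm1}$ cannot occur in a set, but the case $\nP_{i-1}=\nA$ needs the triangle hypothesis. This is where $\Tri_f$ is used: points on the line $x=a$ with $y>f(a)$ are excluded, and so on.

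The main obstacle is the bookkeeping of degenerate configurations, rather than any conceptual step. These are equal $x$-coordinates, which give infinite slopes; points on the chord $\nA\nB$, which is allowed since the triangle is closed; and the degenerate triangles of Definition~\ref{def_triangle}. In the flat case every $\nP_i$ lies on the segment $\nA\nB$, so both \ref{lem_position_slopes_a} and \ref{lem_position_slopes_b} fail for $n\ge1$, and the statement holds trivially. I would handle the vertical ties first: I would show they force $x_1=a$ with $\sigma_1=-\infty$ (and symmetrically at $b$), which requires $\partial^+f(a)=-\infty$. After that the generic argument above applies verbatim.
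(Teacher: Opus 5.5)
Your proposal is correct and follows essentially the same route as the paper. The direction (a)$\Rightarrow$(b) is the paper's contrapositive argument: $\nP_i$ lies weakly above the segment $\nP_{i-1}\nP_{i+1}$, all points lie weakly below the chord $\nA\nB$, so $\nP_i\in\mathrm{conv}\{\nA,\nB,\nP_{i-1},\nP_{i+1}\}$. For (b)$\Rightarrow$(a), the paper handles $\nA,\nB$ via the region below the secant, and separates each $\nP_i$ from the other points by an explicit line of slope strictly between $\Slope(\nP_{i-1},\nP_i)$ and $\Slope(\nP_i,\nP_{i+1})$; this is the same local-kink idea you express as extremality in the convex region between the piecewise linear interpolant and the chord.
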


\begin{proof}
For convenience, let us write 
\eq{
\nP_0 \coloneqq (x_0,y_0) = (a,f(a)) \quad \text{and} \quad 
\nP_{n+1} \coloneqq (x_{n+1},y_{n+1}) = (b,f(b)).
}

\medskip

\noindent \ref{lem_position_slopes_b}$\implies$\ref{lem_position_slopes_a}:
Suppose \ref{lem_position_slopes_b} is true.
Denote the convex hull of $\{\nA,\nB\}\cup\cC$ by $\cH$.
We wish to show that every element of $\{\nA,\nB\}\cup\cC$ is an extreme point of $\cH$.

First we show $\nA$ and $\nB$ are extreme points.
Consider the region $\cR$ consisting of all points on or below the secant line between $\nA$ and $\nB$:
\eeq{ \label{below_secant}
\cR \coloneqq \Big\{(x,y)\in[a,b]\times\R:\, y \le \frac{b-x}{b-a}f(a) + \frac{x-a}{b-a}f(b)\Big\}.
}
Note that $\cR$ is convex, the points $\nA$ and $\nB$ are extreme points of $\cR$, and $\cH\subset\Tri_f(a,b)\subset\cR$.
Therefore, $\nA$ and $\nB$ are extreme points of $\cH$.

Now we show $\nP_1,\ldots,\nP_n$ are extreme points.
Fix $i\in\{1,\ldots,n\}$.
Our assumption \ref{lem_position_slopes_b} allows us to choose some number $s\in\R$ satisfying
\eq{
\Slope(\nP_{i-1},\nP_i) < s < \Slope(\nP_i,\nP_{i+1}).
}
Let $\vphi\colon\R\to\R$ be the affine function with slope $s$ and $\vphi(x_i)=y_i$.
By choice of $s$ and assumption~\ref{lem_position_slopes_b}, we have $\vphi(x_j)<y_j$ for all $j\in\{0,\ldots,n+1\}\setminus\{i\}$.
Let $\eps>0$ be such that $\vphi(x_j)+\eps<y_j$ for all $j\in\{0,\ldots,n+1\}\setminus\{i\}$, and then consider the region $\cQ$ consisting of all points on or above the graph of $\vphi+\eps$:
\eq{
\cQ \coloneqq \{(x,y)\in\R^2:\, \vphi(x)+\eps\le y\}.
}
Note that $\cQ$ is convex, $\cQ$ contains every $\nP_j$ such that $j\neq i$, and $\nP_i\notin\cQ$.
Therefore, the convex hull of $\{\nP_0,\ldots,\nP_{n+1}\}\setminus\{\nP_i\}$ does not contain $\nP_i$.
It follows that $\nP_i$ must be an extreme point of $\cH$, as desired.

\medskip

\noindent \ref{lem_position_slopes_a}$\implies$\ref{lem_position_slopes_b}:
Suppose \ref{lem_position_slopes_b} is false.
That is, there is $i\in\{1,\ldots,n\}$ such that
\eq{
\Slope(\nP_{i-1},\nP_i) \ge \Slope(\nP_i,\nP_{i+1}).
}
This implies the equality below:
\eq{
\Slope(\nP_{i-1},\nP_{i+1}) \le \max\{\Slope(\nP_{i-1},\nP_i),\Slope(\nP_i,\nP_{i+1})\}
= \Slope(\nP_{i-1},\nP_i).
}
Therefore, $\nP_i$ lies on or above the line segment connecting $\nP_{i-1}$ and $\nP_{i+1}$.
On the other hand, since $\nP_{i-1},\nP_i,\nP_{i+1}\in\Tri_f(a,b)$, all three points $\nP_{i-1},\nP_{i},\nP_{i+1}$ lie on or below the line segment connecting $\nA$ and $\nB$.
The two previous sentences together imply that $\nP_i$ lies in the convex hull of $\{\nA,\nB,\nP_{i-1},\nP_{i+1}\}$.
Hence \ref{lem_position_slopes_a} is false.
\end{proof}

\begin{figure}[t]

\tikzset{every picture/.style={line width=0.75pt}} %set default line width to 0.75pt        

\begin{tikzpicture}[x=0.75pt,y=0.75pt,yscale=-1,xscale=1,scale=0.9]
%uncomment if require: \path (0,300); %set diagram left start at 0, and has height of 300

%Shape: Polygon [id:ds9506955071414543] 
\draw  [color={rgb, 255:red, 74; green, 144; blue, 226 }  ,draw opacity=1 ][fill={rgb, 255:red, 74; green, 144; blue, 226 }  ,fill opacity=0.1 ] (50,220) -- (290,50) -- (228,202) -- cycle ;
%Shape: Polygon [id:ds2725995063341654] 
\draw  [color={rgb, 255:red, 74; green, 144; blue, 226 }  ,draw opacity=1 ][fill={rgb, 255:red, 74; green, 144; blue, 226 }  ,fill opacity=0.3 ] (50,220) -- (186,154.5) -- (100,215) -- cycle ;
%Shape: Polygon [id:ds5591067661839962] 
\draw  [color={rgb, 255:red, 74; green, 144; blue, 226 }  ,draw opacity=1 ][fill={rgb, 255:red, 74; green, 144; blue, 226 }  ,fill opacity=0.3 ] (186,154.5) -- (290,50) -- (273,92) -- cycle ;
%Curve Lines [id:da08051833442644107] 
\draw [line width=0.75]    (50,220) .. controls (126,213) and (271,95) .. (290,50) ;
%Shape: Circle [id:dp4329340480616283] 
\draw  [fill={rgb, 255:red, 0; green, 0; blue, 0 }  ,fill opacity=1 ] (182.5,153.5) .. controls (182.5,151.57) and (184.07,150) .. (186,150) .. controls (187.93,150) and (189.5,151.57) .. (189.5,153.5) .. controls (189.5,155.43) and (187.93,157) .. (186,157) .. controls (184.07,157) and (182.5,155.43) .. (182.5,153.5) -- cycle ;
%Shape: Circle [id:dp8690310960267837] 
\draw  [fill={rgb, 255:red, 0; green, 0; blue, 0 }  ,fill opacity=1 ] (46.5,220) .. controls (46.5,218.07) and (48.07,216.5) .. (50,216.5) .. controls (51.93,216.5) and (53.5,218.07) .. (53.5,220) .. controls (53.5,221.93) and (51.93,223.5) .. (50,223.5) .. controls (48.07,223.5) and (46.5,221.93) .. (46.5,220) -- cycle ;
%Shape: Circle [id:dp539780717718849] 
\draw  [fill={rgb, 255:red, 0; green, 0; blue, 0 }  ,fill opacity=1 ] (286.5,50) .. controls (286.5,48.07) and (288.07,46.5) .. (290,46.5) .. controls (291.93,46.5) and (293.5,48.07) .. (293.5,50) .. controls (293.5,51.93) and (291.93,53.5) .. (290,53.5) .. controls (288.07,53.5) and (286.5,51.93) .. (286.5,50) -- cycle ;
%Shape: Polygon [id:ds5075511268968711] 
\draw  [color={rgb, 255:red, 74; green, 144; blue, 226 }  ,draw opacity=1 ][fill={rgb, 255:red, 74; green, 144; blue, 226 }  ,fill opacity=0.3 ] (350,220) -- (486,154.5) -- (400,215) -- cycle ;
%Shape: Polygon [id:ds22433134114243913] 
\draw  [color={rgb, 255:red, 74; green, 144; blue, 226 }  ,draw opacity=1 ][fill={rgb, 255:red, 74; green, 144; blue, 226 }  ,fill opacity=0.3 ] (486,154.5) -- (590,50) -- (573,92) -- cycle ;
%Curve Lines [id:da14360266540535105] 
\draw [line width=0.75]    (350,220) .. controls (426,213) and (571,95) .. (590,50) ;
%Shape: Circle [id:dp29418103398737117] 
\draw  [fill={rgb, 255:red, 0; green, 0; blue, 0 }  ,fill opacity=1 ] (482.5,153.5) .. controls (482.5,151.57) and (484.07,150) .. (486,150) .. controls (487.93,150) and (489.5,151.57) .. (489.5,153.5) .. controls (489.5,155.43) and (487.93,157) .. (486,157) .. controls (484.07,157) and (482.5,155.43) .. (482.5,153.5) -- cycle ;
%Shape: Circle [id:dp8249039840316097] 
\draw  [fill={rgb, 255:red, 0; green, 0; blue, 0 }  ,fill opacity=1 ] (346.5,220) .. controls (346.5,218.07) and (348.07,216.5) .. (350,216.5) .. controls (351.93,216.5) and (353.5,218.07) .. (353.5,220) .. controls (353.5,221.93) and (351.93,223.5) .. (350,223.5) .. controls (348.07,223.5) and (346.5,221.93) .. (346.5,220) -- cycle ;
%Shape: Circle [id:dp9227722158513907] 
\draw  [fill={rgb, 255:red, 0; green, 0; blue, 0 }  ,fill opacity=1 ] (586.5,50) .. controls (586.5,48.07) and (588.07,46.5) .. (590,46.5) .. controls (591.93,46.5) and (593.5,48.07) .. (593.5,50) .. controls (593.5,51.93) and (591.93,53.5) .. (590,53.5) .. controls (588.07,53.5) and (586.5,51.93) .. (586.5,50) -- cycle ;

% Text Node
\draw (269,22.4) node [anchor=north west][inner sep=0.75pt]    {$\nB=( b,f( b))$};
% Text Node
\draw (25,235.4) node [anchor=north west][inner sep=0.75pt]    {$\nA=( a,f( a))$};
% Text Node
\draw (103,108.4) node [anchor=north west][inner sep=0.75pt]  [color={rgb, 255:red, 74; green, 144; blue, 226 }  ,opacity=1 ]  {$\Tri_{f}( a,b)$};
% Text Node
\draw (569,22.4) node [anchor=north west][inner sep=0.75pt]    {$\nB=( b,f( b))$};
% Text Node
\draw (325,235.4) node [anchor=north west][inner sep=0.75pt]    {$\nA=( a,f( a))$};
% Text Node
\draw (367,154.4) node [anchor=north west][inner sep=0.75pt]  [color={rgb, 255:red, 74; green, 144; blue, 226 }  ,opacity=1 ]  {$\Tri_{f}( a,x)$};
% Text Node
\draw (491.5,156.9) node [anchor=north west][inner sep=0.75pt]    {$\nX=( x,f( x))$};
% Text Node
\draw (471,75.4) node [anchor=north west][inner sep=0.75pt]  [color={rgb, 255:red, 74; green, 144; blue, 226 }  ,opacity=1 ]  {$\Tri_{f}( x,b)$};

\end{tikzpicture}
\caption{The tangency triangle $\Tri_f(a,b)$ is the lightly shaded region on the left. This region contains the smaller tangency triangles $\Tri_f(a,x)$ and $\Tri_f(x,b)$ shown on the right.
In both, the graph of $f$ is the solid black curve.}
\label{fig_tangency_triangle}
\end{figure}

The setting of the next lemma is illustrated in Figure~\ref{fig_tangency_triangle}.

\begin{lemma}[Concatenation] \label{lem_concatenation}
Let $f\colon[a,b]\to\R$ be continuous and convex, and $x\in(a,b)$.
Let us write $\nA = (a,f(a))$, $\nB = (b,f(b))$, and $\nX = (x,f(x))$.
Suppose $\cC_1\subset\Interior(\Tri_f(a,x))$ and $\cC_2\subset\Interior(\Tri_f(x,b))$ are finite sets such that $\{\nA,\nX\}\cup\cC_1$ is in convex position, and $\{\nX,\nB\}\cup\cC_2$ is in convex position.
Then $\cC_1\cup\cC_2\subset\Interior(\Tri_f(a,b))$, and $\{\nA,\nB\}\cup\cC_1\cup\cC_2$ is in convex position.
\end{lemma}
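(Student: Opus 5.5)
Both conclusions will come from Lemma~\ref{lem_position_slopes}, which turns convex position into a chain of strict slope inequalities. The idea is to order the combined set by $x$-coordinate and check that the combined slope sequence is strictly increasing. The only new inequality is at the junction point $\nX$, where the two sub-chains meet.

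\emph{Containment.} The inclusion \eqref{triangle_inclusion} gives $\Tri_f(a,x)\cup\Tri_f(x,b)\subseteq\Tri_f(a,b)$, so we only need to show that interiors go to interiors. An interior point of $\Tri_f(a,x)$ lies strictly above the tangent line at $a$, and strictly below the secant from $\nA$ to $\nX$. By convexity, that secant lies on or below the secant from $\nA$ to $\nB$, so the point is also strictly below the big secant. Its $x$-coordinate is $<x<b$, and $\Tri_f(a,x)$ lies weakly above the tangent line at $b$ there (it sits inside $\Tri_f(a,b)$). So the only thing to rule out is a point of $\Interior(\Tri_f(a,x))$ lying on the tangent line at $b$. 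This cannot happen, because near such a point $\Tri_f(a,x)$ would have to lie on one side of that line, which is impossible for an interior point of a subset of $\Tri_f(a,b)$. The symmetric argument handles $\cC_2$, and the degenerate cases are handled the same way.

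\emph{Slopes.} Label $\cC_1=\{\nP_1,\dots,\nP_k\}$ and $\cC_2=\{\nQ_1,\dots,\nQ_m\}$ in increasing $x$-order. Every point of $\Interior(\Tri_f(a,x))$ has $x$-coordinate $<x$ and every point of $\Interior(\Tri_f(x,b))$ has $x$-coordinate $>x$, so the union is ordered as $\nP_1,\dots,\nP_k,\nQ_1,\dots,\nQ_m$. Lemma~\ref{lem_position_slopes} applied to each piece gives
\[
\Slope(\nA,\nP_1)<\cdots<\Slope(\nP_k,\nX) \quad\text{and}\quad \Slope(\nX,\nQ_1)<\cdots<\Slope(\nQ_m,\nB).
\]
The key step, and the main point of the proof, is the junction inequality
\[
\Slope(\nP_k,\nQ_1)\in\big(\Slope(\nP_{k-1},\nP_k),\,\Slope(\nQ_1,\nQ_2)\big).
\]
Since $\nP_k$ is strictly above the tangent line at $x$ (slope $\partial^-f(x)$) and to the left of $\nX$, we get $\Slope(\nP_k,\nX)<\partial^-f(x)\le\partial^+f(x)$. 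Similarly, $\nQ_1$ is strictly above the tangent line with slope $\partial^+f(x)$ and to the right of $\nX$, so $\Slope(\nX,\nQ_1)>\partial^+f(x)$. Hence $\Slope(\nP_k,\nX)<\Slope(\nX,\nQ_1)$. This means $\nX$ lies strictly below the segment $\nP_k\nQ_1$, which gives
\[
\Slope(\nP_k,\nX)<\Slope(\nP_k,\nQ_1)<\Slope(\nX,\nQ_1).
\]
Combined with the two chains above, this yields the full strictly increasing sequence. The edge cases $k=0$ or $m=0$ work the same way, with $\nA$ or $\nB$ in place of the missing endpoint; if both are empty the statement is trivial.

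Applying Lemma~\ref{lem_position_slopes} on $[a,b]$, whose hypotheses hold by the containment step, then gives convex position of $\{\nA,\nB\}\cup\cC_1\cup\cC_2$. The only delicate point is the containment step, namely strict interiority and the degenerate tangency triangles; the slope bookkeeping is routine.
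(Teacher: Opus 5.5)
Your proof is correct and follows the paper's route. Both arguments rest on the same junction inequality $\Slope(\nP_k,\nX)<\partial^-f(x)\le\partial^+f(x)<\Slope(\nX,\nQ_1)$; the only difference is that the paper keeps $\nX$ in the combined chain, applies Lemma~\ref{lem_position_slopes} to $\cC_1\cup\cC_2\cup\{\nX\}$, and then drops $\nX$ (a subset of a set in convex position is in convex position), whereas you remove $\nX$ directly by chord-slope averaging. The containment step you call delicate is in fact immediate, because $A\subseteq B$ implies $\Interior(A)\subseteq\Interior(B)$, so \eqref{triangle_inclusion} gives it at once, degenerate cases included.
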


\begin{proof}
From \eqref{triangle_inclusion} we know
\eq{
\Interior(\Tri_f(a,x)) \subseteq \Interior(\Tri_f(a,b)) \quad \text{and} \quad
\Interior(\Tri_f(x,b)) \subseteq \Interior(\Tri_f(a,b)).
}
This ensures $\cC_1\cup\cC_2\subset\Interior(\Tri_f(a,b))$.

Now we argue that $\{\nA,\nB\}\cup\cC_1\cup\cC_2$ is in convex position.
There is nothing to show if $\cC_1$ and $\cC_2$ are both empty.
So we assume $\cC_1$ is nonempty; the argument is similar if $\cC_2$ is nonempty.
Label the elements of $\cC_1$ in increasing order of their horizontal coordinates, as $\nP_1,\ldots,\nP_m$.
Similarly, label the elements of $\cC_2$ in increasing order of their horizontal coordinates, as $\nQ_1,\ldots,\nQ_n$.
Since $\{\nA,\nX\}\cup\cC_1$ is in convex position, Lemma~\ref{lem_position_slopes} implies
\eeq{ \label{in1fv}
\Slope(\nA,\nP_1) < \Slope(\nP_1,\nP_2) < \cdots < \Slope(\nP_{m-1},\nP_m) < \Slope(\nP_m,\nX).
}
Furthermore, since $\nP_m\in\Interior(\Tri_f(a,x))$, we have
\eq{
\Slope(\nP_m,\nX) < \partial^-f(x).
}
If $n\ge1$ (i.e.\ $\cC_2$ is nonempty), then analogous reasoning gives
\eeq{ \label{in2fv}
\partial^+f(x) < \Slope(\nX,\nQ_1) < \Slope(\nQ_1,\nQ_2) < \cdots < \Slope(\nQ_{n-1},\nQ_n) < \Slope(\nQ_n,\nB).
}
In this case, we can concatenate \eqref{in1fv} and \eqref{in2fv} to obtain
\eeq{ \label{in3fv}
&\Slope(\nA,\nP_1) < \Slope(\nP_1,\nP_2) < \cdots < \Slope(\nP_{m-1},\nP_m) < \Slope(\nP_m,\nX) \\
<\ &\Slope(\nX,\nQ_1) < \Slope(\nQ_1,\nQ_2) < \cdots < \Slope(\nQ_{n-1},\nQ_n) < \Slope(\nQ_n,\nB).
}
If $n=0$, then instead of \eqref{in2fv} we simply observe that
\eq{
\partial^+f(x) \le \Slope(\nX,\nB),
}
so \eqref{in3fv} can be replaced by
\eq{
&\Slope(\nA,\nP_1) < \Slope(\nP_1,\nP_2) < \cdots < \Slope(\nP_{m-1},\nP_m) < \Slope(\nP_m,\nX)
< \Slope(\nX,\nB).
}
In either case, use Lemma~\ref{lem_position_slopes} to conclude that $\{\nA,\nB\}\cup(\cC_1\cup\cC_2\cup\{\nX\})$ is in convex position.
In particular, $\{\nA,\nB\}\cup\cC_1\cup\cC_2$ is in convex position.
\end{proof}

\begin{lemma}[Area of tangency triangles] \label{lem_area}
Let $f\colon[a,b]\to\R$ be continuous and convex.
Assume $\partial^+f(a)=s_a<s_b=\partial^-f(b)$, where at least one of $s_a,s_b$ is finite.
\begin{enumerate}[label=\textup{(\alph*)}]

\item \label{lem_area_a}
Using the quantities $\Delta_1$ and $\Delta_2$ defined in \eqref{eq_unif_case_a} and \eqref{eq_unif_case_b}, we have
\eeq{ \label{triangle_area}
\Area\big(\Tri_f(a,b)\big) = \frac{1}{2}\begin{cases}
\Delta_1(b-a) &\text{if $-\infty<s_a<s_b=\infty$} \\
\Delta_2(b-a) &\text{if $-\infty=s_a<s_b<\infty$} \\
\Delta_1\Delta_2/(s_b-s_a) &\text{if $-\infty<s_a<s_b<\infty$.}
\end{cases}
}

\item \label{lem_area_b}
Assume further that $\partial^+f$ is absolutely continuous on $[a,u]$ for every $u\in(a,b)$, and $\partial^-f(b)<\infty$.
If $0 < c \le f''(u) \le C$ for almost every $u\in(a,b)$, then
\eeq{ \label{area_bound}
\frac{c^2}{C} \le \frac{\Area\big(\Tri_f(a,b)\big)}{\tfrac{1}{8}(b-a)^3} \le \frac{C^2}{c}\wedge 2C.
}

\end{enumerate}
\end{lemma}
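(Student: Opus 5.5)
For part~\ref{lem_area_a} the plan is to compute the area from explicit coordinates. In the typical case with both slopes finite, let $\ell_a(x)=f(a)+s_a(x-a)$ and $\ell_b(x)=f(b)+s_b(x-b)$ be the two tangent lines. Their intersection has abscissa
\[
x_*=\frac{f(b)-f(a)+s_aa-s_bb}{s_a-s_b}.
\]
The triangle with vertices $\nA,\nB,(x_*,\ell_a(x_*))$ has area equal to one half of the base $b-a$ times the vertical gap between the secant line and the apex, measured at $x_*$. I will show this gap equals $\Delta_1\Delta_2/((s_b-s_a)(b-a))$. The cleanest route writes $x_*-a=\Delta_2/(s_b-s_a)$ and $b-x_*=\Delta_1/(s_b-s_a)$, which follows from \eqref{slope_relation}. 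The gap is then the secant value minus $\ell_a(x_*)$, namely $(x_*-a)\bigl(\tfrac{f_b-f_a}{b-a}-s_a\bigr)=(x_*-a)\Delta_1/(b-a)$, and this gives the third formula.

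The two infinite-slope cases are then easy. When $s_b=\infty$, the tangent at $b$ is the vertical line $x=b$. The apex is $(b,\ell_a(b))$, and the vertical side has length $f(b)-\ell_a(b)=\Delta_1$, so the area is $\tfrac12\Delta_1(b-a)$. The case $s_a=-\infty$ is symmetric and gives $\Delta_2$.

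For part~\ref{lem_area_b}, both slopes are finite and $s_a<s_b$ because $f''\ge c>0$. Absolute continuity lets me use the integral representations from the proof of Proposition~\ref{prop_maximizer_properties}:
\[
\Delta_1=\int_a^b (b-x)f''(x)\,\dd x,\qquad \Delta_2=\int_a^b (x-a)f''(x)\,\dd x,\qquad s_b-s_a=\int_a^b f''\,\dd x.
\]
Pointwise bounds on $f''$ then give $\Delta_i\in[\tfrac c2(b-a)^2,\tfrac C2(b-a)^2]$ and $s_b-s_a\in[c(b-a),C(b-a)]$. Plugging these into $\tfrac12\Delta_1\Delta_2/(s_b-s_a)$ gives $\tfrac18\tfrac{c^2}{C}(b-a)^3\le \Area\le\tfrac18\tfrac{C^2}{c}(b-a)^3$.

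For the additional bound $2C$, I will use $\Delta_2\le\Delta_1+\Delta_2=(s_b-s_a)(b-a)$. This gives $\Area\le\tfrac12\Delta_1(b-a)\le\tfrac14 C(b-a)^3=\tfrac18\cdot 2C(b-a)^3$.

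The main obstacle is the bookkeeping needed to justify the integral formulas for $\Delta_1,\Delta_2$ under exactly the stated absolute-continuity hypothesis, which is that of \eqref{2_deriv_eq}. I will handle this by citing the equality case of \eqref{29vc5} and \eqref{29vc6}.
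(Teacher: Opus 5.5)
Your proposal is correct and is essentially the paper's proof. The paper computes the area by a determinant as $\tfrac12(x-a)\Delta_1$, with apex abscissa satisfying $x-a=\Delta_2/(s_b-s_a)$, which is your base-times-vertical-gap computation; in part (b) it uses the same integral representations of $\Delta_1,\Delta_2,s_b-s_a$, and gets the $2C$ bound from $x-a\le b-a$, which is your inequality $\Delta_2\le\Delta_1+\Delta_2$. One small wording fix: in part (b) the finiteness of $s_a$ comes from the absolute-continuity hypothesis via \eqref{2_deriv_eq}, not from $f''\ge c$, and $s_a<s_b$ is already a standing assumption.
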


\begin{proof}
The area of $\Tri_f(a,b)$ is given by the determinant formula
\eeq{ \label{3fg8c}
\Area\big(\Tri_f(a,b)\big)
= \frac{1}{2}\left|\det\begin{bmatrix}
a & b & x \\
f(a) & f(b) & y \\
1 & 1 & 1 \\
\end{bmatrix}\right|,
}
where $(x,y)$ is the unique intersection point of the tangent line at $a$ (with slope $s_a$) and the tangent line at $b$ (with slope $s_b$).
That is,
\eeq{ \label{y_def}
x \begin{cases}
= a &\text{if $s_a=-\infty$} \\
= b &\text{if $s_b=\infty$} \\
\in (a,b) &\text{if $-\infty<s_a<s_b<\infty$}
\end{cases}
\quad \text{and} \quad
y = \begin{cases}
f(a) + s_a(x-a) &\text{if $s_a>-\infty$} \\
f(b) + s_b(x-b) &\text{if $s_b<\infty$.}
\end{cases}
}
Since $(1,1,1)$ is the third row of the matrix in \eqref{3fg8c}, we may subtract $(a,a,a)$ from the first row without changing the determinant. 
Assuming $s_a>-\infty$, this results in
\eeq{ \label{2kcd0}
\Area\big(\Tri_f(a,b)\big)
&\stackrefp{y_def}{=} \frac{1}{2}\left|\det\begin{bmatrix}
0 & b-a & x-a \\
f(a) & f(b) & y \\
1 & 1 & 1 \\
\end{bmatrix}\right| \\
&\stackrefp{y_def}{=} \tfrac{1}{2}\big|(x-a)\cdot(f(b)-f(a))-(b-a)\cdot(y - f(a))\big| \\
&\stackref{y_def}{=} \tfrac{1}{2}(x-a)\big|f(b)-f(a)-s_a(b-a)\big|
\stackref{eq_unif_case_a}{=} \tfrac{1}{2}(x-a)\Delta_1.
}
If $s_b=\infty$, then $x=b$, and we obtain the first case in \eqref{triangle_area}.
The second case (where $-\infty=s_a<s_b<\infty$) is proved in the same way, except we subtract $(b,b,b)$ instead of $(a,a,a)$.
In the third case $(-\infty<s_a<s_b<\infty)$, we solve for $x$ in \eqref{y_def}, and then subtract $a$, to obtain
\eeq{ \label{c_def}
x - a &= \frac{s_b(b-a)-(f(b)-f(a))}{s_b-s_a}
\stackref{eq_unif_case_b}{=} \frac{\Delta_2}{s_b-s_a}.
}
Insert this identity into the final expression of \eqref{2kcd0} to obtain the desired formula.
This completes the proof of part~\ref{lem_area_a}.

For part~\ref{lem_area_b}, the absolute continuity assumption yields the following three equalities (respectively justified by \eqref{29vc5}--\eqref{2c7h5}, \eqref{29vc6}--\eqref{2c7h6}, and \eqref{2_deriv_eq}):
\begin{align}
\Delta_1 &= \int_a^b f''(u)(u-a)\ \dd u \in \big[c\tfrac{(b-a)^2}{2},C\tfrac{(b-a)^2}{2}\big], \label{wixj1} \\
\Delta_2 &= \int_a^b f''(u)(b-u)\ \dd u \in \big[c\tfrac{(b-a)^2}{2},C\tfrac{(b-a)^2}{2}\big], \label{wixj2} \\
s_b - s_a &= \int_a^b f''(u)\ \dd u \in \big[c(b-a),C(b-a)\big]. \label{wixj3}
\end{align}
Using the estimates \eqref{wixj2} and \eqref{wixj3} in \eqref{c_def}, we obtain
\eeq{ \label{3bxxqk}
\frac{c}{2C}(b-a) \le x-a \le \Big(\frac{C}{2c}\wedge 1\Big)(b-a).
}
Using \eqref{3bxxqk} and \eqref{wixj1} in \eqref{2kcd0}, we obtain \eqref{area_bound}.
\end{proof}

\begin{corollary}[Characterization of uniform optimizers] \label{cor_unif_case_characterization}
Let $f$ be the unique optimizer from Proposition~\ref{prop_unif_case}.
For any $u < v$ in $[a,b]$, we have
\eeq{ \label{integral_to_triangle}
\int_u^v f''(x)^{1/3}\ \dd x = 2\cdot\Area\big(\Tri_f(u,v)\big)^{1/3}.
}
\end{corollary}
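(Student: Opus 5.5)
The plan is a self-similarity argument: the restriction of a uniform optimizer to $[u,v]$ should itself be the uniform optimizer on $[u,v]$ for its own boundary data. Then the value formulas of Proposition~\ref{prop_unif_case} and the area formulas of Lemma~\ref{lem_area}\ref{lem_area_a} will match, up to the factor $8$.

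\textbf{Step 1: restriction principle.} Let $g = f|_{[u,v]}$, with boundary data
\[
f_u = f(u),\quad f_v = f(v),\quad s_u = \partial^+f(u),\quad s_v = \partial^-f(v).
\]
I claim $g$ maximizes $\int_u^v h''(x)^{1/3}\,\dd x$ over continuous convex $h$ on $[u,v]$ with exactly this boundary data. If some $h$ did strictly better, splice $h$ into $f$ on $[u,v]$. Matching values and one-sided slopes keep the result convex, continuous, and in $\cF_\partial$ for the original problem on $[a,b]$. The spliced function would then strictly exceed the supremum that $f$ attains, a contradiction.

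\textbf{Step 2: identify the case.} The maximizers in Proposition~\ref{prop_unif_case} have $f''>0$ everywhere in the open interval, by \eqref{29cwq}, its analogue in case (b), and \eqref{17xxih}. Hence $s_u<\frac{f_v-f_u}{v-u}<s_v$ and the feasibility condition \eqref{eq:feasible} holds on $[u,v]$. Moreover $s_u=-\infty$ only when $u=a$ and $s_a=-\infty$, and $s_v=\infty$ only when $v=b$ and $s_b=\infty$; interior slopes are finite because $f$ is differentiable there. Since $\partial^+f(a)$, $\partial^-f(b)$ are not both infinite (one of $s_a,s_b$ is finite in every case), the data on $[u,v]$ falls into exactly one of the cases (a), (b), (c).

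\textbf{Step 3: compare formulas.} By Step 1, $\int_u^v f''^{1/3}$ equals the supremum for the data on $[u,v]$, namely $(4(v-u)\Delta_1)^{1/3}$, $(4(v-u)\Delta_2)^{1/3}$, or $(4\Delta_1\Delta_2/(s_v-s_u))^{1/3}$, with $\Delta_1,\Delta_2$ computed from $(u,v,f_u,f_v,s_u,s_v)$. Lemma~\ref{lem_area}\ref{lem_area_a} gives $\Area(\Tri_f(u,v))$ as one half of the same bracketed quantity without the $4$, using \eqref{triangle_inclusion} and the fact that $\Tri_f(u,v)$ only depends on this data. In every case
\[
\Big(\int_u^v f''^{1/3}\Big)^3 = 4X = 8\,\Area\big(\Tri_f(u,v)\big),
\]
where $X$ is the bracketed quantity. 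Taking cube roots gives \eqref{integral_to_triangle}.

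\textbf{Main obstacle.} The splicing in Step 1 is the delicate part. When $u=a$ or $v=b$, the boundary constraints in \eqref{boundary_data} are inequalities, so I need the spliced function to still satisfy them; it does, since the spliced function takes the data of $f\in\cF_\partial^=$ at $a$ and $b$. One must also treat the case where a one-sided slope of $h$ at an endpoint is infinite; this happens only at $a$ or $b$ and is consistent with the data there. The degenerate case $u=v$ is trivial, and $u<v$ is assumed.
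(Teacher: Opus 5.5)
Your proposal is correct and follows essentially the same route as the paper. The paper takes the optimizer $g$ on $[u,v]$ with the boundary data of $f$, matches the three cases of \eqref{triangle_area} against \eqref{eq_unif_case_a}, \eqref{eq_unif_case_b}, and \eqref{unif_case_c_sup}, and then uses the same splicing argument to conclude $f|_{[u,v]}=g$; you instead phrase the splicing step as ``$f|_{[u,v]}$ attains the supremum'' rather than invoking uniqueness, which is an immaterial difference.
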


\begin{proof}
Fix any $u<v$ in $[a,b]$.
Let $g\colon[u,v]\to\R$ be the optimizer from Proposition~\ref{prop_unif_case} with boundary data 
\eq{
g(u) = f(u), \quad
g(v) = f(v), \quad
\partial^+g(u) = \partial^+f(u), \quad
\partial^-g(v) = \partial^-f(v).
}
By comparing the three cases of \eqref{triangle_area} with \eqref{eq_unif_case_a}, \eqref{eq_unif_case_b}, and \eqref{unif_case_c_sup} respectively, we see that
\eeq{ \label{jg4xp0}
\int_u^v g''(x)^{1/3}\ \dd x = 2\cdot \Area\big(\Tri_g(u,v)\big)^{1/3}.
}
In addition, we must have $f\big|_{[u,v]}=g$, since otherwise $f\big|_{[u,v]}$ could be replaced by $g$ to contradict the optimality of $f$.
Hence \eqref{jg4xp0} implies \eqref{integral_to_triangle}.
\end{proof}

\subsection{Convex chains in general triangles} \label{sec_general_triangles}
In this section we work on a general triangle instead of $\cT$.
By Remark~\ref{rem_convex_position}, there would be no loss of generality in restricting attention to $\cT$, but keeping the notation general will make later arguments more transparent.
This is because the results we prove here---gathered in a single proposition below---will be applied to small triangles within $\cT$ rather than to $\cT$ itself.
To distinguish the two settings, we use $\wh\fp$ instead of $\fp$.
But we will continue to write $\sL_n$ and $\cS_n$, for simplicity.

\begin{proposition}[Convex chains in general triangles] \label{prop_general}
Let $\triangle$ be a nondegenerate triangle with two distinguished vertices $\nA,\nB\in\R^2$.
Let $\wh\fp$ be a probability density on $\triangle$ such that
\eeq{ \label{ratio_assumption}
\frac{\inf_{(x,y)\in\triangle}\wh\fp(x,y)}{1/\Area(\triangle)} \ge r\in [0,1]
\qquad \text{and} \qquad
\frac{\sup_{(x,y)\in\triangle}\wh\fp(x,y)}{1/\Area(\triangle)} \le R\in[1,\infty).
}
Let $\cS_n$ %= \{\nP_1,\ldots,\nP_n\}$ 
be a set of $n$ independent samples from $\wh\fp$, and define
\eq{
\sL_n \coloneqq \max\{\#\cC:\, \cC\subseteq\cS_n \text{ and $\{\nA,\nB\}\cup\cC$ is in convex position}\}.
}
Then the following statements hold.
(If $r = 0$, then by convention $R/r = \infty$.)
\begin{enumerate}[label=\textup{(\alph*)}]

\item \label{prop_general_compare} \textup{(Comparison to uniform case)} 
Let $\alpha$ be the constant from Theorem~\ref{thm_uniform_known}. 
For every $\delta>0$, there exists $\fn = \fn(R/r,\delta)$ large enough that
\eeq{ \label{mean_upper_lower}
(1-\delta)\Big(\frac{r}{R}\Big)^{1/3} \le \frac{\E\sL_n}{\alpha n^{1/3}} \le (1+\delta)\Big(\frac{R}{r}\Big)^{1/3} \quad \text{for all $n\ge\fn$.}
}

\item \label{prop_general_likelihood} \textup{(Likelihood of a full convex chain)}
For every $n\ge1$, we have
\eeq{ \label{1o0nc}
r^n\frac{2^n}{n!(n+1)!} \le \P(\{\nA,\nB\}\cup\cS_n \text{ is in convex position})
\le R^n\frac{2^n}{n!(n+1)!}.
}

\item \label{prop_general_tail} \textup{(Deep tail event)}
For every $n\ge1$ and $t\ge0$, we have
\eeq{ \label{tail_deep}
\P\big(\sL_n \ge (4R\e^3 n)^{1/3} + t\big) < 2^{-(4R\e^3 n)^{1/3}-t}.
}

\item \label{prop_general_concentration} \textup{(Concentration)}
For every $t_0>0$ and $\beta\in(\tfrac16,\tfrac13]$, there exist $C=C(t_0,\beta)$ large enough and $\fa = \fa(R,t_0)>0$ small enough that every $n\ge1$, $t\ge t_0$, we have
\eeq{ \label{concentration_ineq}
\mathbb P\Bigl(\big|\sL_n-\E \sL_n\big| \ge tn^\beta\Bigr)
\le
C \exp\bigl(-\fa tn^{2\beta-\frac13} \bigr).
}

\item \label{prop_general_convergence} \textup{(Convergence after centering)}
For every $\gamma>\tfrac16$, we have
\eeq{ \label{3hno2x}
\frac{\sL_n - \E\sL_n}{n^\gamma} \xrightarrow[n\to\infty]{} 0 \quad \text{almost surely and in $L^p$ for every $p\in[1,\infty)$.}
}

\end{enumerate}
\end{proposition}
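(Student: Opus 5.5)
The plan is to treat the five parts separately. The density bounds \eqref{ratio_assumption} will be used throughout to compare with the uniform case, either through thinning/coupling or through the change-of-measure bound $r/\Area(\triangle)\le\wh\fp\le R/\Area(\triangle)$. By Remark~\ref{rem_convex_position}, we may assume $\triangle=\cT$, $\nA=(0,0)$, $\nB=(1,1)$, so $\Area(\triangle)=1/2$.

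\emph{Part (b).} The probability in question is $\int\one\{\text{convex position}\}\prod_i\wh\fp(\nP_i)\,\dd\nP_1\cdots\dd\nP_n$. Bounding each factor between $r/\Area$ and $R/\Area$ reduces it to $r^n$ or $R^n$ times the uniform probability. That uniform probability equals $2^n/(n!(n+1)!)$ by the B\'ar\'any--Rote--Steiger--Zhang formula \eqref{0o0nc}.

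\emph{Part (c).} We union bound over $k$-subsets of $\cS_n$ and apply (b):
\[
\P(\sL_n\ge k)\le\binom nk R^k\frac{2^k}{k!(k+1)!}\le\Big(\frac{2R\e^3n}{k^3}\Big)^k,
\]
where the last step uses $n^k/k!$ and $k!\ge(k/\e)^k$. For the integer $k=\lceil(4R\e^3n)^{1/3}+t\rceil$, the base is at most $1/2$, which gives \eqref{tail_deep}.

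\emph{Part (a).} For the lower bound, write $\wh\fp=r\cdot\unif+(1-r)q$ as a mixture. Then $\cS_n$ contains a $\mathrm{Bin}(n,r)$ number of i.i.d.\ uniform points, and $\sL_n$ dominates the longest chain among them. Theorem~\ref{thm_uniform_known}, monotonicity of $m\mapsto\E\sL^\unif_m$, and binomial concentration then give $\E\sL_n\ge(1-\delta)\alpha(rn)^{1/3}$, which is stronger than needed.

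For the upper bound, sample $m=\lceil(1+\delta)Rn\rceil$ uniform points. Accept each point independently with probability $\wh\fp\cdot\Area/R\le1$. The accepted points are i.i.d.\ from $\wh\fp$, and there are $\mathrm{Bin}(m,1/R)\ge n$ of them except with probability $\e^{-c(\delta)n}$. On that event $\sL_n\le\sL^\unif_m$, and off it we use $\sL_n\le n$. This yields $\E\sL_n\le(1+\delta)\alpha(Rn)^{1/3}$ for $n\ge\fn$. Since $r\le1\le R$, both bounds imply \eqref{mean_upper_lower}.

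\emph{Part (d), the main step.} We apply Talagrand's inequality for certifiable functions, as in the paper's Lemma~\ref{lem_talagrand}. The function $\sL_n$ changes by at most one when a single sample is altered, and the event $\{\sL_n\ge s\}$ is certified by the $s$ points of a chain. This gives
\[
\P(|\sL_n-M_n|\ge s)\le4\exp\big(-s^2/(4(M_n+s))\big),
\]
where $M_n$ is a median. By (c), $M_n\le C_Rn^{1/3}$. Taking $s=tn^\beta$ with $\beta\le1/3$ and $t\ge t_0$, the exponent is at least $\fa(R,t_0)\,t\,n^{2\beta-1/3}$, using $t^2/(C_R+t)\ge t\,t_0/(C_R+t_0)$. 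Integrating this tail shows $|\E\sL_n-M_n|=O(n^{1/6})$. Since $\beta>1/6$, that shift can be absorbed into $tn^\beta$ for large $n$ by halving $t$; the small-$n$ cases are handled by enlarging $C$. The main delicacy is the bookkeeping of constants, so that $C$ depends only on $(t_0,\beta)$ and $\fa$ only on $(R,t_0)$.

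\emph{Part (e).} Choose $\beta\in(1/6,\min\{\gamma,1/3\})$. By (d), $\P(|\sL_n-\E\sL_n|\ge\eps n^\gamma)$ is summable in $n$, so Borel--Cantelli gives almost sure convergence. The same tail bound, integrated, shows that $n^{-\gamma}|\sL_n-\E\sL_n|$ is bounded in every $L^{p}$ and hence uniformly integrable in each $L^p$. Together with almost sure convergence, this gives convergence in $L^p$ for every $p\in[1,\infty)$.
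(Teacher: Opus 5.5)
Your proposal is correct. Parts (b)--(e) are essentially the paper's argument:
\begin{itemize}
\item (b): change of measure against \eqref{0o0nc};
\item (c): union bound over $k$-subsets;
\item (d): Talagrand with certificate $\psi(r)=r$, a median bound from (c), and absorption of an $O(n^{1/6})$ mean--median shift;
\item (e): Borel--Cantelli plus tail integration.
\end{itemize}
Part (a) is where you improve on the paper.

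\textbf{Part (a).} Your two couplings are the paper's in substance, but you count acceptances differently.
\begin{itemize}
\item Your mixture $\wh\fp=r\fu+(1-r)q$ is the paper's reverse rejection step: keep an $\wh\fp$-sample with probability $r\fu/\wh\fp$.
\item Your thinning of $\lceil(1+\delta)Rn\rceil$ uniform points is the paper's forward step.
\item You count acceptances exactly. The indicators are i.i.d.\ Bernoulli$(r)$, respectively Bernoulli$(1/R)$, because $\int r\fu=r$ and $\int(\wh\fp/(R\fu))\,\fu=1/R$.
\item The paper uses only the pointwise bound $r/R$ on the acceptance probability in both directions, so it loses a factor $(R/r)^{1/3}$ on each side.
\end{itemize}
Your version gives $(1-\delta)r^{1/3}\le\E\sL_n/(\alpha n^{1/3})\le(1+\delta)R^{1/3}$. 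This is the sharper comparison \eqref{better_comparison}, which the paper's remark obtains from Theorem~\ref{thm_var_formula} only for continuous densities; your coupling gives it for every bounded measurable density.

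Your thresholds are compatible with $\fn=\fn(R/r,\delta)$. The lower bound needs $n$ large in terms of $1/r\le R/r$. The upper bound needs $n$ large in terms of $\delta$ alone if you use a multiplicative Chernoff bound; with Hoeffding your rate $c(\delta)$ would also involve $R\le R/r$. The paper's cruder count costs it nothing, since (a) is only applied with $R/r\to1$.

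\textbf{A bookkeeping point in (d).} You flag the constant dependence as the main delicacy but do not resolve it, and the paper's proof has the same issue. Suppose you absorb the mean--median shift only for $n\ge n_0$ and enlarge $C$ for $n<n_0$. The shift is at most $K\sqrt{\fm}\,n^{1/6}$, where $\fm=(4R\e^3)^{1/3}$ and $K$ is universal (the paper has $K=41$). So $n_0$ depends on $R$, and $C$ inherits that dependence, contrary to $C=C(t_0,\beta)$. This is harmless downstream, where $R\le\fC/\fc$, but you can avoid it by splitting on $t$ instead of $n$. Take $\fa=t_0/(16(\fm+t_0))$.
\begin{itemize}
\item If $tn^\beta\ge2|\E\sL_n-M_n|$, halving gives $4\exp(-\fa tn^{2\beta-1/3})$, as you describe.
\item Otherwise $t^2n^{2\beta}<4K^2\fm n^{1/3}$. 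Then $\fa tn^{2\beta-1/3}=\fa(tn^\beta)^2/(tn^{1/3})<4K^2\fm\fa/t_0<K^2/4$. So the trivial bound $1$ is at most $\e^{K^2/4}\exp(-\fa tn^{2\beta-1/3})$.
\end{itemize}
Hence $C=\max\{4,\e^{K^2/4}\}$ works universally, and you need no separate treatment of small $n$ or large $t$.
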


\begin{remark}[A better comparison]
The bounds in \eqref{mean_upper_lower} are not optimal.
For instance, if $\wh\fp$ is continuous, then Theorem~\ref{thm_var_formula} implies
\eeq{ \label{better_comparison}
(1+o(1))r^{1/3} \le \frac{\E\sL_n}{\alpha n^{1/3}} \le (1+o(1))R^{1/3}.
}
Nevertheless, we use Proposition~\ref{prop_general}\ref{prop_general_compare} on our way to proving Theorem~\ref{thm_var_formula}.
In fact, our applications of Proposition~\ref{prop_general}\ref{prop_general_compare} will be to scenarios in which $R\approx r\approx 1$, in which case \eqref{mean_upper_lower} and \eqref{better_comparison} are essentially equivalent.
\qedrem
\end{remark}

\begin{proof}[Proof of Proposition~\ref{prop_general}\ref{prop_general_compare} and \ref{prop_general_likelihood}]
Denote the uniform density function on $\triangle$ by $\fu \equiv 1/\Area(\triangle)$.
Let $\cU_n$ denote a set of $n$ uniform samples from $\triangle$, and define
\eq{
\sL_n^\unif \coloneqq \max\{\#\cC:\, \cC\subseteq\cU_n \text{ and $\{\nA,\nB\}\cup\cC$ is in convex position}\}.
}

\medskip

\noindent Part~\ref{prop_general_compare}:
If $r = 0$, then \eqref{mean_upper_lower} is trivial, so we assume $r>0$.
The strategy is to couple the sample sets $(\cS_n)_{n\ge1}$ and $(\cU_n)_{n\ge1}$ in such a way that the ratio $\sL_n/\sL_n^\unif$ is well behaved.
By Theorem~\ref{thm_uniform_known} (and Remark~\ref{rem_convex_position} to convert from $\cT$ to $\triangle$), there is $n_1 = n_1(\delta)$ large enough that
\eeq{ \label{n0_choice}
\alpha\sqrt{1-\delta}\, n^{1/3} \le \E \sL_n^\unif \le \alpha\sqrt{1+\delta}\, n^{1/3} \quad \text{for all $n\ge n_1$.}
}
Let $(X_i,Y_i)_{i\ge1}$ be independent uniform samples from $\fu$, so that we may assume
\eq{
\cU_N = \{(X_1,Y_1),\ldots,(X_N,Y_N)\} \quad \text{for every $N\ge1$.}
}
Since $\wh \fp(x,y)/\fu(x,y) \le R$ by \eqref{ratio_assumption}, we can perform the following rejection sampling:

\begin{enumerate}[label=\textup{\arabic*.}]

\item Let $(U_i)_{i\ge1}$ be i.i.d.\ uniform $[0,1]$-valued random variables that are independent of $(X_i,Y_i)_{i\ge1}$.
If
\eeq{ \label{accept_condition}
U_i \le \frac{1}{R}\cdot \frac{\wh \fp(X_i,Y_i)}{\fu(X_i,Y_i)},
}
then set $\chi_i = 1$ (accept).
Otherwise set $\chi_i = 0$ (reject).

\item Denote the index of the $n$-th acceptance by $\tau_n = \inf\{N:\,\chi_1+\cdots+\chi_N = n\}$.
It is a standard fact (e.g.\ \cite[Corollary~2.17]{robert_casella04}) that the accepted samples $(X_{\tau_i},Y_{\tau_i})_{i\ge1}$ are independent and distributed according to $\wh\fp$.
So we may assume
\eq{
\cS_n = \big\{(X_{\tau_1},Y_{\tau_1}),\ldots,(X_{\tau_n},Y_{\tau_n})\big\} \quad \text{for every $n\ge1$.}
}

\end{enumerate}
If $\tau_n \le N$, then $\cS_n \subseteq \cU_N$, hence
\begin{subequations} \label{df7b3c}
\eeq{ \label{2mgv7}
\E(\sL_n\one\{\tau_n\le N\})
\le \E \sL_N^\unif.
}
On the other hand, since $\sL_n\le n$, we trivially have
\eeq{ \label{3cb4c}
\E(\sL_n\one\{\tau_n> N\})
\le n\P(\tau_n> N).
}
\end{subequations}
We now seek control on the right-hand sides of \eqref{2mgv7} and \eqref{3cb4c}, with the choice
\eeq{ \label{N_from_n}
N = \bigg\lceil\frac{R}{r}(1+\delta)n\bigg\rceil.
}
On one hand, if $n\ge n_1$, then also $N \ge n_1$, so \eqref{n0_choice} implies
\begin{subequations}
\eeq{ \label{n1_choice}
\E\sL_N^\unif 
\le \alpha\sqrt{1+\delta}N^{1/3} 
\stackref{N_from_n}{\le} \alpha\sqrt{1+\delta}\Big(\frac{R}{r}(1+\delta)n+1\Big)^{1/3}
\quad \text{for all $n\ge n_1$.}
}
On the other hand, since $\wh \fp(x,y)/\fu(x,y) \ge r$ by \eqref{ratio_assumption}, the acceptance condition \eqref{accept_condition} holds if $U_i \le r/R$.
Consequently, $\chi_1+\cdots+\chi_N$ stochastically dominates the binomial distribution with $N$ trials and success probability $r/R$.
This domination, together with Hoeffding's inequality, justifies the first inequality below:
\eq{
\P(\tau_n > N)
= \P\big(\chi_1+\cdots+\chi_N < n\big)
&\le \exp\Big(\frac{-2(\frac{r}{R}N-n)^2}{N}\Big)
\stackref{N_from_n}{\le} \exp\Big(\frac{-2(\delta n)^2}{\frac{R}{r}(1+\delta)n+1}\Big).
}
So let $n_2 = n_2(R/r,\delta)$ be large enough that
\eq{
\exp\Big(\frac{-2(\delta n)^2}{\frac{R}{r}(1+\delta)n+1}\Big) \le 1/n \quad \text{for all $n\ge n_2$.}
}
The two previous displays together give
\eeq{ \label{n2_choice}
n\P(\tau_n > N) \le 1 \quad \text{for all $n \ge n_2$.}
}
\end{subequations}
Using \eqref{n1_choice} and \eqref{n2_choice} in \eqref{2mgv7} and \eqref{3cb4c} respectively, we arrive at
\eq{
\E\sL_n \le \alpha\sqrt{1+\delta}\Big(\frac{R}{r}(1+\delta)n+1\Big)^{1/3} + 1 \quad \text{for all $n\ge\max\{n_1,n_2\}$.}
}
Finally, let $n_3 = n_3(R/r,\delta)$ be large enough that
\eq{
\alpha\sqrt{1+\delta}\Big(\frac{R}{r}(1+\delta)n+1\Big)^{1/3} + 1 
\le \alpha(1+\delta)\Big(\frac{R}{r}\Big)^{1/3}n^{1/3} \quad \text{for all $n\ge n_3$}.
}
The two previous displays together give
\eq{
\E\sL_n \le \alpha(1+\delta)\Big(\frac{R}{r}\Big)^{1/3}n^{1/3} \quad \text{for all $n\ge \max\{n_1,n_2,n_3\}$.}
}
This establishes the upper bound in \eqref{mean_upper_lower} with $\fn = \max\{n_1,n_2,n_3\}$.

The lower bound is proved by exchanging the roles of $\fu$ and $\wh \fp$.
That is, the samples $(X_i,Y_i)_{i\ge1}$ are drawn according to $\fp$ instead of $\fu$, and the acceptance condition is
\eq{
U_i \le r\cdot\frac{\fu(X_i,Y_i)}{\wh \fp(X_i,Y_i)}
}
instead of \eqref{accept_condition}.
Denoting the raw samples (from $\wh \fp$) by $\cS_N = \{(X_1,Y_1),\ldots,(X_N,Y_N)\}$ and the accepted samples (from $\fu$) by $\cU_n  = \{(X_{\tau_1},Y_{\tau_1}),\ldots,(X_{\tau_n},Y_{\tau_n})\}$, we replace \eqref{df7b3c} with
\eeq{ \label{h8vylp}
\E\sL_N \ge \E(\sL_n^\unif\one\{\tau_n\le N\})
\ge \E\sL_n^\unif - n\P(\tau_n>N).
}
Assuming $\delta<1$ (otherwise the lower bound in \eqref{mean_upper_lower} is trivial), we may choose
\eq{
n = \bigg\lfloor\frac{r}{R}(1-\delta)N\bigg\rfloor.
}
For $N$ sufficiently large, we bound $\E\sL_n^\unif$ from below using \eqref{n0_choice}, bound $n\P(\tau_n > N)$ from above exactly as before, and use these two estimates in \eqref{h8vylp}.

\medskip

\noindent Part~\ref{prop_general_likelihood}:
For uniform samples, it is known (e.g.\ see \cite[Theorem~1]{barany_rote_steiger_zhang00} or \cite[display~(5.3)]{barany99}) that
\eeq{ \label{0o0nc}
\P(\{\nA,\nB\}\cup\cU_n\text{ is in convex position}) = \frac{2^n}{n!(n+1)!}.
}
Let $\nP_1,\ldots,\nP_n$ denote $n$ independent samples from the density $\wh\fp$.
By our assumption \eqref{ratio_assumption}, the density function of $(\nP_1,\ldots,\nP_n)$ with respect to product uniform measure on $\triangle$ is bounded from above by $R^n$ and from below by $r^n$.
Therefore, \eqref{1o0nc} follows from \eqref{0o0nc}.
\end{proof}

\begin{remark}[Another route] \label{rem_alternative}
Within Section~\ref{sec_upper_bound}, we give a short alternative proof of \eqref{0o0nc} in passing.
Specifically, let $\cU_n^\square$ denote $n$ uniform samples from the unit square $[0,1]^2$.
Then start at the paragraph containing \eqref{exp_rep}, and read until \eqref{qbd7v}.
\qedrem
\end{remark}

\begin{proof}[Proof of Proposition~\ref{prop_general}\ref{prop_general_tail}]
The following argument adapts the strategy of \cite[Section~4]{ambrus_barany09}.
By part~\ref{prop_general_likelihood} (specifically the upper bound in \eqref{1o0nc}) and a union bound over subsets of size $k$, we have
\eq{
\P(\sL_n \ge k) 
\le {n \choose k}\frac{2^k}{k!(k+1)!}R^k
\le \frac{n^k}{k!}\cdot\frac{2^k}{(k!)^2}R^k
= \frac{(2R n)^k}{(k!)^3}.
}
Next use the standard estimate $k! \ge (\frac{k}{\e})^k\sqrt{2\pi k}$ to obtain
\eq{
\P(\sL_n \ge k) \le \frac{(2R\e^3 n)^k}{k^{3k}(2\pi k)^{3/2}}
< \Big(\frac{2R\e^3 n}{k^3}\Big)^{k}.
}
Choose $k = \ceil{(4R\e^3 n)^{1/3}+t}$ to obtain \eqref{tail_deep}.
\end{proof}

To obtain the concentration inequality in part~\ref{prop_general_concentration}, we will invoke the following consequence of Talagrand's convex distance inequality \cite[Section~6]{talagrand96}.

\begin{lemma}[Talagrand's concentration inequality, \textup{\cite[Theorem~2.29]{janson_luczak_rucinski00}}] \label{lem_talagrand}
Let $Z_1,\ldots,Z_n$ be independent random variables taking values in $\cZ_1,\ldots,\cZ_n$, respectively.
Let $f\colon\cZ_1\times\cdots\times\cZ_n\to\R$ be a measurable map satisfying the following two assumptions:
\begin{enumerate}[label=\textup{(\roman*)}]
\item \label{talagrand_lipschitz} (Lipschitz with respect to Hamming distance) 
For every $j\in\{1,\ldots,n\}$, we have
\eq{ 
|f(z_1,\ldots,z_n) - f(z_1',\ldots,z_n')| \le 1 \quad \text{whenever $z_i = z_i'$ for every $i\ne j$.}
}
\item \label{talagrand_certifiable} (Lower certifiable) 
There exists a map $\psi\colon\R\to[0,\infty)$ with the following property.
If $f(z_1,\ldots,z_n)\ge r$, then there exists $I\subseteq\{1,\ldots,n\}$ such that $\#I\le\psi(r)$ and
\eq{
f(z_1',\dots,z_n') \ge r \quad \text{whenever $z_i = z_i'$ for every $i\in I$.}
}
\end{enumerate}
Let $Z = f(Z_1,\ldots,Z_n)$.
For every $r\in\R$ and $s\ge0$, we have
\eq{
\P(Z\le r-s)\P(Z\ge r) \le \exp\Big(\frac{-s^2}{4\psi(r)}\Big).
}
In particular, for any median $m$ of $Z$, we have
\eq{
\P(Z\le m-s) \le 2\exp\Big(\frac{-s^2}{4\psi(m)}\Big) \quad \text{and} \quad
\P(Z\ge m+s) \le 2\exp\Big(\frac{-s^2}{4\psi(m+s)}\Big).
}
\end{lemma}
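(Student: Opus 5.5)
This is a standard consequence of Talagrand's convex distance inequality, and I would derive it directly from that inequality. For $A\subseteq\cZ_1\times\cdots\times\cZ_n$ measurable and $z$ a point of the product space, recall the convex distance
\[
d_T(z,A)\coloneqq\sup_{\alpha\in[0,\infty)^n,\ \|\alpha\|_2\le 1}\ \inf_{y\in A}\ \sum_{i=1}^n \alpha_i\one\{y_i\neq z_i\}.
\]
Talagrand's theorem \cite[Section~6]{talagrand96} states that $\P(\mathbf Z\in A)\,\E\exp\big(d_T(\mathbf Z,A)^2/4\big)\le1$, where $\mathbf Z=(Z_1,\ldots,Z_n)$. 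By Markov's inequality this gives
\[
\P(\mathbf Z\in A)\,\P\big(d_T(\mathbf Z,A)\ge u\big)\le \e^{-u^2/4}\quad\text{for every } u\ge0 .
\]
The plan is to apply this with $A=\{f\le r-s\}$ and to show that $\{f\ge r\}\subseteq\{d_T(\cdot,A)\ge s/\sqrt{\psi(r)}\}$. Substituting $u=s/\sqrt{\psi(r)}$ then yields the first displayed inequality. If $\psi(r)=0$, I would treat it separately: the same argument below shows $A$ and $\{f\ge r\}$ are disjoint-and-incompatible unless $s=0$, so the product vanishes.

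\textbf{Key step: lower-bounding the convex distance.} Fix $z$ with $f(z)\ge r$, and let $I$ be the certificate from assumption~\ref{talagrand_certifiable}, so $\#I\le\psi(r)$. If $I$ is nonempty, choose weights $\alpha_i=(\#I)^{-1/2}$ for $i\in I$ and $\alpha_i=0$ otherwise; then $\|\alpha\|_2=1$.

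Take any $y\in A$. Define the hybrid point $z'$ by $z'_i=z_i$ for $i\in I$ and $z'_i=y_i$ for $i\notin I$. By certifiability, $f(z')\ge r$, while $f(y)\le r-s$. The points $z'$ and $y$ differ only at coordinates $i\in I$ with $y_i\ne z_i$. Changing these coordinates one at a time and applying the Hamming-Lipschitz assumption~\ref{talagrand_lipschitz} at each step gives
\[
s\le f(z')-f(y)\le \#\{i\in I:\ y_i\neq z_i\}.
\]
Hence
\[
\sum_i\alpha_i\one\{y_i\ne z_i\}\ \ge\ \frac{s}{\sqrt{\#I}}\ \ge\ \frac{s}{\sqrt{\psi(r)}}.
\]
Since $y\in A$ was arbitrary, $d_T(z,A)\ge s/\sqrt{\psi(r)}$. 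If $I$ is empty, the same hybrid argument gives $s\le 0$, so the only case is $s=0$, which is trivial.

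\textbf{Median corollaries.} For a median $m$, take $r=m$: since $\P(Z\ge m)\ge\tfrac12$, dividing gives $\P(Z\le m-s)\le 2\exp(-s^2/(4\psi(m)))$. For the upper tail, take $r=m+s$: since $\P(Z\le m)\ge\tfrac12$, dividing gives $\P(Z\ge m+s)\le 2\exp(-s^2/(4\psi(m+s)))$.

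\textbf{Main obstacle.} The only nontrivial point is the hybrid-point construction combining certifiability with the Lipschitz property. The rest is bookkeeping, plus measurability of $\{d_T\ge u\}$, which is already handled in Talagrand's and Janson--\L uczak--Ruci\'nski's treatment (one can work with outer probabilities if needed).
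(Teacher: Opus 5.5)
Your argument is correct and is the standard derivation. The paper gives no proof of its own; it quotes \cite[Theorem~2.29]{janson_luczak_rucinski00}, whose proof is exactly this: Talagrand's convex distance inequality applied to $A=\{f\le r-s\}$, with uniform weights $(\#I)^{-1/2}$ on the certificate and the same hybrid-point/Lipschitz step, and your handling of the degenerate case $\psi(r)=0$ (for $s>0$ one of the two events must be empty) and of the two median bounds is also fine.
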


\begin{proof}[Proof of Proposition~\ref{prop_general}\ref{prop_general_concentration}]
We claim that the random variable $Z = \sL_n$ fits the setting of Lemma~\ref{lem_talagrand} with the following choices:
\begin{itemize}
\item $Z_1,\dots,Z_n$ are i.i.d.\ samples from $\triangle$, with density function $\wh\fp$.
\item $f$ is the map $\triangle^n\to\Z_{\ge1}$ that returns the length of the longest convex chain:
\eq{
f(z_1,\dots,z_n) = \max\{\#I:\,I\subseteq\{1,\ldots,n\} \text{ and $\{\nA,\nB\}\cup\{z_i:\,i\in I\}$ is in convex position}\}.
}
\item $\psi(r) = r$.
\end{itemize}
Indeed, the Lipschitz condition~\ref{talagrand_lipschitz} is trivial, and the certifiability condition~\ref{talagrand_certifiable} holds by letting $I$ be any maximizer.
Therefore, the conclusion of Lemma~\ref{lem_talagrand} gives
\eeq{ \label{ybn84c}
\P\bigl(|\sL_n-m_n|\ge s\bigr)
\le
4\exp\Bigl(\frac{-s^2}{4(m_n+s)}\Bigr) \quad \text{for every $s\ge0$},
}
where $m_n$ is a median of $\sL_n$.
From \eqref{tail_deep} with $t=0$, we infer that
\eeq{ \label{median_upper_all}
m_n < \fm n^{1/3} \quad \text{where} \quad \fm = (4R\e^3)^{1/3}.
}
Ultimately we wish to replace the median with the mean, so we note that
\eq{
|\E\sL_n - m_n|
\le \E|\sL_n - m_n|
&\stackrefp{ybn84c}{=} \int_0^\infty \P\big(|\sL_n - m_n|>s\big)\ \dd s \\
&\stackref{ybn84c}{\le}4\int_0^\infty \exp\Big(\frac{-s^2}{4(m_n+s)}\Big)\ \dd s \\
&\stackrefp{ybn84c}{\le}4\int_0^{\infty}\exp\Big(\frac{-s^2}{8m_n}\Big)\ \dd s + 4\int_{m_n}^\infty \exp\Big(\frac{-s^2}{8s}\Big)\ \dd s \\
&\stackrefp{ybn84c}{=} \sqrt{32\pi m_n} + 32\exp\Big(\frac{-m_n}{8}\Big).
}
Since $\sqrt{32\pi m_n} \ge \sqrt{32\pi} \ge 10$ while $32\exp(-m_n/8) \le 32\exp(-1/8) \le 30$, we in fact have 
\eeq{ \label{mean_median_gap}
|\E\sL_n - m_n|
\le 4\sqrt{32\pi m_n}
\stackref{median_upper_all}{\le} 4\sqrt{32\pi}\cdot\sqrt{\fm}n^{1/6}
\le 41\sqrt{\fm}n^{1/6}.
}
Fix $t_0>0$ and $\beta\in(\tfrac16,\tfrac13]$.
Since $\beta>\frac16$, we can consider the smallest positive integer $n_0=n_0(t_0,\beta)$ such that
\eq{
\frac{t_0}{2}(n_0+1)^\beta \ge 41\sqrt{\fm} (n_0+1)^{1/6}.
}
Since $n\mapsto n^{\beta-1/6}$ is nondecreasing, it follows that
\eeq{ \label{bo24ch}
\text{for every $n>n_0$, we have} \quad
\frac{t_0}{2} n^\beta 
\ge 41\sqrt{\fm} n^{1/6}
\stackref{mean_median_gap}{\ge} |\E\sL_n - m_n|.
}
We now consider three cases:
\begin{enumerate}[label=\textup{\arabic*.}]
\item Suppose $n>n_0$ and $t\ge t_0$.
We then have
\eq{
\mathbb P\Bigl(\bigl|\sL_n-\mathbb E \sL_n\bigr| \ge tn^\beta\Bigr)
&\stackref{bo24ch}{\le}
\mathbb P\Bigl(\bigl|\sL_n-m_n\bigr| \ge (t-\tfrac{ t_0}{2})n^\beta\Bigr) \\
&\stackref{ybn84c}{\le}
4\exp\Bigl(
\frac{-(t-\tfrac{ t_0}{2})^2n^{2\beta}}{4(m_n+ (t-\tfrac{ t_0}{2})n^\beta)}
\Bigr) \\
&\stackref{median_upper_all}{\le} 4\exp\Bigl(
\frac{-(t/2)^2n^{2\beta}}{4(\fm n^{1/3}+ t n^\beta)}
\Bigr) \\
&\stackrefp{median_upper_all}{\le} 4\exp\Big(\frac{-tn^{2\beta-1/3}}{16(\fm/t_0+1)}\Big),
}
where the last inequality uses the assumptions $t\ge t_0$ and $\beta \le \tfrac13$.

\item Suppose $n\le n_0$ and $t \le n_0^{1-\beta}$.
By minimality of $n_0$ we have
\eeq{ \label{win8xh}
\frac{t_0}{2}n_0^\beta < 41\sqrt{\fm} n_0^{1/6}.
}
Therefore,
\eq{
\P\Bigl(\bigl|\sL_n-\mathbb E \sL_n\bigr| \ge tn^\beta\Bigr)
\stackrefp{win8xh}{\le} 1
&\stackrefp{win8xh}{\le} \exp\Big(\frac{tn_0^{2\beta-1/3}}{16(\fm/t_0+1)}\Big)\exp\Big(\frac{-tn^{2\beta-1/3}}{16(\fm/t_0+1)}\Big) \\
&\stackref{win8xh}{\le} \exp\Big(\frac{t(41\sqrt{\fm}\cdot 2/t_0)^2}{16(\fm/t_0+1)}\Big)\exp\Big(\frac{-tn^{2\beta-1/3}}{16(\fm/t_0+1)}\Big) \\
&\stackrefp{win8xh}{\le} \exp\Big(\frac{n_0^{1-\beta}\cdot 41^2}{4 t_0}\Big)\exp\Big(\frac{-tn^{2\beta-1/3}}{16(\fm/t_0+1)}\Big).
}

\item Suppose $n\le n_0$ and $t > n_0^{1-\beta}$.
Then $tn^\beta > n_0^{1-\beta}n^\beta \ge n$, hence
\eq{
\P\Bigl(\bigl|\sL_n-\mathbb E \sL_n\bigr| \ge tn^\beta\Bigr) = 0.
}

\end{enumerate}
To cover all three cases, we set $C = \max\{4,\exp(n_0^{1-\beta}\cdot 41^2/(4 t_0)\}$ and $\fa = \frac{1}{16(\fm/t_0+1)}$, so that \eqref{concentration_ineq} holds for every $n\ge1$ and $t\ge t_0$.
\end{proof}

\begin{proof}[Proof of Proposition~\ref{prop_general}\ref{prop_general_convergence}]
Since $\gamma>\tfrac16$, we can choose some $\beta\in(\tfrac16,\gamma\wedge\tfrac13)$.
In the steps below that invoke the concentration inequality \eqref{concentration_ineq}, we are using part~\ref{prop_general_concentration} with $t_0=1$.

For every $n\ge1$, we have
\eq{
\P\Big(\frac{|\sL_n - \E\sL_n|}{n^\gamma} \ge n^{-(\gamma-\beta)}\Big)
= \P\big(|\sL_n - \E\sL_n| \ge n^\beta\big)
\stackref{concentration_ineq}{\le} C \exp\bigl(-\fa n^{2\beta-\frac13} \bigr).
}
By choice of $\beta$, we have $n^{-(\gamma-\beta)}\to0$ as $n\to\infty$, and also $\sum_{n\ge1}\exp(-\fa n^{2\beta-1/3})<\infty$.
Therefore, the first Borel--Cantelli lemma implies the almost sure convergence claimed in \eqref{3hno2x}.
For $L^p$ convergence, observe that
\eq{
\E\Big(\frac{|\sL_n - \E\sL_n|^p}{n^{\gamma p}}\Big)
&\stackrefp{concentration_ineq}{=} \frac{1}{n^{\gamma p}}\int_0^\infty pu^{p-1}\P\Big(|\sL_n - \E\sL_n| \ge u\Big)\ \dd u \\
&\stackrefp{concentration_ineq}{=} \frac{1}{n^{p(\gamma-\beta)}}\int_0^\infty pt^{p-1}\P\Big(|\sL_n - \E\sL_n| \ge tn^\beta\Big)\ \dd t \\
&\stackrefp{concentration_ineq}{\le} \frac{1}{n^{p(\gamma-\beta)}}\Big[1+\int_1^\infty pt^{p-1}\P\Big(|\sL_n - \E\sL_n| \ge tn^\beta\Big)\ \dd t\Big] \\
&\stackref{concentration_ineq}{\le} \frac{1}{n^{p(\gamma-\beta)}}\Big[1+\int_1^\infty pt^{p-1}\cdot C \exp\bigl(-\fa tn^{2\beta-\frac13} \bigr)\ \dd t\Big].
}
Since $\gamma-\beta>0$ and $2\beta-\tfrac13 > 0$, the final line tends to $0$ as $n\to\infty$.
\end{proof}

\begin{remark}[Concentration is very general] \label{rem_concentration}
Only two facts about $\sL_n$ were used in the proof of Proposition~\ref{prop_general}\ref{prop_general_concentration}.
First, $\sL_n$ fits the setting of Lemma~\ref{lem_talagrand} with certificate function $\psi(r)=r$, so \eqref{ybn84c} holds.
Second, $\sL_n$ has a median which is $O(n^{1/3})$, specifically \eqref{median_upper_all} holds.
The rest of the proof used only these two starting points.
Furthermore, the proof of Proposition~\ref{prop_general}\ref{prop_general_convergence} used only the result from part~\ref{prop_general_concentration}.
\qedrem
\end{remark}

\section{Lower bounds} \label{sec_lower_bound}

The goal of this section is to prove the following proposition, which concerns convex chains that are close to the graph of a given function $f$.
Define the following random variable, which measures the longest convex chain that remains $\eps$-close to $f$:
\eeq{ \label{Lnf_def}
\sL_n(f,\eps) \coloneqq \max\Big\{\#\cC:\, \text{$\cC\subseteq\cS_n$, $\cC$ is a convex chain, $\displaystyle\max_{(x,y)\in\cC} |f(x)-y|<\eps$}\Big\}.
}
We also define the following enlargement of $J(f)$:
\eeq{ \label{Jfeps_def}
J(f,\eps) \coloneqq \sup\{J(g):\,\text{$g\in\cF$ and $\|g-f\|_{[0,1]}<\eps$}\}.
}

\begin{proposition}[Lower bounds] \label{prop_lower_bound}
Assume \eqref{p_cont} and \eqref{p_lower}.
Then the following statements hold for every $f\in\cF$ and $\eps>0$.
\begin{enumerate}[label=\textup{(\alph*)}]

\item \label{prop_lower_bound_typical} \textup{(Typical convex chain near $f$)}
Let $\alpha$ be the constant from Theorem~\ref{thm_uniform_known}. Then
\eq{
\liminf_{n\to\infty} \frac{\sL_n(f,\eps)}{n^{1/3}} \ge \frac{\alpha}{2}J(f,\eps) \quad \text{almost surely.}
}

\item \label{prop_lower_bound_full} \textup{(Full convex chain near $f$)}
We have
\eq{
\liminf_{n\to\infty}\frac{1}{n}\log\Big[\frac{(3n)!}{n!}\P\big(\sL_n(f,\eps)=n\big)\Big] \ge \log\Big(\frac{27 J(f,\eps)^3}{4}\Big).
}

\end{enumerate}
\end{proposition}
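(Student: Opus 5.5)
First I reduce to a single smooth, strictly convex $g$. Given $f\in\cF$ and $\eps>0$, pick $g\in\cF$ with $\|g-f\|_{[0,1]}<\eps$ and $J(g)$ within $\eta$ of $J(f,\eps)$. By Lemma~\ref{lem_smooth_f}, I may further take $g$ smooth with $g''>0$ on $(0,1)$. Since Lemma~\ref{lem_smooth_f} only gives $d$-convergence, I must check that the approximants stay uniformly $\eps$-close to $f$ on all of $[0,1]$. I will handle this by first shrinking $g$ slightly, $g\mapsto(1-\kappa)g+\kappa x^2$ or similar, so that there is slack near $x=1$, and using upper semicontinuity to keep $J$ nearly unchanged. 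I would also trim the endpoints: work on $[\tau,1-\tau]$, where $g''$ is bounded above and below by constants $C_\tau,c_\tau>0$, and note that the contribution of $J$ near the endpoints is small by \eqref{eq_unif_case_a}. Every chain I construct lies inside the tangency triangles $\Tri_g(x_\ell,x_{\ell+1})$, whose heights are $O(L^{-2})$. Hence for $L$ large the chain is within $\eps$ of $f$, and by Lemma~\ref{lem_concatenation} it is a convex chain from $(0,0)$ to $(1,1)$, provided the first and last pieces are joined through the triangles $\Tri_g(0,\tau)$ and $\Tri_g(1-\tau,1)$ (possibly with no points in them).

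\textbf{Part (a).} Partition $[\tau,1-\tau]$ into $L$ equal intervals with triangles $\triangle_\ell$. The number of samples in $\triangle_\ell$ is $\mathrm{Bin}(n,p_\ell)$ with $p_\ell=\int_{\triangle_\ell}\fp\approx \fp(x_\ell,g(x_\ell))\Area(\triangle_\ell)$. Conditional on that count $N_\ell$, the samples are i.i.d.\ with density $\fp/p_\ell$ on $\triangle_\ell$. By continuity of $\fp$ and \eqref{p_lower}, the ratios $r,R$ in \eqref{ratio_assumption} are $1\pm o_L(1)$. Proposition~\ref{prop_general}\ref{prop_general_compare} together with \ref{prop_general_convergence} (applied conditionally, with $N_\ell\sim np_\ell\to\infty$ concentrated) gives that the longest chain in $\triangle_\ell$, normalized by $n^{1/3}$, converges almost surely to
\[
\alpha\,\bigl(p_\ell\bigr)^{1/3}(1\pm o_L(1)).
\]
Using Lemma~\ref{lem_area}, Taylor expansion gives $\Area(\triangle_\ell)=\tfrac18 g''(x_\ell)L^{-3}(1+o(1))$, so this limit equals $\tfrac{\alpha}{2}[g''\fp]^{1/3}(x_\ell)L^{-1}$. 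Summing over $\ell$ and concatenating yields the lower bound $\tfrac\alpha2\int_\tau^{1-\tau}[g''\fp(\cdot,g)]^{1/3}$ almost surely, up to $o_L(1)$. Letting $L\to\infty$, then $\tau\to0$, then $\eta\to0$ finishes. The almost-sure statement for the finitely many triangles follows from Borel--Cantelli, using the exponential concentration in \eqref{concentration_ineq} together with a Chernoff bound on $N_\ell$.

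\textbf{Part (b).} The lower bound comes from the event that each sample lands in the triangles and that, within each, the points form a full chain. Choose occupancy numbers $k_\ell$ summing to $n$, with $k_\ell/n=w_\ell$ to be optimized. The multinomial probability of these occupancies, times the product of the lower bounds from Proposition~\ref{prop_general}\ref{prop_general_likelihood}, is
\[
\frac{n!}{\prod k_\ell!}\prod_\ell p_\ell^{k_\ell}\, r^{k_\ell}\frac{2^{k_\ell}}{k_\ell!(k_\ell+1)!}.
\]
By Stirling, $\frac1n\log$ of this plus $\frac1n\log\frac{(3n)!}{n!}$ becomes a function of $(w_\ell)$. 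Maximizing over $(w_\ell)$ by Lagrange multipliers should give $w_\ell\propto p_\ell^{1/3}$, and the value $\log\bigl(\tfrac{27}{4}(\sum_\ell (2p_\ell)^{1/3}\cdot\text{const})^3\bigr)$. With $p_\ell^{1/3}\approx\tfrac12[g''\fp]^{1/3}L^{-1}$, this matches $\log(27J(g)^3/4)$ in the limit. I will check the constant via the uniform case, where $J_\star=2$ and \eqref{0o0nc} must be consistent.

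The main obstacle I expect is the bookkeeping in (a): making the conditional application of Proposition~\ref{prop_general} rigorous and uniform in $\ell$, and securing uniform (not merely locally uniform) $\eps$-closeness near $x=1$ where $g'$ blows up. In (b) the delicate point is the Stirling optimization and the endpoint pieces. For those I would simply require zero points in the end triangles; this is not required by the chain definition, and since it costs nothing it is harmless.
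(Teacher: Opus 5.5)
Your proposal is correct and follows essentially the same route as the paper. For (a) both arguments use tangency triangles of a smooth strictly convex approximant over a trimmed interval, apply Proposition~\ref{prop_general}\ref{prop_general_compare} and \ref{prop_general_concentration} conditionally on the binomial counts, and finish with Hoeffding and Borel--Cantelli. For (b) both fix occupancies $k_\ell\propto p_\ell^{1/3}$ and use Proposition~\ref{prop_general}\ref{prop_general_likelihood}, and your direct Stirling computation is equivalent to the paper's use of Lemma~\ref{lem_multinomial}.

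Your worry about uniform closeness near $x=1$ is unnecessary, because all chain points lie over $[\tau,1-\tau]$. As in the paper, fix $\tau$ first using the endpoint bound from \eqref{eq_unif_case_a}, which is uniform over $\cF$. Then take the smooth approximant close to $g$ only on $[0,1-\tau]$, which $d$-convergence provides, so the shrinking step can be dropped. That step also cites the wrong tool: upper semicontinuity bounds $J$ from above, whereas you would need a $\liminf$ lower bound, which Fatou gives.
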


In the proof of part~\ref{prop_lower_bound_full}, we will use the following lemma about multinomial distributions.
Roughly speaking, it says that every value within a fixed distance of the mode occurs with a probability that is comparable to that of the mode.
It is a simple application of the local central limit theorem, and we omit the proof.

\begin{lemma}[Multinomial lower bound] \label{lem_multinomial}
Fix $\eps_1,\eps_2>0$ and an integer $K\ge2$. Let $(M_1, M_2, \ldots, M_K)$ have the multinomial distribution with $m$ trials and probability parameters $(q_1, q_2, \ldots, q_K)$, where $q_k\ge\eps_1$ for every $k$. 
Assume $m_1,\dots,m_K$ are nonnegative integers that sum to $m$, and
\eq{
|m_k - mq_k| \le 1/\eps_2 \quad \text{for every $k$.}
}
Then there is a constant $c = c(K,\eps_1,\eps_2) > 0$ not depending on $m,m_1,\dots,m_K$, such that 
\eq{
\P(M_1=m_1,\ldots,M_K=m_K) \ge \frac{c}{m^{(K-1)/2}}.
}
\end{lemma}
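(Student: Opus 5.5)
This is a local central limit estimate, so the plan is to compare the target probability directly with the multinomial formula. Write $N_k=m-\sum_{j\neq k}\ldots$ and work with the explicit probability
\[
\P(M_1=m_1,\ldots,M_K=m_K)=\frac{m!}{m_1!\cdots m_K!}\prod_{k=1}^K q_k^{m_k}.
\]

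\emph{Step 1 (Stirling).} Apply Stirling's formula in the two-sided form $\sqrt{2\pi n}\,(n/\e)^n\le n!\le \e\sqrt{n}\,(n/\e)^n$, valid for $n\ge1$. For large $m$ every $m_k\ge mq_k-1/\eps_2\ge m\eps_1/2\ge1$, so this applies to each factorial. Up to a constant depending only on $K$, the probability is then bounded below by
\[
\sqrt{\frac{m}{\prod_k m_k}}\;\exp\Big(-\sum_k m_k\log\frac{m_k}{mq_k}\Big).
\]
The prefactor is at least $(m\eps_1/2)^{-K/2}m^{1/2}\gtrsim m^{-(K-1)/2}$, with a constant depending on $K$ and $\eps_1$.

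\emph{Step 2 (relative entropy term).} Set $p_k=m_k/m$. The exponent equals $-m\,\mathrm{KL}(p\,\|\,q)$. Use the bound $\mathrm{KL}(p\|q)\le\chi^2(p\|q)=\sum_k (p_k-q_k)^2/q_k$. Since $|p_k-q_k|\le 1/(\eps_2 m)$ and $q_k\ge\eps_1$, this gives
\[
m\,\mathrm{KL}\le \frac{K}{\eps_1\eps_2^2 m}\le \frac{K}{\eps_1\eps_2^2}.
\]
So the exponential factor is bounded below by a constant depending only on $(K,\eps_1,\eps_2)$.

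\emph{Step 3 (small $m$).} For $m$ below the threshold $m_*=2/(\eps_1\eps_2)$, only finitely many configurations $(m,m_1,\ldots,m_K)$ are possible. For each one, the probability is positive and bounded below uniformly in $q$, because $q_k\ge\eps_1$. Taking the minimum over this finite set and combining it with Steps 1–2 gives the constant $c$.

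The only point needing care is the uniformity of the constants in Step 1, in particular making sure no $m_k$ is zero. This is exactly why the small-$m$ range is split off in Step 3.
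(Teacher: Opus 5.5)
Your proof is correct after one small repair. It takes a different route from the paper, which omits the argument and only remarks that the lemma is a simple application of the local central limit theorem.

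The repair is in Step~1. The bound $\sqrt{m/\prod_k m_k}\ge (m\eps_1/2)^{-K/2}m^{1/2}$ goes the wrong way: $m_k\ge m\eps_1/2$ gives $\prod_k m_k\ge (m\eps_1/2)^K$, which is an \emph{upper} bound on the prefactor. What you need is an upper bound on the $m_k$. The trivial one, $m_k\le m$, gives $\sqrt{m/\prod_k m_k}\ge m^{-(K-1)/2}$ with constant $1$.

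A few minor points:
\begin{itemize}
\item The chain $m\eps_1/2\ge1$ needs $m\ge 2/\eps_1$, which $m\ge m_*=2/(\eps_1\eps_2)$ does not guarantee when $\eps_2>1$. However, $m\ge m_*$ already gives $m_k\ge m\eps_1-1/\eps_2\ge 1/\eps_2>0$, hence $m_k\ge1$ by integrality, and that is all you use.
\item The opening fragment defining $N_k$ is unused and can be deleted.
\item The statement tacitly requires $m\ge1$.
\end{itemize}

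\emph{Comparison with the local CLT route.} That route views $(M_1,\ldots,M_K)$ as a sum of $m$ i.i.d.\ unit vectors on the $(K-1)$-dimensional lattice $\{x\in\Z^K:\,\sum_k x_k=m\}$. It reads off $\P(M_1=m_1,\ldots,M_K=m_K)=(1+o(1))(2\pi m)^{-(K-1)/2}(\prod_k q_k)^{-1/2}$ when $|m_k-mq_k|=O(1)$. This gives the sharp constant, but it has two costs:
\begin{itemize}
\item To obtain $c$ depending only on $(K,\eps_1,\eps_2)$ and not on $q$, one needs a local CLT whose error is uniform over the compact family $\{q:\,q_k\ge\eps_1\ \forall k\}$.
\item Small $m$ must still be treated separately.
\end{itemize}

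Your argument identifies the Stirling exponent exactly as $-m\,\mathrm{KL}(p\|q)$ and then uses Jensen's bound $\mathrm{KL}(p\|q)\le\log(1+\chi^2(p\|q))\le\chi^2(p\|q)$. It is fully elementary and automatically uniform in $q$. It also yields an explicit constant, e.g.\ $c=\min\{\sqrt{2\pi}\,\e^{-K-K/(\eps_1\eps_2^2)},\,\eps_1^{m_*}\}$, where for $1\le m<m_*$ you use $\P\ge\prod_k q_k^{m_k}\ge\eps_1^{m}\ge\eps_1^{m_*}$.

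It extends, with only the threshold changed, to deviations $|m_k-mq_k|\le C\sqrt m$, since then $m\,\chi^2\le KC^2/\eps_1$ stays bounded. What it gives up is the sharp prefactor, which the paper's application does not need.
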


\begin{proof}[Proof of Proposition~\ref{prop_lower_bound}]
Fix $h\in\cF$ and $\eps>0$.
We must show
\eeq{ \label{10npl4}
\liminf_{n\to\infty} \frac{\sL_n(h,\eps)}{n^{1/3}} \ge \frac{\alpha}{2}J(h,\eps) \quad \text{almost surely},
}
as well as
\eeq{ \label{t5c6b}
\liminf_{n\to\infty}\frac{1}{n}\log\Big[\frac{(3n)!}{n!}\P\big(\sL_n(h,\eps)=n\big)\Big] \ge \log\Big(\frac{27 J(h,\eps)^3}{4}\Big).
}
Let $\delta\in\big(0,\tfrac13J(h,\eps)\big)$ be arbitrary.
By the definition of $J(h,\eps)$ in \eqref{Jfeps_def}, there exists $g\in\cF$ such that
\begin{align}
\label{2ie9nc}
\|g-h\|_{[0,1]}<\eps
\qquad \text{and} \qquad
 J(g) &\ge J(h,\eps) - \delta.
\end{align}
Recall the constant $\fC$ from \eqref{p_upper}.
Having fixed $\delta$, we choose $\eta>0$ small enough that
\eeq{ \label{377bx3}
4(\fC\eta)^{1/3} \le \delta.
}
By \eqref{2ie9nc} and Lemma~\ref{lem_smooth_f}, there exists $f\in\cF$ such that the following three statements hold:
\begin{subequations} \label{wknvh6}
\begin{gather}
\|f - g\|_{[0,1-\eta]} + \|g-h\|_{[0,1]} < \eps, \label{qjb7w} \\
\text{$f$ is smooth with $f''(x)>0$ for all $x\in(0,1)$,} \label{kgjb7} \\
J(f)\ge J(h,\eps)-2\delta. \label{r3xby6}
\end{gather}
\end{subequations}
Our choices of $\eta$ and $f$ ensure the following:
\eeq{ \label{small_J_loss}
&J(h,\eps) - \int_\eta^{1-\eta}\big[f''(x)\cdot\fp(x,f(x))\big]^{1/3}\ \dd x \\
&\stackref{r3xby6}{\le} 2\delta + J(f) - \int_\eta^{1-\eta}\big[f''(x)\cdot\fp(x,f(x))\big]^{1/3}\ \dd x \\
&\stackrefp{r3xby6}{=} 2\delta + \int_0^\eta\big[f''(x)\cdot\fp(x,f(x))\big]^{1/3}\ \dd x+
\int_{1-\eta}^1 \big[f''(x)\cdot\fp(x,f(x))\big]^{1/3}\ \dd x \\
&\stackrefpp{p_upper}{r3xby6}{\le} 2\delta+(2\fC)^{1/3}\Big(\int_0^\eta f''(x)^{1/3}\ \dd x + \int_{1-\eta}^1f''(x)^{1/3}\ \dd x\Big) \\
&\stackrefpp{eq_unif_case_a}{r3xby6}{\le} 2\delta + (2\fC)^{1/3}\big((4\eta)^{1/3}+(4\eta)^{1/3}\big)
\stackref{377bx3}{\le} 3\delta.
}

Now divide the interval $[\eta,1-\eta]$ into subintervals $[x_1,x_2], [x_2,x_3], \ldots, [x_{L},x_{L+1}]$ of equal length, meaning
\eq{
x_\ell \coloneqq \eta + \frac{\ell-1}{L}(1-2\eta), \quad \ell\in\{1,\ldots,L+1\}.
}
We choose $L$ large enough to satisfy certain conditions detailed below.
For $\ell\in\{1,\ldots,L\}$, consider the tangency triangle $\triangle_\ell \coloneqq \Tri_f(x_\ell,x_{\ell+1})$ from Definition~\ref{def_triangle}. 
This triangle is nondegenerate because of the strict convexity from \eqref{kgjb7}.
It has two vertices on the graph $f$, namely
\eq{
\nA_\ell \coloneqq (x_\ell,f(x_\ell))
\qquad \text{and} \qquad
\nB_\ell \coloneqq \nA_{\ell+1}=(x_{\ell+1},f(x_{\ell+1})).
}
Define the quantities
\eq{
p_\ell \coloneqq \int_{\triangle_\ell}\fp(x,y)\ \dd x\, \dd y,
\qquad
r_\ell \coloneqq \frac{\inf_{(x,y)\in\triangle_\ell}\fp(x,y)}{p_\ell/\Area(\triangle_\ell)},
\qquad
R_\ell \coloneqq \frac{\sup_{(x,y)\in\triangle_\ell}\fp(x,y)}{p_\ell/\Area(\triangle_\ell)},
}
and note that $r_\ell\ge\fc/\fC$ and $R_\ell\le\fC/\fc$ by \eqref{p_upper} and \eqref{p_lower}.
Since $f$ is nondecreasing, the triangle $\triangle_\ell$ lies within the rectangle with corners $\nA_\ell$ and $\nB_\ell$ (see Figure~\ref{fig_tangency_triangle} for a visualization): 
\eeq{ \label{28ch5}
\triangle_\ell \subset [x_\ell,x_{\ell+1}] \times [f(x_\ell),f(x_{\ell+1})].
}
Since $x_{\ell+1}-x_\ell = \frac{1-2\eta}{L}$ and $f$ is continuous, the diameter of the rectangle on the right-hand side vanishes as $L\to\infty$, uniformly in $\ell$.
Therefore, the diameter of $\triangle_\ell$ vanishes as $L\to\infty$, uniformly in $\ell$.
Since $\fp$ is continuous by assumption \eqref{p_cont}, it follows that
\eeq{ \label{unk3c}
\lim_{L\to\infty}\sup_{\ell\in\{1,\ldots,L\}}\sup_{(x,y)\in\triangle_\ell}\Big|\frac{p_\ell}{\Area(\triangle_\ell)}-\fp(x,y)\Big| = 0.
}
Since $\fp$ is bounded away from $0$ because of \eqref{p_lower}, \eqref{unk3c} implies
\eq{
\lim_{L\to\infty}\sup_{\ell\in\{1,\ldots,L\}}\sup_{(x,y)\in\triangle_\ell}\Big|\frac{p_\ell}{\Area(\triangle_\ell)\fp(x,y)} - 1\Big| = 0.
}
Because of this limit, we may choose $L$ large enough that the following inequalities hold for every $\ell\in\{1,\ldots,L\}$:
\begin{gather}
p_\ell \ge (1-\delta)\Area(\triangle_\ell)\cdot\fp(x_\ell,f(x_\ell)), \label{283jbm} \\
r_\ell \ge 1 - \delta \quad \text{and} \quad R_\ell \le (1-\delta)^{-1}. \label{r_ell_small}
\end{gather}
In addition, in light of \eqref{qjb7w}, we may use continuity of $f$ to assume $L$ is large enough that for every $\ell$, we have
\eeq{ \label{cihoo1}
f(x_{\ell+1}) - f(x_\ell) < \eps - \|f - h\|_{[0,1-\eta]}.
}
The graph of $f\big|_{[x_\ell,x_{\ell+1}]}$ remains inside $\triangle_\ell$ (see \eqref{graph_in_triangle}), which justifies the first implication below:
\eeq{ \label{b7n0o1c}
(x,y)\in\triangle_\ell \quad 
&\stackrefp{cihoo1}{\implies} \quad (x,y)\in\triangle_\ell \text{ and } (x,f(x))\in\triangle_\ell \\
&\stackrefpp{28ch5}{cihoo1}{\implies} \quad |f(x)-y| \le f(x_{\ell+1}) - f(x_\ell) \\
&\stackref{cihoo1}{\implies} \quad |f(x)-y| < \eps - \|f - h\|_{[0,1-\eta]} \\
&\stackrefp{cihoo1}{\implies} \quad |h(x)-y| < \eps.
}
In addition, since $f''$ is continuous and bounded away from $0$ on $[\eta,1-\eta]$ thanks to \eqref{kgjb7}, we may assume $L$ is large enough that
\eq{
(1-\delta)f''(x_\ell)\le f''(u)\le (1-\delta)^{-1}f''(x_\ell) \quad \text{for every $u\in[x_\ell,x_{\ell+1}]$.}
}
Lemma~\ref{lem_area}\ref{lem_area_b} then gives the following inequality:
\eeq{ \label{2ub7x7}
\Area(\triangle_\ell) 
\ge (1-\delta)^3f''(x_\ell)\cdot\tfrac18(x_{\ell+1}-x_\ell)^3.
}
Combining \eqref{283jbm} and \eqref{2ub7x7} yields
\eeq{ \label{30n0x}
p_\ell 
&\ge (1-\delta)^4\cdot \tfrac18f''(x_\ell)\cdot\fp(x_\ell,f(x_\ell))\cdot(x_{\ell+1}-x_\ell)^3.
}
Finally, we assume $L$ is large enough that the following Riemann sum approximation holds:
\eeq{ \label{28b11e}
\sum_{\ell=1}^{L} \big[f''(x_\ell)\cdot\fp(x_\ell,f(x_\ell))\big]^{1/3}(x_{\ell+1}-x_\ell)
\ge (1-\delta)\int_\eta^{1-\eta} \big[f''(x)\cdot\fp(x,f(x))\big]^{1/3}\ \dd x.
}
Here we have used the continuity of $x\mapsto f''(x)\cdot\fp(x,f(x))$, which is guaranteed by \eqref{kgjb7} and \eqref{p_cont}.
We now have
\eeq{ \label{qvb3cs}
\sum_{\ell=1}^{L} p_\ell^{1/3} 
&\stackref{30n0x,28b11e}{\ge} \frac{(1-\delta)^{7/3}}{2}\int_\eta^{1-\eta}\big[f''(x)\cdot\fp(x,f(x))\big]^{1/3}\ \dd x \\
&\stackrefpp{small_J_loss}{30n0x,28b11e}{\ge}\frac{(1-\delta)^{7/3}}{2}\Big(J(h,\eps)-3\delta\Big).
}

Consider the length of the longest convex chain inside the triangle $\triangle_\ell$:
\eq{
\sL_{n,\ell}
\coloneqq \max\big\{\#\cC:\, \text{$\cC\subseteq \cS_n\cap\Interior(\triangle_\ell)$ and $\cC\cup\{\nA_\ell,\nB_\ell\}$ is in convex position}\big\}
}

\begin{claim}[Concatenation] \label{claim_concatenate}
$\sL_n(h,\eps) \ge \sum_{\ell=1}^{L}\sL_{n,\ell}$.
\end{claim}

\begin{proofclaim}
Let $\cC_1,\ldots,\cC_{L}$ be maximizing sets for $\sL_{n,1},\ldots,\sL_{n,L}$, respectively.
These sets are disjoint since $\Interior(\triangle_1),\ldots,\Interior(\triangle_{L})$ are disjoint.
Denote the disjoint union by $\cC_\mathsf{all} = \biguplus_{\ell=1}^{L} \cC_\ell$.
By $L-1$ many applications of Lemma~\ref{lem_concatenation}, the set $\{\nA_1,\nB_L\}\cup\cC_\mathsf{all}$ is in convex position.
By two more applications of Lemma~\ref{lem_concatenation}---each time concatenating with an empty set---we obtain that $\{(0,f(0)),(1,f(1))\}\cup\cC_{\mathsf{all}}$ is in convex position.
By Lemma~\ref{lem_position_slopes}, this means that if we label the elements of $\cC_\mathsf{all}$ in increasing order of their horizontal coordinates, as $\nP_1,\ldots,\nP_m$, then
\eeq{ \label{3odncp}
\Slope\big((0,f(0)),\nP_1\big)<\Slope(\nP_1,\nP_2)<\cdots<\Slope(\nP_{m-1},\nP_m)<\Slope\big(\nP_m,(1,f(1))\big).
}
Since $f\in\cF$, we have $f(0)=0$ and $f(1)\le 1$, so \eqref{3odncp} implies
\eq{
\Slope\big((0,0),\nP_1\big)<\Slope(\nP_1,\nP_2)<\cdots<\Slope(\nP_{m-1},\nP_m)<\Slope\big(\nP_m,(1,1)\big).
}
Therefore, $\cC_\mathsf{all}$ is a convex chain.
Since $\cC_\ell\subset\triangle_\ell$, \eqref{b7n0o1c} ensures $|h(x)-y|<\eps$ for all $(x,y)\in\cC_\mathsf{all}$.
This makes $\cC_\mathsf{all}$ a candidate for $\sL_n(h,\eps)$ as defined in \eqref{Lnf_def}, hence
\[
\sL_n(h,\eps) \ge \#\cC_\mathsf{all} = \sum_{\ell=1}^{L}\#\cC_\ell = \sum_{\ell=1}^{L}\sL_{n,\ell}.
\qedhere
\]
\end{proofclaim}

We are now ready to prove \eqref{10npl4}.
Denote the number of sample points that land in $\Interior(\triangle_\ell)$ by
\eeq{ \label{N_nell_def}
N_{n,\ell} \coloneqq \#\big(\cS_n\cap\Interior(\triangle_\ell)\big)\sim\mathsf{Binomial}(n,p_\ell).
}
By Hoeffding's inequality, we have
\eeq{ \label{3inplk}
\P\big(N_{n,\ell} \ge np_\ell - n^{2/3}\big)
\ge 1 - \exp(-2n^{1/3}).
}
Conditional on $N_{n,\ell}$, the random set $\cS_n\cap\triangle_\ell$ has the same law as $N_{n,\ell}$ independent samples from $\fp$ conditioned to be inside $\triangle_\ell$. %, and these sets are independent across $\ell$.
Therefore, Proposition~\ref{prop_general}\ref{prop_general_compare} (with $\wh\fp$ equal to the conditional density on $\triangle_\ell$) gives
\eq{
\E\givenk{\sL_{n,\ell}}{N_{n,\ell}} \ge (1-\delta)\Big(\frac{r_\ell}{R_\ell}\Big)^{1/3}\alpha N_{n,\ell}^{1/3}\one\{N_{n,\ell}\ge\fn\},
}
where $\fn$ depends only on $\fC/\fc$ and $\delta$.
Since $(1-\delta)(r_\ell/R_\ell)^{1/3}\le 1$, we can remove the indicator on the right-hand side by instead writing
\eq{
\E\givenk{\sL_{n,\ell}}{N_{n,\ell}} + \alpha\fn^{1/3} \ge 
(1-\delta)\Big(\frac{r_\ell}{R_\ell}\Big)^{1/3}\alpha N_{n,\ell}^{1/3}.
}
Meanwhile, Proposition~\ref{prop_general}\ref{prop_general_concentration} (with $t_0 =1$, $\beta=\tfrac14$, $t=n^{1/13}$) gives
\eq{
\P\givenk[\Big]{\sL_{n,\ell} \ge \E\givenk{\sL_{n,\ell}}{N_{n,\ell}} - n^{1/13}N_{n,\ell}^{1/4}}{N_{n,\ell}}
\ge 1 - C\exp(-\fa n^{1/13}),
}
where $C$ is a universal constant, and $\fa$ depends only on $\fC/\fc$.
Since $N_{n,\ell}\le n$, it follows from the two previous displays that
\eq{
\P\Big(\sL_{n,\ell} \ge (1-\delta)\Big(\frac{r_\ell}{R_\ell}\Big)^{1/3}\alpha N_{n,\ell}^{1/3} - \alpha\fn^{1/3} - n^{1/13}\cdot n^{1/4}\Big)
\ge 1 - C\exp(-\fa n^{1/13}).
}
By a union bound with \eqref{3inplk}, it follows that
\eq{
&\P\Big(\sL_{n,\ell} \ge (1-\delta)\Big(\frac{r_\ell}{R_\ell}\Big)^{1/3}\alpha\big(np_\ell-n^{2/3}\big)^{1/3} - \alpha\fn^{1/3} - n^{17/52}\Big) \\
&\ge 1 - C\exp(-\fa n^{1/13}) - \exp(-2n^{1/3}).
}
Using the first Borel--Cantelli lemma, we conclude that with probability one, we have
\eeq{  \label{2ih4d}
\sL_{n,\ell} \ge (1-\delta)\Big(\frac{r_\ell}{R_\ell}\Big)^{1/3}\alpha\big(np_\ell-n^{2/3}\big)^{1/3} - \alpha\fn^{1/3} - n^{17/52}
}
for all large $n$ and every $\ell$.
Now sum over $\ell$ and invoke Claim~\ref{claim_concatenate} to obtain the following almost sure statement:
\eq{
\liminf_{n\to\infty} \frac{\sL_n(h,\eps)}{n^{1/3}}
&\stackrefpp{2ih4d}{r_ell_small,qvb3cs}{\ge} (1-\delta)\alpha\sum_{\ell=1}^{L}\Big(\frac{r_\ell}{R_\ell}\Big)^{1/3}p_\ell^{1/3} \\
&\stackref{r_ell_small,qvb3cs}{\ge} \frac{(1-\delta)^{4}\alpha}{2}\Big(J(h,\eps)-3\delta\Big).
}
As $\delta>0$ is arbitrary, we can send $\delta\searrow0$ we obtain \eqref{10npl4}.

Our final task is to prove \eqref{t5c6b}.
Observe that the random vector $(N_{n,1},\ldots,N_{n,L},n-\sum_{\ell=1}^{L}N_{n,\ell})$ from \eqref{N_nell_def} has the multinomial distribution with $n$ trials and probability parameters $(p_1,\ldots,p_{L},1-\sum_{\ell=1}^{L}p_\ell)$.
Let us fix nonnegative integers $n_1,\ldots,n_{L}$ such that 
\begin{subequations} \label{nj4bk1}
\begin{gather}
\sum_{\ell=1}^{L} n_\ell = n, \label{3n9c4g} \\
\text{and} \quad |n_\ell - 3\bar q_\ell n| \le 1 \quad \text{for every $\ell$,} \quad \text{where} \quad
\bar q_\ell \coloneqq \frac{p_\ell^{1/3}}{3\sum_{k=1}^{L}p_k^{1/3}} \stackref{p_lower}{>}0. \label{q_ell_def}
\end{gather}
\end{subequations}
Because of \eqref{3n9c4g}, we have
\eeq{ \label{i8mb4p}
\P\bigg(\bigcap_{\ell=1}^{L}\{N_{n,\ell} = n_\ell\}\bigg)
= {n \choose n_1,\ldots,n_{L}}p_1^{n_1}\cdots p_{L}^{n_{L}}
&= n!\prod_{\ell=1}^{L}\frac{p_\ell^{n_\ell}}{n_\ell!}.
}
Denote the intersection event in \eqref{i8mb4p} by $E_n$.
Conditional on $E_n$, the random set $\cS_n\cap\triangle_\ell$ has the same law as $n_\ell$ independent samples from $\fp$ conditioned to be inside $\triangle_\ell$, and these sets are independent across $\ell$.
Therefore, Proposition~\ref{prop_general}\ref{prop_general_likelihood} (with $\wh\fp$ equal to the conditional density on $\triangle_\ell$) implies the first inequality below:
\eeq{ \label{fe8vx4}
\P\givenp[\Big]{\bigcap_{\ell=1}^{L}\{\sL_{n,\ell} = n_\ell\}}{E_n}
&\stackrefpp{1o0nc}{r_ell_small,3n9c4g}{\ge} \prod_{\ell=1}^{L} r_\ell^{n_\ell}\frac{2^{n_\ell}}{n_\ell!(n_\ell+1)!} \\
&\stackref{r_ell_small,3n9c4g}{\ge} (1-\delta)^{n}\frac{2^n}{(n+1)^{L}}\prod_{\ell=1}^{L}\frac{1}{(n_\ell!)^2}.
}
Denote the event in \eqref{fe8vx4} by
\[
F_n \coloneqq \bigcap_{\ell=1}^{L}\{\sL_{n,\ell}=n_\ell\}.
\]
On $F_n$, Claim~\ref{claim_concatenate} gives $\sL_n(h,\eps) \ge \sum_{\ell=1}^{L} n_\ell = n$, and the reverse inequality $\sL_n(h,\eps)\le n$ is trivial since there are $n$ sample points. 
Hence
\eeq{ \label{184lb0}
\P\big(\sL_n(h,\eps)=n\big) \ge \P(F_n).
}
Combining \eqref{i8mb4p} and \eqref{fe8vx4}, and then invoking the definition of $\bar q_\ell$ from \eqref{q_ell_def}, yields
\eq{
\P(F_n) 
&\ge (1-\delta)^{n}\frac{2^nn!}{(n+1)^{L}}\prod_{\ell=1}^{L} \frac{p_\ell^{n_\ell}}{(n_\ell!)^3} \\
&= (1-\delta)^{n}\frac{2^nn!}{(n+1)^{L}(3n)!}\Big(3\sum_{\ell=1}^{L}p_\ell^{1/3}\Big)^{3n}{3n\choose n_1,n_1,n_1,\ldots,n_{L},n_{L},n_{L}}\prod_{\ell=1}^{L} (\bar q_\ell)^{3n_\ell}.
}
Because of \eqref{nj4bk1}, we can apply the multinomial estimate from Lemma~\ref{lem_multinomial} with $\eps_1 \coloneqq \min_\ell \bar q_\ell$, $\eps_2 \coloneqq 1$, $m \coloneqq 3n$ trials, $K \coloneqq 3L$ types of outcomes, and parameters
\eq{
(q_1,q_2,q_3,\ldots,q_{3L-2},q_{3L-1},q_{3L}) &\coloneqq (\bar q_1, \bar q_1, \bar q_1, \ldots, \bar q_{L},\bar q_{L}, \bar q_{L}), \\
(m_1,m_2,m_3,\ldots,m_{3L-2},m_{3L-1},m_{3L}) &\coloneqq (n_1, n_1, n_1, \ldots, n_{L},n_{L}, n_{L}).
}
This application of Lemma~\ref{lem_multinomial} results in the first inequality below:
\eq{
\P(F_n) 
&\stackrefp{qvb3cs}{\ge} (1-\delta)^{n}\frac{2^nn!}{(n+1)^{L}(3n)!}\Big(3\sum_{\ell=1}^{L}p_\ell^{1/3}\Big)^{3n}\cdot\frac{c}{(3n)^{(3L-1)/2}} \\
&\stackref{qvb3cs}{\ge}(1-\delta)^{8n}\frac{(27/4)^nn!}{(3n)!}\Big(J(h,\eps)-3\delta\Big)^{3n}\cdot\frac{c}{(n+1)^L(3n)^{(3L-1)/2}},
}
where $c$ depends on $L$ and the value of $\eps_1$, but not on $n$.
It follows that
\eq{
\liminf_{n\to\infty}\frac{1}{n}\log\Big[\frac{(3n)!}{n!}\P(F_n)\Big] \ge 8\log(1-\delta) + \log(27/4) + 3\log\Big(J(h,\eps)-3\delta\Big).
}
As $\delta>0$ is arbitrary, we can send $\delta\searrow0$ to conclude
\eq{
\liminf_{n\to\infty}\frac{1}{n}\log\Big[\frac{(3n)!}{n!}\P(F_n)\Big] \ge \log(27/4) + 3\log J(h,\eps).
}
In light of \eqref{184lb0}, this proves \eqref{t5c6b}.
\end{proof}

\section{Upper bounds} \label{sec_upper_bound}

The goal of this section is to prove the following proposition, which complements the lower bounds from Proposition~\ref{prop_lower_bound}.
Parts~\ref{prop_upper_bound_far_typical} and \ref{prop_upper_bound_far_full} below concern convex chains that are ``far from'' every optimizer of $J$.
Recall from \eqref{L_neps_def} the random quantity $\sL_n^\far(\eps)$, which is measurable by application of Lemma~\ref{lem_measurable} stated further below.

\begin{proposition}[Upper bounds] \label{prop_upper_bound}
Assume \eqref{p_cont} and \eqref{p_lower}.
\begin{enumerate}[label=\textup{(\alph*)}]

\item \label{prop_upper_bound_typical} \textup{(Typical convex chain)}
Let $\alpha$ be the constant from Theorem~\ref{thm_uniform_known}. Then
\eeq{ \label{upper_almost_sure}
\limsup_{n\to\infty} \frac{\sL_n}{n^{1/3}} \le \frac{\alpha}{2}J_\star \quad \text{almost surely}.
}

\item \label{prop_upper_bound_far_typical} \textup{(Typical convex chains that are far from maximizers)}
For every $\eps>0$, there exists $\theta>0$ such that
\eeq{ \label{upper_far_almost_sure}
\limsup_{n\to\infty}\frac{\sL_n^\far(\eps)}{n^{1/3}} \le \frac{\alpha}{2}(J_\star-\theta) \quad \text{almost surely}.
}

\item \label{prop_upper_bound_full} \textup{(Full convex chain)}
We have
\eeq{ \label{upper_probability}
\limsup_{n\to\infty}\frac{1}{n}\log\Big[\frac{(3n)!}{n!}\P(\sL_n=n)\Big] \le \log\Big(\frac{27 J_\star^3}{4}\Big).
}

\item \label{prop_upper_bound_far_full} \textup{(Full convex chains that are far from maximizers)}
For every $\eps>0$, we have
\eeq{ \label{prop_upper_bound_eps}
\limsup_{n\to\infty}\frac{1}{n}\log\Big[\frac{(3n)!}{n!}\P\big(\sL_n^\far(\eps)=n\big)\Big] < \log\Big(\frac{27 J_\star^3}{4}\Big).
}
\end{enumerate}
\end{proposition}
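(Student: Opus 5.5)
The approach is to make rigorous the ``template'' strategy sketched in Section~\ref{subsec_ideas}. A longest chain will be trapped inside the tangency triangles of one of a \emph{bounded} (independent of $n$) family of piecewise convex templates $\tilde f$. Each triangle will then be handled with Proposition~\ref{prop_general}, and the resulting sums compared to $J$ via Corollary~\ref{cor_unif_case_characterization}.

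\textbf{Templates.} Fix a large $L$ and a small mesh $\kappa\ll L^{-3}$, and set $x_\ell=\ell/L$. Given a convex chain $\cC$, let $g_\cC\in\cF$ be the polygonal convex function through $(0,0)$, the points of $\cC$, and $(1,1)$. Record $g_\cC(x_\ell)$ and its one-sided slopes at $x_\ell$, rounded to the grid $\kappa\Z$; slopes are capped at some $M$ only in the last two strips. The rounded data $\omega$ take at most $(C/\kappa)^{3L}$ values.

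For each $\omega$ and each interior strip $[x_\ell,x_{\ell+1}]$, I enlarge by $O(\kappa)$ the triangle cut out by the two recorded tangent lines and the recorded secant, obtaining $\triangle_\ell^\omega$. Every point of $\cC$ with abscissa in that strip lies in $\triangle_\ell^\omega$. These points form a convex chain with respect to two points of $\partial\triangle_\ell^\omega$ that are $O(\kappa)$-close to its vertices. By adding at most two boundary points and using Lemma~\ref{lem_position_slopes}, their number is therefore at most $2$ plus the longest chain for the triangle's own vertices.

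The first and last strips are handled separately: the triangles there have area $O(L^{-1})$, so the deep-tail bound of Proposition~\ref{prop_general}\ref{prop_general_tail} shows they contribute at most $C(n/L)^{1/3}$.

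\textbf{Per-triangle bounds and the sum.} For each $\omega$ and $\ell$, Proposition~\ref{prop_general}\ref{prop_general_compare} together with \ref{prop_general_concentration} (after conditioning on the binomial count, as in the lower bound) gives
\[
\sL\big(\triangle^\omega_\ell\big)\le(1+\delta)\,\alpha\,(np^\omega_\ell)^{1/3}
\]
with probability $1-\exp(-cn^{1/13})$. Here $p^\omega_\ell\approx\fp(x_\ell,\cdot)\Area(\triangle_\ell^\omega)$ by continuity of $\fp$. A union bound over the finitely many $(\omega,\ell)$ and Borel--Cantelli then give
\[
\limsup_{n\to\infty}\frac{\sL_n}{n^{1/3}}\le(1+\delta)\,\alpha\,\max_\omega\sum_\ell\big[\fp_\ell\Area(\triangle_\ell^\omega)\big]^{1/3}+O(L^{-1/3}).
\]
Next, on each strip replace the data by the uniform optimizer $g_\ell$ of Proposition~\ref{prop_unif_case} with the recorded boundary data. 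Corollary~\ref{cor_unif_case_characterization} gives $\tfrac12\int g_\ell''^{1/3}=\Area^{1/3}$, so the sum above equals $\tfrac12\tilde J(\tilde f_\omega)$ up to $o(1)$, where $\tilde f_\omega$ is the concatenation of the $g_\ell$. The template $\tilde f_\omega$ is convex up to slope defects of size $\kappa$ at the knots. Sending $\kappa\to0$ and then $L\to\infty$ along a sequence of maximizing templates, the compactness of Lemma~\ref{lem_compact} and the upper semicontinuity argument of Proposition~\ref{prop_usc} (run for these nearly convex functions) yield a limit $f\in\cF$ with $\limsup\tilde J\le J(f)\le J_\star$. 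This proves~\ref{prop_upper_bound_typical}.

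For~\ref{prop_upper_bound_far_typical}, a chain that is $\eps$-far from $\argmax J$ forces its template $\tilde f_\omega$ to be $\eps/2$-far from $\argmax J$ once $L$ is large, using the uniform equicontinuity of Proposition~\ref{prop_equicontinuity}. Proposition~\ref{prop_far_from_optimizer}, passed through the same limit, then gives the gap $\theta$.

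\textbf{Full chains.} For~\ref{prop_upper_bound_full} and~\ref{prop_upper_bound_far_full}, on $\{\sL_n=n\}$ all points lie in the triangles of some $\omega$ and form full chains in each. Union-bound over $\omega$ and over the occupation numbers $(n_\ell)$, using the multinomial probability together with the upper bound in \eqref{1o0nc}:
\[
\P(\sL_n=n)\le\sum_\omega\sum_{(n_\ell)}n!\prod_\ell\frac{(2R_\ell p_\ell)^{n_\ell}}{(n_\ell!)^2(n_\ell+1)!}.
\]
Applying the multinomial theorem with each $n_\ell!^3$ compared to $(3n_\ell)!$ via Stirling, the inner sum is at most
\[
e^{o(n)}\,\frac{n!}{(3n)!}\Big(3\sum_\ell(2p_\ell)^{1/3}\Big)^{3n}.
\]
Since $3\cdot2^{1/3}\cdot\tfrac12\tilde J$ cubed equals $27\tilde J^3/4$, this gives the rate $\log(27J_\star^3/4)$. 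The far version follows by restricting to templates with $\tilde J\le J_\star-\theta$.

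\textbf{Main obstacle.} I expect the hardest step to be the template construction. It has to stay finite while controlling the points near $x=1$, where slopes blow up. It also has to quantify how much the $\kappa$-misaligned triangles over-count: the enlargement must add only $o(L^{-3})$ area per strip. Finally, the limit $\kappa\to0$, $L\to\infty$ of non-convex templates must still land in $\cF$ with no gain in $J$. If the $O(\kappa)$ enlargement is too costly relative to the strip area $\asymp L^{-3}f''$, I would first try choosing $\kappa=\kappa(L)$ after $L$ and handling strips where the recorded slope gap is below $\sqrt\kappa$ by the crude bound $\Area^{1/3}\lesssim(\kappa L^{-2})^{1/3}$.
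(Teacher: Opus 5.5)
\textbf{Gap: transferring convex position from the true knots to the template vertices.} Your route differs from the paper mainly in using a template family that is finite and independent of $n$. The step that breaks is the transfer of convex position from the true knots to the vertices of $\triangle^\omega_\ell$. Since $x_\ell=\ell/L$ is generically not an abscissa of $\cC$, the point $(x_\ell,g_\cC(x_\ell))$ lies in the interior of an edge $\nP_0\nP_1$ of $g_\cC$. So the first chain point $\nP_1$ to the right of $x_\ell$ lies on the true tangent line. After the $O(\kappa)$ enlargement, the true knot is an interior point of $\triangle^\omega_\ell$, not a point of $\partial\triangle^\omega_\ell$. Moreover, $\nP_1$ is in general at height of order $\kappa$ above the left side of $\triangle^\omega_\ell$. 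Take the vertex of $\triangle^\omega_\ell$, or any point of its left side within the strip. The slope from it to $\nP_1$ exceeds $\partial^+g_\cC(x_\ell)$ by an amount of order $\kappa L$, whereas $\Slope(\nP_1,\nP_2)$ can exceed it by arbitrarily little. Hence a whole initial run of chain points near the knot fails the slope condition of Lemma~\ref{lem_position_slopes} relative to the template vertex and must be discarded. For fixed $\kappa,L$ the length of this run is not bounded in $n$; heuristically it is of order $\kappa L n^{1/3}$ in a longest chain. So the claim ``at most $2$ plus the longest chain for the triangle's own vertices'' is false.

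\textbf{Consequences for each part.} For (a) and (b) one could presumably repair this by discarding the chain points in a small template-determined region around each knot and bounding them with Proposition~\ref{prop_general}\ref{prop_general_tail} or Lemma~\ref{lem_lozenge}. That is an extra estimate your proposal does not contain. For (c) and (d) the problem is more serious. The upper bound in \eqref{1o0nc} controls the event that every sample in a triangle is in convex position with that triangle's vertices. For a fixed finite family it is false that on $\{\sL_n=n\}$ all points ``lie in the triangles of some $\omega$ and form full chains in each''. A number of points growing with $n$ can sit near the knots outside this structure, so the product and multinomial bound cannot be applied as written.

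\textbf{How the paper avoids this.} It makes the capture exact, at the price of an $n$-dependent family. Each knot $x_\ell$ is chosen among $n+1$ candidates at distance at least $n^{-2}$ from every abscissa of $\cC$; see \eqref{28buf}. Heights are rounded up to $n^{-3}\Z$, so the template vertices sit directly above the true knots and convex position is preserved (Claim~\ref{claim_capture}\ref{claim_capture_convex}). Slopes are rounded outward by at least $n^{-1}$, so the slope margin over the buffer, $n^{-1}\cdot n^{-2}$, beats the height error $n^{-3}$. This gives $\cC_\ell\subset\triangle_\ell$ exactly (Claim~\ref{claim_capture}\ref{claim_capture_tri}). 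The cost is $\#\wt\cF_n\le n^{7L}$ templates. This count is subexponential and is absorbed by the $\exp(-\fa n^{1/13})$ concentration when $L=\floor{n^{1/157}}$. The resulting convexity defects are only $O(L/n)$ and are removed directly in Step~4.

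\textbf{A second, smaller gap: strips near $x=1$.} With $x_\ell=\ell/L$ and only the last two strips treated crudely, the strip at distance $k/L$ from $1$ can carry slopes up to $L/(k-1)$. Its tangency triangle can then have height up to $1/(k-1)$. So the bulk triangles do not have uniformly vanishing diameter, and both $p^\omega_\ell\approx\fp(x_\ell,\cdot)\Area(\triangle^\omega_\ell)$ and $R_\ell/r_\ell\to1$ fail there. You need a last strip of width $\eta$ with $L^{-1}\ll\eta\ll1$, as in the paper: there $x_L\in(1-\eta,1-\eta/2)$ gives $f'\le 2\eta^{-1}$ on the bulk, by \eqref{6rg7jc}. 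That last strip, of area $O(\eta)$, is then handled by the deep-tail bound.
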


Note that \eqref{upper_almost_sure} and \eqref{upper_probability} are actually equalities thanks to Proposition~\ref{prop_lower_bound}, but in this section we are just arguing the upper bounds.
The proof of Proposition~\ref{prop_upper_bound} will be presented in Section~\ref{subsec_upper_proof} after various lemmas are given in Section~\ref{subsec_upper_lemmas}.
Even without these lemmas, the proof is long, so we set the stage with an outline.

\subsection{Proof outline for Proposition~\ref{prop_upper_bound}}\label{subsec_upper_outline}
The argument will be presented in five steps.

\medskip

\noindent \textit{Step 1}. Convex chains lying very near the boundary lines $y=0$ and $x=1$ raise certain technical challenges.
Therefore, in this step we simply rule out this scenario by invoking several lemmas.
This allows us to assume that all relevant convex chains are $\rho$-regular (Definition~\ref{def_regular}), and that the sample set is $(\eta,\delta)$-good (Definition~\ref{def_good}).
Being $\rho$-regular means the convex chain does not enter the square $[1-\rho,1]\times[0,\rho]$, the key point being that the chain cannot hug the axes for too long.
Meanwhile, being $(\eta,\delta)$-good means that at most $\delta n$ many points lie in each of the strips $[0,1]\times[0,\eta]$ and $[1-\eta,1]\times[0,1]$.
So that these points effectively contribute nothing in Step 3, we send $\delta\searrow0$ as $n\to\infty$, gradually enough that $\eta$ can be kept as large as $n^{-c_1}$.
The constant $c_1>0$ is chosen small enough (see \eqref{parameter_selection}) to accommodate certain parameter choices in Step 2.

\medskip

\noindent \textit{Step 2}. Given a $\rho$-regular convex chain $\cC$ with $\#\cC\le n$, we will identify a function $\tilde f = \tilde f_{\cC,n}$ such that $\cC$ is contained in the tangency triangles of $\tilde f$, and these triangles have vanishing diameter as $n\to\infty$.
Therefore, the density function $\fp$ is nearly constant on each triangle.
Crucially, $\tilde f$ will belong to a finite collection $\wt \cF_n$ whose size grows sufficiently slowly in $n$, specifically $\#\wt\cF_n \le \e^{n^{c_2}}$ for some sufficiently small $c_2>0$; see \eqref{cardinality_upper}.
Unfortunately, $\tilde f$ will not be convex, but rather piecewise convex.
Because of the latter, it is still possible to define $J(\tilde f)$, and we will construct $\tilde f$ in such a way that $J(\tilde f)$ is nearly as large as possible under the constraint that $\tilde f$ is close to $\cC$.

\medskip 

\noindent \textit{Step 3}.
Using analysis similar to the proof of Proposition~\ref{prop_lower_bound}, we (essentially) show
\eq{
\limsup_{n\to\infty}\sup_{\cC}\Big[\frac{\#\cC}{\alpha n^{1/3}} - \frac{J(\tilde f_{\cC,n})}{2}\Big] \le 0,
}
where the supremum is over $\rho$-regular convex chains $\cC$ with $\#\cC\le n$.
This statement is captured by Claims~\ref{claim_deconcatenation} and \ref{claim_as_upper}, and goes toward proving parts~\ref{prop_upper_bound_typical} and \ref{prop_upper_bound_far_typical}.
For parts~\ref{prop_upper_bound_full} and \ref{prop_upper_bound_far_full}, we (essentially) show
\eq{
\lim_{n\to\infty}\sup_{\tilde f\in\wt\cF_n}\bigg[\frac{1}{n}\log\Big[\frac{(3n)!}{n!}\P\big(\text{$\cS_n$ is a convex chain and $\tilde f_{\cS_n,n} = \tilde f$})\Big]-\log\Big(\frac{27 J(\tilde f)^3}{4}\Big)\bigg] \le 0.
}
For the more precise version, see Claim~\ref{claim_prob_upper}.

\medskip

\noindent \textit{Step 4}. To account for the fact that $\tilde f$ is not convex, we identify a nearby function that \textit{is} convex.
Namely, we will show that for every $\tilde f\in\wt\cF_n$, there exists $g\in\cF$ such that $\|\tilde f-g\|_{[0,1]} = o(1)$ and $|J(\tilde f) - J(g)| = o(1)$, where $o(1)\to0$ as $n\to\infty$, uniformly in $\tilde f$.

\medskip

\noindent \textit{Step 5.}
Parts~\ref{prop_upper_bound_typical} and \ref{prop_upper_bound_full} of Proposition~\ref{prop_upper_bound} are straightforward consequences of Steps~3 and 4, by taking a union bound over $\tilde f\in\cF_n$.
Parts~\ref{prop_upper_bound_far_typical} and \ref{prop_upper_bound_far_full} require the following additional ingredient, which is spelled out more carefully in Claim~\ref{claim_distance_consequence}.
Suppose $\cC$ is $\eps$-far from every $h\in\argmax J$, and $\tilde f = \tilde f_{\cC,n}$.
Since $\|\tilde f - g\|_{[0,1]} = o(1)$ (by Step 4), and $\tilde f(x) = y + o(1)$ for every $(x,y)\in\cC$ (by Step 2), it follows that $\|g-h\|_{[0,1]}\ge\eps-o(1)$ for every $h\in\argmax J$.
Then Proposition~\ref{prop_far_from_optimizer} implies $J(g) \le J_\star - \theta$ for some $\theta>0$ depending only on $\eps$.
Since $J(\tilde f) = J(g) + o(1)$ (also by Step 4), the estimates from Step 3 lead to the desired results.

\subsection{Lemmas needed for the upper bounds} \label{subsec_upper_lemmas}

Our first lemma settles any concerns about measurability.
It is needed for parts~\ref{prop_upper_bound_far_typical} and \ref{prop_upper_bound_far_full} of Proposition~\ref{prop_upper_bound} to make sense, but it will not be explicitly invoked in any later proofs.

\begin{lemma}[Distance from optimizers is measurable] \label{lem_measurable}
Assume \eqref{p_cont}.
For every $k\ge1$, the map
\eq{
\cT^k\mapsto[0,1] \quad \text{given by} \quad (x_i,y_i)_{i=1}^k\mapsto \inf_{f\in\argmax J}\max_{i\in\{1,\ldots,k\}} |f(x_i)-y_i|
}
is measurable (with respect to the usual Borel $\sigma$-algebras).
\end{lemma}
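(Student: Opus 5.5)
The plan is to reduce the infimum over the possibly uncountable set $\argmax J$ to an infimum over a countable set, using Proposition~\ref{prop_separable}. By Proposition~\ref{prop_maximizers}, $\argmax J$ is nonempty and closed in $(\cF,d)$. Proposition~\ref{prop_separable} applied with $\cK=\argmax J$ then gives a countable subset $\cD\subseteq\argmax J$ with the following property: every $f\in\argmax J$ is the pointwise limit on all of $[0,1]$, including the endpoint $x=1$, of some sequence $(f_n)$ in $\cD$.

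The key identity to establish is
\[
\inf_{f\in\argmax J}\max_{i\le k}|f(x_i)-y_i| = \inf_{f\in\cD}\max_{i\le k}|f(x_i)-y_i| \quad \text{for every } (x_i,y_i)_{i=1}^k\in\cT^k.
\]
Since $\cD\subseteq\argmax J$, the left side is at most the right side. For the reverse inequality, fix $f\in\argmax J$ and take $f_n\in\cD$ with $f_n(x)\to f(x)$ for every $x\in[0,1]$. Because $k$ is finite, $\max_{i}|f_n(x_i)-y_i|\to\max_i|f(x_i)-y_i|$. Hence the infimum over $\cD$ is at most the value at $f$, and taking the infimum over $f$ gives the claim. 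Pointwise convergence at $x=1$ is exactly what makes this work, since some $x_i$ may equal $1$ and $d$-convergence alone would not control $f_n(1)$. This is why Proposition~\ref{prop_separable} was stated in its stronger form.

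Measurability then follows. For each fixed $f\in\cF$, the map $(x_i,y_i)_{i}\mapsto\max_i|f(x_i)-y_i|$ is Borel. Indeed, $f$ is continuous on $[0,1]$, since it is convex and continuous by definition of $\cF$, so this map is continuous on $\cT^k$. A countable infimum of Borel functions is Borel, so the map in the lemma is measurable. Its values lie in $[0,1]$ because $f(x),y\in[0,1]$.

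The only genuinely delicate point is the endpoint issue, and it has already been handled by Proposition~\ref{prop_separable} together with the closedness from Proposition~\ref{prop_maximizers}. I expect no further obstacle.
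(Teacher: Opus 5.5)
Your proposal is correct and follows the paper's proof essentially step for step. Like the paper, you use the closedness of $\argmax J$ (Proposition~\ref{prop_maximizers}) and the countable subset from Proposition~\ref{prop_separable} with pointwise convergence on all of $[0,1]$, establish the identity between the infimum over $\argmax J$ and the infimum over $\cD$, and conclude from measurability of a countable infimum of continuous functions.
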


\begin{proof}
Under the metric $d$ from \eqref{metric_def}, the set $\argmax J$ is closed (Proposition~\ref{prop_maximizers}).
Therefore, Proposition~\ref{prop_separable} guarantees a countable subset $\cD\subseteq\argmax J$ such that for every $f\in\argmax J$, there exists a sequence $(f_n)_{n\ge1}$ in $\cD$ satisfying
\eeq{ \label{3hon0c}
\lim_{n\to\infty} f_n(x) = f(x) \quad \text{for every $x\in[0,1]$.}
}
To verify the lemma, it suffices to show
\eeq{ \label{jn4ngy}
\inf_{f\in\argmax J}\max_{i\in\{1,\ldots,k\}} |f(x_i)-y_i|
= \inf_{f\in\cD}\max_{i\in\{1,\ldots,k\}} |f(x_i)-y_i|,
}
since the right-hand side of \eqref{jn4ngy} is clearly measurable.
The $\le$ direction of \eqref{jn4ngy} is trivial, so we just need to argue the $\ge$ direction.
To this end, consider any $(x_i,y_i)_{i=1}^k\in\cT^k$ and any $\eps>0$.
Choose $f\in\argmax J$ such that
\eeq{ \label{9nldx}
\text{LHS of \eqref{jn4ngy}} \ge \max_{i\in\{1,\ldots,k\}}|f(x_i) - y_i| - \eps.
}
Given $f$, choose a sequence $(f_n)_{n\ge1}$ in $\cD$ that satisfies \eqref{3hon0c}.
We now have
\eq{
\text{LHS of \eqref{jn4ngy}}
&\stackref{9nldx}{\ge} \max_{i\in\{1,\ldots,k\}}|f(x_i) - y_i| - \eps \\
&\stackref{3hon0c}{=} \lim_{n\to\infty}\max_{i\in\{1,\ldots,k\}}|f_n(x_i) - y_i| - \eps 
\ge (\text{RHS of \eqref{jn4ngy}}) - \eps.
}
Sending $\eps\searrow0$ completes the proof.
\end{proof}

The next two lemmas concern the following definition, which quantifies the property of not having too many points close to the boundary lines $x=1$ and $y=0$.

\begin{definition} \label{def_good}
Given $\eta>0$ and $\delta>0$, we say the sample set $\cS_n$ is \textit{$(\eta,\delta)$-good} if
\eeq{ \label{good_def}
\#\big(\cS_n\cap([1-\eta,1]\times[0,1])\big)\le\delta n \qquad \text{and} \qquad
\#\big(\cS_n\cap([0,1]\times[0,\eta])\big)\le\delta n.
}
Otherwise we say $\cS_n$ is \textit{$(\eta,\delta)$-bad}.
\qedrem
\end{definition}

By choosing $\eta$ sufficiently small, we can ensure $(\eta,\delta)$-badness is an exponentially unlikely event, even conditional on the (super-exponentially unlikely) event that $\cS_n$ is a convex chain.
The following lemma goes slightly further by allowing $a$ to vary, but we will only use the case $a = 1-\eta$ from \eqref{gx64h9_a}, and $a=0$ from \eqref{gx64h9_b}, which explains the ``in particular'' portion of the lemma.

\begin{lemma}[Points cannot be too dense] \label{lem_not_too_dense}
Assume \eqref{p_upper} and \eqref{p_lower}.
For every $\epsilon>0$ and $\delta>0$, there exists $\eta = \eta(\epsilon,\delta,\fC/\fc)>0$ small enough that
\begin{subequations} \label{gx64h9}
\begin{align} 
\label{gx64h9_a}
\sup_{a\in[0,1]}\P\Big(\parbox{0.36\textwidth}{\centering $\cS_n$ is a convex chain such that \\ $\#\big(\cS_n\cap([a,a+\eta]\times[0,1])\big)\ge \delta n$}\Big)
&\le \epsilon^n\cdot \P(\text{$\cS_n$ is a convex chain}) \\
\label{gx64h9_b}
\text{and} \quad
\sup_{a\in[0,1]}\P\Big(\parbox{0.36\textwidth}{\centering $\cS_n$ is a convex chain such that \\ $\#\big(\cS_n\cap([0,1]\times[a,a+\eta])\big)\ge \delta n$}\Big)
&\le \epsilon^n\cdot \P(\text{$\cS_n$ is a convex chain})
\end{align}
\end{subequations}
for all sufficiently large $n$.
In particular, for all large $n$ we have
\eq{
\P(\text{$\cS_n$ is an $(\eta,\delta)$-bad convex chain})
\le 2\epsilon^n\cdot\P(\text{$\cS_n$ is a convex chain}).
}
\end{lemma}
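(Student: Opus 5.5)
The plan is to compare an upper bound for the probability of a convex chain with a crowded strip against the lower bound for $\P(\text{$\cS_n$ is a convex chain})$ already in hand. By Proposition~\ref{prop_lower_bound}\ref{prop_lower_bound_full} (with any fixed $h\in\cF$, e.g.\ $h(x)=x^2$, and \eqref{p_lower}), that lower bound is
\[
\P(\text{$\cS_n$ is a convex chain})\ge \kappa^n\,\frac{n!}{(3n)!}
\]
for large $n$, with $\kappa>0$ depending only on $\fc$. It therefore suffices to show that the left-hand side of \eqref{gx64h9_a} is at most $(K\eta^{\delta/3})^n\, n!/(3n)!$, where $K$ depends only on $\fC$ and $\delta$, uniformly in $a$. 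One then chooses $\eta$ with $K\eta^{\delta/3}\le \epsilon\kappa$. For \eqref{gx64h9_b} I would use the same argument after the reflection $(x,y)\mapsto(1-y,1-x)$, which preserves $\cT$ and convex chains. The bound on $(\eta,\delta)$-badness then follows by a union bound over the two strips, with $a=1-\eta$ and $a=0$.

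For the upper bound I would decompose a convex chain $\cS_n$ with $k\ge\delta n$ points in the strip $[a,a+\eta]\times[0,1]$ into three consecutive pieces: $k_1$ points left of the strip, the $k$ points in it, and $k_2$ points to its right. Let $\nP,\nQ$ be the first and last chain points in the strip, and let $\nP^-,\nQ^+$ be their chain neighbours outside it (or $(0,0)$, $(1,1)$ if there are none). By Lemma~\ref{lem_position_slopes}, the points strictly between $\nP$ and $\nQ$ form a convex chain from $\nP$ to $\nQ$. They lie in the triangle cut out by the segment $\nP\nQ$, the line through $\nP^-\nP$, and the line through $\nQ\nQ^+$. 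This triangle lies in $\cT$ and has horizontal extent at most $\eta$, so its area is at most $\eta$. The outer pieces lie in analogous triangles of area at most $1/2$.

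To make these regions deterministic, I would take a union bound over the $O(n^4)$ choices of which sample indices play the roles of $\nP^-,\nP,\nQ,\nQ^+$, and over the choice of which indices go in each piece. Conditioning on those four points, the remaining samples are i.i.d. Proposition~\ref{prop_general}\ref{prop_general_likelihood}, applied to the density restricted to each triangle, shows that $m$ points form a chain in a triangle of $\fp$-mass $p\le 2\fC\cdot\Area$ with probability at most $p^m R^m 2^m/(m!(m+1)!)\le (4\fC\,\Area)^m/(m!)^2$. Summing over assignments gives
\[
\text{LHS of \eqref{gx64h9_a}}\le n^{O(1)}\sum_{k_1+k+k_2\approx n,\ k\ge\delta n}\frac{n!}{k_1!\,k!\,k_2!}\cdot\frac{(C)^{k_1+k_2}(C\eta)^{k}}{(k_1!)^2(k!)^2(k_2!)^2}.
\]
Using $\frac{(3n)!}{(3k_1)!(3k)!(3k_2)!}\le 3^{3n}$ and $(3m)!\le 27^m (m!)^3$, each summand is at most $n!\,C'^n\eta^{k}/(3n)!$. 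This is at most $n!(C'\eta^{\delta})^n/(3n)!$ once $\eta\le1$, and the polynomially many terms are absorbed into the exponential.

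The main obstacle is the geometric step. I must check that the strip sub-chain really lies in a triangle whose area is $O(\eta)$, and that the conditioning via the four distinguished points yields genuinely i.i.d.\ samples in fixed triangles, so that Proposition~\ref{prop_general}\ref{prop_general_likelihood} applies. The boundary cases also need checking: no points outside the strip on one side, and $\nP=\nQ$ (where the count and exponent shift by $O(1)$). The constant bookkeeping is routine, since all factors other than $\eta^{k}$ are at most exponential in $n$ with base depending only on $\fC,\fc$.
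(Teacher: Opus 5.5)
Your argument is correct, but it takes a genuinely different route from the paper.

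The paper reduces to uniform samples in the square $[0,1]^2$. It realizes them through exponential spacings and an independent random permutation, so that the chain event becomes $\{\pi=\id\}\cap\{\zeta_1/\xi_1<\cdots<\zeta_{n+1}/\xi_{n+1}\}$. It then proves a symmetrization inequality (Claim~\ref{claim_correlation}), which bounds $\P(\{\text{chain}\}\cap\{\Phi\in B\})$ by $\P(\text{chain})\,\P(\Phi\in B)$ times a multinomial coefficient. A crowded strip forces some block of $\lfloor\delta n/3\rfloor$ consecutive spacings to sum to at most $\eta$, and Chernoff bounds make this as unlikely as needed. That argument is \emph{relative}: it never uses the exponential order of $\P(\text{$\cS_n$ is a convex chain})$.

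Yours is \emph{absolute}. You cut the chain at the strip into three sub-chains confined to triangles; the middle one sits inside the strip, so it has area at most $\eta$. You bound each sub-chain by \eqref{1o0nc}, then use $(3m)!\le 27^m(m!)^3$ and $\binom{3n}{3k_1,3k,3k_2}\le 27^n$ to land on the scale $n!/(3n)!$. This leaves an extra factor $\eta^{k-O(1)}$ with $k\ge\delta n$, which beats the $C^n$ slack between that scale and $\P(\text{chain})$. This is essentially the bookkeeping of Claim~\ref{claim_prob_upper} with three triangles, and it treats horizontal strips just as easily. The geometric step you flagged is fine: all chain slopes are positive, so the middle triangle lies in the strip and in $\cT$. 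Conditioning on the four distinguished points does leave the remaining samples i.i.d.

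Two small repairs are needed.
\begin{itemize}
\item Proposition~\ref{prop_lower_bound}\ref{prop_lower_bound_full} assumes \eqref{p_cont}, which the lemma does not. Instead, use the lower bound in \eqref{1o0nc} with $r=\fc$, together with $\binom{3n}{n,n,n}\ge 27^n/(3n+1)^2$. These give $\P(\text{$\cS_n$ is a convex chain})\ge (54\fc)^n\, n!/\big((3n)!\,(n+1)(3n+1)^2\big)$.
\item To get $\eta=\eta(\epsilon,\delta,\fC/\fc)$ as stated, rather than a dependence on $\fC$ and $\fc$ separately, note that the powers of $\fC$ in your upper bound total at most $\fC^n$ (since $\fC\ge1$). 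They therefore pair with the $\fc^n$ in the lower bound.
\end{itemize}
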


\begin{proof}
The two statements \eqref{gx64h9_a} and \eqref{gx64h9_b} are in symmetry with each other, by reflection over the line $x+y=1$.
More specifically, \eqref{gx64h9_b} is equivalent to \eqref{gx64h9_a} in the model where $\fp$ is replaced by $\wt\fp(x,y) = \fp(1-y,1-x)$.
So we just prove \eqref{gx64h9_a}.

First we reduce to the uniform case.
Denote the $n$ independent samples from $\fp$ by 
\eq{
\cS_n = \{(X_1,Y_1),\ldots,(X_n,Y_n)\}.
}
By \eqref{p_upper} and \eqref{p_lower}, the probability density function of $\big((X_1,Y_1),\ldots,(X_n,Y_n)\big)$ with respect to the product uniform measure on $\cT$ is bounded from above by $\fC^n$, and from below by $\fc^{n}$.
So let $\cU_n$ denote a collection of $n$ independent uniform samples from $\cT$.
For any event of the form $\{\text{$\cS_n$ has property $\sP$}\}$ that is measurable with respect to $\big((X_1,Y_1),\ldots,(X_n,Y_n)\big)$, we have the following upper and lower bounds:
\eq{
\fc^{n}\cdot\P(\text{$\cU_n$ has property $\sP$})
\le \P(\text{$\cS_n$ has property $\sP$}) 
\le \fC^n\cdot\P(\text{$\cU_n$ has property $\sP$}).
}
Consequently, it suffices to prove the lemma for $\cU_n$ in place of $\cS_n$ (and $\epsilon\cdot\fc/\fC$ in place of $\epsilon$).
%The first inequality can be used to lower bound the right-hand sides of \eqref{gx64h9}, while the second can be used to upper bound the left-hand sides of \eqref{gx64h9}. Then adjust parameters.

Next we make a further reduction.
Let $\cU_n^\square$ denote a set of $n$ independent uniform samples from the unit square $[0,1]^2$.
We will say that $\cU_n^\square$ is a convex chain if $\cU_n^\square\subset\cT$ and $\cU_n^\square\cup\{(0,0),(1,1)\}$ is in convex position.
If we condition on the event $\cU_n^\square\subset\cT$, then $\cU_n^\square$ has the same law as $\cU_n$, which justifies the second equality below:
\eq{
&\P(\{\text{$\cU_n^\square$ is a convex chain}\}\cap\{\text{$\cU_n^\square$ has property $\sP$}\}) \\
&=\P(\{\cU_n^\square\subset\cT\}\cap\{\text{$\cU_n^\square\cup\{(0,0),(1,1)\}$ is in convex position}\}\cap\{\text{$\cU_n^\square$ has property $\sP$}\}) \\
&=\P(\text{$\cU_n^\square\subset\cT$})\cdot\P(\{\text{$\cU_n$ is a convex chain}\}\cap\{\text{$\cU_n$ has property $\sP$}\}) \\
&= 2^{-n}\cdot\P(\{\text{$\cU_n$ is a convex chain}\}\cap\{\text{$\cU_n$ has property $\sP$}\}).
}
Therefore, it suffices to prove the lemma for $\cU_n^\square$ instead of $\cU_n$.

To this end, we realize $\cU_n^\square$ as follows.
Let $\xi_1,\ldots,\xi_{n+1},\zeta_1,\ldots,\zeta_{n+1}$ be $\mathrm{Exp}(1)$ random variables, and let $\pi$ be a uniformly random permutation on $n$ elements; assume all these quantities are independent.
Now define
\eeq{ \label{exp_rep}
U_i \coloneqq \frac{\xi_1+\cdots+\xi_i}{\xi_1+\cdots+\xi_{n+1}}, \qquad
V_i \coloneqq \frac{\zeta_1+\cdots+\zeta_i}{\zeta_1+\cdots+\zeta_{n+1}}.
}
The vectors $(U_1,\dots,U_n)$ and $(V_1,\dots,V_n)$ are independent realizations of the order statistics for $n$ independent uniform samples from $[0,1]$.
Therefore, the (unordered) set $\big\{(U_i,V_{\pi(i)}):\,i\in\{1,\ldots,n\}\big\}$ constitutes $n$ independent uniform samples from $[0,1]^2$, so we may assume
\eeq{ \label{unif_rep}
\cU_n^\square = \big\{(U_i,V_{\pi(i)}):\,i\in\{1,\ldots,n\}\big\}.
}

A necessary condition for $\cU_n^\square$ to be a convex chain is that it is a monotone chain. 
Since $U_1<U_2<\cdots<U_n$, this monotonicity means $V_{\pi(1)}< V_{\pi(2)}<\cdots<V_{\pi(n)}$, which forces $\pi$ to be the identity.
Therefore,
\eeq{ \label{aj3cb}
&\{\text{$\cU_n^\square$ is a convex chain}\} \\
&\stackrel{\hphantom{\mbox{\footnotesize (Lemma~\ref{lem_position_slopes})}}}{=} \{\pi=\id\}\cap\{\text{$\cU_n^\square\subset\cT$ and $\cU_n^\square\cup\{(0,0),(1,1)\}$ is in convex position}\} \\
&\stackrel{\mbox{\footnotesize (Lemma~\ref{lem_position_slopes})}}{=} \{\pi=\id\}\cap\Big\{\frac{V_1}{U_1}<\frac{V_2-V_1}{U_2-U_1}<\cdots<\frac{V_n-V_{n-1}}{U_n-U_{n-1}}<\frac{1-V_n}{1-U_n}\Big\} \\
&\stackrel{\parbox{\widthof{\footnotesize (Lemma~\ref{lem_position_slopes})}}{\centering\footnotesize\eqref{exp_rep}}}{=} \{\pi=\id\}\cap\Big\{\frac{\zeta_1}{\xi_1}<\frac{\zeta_2}{\xi_2}<\cdots<\frac{\zeta_n}{\xi_n}<\frac{\zeta_{n+1}}{\xi_{n+1}}\Big\}.
}
By independence of $\pi$ and exchangeability of $(\xi_i,\zeta_i)_{i=1}^{n+1}$, it follows that
\eeq{ \label{qbd7v}
\P(\text{$\cU_n^\square$ is a convex chain}) = \frac{1}{n!(n+1)!}.
}

In the sequel, $\llbrack a,b\rrbrack$ denotes the integer interval $\{a,a+1,\ldots,b\}$.

\begin{claim}[Controlling positive correlation] \label{claim_correlation}
Let $\cP_1,\dots,\cP_m$ be a partition of $\llbrack1,n+1\rrbrack$.
Let $\Sigma$ be the set of permutations $\sigma$ of $\llbrack1,n+1\rrbrack$ such that $\sigma(\cP_i)=\cP_i$ for every $i$.
Assume $\Phi\colon(\R^2)^{n+1}\to\R$ is measurable and invariant under the action of any $\sigma\in\Sigma$, meaning
\eeq{ \label{nbk4xd}
\Phi\big((x_i,y_i)_{i=1}^{n+1}\big) = \Phi\big((x_{\sigma(i)},y_{\sigma(i)})_{i=1}^{n+1}\big)  
\quad \text{$\forall$ $\sigma\in\Sigma$ and $(x_1,y_1),\ldots,(x_{n+1},y_{n+1})\in\R^2$.}
}
Then for any Borel set $B\subseteq\R$, we have
\eeq{ \label{bi4xh}
&\P\Big(\{\textup{$\cU_n^\square$ is a convex chain}\}\cap\big\{\Phi\big((\xi_i,\zeta_i)_{i=1}^{n+1}\big)\in B\big\}\Big) \\
&\le\P(\textup{$\cU_n^\square$ is a convex chain})\cdot\P\Big(\Phi\big((\xi_i,\zeta_i)_{i=1}^{n+1}\big)\in B\Big)\cdot{n+1 \choose \#\cP_1,\ldots,\#\cP_m}.
}
\end{claim}

\begin{proofclaim}
By independence of $\pi$, we have
\eeq{ \label{nk3pml}
&\P\Big(\Phi\big((\xi_i,\zeta_i)_{i=1}^{n+1}\big)\in B\Big)
= n!\cdot\P\Big(\big\{\Phi\big((\xi_i,\zeta_i)_{i=1}^{n+1}\big)\in B\big\}\cap\{\pi=\id\}\Big).
}
By definition of $\Sigma$, we have $\#\Sigma = (\#\cP_1)!\cdots(\#\cP_m)!$, hence
\eeq{ \label{Sigma_size}
{n+1 \choose \#\cP_1,\ldots,\#\cP_m} = \frac{(n+1)!}{\#\Sigma}.
}
Now multiply \eqref{qbd7v}, \eqref{nk3pml}, and \eqref{Sigma_size}, to obtain
\eeq{ \label{2jkm0s}
\text{RHS of \eqref{bi4xh}}
&= \P\Big(\big\{\Phi\big((\xi_i,\zeta_i)_{i=1}^{n+1}\big)\in B\big\}\cap\{\pi=\id\}\Big)\cdot(\#\Sigma)^{-1}.
}
We next partition the event on the right-hand side, according to the ordering of the ratios $\{\zeta_i/\xi_i:\,i\in\llbrack1,n+1\rrbrack\}$ from smallest to largest:
\eq{
&\P\Big(\big\{\Phi\big((\xi_i,\zeta_i)_{i=1}^{n+1}\big)\in B\big\}\cap\{\pi=\id\}\Big) \\
&\stackrefp{nbk4xd}{\ge} \sum_{\sigma\in\Sigma}\P\bigg(\big\{\Phi\big((\xi_i,\zeta_i)_{i=1}^{n+1}\big)\in B\big\}\cap\{\pi=\id\}\cap\Big\{\frac{\zeta_{\sigma(1)}}{\xi_{\sigma(1)}} < \cdots < \frac{\zeta_{\sigma(n+1)}}{\xi_{\sigma(n+1)}}\Big\}\bigg) \\
&\stackref{nbk4xd}{=}\sum_{\sigma\in\Sigma}\P\bigg(\big\{\Phi\big((\xi_{\sigma(i)},\zeta_{\sigma(i)})_{i=1}^{n+1}\big)\in B\big\}\cap\{\pi=\id\}\cap\Big\{\frac{\zeta_{\sigma(1)}}{\xi_{\sigma(1)}} < \cdots < \frac{\zeta_{\sigma(n+1)}}{\xi_{\sigma(n+1)}}\Big\}\bigg).
}
By exchangeability of $(\xi_i,\zeta_i)_{i=1}^{n+1}$ and independence of $\pi$, the probability on the final line is the same for every $\sigma\in\Sigma$.
Hence
\eeq{ \label{381m2x}
&\P\Big(\big\{\Phi\big((\xi_i,\zeta_i)_{i=1}^{n+1}\big)\in B\big\}\cap\{\pi=\id\}\Big) \\
&\stackrefp{qbd7v}{\ge}\#\Sigma\cdot\P\bigg(\big\{\Phi\big((\xi_i,\zeta_i)_{i=1}^{n+1}\big)\in B\big\}\cap\{\pi=\id\}\cap\Big\{\frac{\zeta_{1}}{\xi_{1}} < \cdots < \frac{\zeta_{n+1}}{\xi_{n+1}}\Big\}\bigg) \\
&\stackrefpp{aj3cb}{qbd7v}{=}\#\Sigma\cdot\P\Big(\big\{\Phi\big((\xi_i,\zeta_i)_{i=1}^{n+1}\big)\in B\big\}\cap\{\text{$\cU_n^\square$ is a convex chain}\}\Big).
}
Now insert \eqref{381m2x} into \eqref{2jkm0s} to obtain \eqref{bi4xh}.
\end{proofclaim}

We are now ready to prove \eqref{gx64h9_a} with $\cU_n^\square$ in place of $\cS_n$.
That is, we will find $\eta = \eta(\epsilon,\delta)>0$ such that
\eeq{ \label{20vjj3}
\sup_{a\in[0,1]}\P\Big(\parbox{0.36\textwidth}{\centering $\cU_n^\square$ is a convex chain such that \\ $\#\big(\cU_n^\square\cap([a,a+\eta]\times[0,1])\big)\ge \delta n$}\Big)
\le \epsilon^n\cdot \P(\text{$\cU_n^\square$ is a convex chain}).
}
The case $\epsilon\ge1$ is trivial, so assume $\epsilon\in(0,1)$. 
The case $\delta>1$ is also trivial, so assume $\delta \in (0,1].$
We assume $\eta$ is sufficiently small to satisfy certain conditions given later in the argument.

Recall the representation \eqref{unif_rep} of $\cU_n^\square$.
Define the random indices
\eq{ 
I_1 &\coloneqq \inf\big\{i\in\{1,\ldots,n\}:\, U_i\ge a\big\} \wedge (n+1), \\
I_2 &\coloneqq \inf\big\{i\in\{1,\ldots,n\}:\, U_i > a+\eta\big\} \wedge (n+1).
}
It follows from these definitions that for every $i\in\{1,\ldots,n\}$, we have
\eeq{ \label{oh8c4}
U_i\in[a,a+\eta] \quad \text{if and only if} \quad I_1 \le i \le I_2-1.
}
So we wish to show $I_2-I_1$ is very unlikely to exceed $\delta n$. 

Let $k_{\delta}$ be the largest integer such that $k_\delta\floor{\tfrac{\delta}{3}n}\le n$.
%In particular, $k_\delta\ge2$.
We assume $n$ is large enough that 
\eeq{ \label{2ion9}
k_\delta \le \ceil{3/\delta}.
}
Using the notation $i_k = k\floor{\tfrac{\delta}{3} n}$, 
define the following integer intervals:
\eq{
\cI_k \coloneqq \llbrack 1+i_k,i_{k+1}\rrbrack, \quad k\in\{0,\ldots,k_\delta\}.
}
Note that $\cI_0,\ldots,\cI_{k_\delta}$ each contain $\floor{\tfrac{\delta}{3}n}$ elements, and collectively cover the interval $\llbrack 1,n\rrbrack$.
Therefore, any subinterval of $\llbrack1,n\rrbrack$ with at least $\delta n$ elements must intersect at least three consecutive $\cI_k$'s, and thus contains $\llbrack i_k,i_{k+1}\rrbrack$ for some $k\in\{1,\ldots,k_\delta-1\}$.
This justifies the first containment below:
\eeq{ \label{vk5cn}
\big\{I_2 - I_1 \ge \delta n\big\}
&\stackrefp{oh8c4}{\subseteq} \bigcup_{k=1}^{k_\delta-1}\Big\{\llbrack I_1,I_2-1\rrbrack\supseteq\llbrack i_k,i_{k+1}\rrbrack\Big\} \\
&\stackref{oh8c4}{\subseteq} \bigcup_{k=1}^{k_\delta-1} \{\text{$U_i\in[a,a+\eta]$ for every $i\in\llbrack i_k,i_{k+1}\rrbrack$}\} \\
&\stackrefp{oh8c4}{\subseteq} \bigcup_{k=1}^{k_\delta-1} \{U_{i_{k+1}}-U_{i_k}\le\eta\} 
\stackref{exp_rep}{=} \bigcup_{k=1}^{k_\delta-1} \Big\{\frac{\xi_{1+i_k}+\cdots+\xi_{i_{k+1}}}{\xi_1+\cdots+\xi_{n+1}} \le \eta\Big\}.
}
Denote the event in the final line by
\eeq{ \label{wj8c4x}
A_k \coloneqq \Big\{\frac{\xi_{1+i_k}+\cdots+\xi_{i_{k+1}}}{\xi_1+\cdots+\xi_{n+1}} \le \eta\Big\}.
}
Now apply Claim~\ref{claim_correlation} with $\cP_1 \coloneqq \llbrack1,i_k\rrbrack$, $\cP_2 \coloneqq \llbrack1+i_k,i_{k+1}\rrbrack$, $\cP_3 \coloneqq \llbrack1+i_{k+1},n+1\rrbrack$, and
\eq{
\Phi\big((x_i,y_i)_{i=1}^{n+1}\big) \coloneqq \frac{x_{1+i_k}+\cdots+x_{i_{k+1}}}{x_1+\cdots+x_{n+1}}, \qquad B \coloneqq [0,\eta],
}
to obtain
\eeq{ \label{3bm7cy}
\P\big(\{\text{$\cU_n^\square$ is a convex chain}\}\cap A_k\big)
\le \P(\textup{$\cU_n^\square$ is a convex chain})\cdot\P(A_k)\cdot 3^{n+1}.
}
Putting various observations together, we arrive at
\eeq{ \label{yfb99}
\P\Big(\parbox{0.37\textwidth}{\centering $\cU_n^\square$ is a convex chain such that \\ $\#\big(\cU_n^\square\cap([a,a+\eta]\times[0,1])\big)\ge \delta n$}\Big)
&\stackrefpp{oh8c4}{3bm7cy}{=} \P\big(\{\text{$\cU_n^\square$ is a convex chain}\}\cap\{I_2-I_1\ge\delta n\}\big) \\
&\stackrefpp{vk5cn}{3bm7cy}{\le} \sum_{k=1}^{k_\delta-1} \P\big(\{\text{$\cU_n^\square$ is a convex chain}\}\cap A_k\big) \\
&\stackref{3bm7cy}{\le}\P(\textup{$\cU_n^\square$ is a convex chain})\cdot 3^{n+1}\sum_{k=1}^{k_\delta-1}\P(A_k).
}
This result prompts us to seek an upper bound on $\P(A_k)$.
The next paragraph will derive a bound that is uniform in $k$.

From the definition of $A_k$ in \eqref{wj8c4x} and a simple union bound, we have the following for any $L\ge 0$:
\eeq{ \label{bon83x}
\P(A_k)
\le \P\Big(\xi_1+\cdots+\xi_{n+1}>L(n+1)\Big)
+ \P\Big(\xi_{1+i_k}+\cdots+\xi_{i_{k+1}} \le \eta L(n+1)\Big).
}
We next use Chernoff's method to control each term on the right-hand side.
First,
\eq{
\P\Big(\xi_1+\cdots+\xi_{n+1}> L(n+1)\Big)
&= \P\Big(\e^{(\xi_1+\cdots+\xi_{n+1})/2}> \e^{L(n+1)/2}\Big) \\
&\le \frac{(\E\e^{\xi_1/2})^{n+1}}{\e^{L(n+1)/2}}
= \Big(\frac{2}{\e^{L/2}}\Big)^{n+1}.
}
We assume $L = L(\epsilon,\delta)$ is sufficiently large that $2/\e^{L/2} \le (\delta/18)\wedge(\epsilon/3)$, so that the previous display implies
\eeq{ \label{d6v3jy}
\P\Big(\xi_1+\cdots+\xi_{n+1}> L(n+1)\Big) \le (\delta/18)\cdot(\epsilon/3)^{n} \quad \text{for all $n\ge1$}.
}
Now we turn our attention to the second term on the right-hand side of \eqref{bon83x}. 
For any $\theta>0$ we have
\eeq{ \label{ev7ep}
\P\Big(\xi_{1+i_k}+\cdots+\xi_{i_{k+1}} \le \eta L(n+1)\Big)
&= \P\Big(\e^{-\theta(\xi_{1+i_k}+\cdots+\xi_{i_{k+1}})} \ge \e^{-\theta\eta L(n+1)}\Big) \\
&\le \frac{(\E \e^{-\theta\xi_1})^{i_{k+1}-i_k}}{\e^{-\theta\eta L(n+1)}}.
}
Since $\P(\xi_1>0)=1$, we have $\E \e^{-\theta\xi_1}\searrow0$ as $\theta\nearrow\infty$. 
This limit, togther with the fact $i_{k+1}-i_k = \floor{\tfrac{\delta}{3}n}$, allows us to choose $\theta = \theta(\epsilon,\delta)$ so large that
\eq{
(\E \e^{-\theta\xi_1})^{i_{k+1}-i_k} \le (\delta/18)\cdot(\epsilon/3)^{2n} \quad \text{for all sufficiently large $n$}.
}
Now choose $\eta = \eta(\theta,L,\epsilon) > 0$ so small that
\eq{
\e^{-\theta\eta L(n+1)} \ge (\epsilon/3)^n \quad \text{for all $n\ge1$.}
}
Applying the two previous displays to the final line of \eqref{ev7ep}, we obtain
\eeq{ \label{cu4bq}
\P\Big(\xi_{1+i_k}+\cdots+\xi_{i_{k+1}} \le \eta L(n+1)\Big)
\le (\delta/18)\cdot(\epsilon/3)^{n} \quad \text{for all sufficiently large $n$}.
}
Using \eqref{d6v3jy} and \eqref{cu4bq} in \eqref{bon83x}, we arrive at
\eeq{ \label{2ib8w}
\P(A_k) \le (\delta/9)\cdot(\epsilon/3)^{n} \quad \text{for all sufficiently large $n$.}
}
Finally, insert \eqref{2ib8w} into \eqref{yfb99} to obtain the following for all sufficiently large $n$:
\eq{
&\P\Big(\parbox{0.37\textwidth}{\centering $\cU_n^\square$ is a convex chain such that \\ $\#\big(\cU_n^\square\cap([a,a+\eta]\times[0,1])\big)\ge \delta n$}\Big) \\
&\stackrefp{2ion9}{\le} \P(\text{$\cU_n^\square$ is a convex chain})\cdot 3^{n+1}\cdot(k_\delta-1)\cdot (\delta/9)\cdot (\epsilon/3)^n \\
&\stackref{2ion9}{\le} \epsilon^n\cdot \P(\text{$\cU_n^\square$ is a convex chain}).
}
This completes the proof of \eqref{20vjj3}.
\end{proof}

Ultimately we will want the sample set to be $(\eta,\delta)$-good with $\delta=\delta_n\to0$ as $n\to\infty$.
In order for this to be possible, we must also take $\eta = \eta_n \to0$, which in turn threatens the usefulness of $(\eta,\delta)$-goodness in the first place.
Fortunately, the following lemma shows that even very slow convergence $\eta_n\to0$ can be accommodated.

\begin{lemma}[Full convex chains are unlikely to be bad] \label{lem_bad_unlikely}
Assume \eqref{p_upper} and \eqref{p_lower}.
Let $(\eta_n)_{n\ge1}$ be any sequence of positive numbers such that $\lim_{n\to\infty}\eta_n=0$.
For every $\epsilon>0$, there exists a sequence $(\delta_n)_{n\ge1}$ of positive numbers such that $\lim_{n\to\infty}\delta_n=0$ and
\eeq{ \label{vo48xv}
\P(\text{$\cS_n$ is an $(\eta_n,\delta_n)$-bad convex chain})
\le\epsilon^n\cdot\P(\text{$\cS_n$ is a convex chain})
}
for all sufficiently large $n$.
\end{lemma}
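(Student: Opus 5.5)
This is a diagonalization built on Lemma~\ref{lem_not_too_dense}. Fix $\epsilon>0$; we may assume $\epsilon<1$. For each integer $j\ge1$, apply Lemma~\ref{lem_not_too_dense} with the pair $(\epsilon,\delta)=(\epsilon,1/j)$. This gives $\eta^{(j)}=\eta(\epsilon,1/j,\fC/\fc)>0$ and a threshold $N_j$ such that for all $n\ge N_j$,
\eq{
\P(\text{$\cS_n$ is an $(\eta^{(j)},1/j)$-bad convex chain})\le 2\epsilon^n\cdot\P(\text{$\cS_n$ is a convex chain}).
}
To replace $2\epsilon^n$ by $\epsilon^n$, apply the lemma with $\epsilon/2$ in place of $\epsilon$ instead. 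Since $(\epsilon/2)^n\cdot 2\le\epsilon^n$ for $n\ge1$, this yields the constant $1$.

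Next, note that badness is monotone in $\eta$: if $\eta\le\eta'$, the strips $[1-\eta,1]\times[0,1]$ and $[0,1]\times[0,\eta]$ shrink, so $(\eta,\delta)$-bad implies $(\eta',\delta)$-bad. Consequently, whenever $\eta_n\le\eta^{(j)}$ and $n\ge N_j$, the event $\{\cS_n$ is $(\eta_n,1/j)$-bad$\}$ is contained in $\{\cS_n$ is $(\eta^{(j)},1/j)$-bad$\}$, and the bound above applies with $\delta=1/j$.

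Now define $j(n)$ as the largest $j\le n$ such that $n\ge N_j$ and $\sup_{m\ge n}\eta_m\le\eta^{(j)}$ (equivalently, just $\eta_n\le\eta^{(j)}$, since only the current $n$ matters), and set $j(n)=1$ if no such $j$ exists. Put $\delta_n\coloneqq 1/j(n)$. We may also arrange that the thresholds $N_j$ are increasing in $j$.

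It remains to check two things. First, $j(n)\to\infty$: for any fixed $J$, because $\eta_n\to0$ and the finitely many values $\eta^{(1)},\dots,\eta^{(J)}$ are positive, we have $\eta_n\le\min_{j\le J}\eta^{(j)}$ and $n\ge N_J$ for all large $n$, which gives $j(n)\ge J$. Hence $\delta_n\to0$. Second, for large $n$ the index $j(n)$ satisfies the defining conditions, so the monotonicity observation gives \eqref{vo48xv}. In the fallback case $j(n)=1$, which occurs only for finitely many $n$, nothing needs to be checked, since the claim is only "for all sufficiently large $n$".

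The only subtle point is that the threshold "for all sufficiently large $n$" in Lemma~\ref{lem_not_too_dense} depends on $\delta$. The requirement $n\ge N_{j(n)}$ built into the choice of $j(n)$ handles this, and otherwise the argument is routine.
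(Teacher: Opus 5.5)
Your proof is correct and follows essentially the same diagonalization as the paper. Both arguments apply Lemma~\ref{lem_not_too_dense} with $\delta=1/j$, use monotonicity of badness in $\eta$, and set $\delta_n=1/j(n)$ with $j(n)\to\infty$ chosen so that both $\eta_n\le\eta^{(j(n))}$ and $n\ge N_{j(n)}$ hold; the paper builds this from blocks $n_k$, while you take a pointwise maximal admissible index. Your use of $\epsilon/2$ to absorb the factor $2$ from the ``in particular'' clause is a detail the paper skips, and the parenthetical ``equivalently'' is not literally an equivalence, though either version of the condition works.
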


\begin{proof}
Fix $\epsilon>0$.
Lemma~\ref{lem_not_too_dense} guarantees that for any $\delta>0$, there exists $\bar\eta = \bar\eta(\delta)>0$ and $\bar n = \bar n(\delta)<\infty$ such that
\eeq{ \label{4eb8c}
\P(\text{$\cS_n$ is an $(\bar\eta(\delta),\delta)$-bad convex chain})
\le \epsilon^n\cdot\P(\text{$\cS_n$ is a convex chain}) \quad \text{$\forall$ $n\ge \bar n(\delta)$}.
}
Define a sequence of integers $0 = n_0 < n_1 < n_2 < \cdots$ inductively as follows.
Given $n_{k-1}$, let
\eq{
n_k \coloneqq \sup\big\{n:\, \eta_n > \bar\eta(k^{-1}) \text{ or } n < \bar n(k^{-1})\big\} \vee (n_{k-1}+1).
}
By assumption $\lim_{n\to\infty}\eta_n = 0$, so we must have $n_k<\infty$.
Whenever $n > n_k$, we have $\eta_n \le \bar\eta(k^{-1})$ and $n\ge\bar n(k^{-1})$, hence
\eq{
\P(\text{$\cS_n$ is an $(\eta_n,k^{-1})$-bad convex chain})
&\stackrefp{4eb8c}{\le} \P(\text{$\cS_n$ is an $(\bar\eta(k^{-1}),k^{-1})$-bad convex chain}) \\
&\stackref{4eb8c}{\le} \epsilon^n\cdot\P(\text{$\cS_n$ is a convex chain}).
}
Therefore, it suffices to take $\delta_n = k^{-1}$ for all $n\in\{n_k+1,\ldots,n_{k+1}\}$.
\end{proof}

The final set of lemmas are in service of the following definition, which quantifies staying away from the corner $(1,0)$.
See Figure~\ref{fig_regular} for an illustration.

\begin{definition} \label{def_regular}
Given a convex chain $\cC$, let $f_\cC\colon[0,1]\to[0,1]$ be the maximal convex function starting at $f_\cC(0)=0$ and satisfying $f_\cC(x)=y$ for every $(x,y)\in\cC$.
That is, $f_\cC$ is piecewise linear with corners at the elements of $\cC$, and $f_\cC(1)=1$ unless $(1,y)\in\cC$ for some $y\in[0,1)$.

Given $\rho\in(0,\frac12)$, we say $\cC$ is \textit{$\rho$-regular} if $f_\cC(1-\rho)\ge\rho$ and $f_\cC(1)=1$.
Otherwise we say $\cC$ is \textit{$\rho$-irregular}. 
Note that if $\cC$ is $\rho$-regular, then it is $\rho'$-regular for every $\rho'\in(0,\rho]$.
\qedrem
\end{definition}

\begin{figure}[h]

\tikzset{every picture/.style={line width=0.75pt}} %set default line width to 0.75pt        

\begin{tikzpicture}[x=0.75pt,y=0.75pt,yscale=-1,xscale=1]
%uncomment if require: \path (0,250); %set diagram left start at 0, and has height of 250

%Straight Lines [id:da5599118529408579] 
\draw [color={rgb, 255:red, 74; green, 144; blue, 226 }  ,draw opacity=1 ][line width=1.5]    (361,220) -- (436.5,212.5) ;
%Straight Lines [id:da23072407956192054] 
\draw [color={rgb, 255:red, 74; green, 144; blue, 226 }  ,draw opacity=1 ][line width=1.5]    (436.5,212.5) -- (488.5,201.5) ;
%Straight Lines [id:da11087768581603141] 
\draw [color={rgb, 255:red, 74; green, 144; blue, 226 }  ,draw opacity=1 ][line width=1.5]    (548.5,115.5) -- (560,20) ;
%Shape: Circle [id:dp7338520132094327] 
\draw  [fill={rgb, 255:red, 0; green, 0; blue, 0 }  ,fill opacity=1 ] (433,212.5) .. controls (433,210.57) and (434.57,209) .. (436.5,209) .. controls (438.43,209) and (440,210.57) .. (440,212.5) .. controls (440,214.43) and (438.43,216) .. (436.5,216) .. controls (434.57,216) and (433,214.43) .. (433,212.5) -- cycle ;
%Shape: Square [id:dp6796523115922546] 
\draw  [fill={rgb, 255:red, 184; green, 181; blue, 181 }  ,fill opacity=0.3 ][dash pattern={on 0.84pt off 2.51pt}] (510,170) -- (560,170) -- (560,220) -- (510,220) -- cycle ;
%Straight Lines [id:da23484065848688496] 
\draw [color={rgb, 255:red, 74; green, 144; blue, 226 }  ,draw opacity=1 ][line width=1.5]    (538,158.5) -- (548.5,115.5) ;
%Straight Lines [id:da7561321651032296] 
\draw [color={rgb, 255:red, 74; green, 144; blue, 226 }  ,draw opacity=1 ][line width=1.5]    (488.5,201.5) -- (538,158.5) ;
%Shape: Circle [id:dp5601922208272282] 
\draw  [fill={rgb, 255:red, 0; green, 0; blue, 0 }  ,fill opacity=1 ] (485,201.5) .. controls (485,199.57) and (486.57,198) .. (488.5,198) .. controls (490.43,198) and (492,199.57) .. (492,201.5) .. controls (492,203.43) and (490.43,205) .. (488.5,205) .. controls (486.57,205) and (485,203.43) .. (485,201.5) -- cycle ;
%Shape: Circle [id:dp0308703957418488] 
\draw  [fill={rgb, 255:red, 0; green, 0; blue, 0 }  ,fill opacity=1 ] (534.5,158.5) .. controls (534.5,156.57) and (536.07,155) .. (538,155) .. controls (539.93,155) and (541.5,156.57) .. (541.5,158.5) .. controls (541.5,160.43) and (539.93,162) .. (538,162) .. controls (536.07,162) and (534.5,160.43) .. (534.5,158.5) -- cycle ;
%Shape: Circle [id:dp8974148643884926] 
\draw  [fill={rgb, 255:red, 0; green, 0; blue, 0 }  ,fill opacity=1 ] (545,115.5) .. controls (545,113.57) and (546.57,112) .. (548.5,112) .. controls (550.43,112) and (552,113.57) .. (552,115.5) .. controls (552,117.43) and (550.43,119) .. (548.5,119) .. controls (546.57,119) and (545,117.43) .. (545,115.5) -- cycle ;
%Shape: Right Triangle [id:dp18322537994013344] 
\draw   (560,20) -- (361,220) -- (560,220) -- cycle ;
%Straight Lines [id:da6808692544491816] 
\draw [color={rgb, 255:red, 74; green, 144; blue, 226 }  ,draw opacity=1 ][line width=1.5]    (91,220) -- (151.5,206.5) ;
%Straight Lines [id:da30812310790793185] 
\draw [color={rgb, 255:red, 74; green, 144; blue, 226 }  ,draw opacity=1 ][line width=1.5]    (205.5,188.5) -- (249.5,154.5) ;
%Straight Lines [id:da3426191539654023] 
\draw [color={rgb, 255:red, 74; green, 144; blue, 226 }  ,draw opacity=1 ][line width=1.5]    (249.5,154.5) -- (278.5,100.5) ;
%Straight Lines [id:da7407874580559863] 
\draw [color={rgb, 255:red, 74; green, 144; blue, 226 }  ,draw opacity=1 ][line width=1.5]    (278.5,100.5) -- (290,20) ;
%Shape: Right Triangle [id:dp4291793069867975] 
\draw   (290,20) -- (91,220) -- (290,220) -- cycle ;
%Shape: Circle [id:dp705814255091614] 
\draw  [fill={rgb, 255:red, 0; green, 0; blue, 0 }  ,fill opacity=1 ] (246,154.5) .. controls (246,152.57) and (247.57,151) .. (249.5,151) .. controls (251.43,151) and (253,152.57) .. (253,154.5) .. controls (253,156.43) and (251.43,158) .. (249.5,158) .. controls (247.57,158) and (246,156.43) .. (246,154.5) -- cycle ;
%Shape: Circle [id:dp10504945556144385] 
\draw  [fill={rgb, 255:red, 0; green, 0; blue, 0 }  ,fill opacity=1 ] (275,100.5) .. controls (275,98.57) and (276.57,97) .. (278.5,97) .. controls (280.43,97) and (282,98.57) .. (282,100.5) .. controls (282,102.43) and (280.43,104) .. (278.5,104) .. controls (276.57,104) and (275,102.43) .. (275,100.5) -- cycle ;
%Straight Lines [id:da5072642732982903] 
\draw [color={rgb, 255:red, 74; green, 144; blue, 226 }  ,draw opacity=1 ][line width=1.5]    (151.5,206.5) -- (205.5,188.5) ;
%Shape: Circle [id:dp5345638850243576] 
\draw  [fill={rgb, 255:red, 0; green, 0; blue, 0 }  ,fill opacity=1 ] (148,206.5) .. controls (148,204.57) and (149.57,203) .. (151.5,203) .. controls (153.43,203) and (155,204.57) .. (155,206.5) .. controls (155,208.43) and (153.43,210) .. (151.5,210) .. controls (149.57,210) and (148,208.43) .. (148,206.5) -- cycle ;
%Shape: Circle [id:dp33766111006145705] 
\draw  [fill={rgb, 255:red, 0; green, 0; blue, 0 }  ,fill opacity=1 ] (202,188.5) .. controls (202,186.57) and (203.57,185) .. (205.5,185) .. controls (207.43,185) and (209,186.57) .. (209,188.5) .. controls (209,190.43) and (207.43,192) .. (205.5,192) .. controls (203.57,192) and (202,190.43) .. (202,188.5) -- cycle ;
%Shape: Square [id:dp643850277947071] 
\draw  [fill={rgb, 255:red, 184; green, 181; blue, 181 }  ,fill opacity=0.3 ][dash pattern={on 0.84pt off 2.51pt}] (240,170) -- (290,170) -- (290,220) -- (240,220) -- cycle ;

% Text Node
\draw (566,165) node [anchor=north west][inner sep=0.75pt]    {$\rho $};
% Text Node
\draw (492,223.4) node [anchor=north west][inner sep=0.75pt]    {$1-\rho $};
% Text Node
\draw (296,165) node [anchor=north west][inner sep=0.75pt]    {$\rho $};
% Text Node
\draw (222,223.4) node [anchor=north west][inner sep=0.75pt]    {$1-\rho $};
% Text Node
\draw (242,114.4) node [anchor=north west][inner sep=0.75pt]  [color={rgb, 255:red, 74; green, 144; blue, 226 }  ,opacity=1 ]  {$f_{\mathcal{C}}$};
% Text Node
\draw (520,127) node [anchor=north west][inner sep=0.75pt]  [color={rgb, 255:red, 74; green, 144; blue, 226 }  ,opacity=1 ]  {$f_{\mathcal{C}}$};

\end{tikzpicture}

\caption{\textit{Left}: a $\rho$-regular convex chain.
\textit{Right}: a $\rho$-irregular convex chain.}
\label{fig_regular}
\end{figure}

\begin{lemma}[Full convex chains are unlikely to be irregular] \label{lem_irreg_unlikely}
Assume \eqref{p_upper} and \eqref{p_lower}.
For every $\epsilon>0$, there exists $\rho = \rho(\epsilon,\fC/\fc)\in(0,\tfrac12)$ small enough that
\eeq{ \label{ob8hp}
\P(\text{$\cS_n$ is a $\rho$-irregular convex chain})
\le \epsilon^n\cdot\P(\text{$\cS_n$ is a convex chain})
}
for all sufficiently large $n$.
\end{lemma}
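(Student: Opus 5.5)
We will follow the proof of Lemma~\ref{lem_not_too_dense} closely. First reduce to uniform samples on the unit square. Using the density comparison with constants $\fc^n,\fC^n$ it suffices to treat $\cU_n$, with $\epsilon\cdot\fc/\fC$ in place of $\epsilon$. Conditioning on $\cU_n^\square\subset\cT$ then reduces further to $\cU_n^\square$. We work with the exponential representation \eqref{exp_rep}--\eqref{unif_rep}. On the event that $\cU_n^\square$ is a convex chain we have $\pi=\id$, the chain is all of $\cU_n^\square$, and $f_\cC(1)=1$ automatically, since a.s.\ no point has first coordinate $1$. So irregularity means exactly $f_\cC(1-\rho)<\rho$.

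Next we translate $f_\cC(1-\rho)<\rho$ into a condition on the points. Let $i$ be the last index with $U_i\le 1-\rho$, so that $V_i\le f_\cC(1-\rho)<\rho$ (we set $V_0=U_0=0$). Then at least one of the following holds.

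\emph{Case (A): many points in the bottom strip.} At least $\delta n$ points have $V_j<\rho$, i.e.\ $\#(\cU_n^\square\cap([0,1]\times[0,\rho]))\ge\delta n$.

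\emph{Case (B): many points in the right strip.} At least $(1-\delta)n$ points have $U_j>1-\rho$, so in particular at least $\delta n$ points lie in $[1-\rho,1]\times[0,1]$ whenever $\delta\le 1/2$.

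Indeed, if $i\ge\delta n$ we are in Case~(A), since the first $i$ points satisfy $V_j\le V_i<\rho$. Otherwise $i<\delta n$, and then more than $(1-\delta)n$ points lie in $U_j>1-\rho$, which is Case~(B). Either way we obtain $\ge\delta n$ points in a strip of width $\rho$, which is exactly the event controlled by \eqref{gx64h9_a} with $a=1-\rho$, or \eqref{gx64h9_b} with $a=0$. Taking, say, $\delta=1/2$ and choosing $\rho\le\eta(\epsilon/2,1/2,\fC/\fc)$ from Lemma~\ref{lem_not_too_dense}, we get
\[
\P(\text{$\rho$-irregular convex chain})\le 2(\epsilon/2)^n\,\P(\text{convex chain})\le\epsilon^n\,\P(\text{convex chain}).
\]
This gives \eqref{ob8hp}, and we also need $\rho<\tfrac12$.

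The main point needing care is the combinatorial dichotomy: checking that $f_\cC(1-\rho)<\rho$ really forces one of the two crowded strips. The subtlety is that $f_\cC$ interpolates linearly between chain points, so we must argue with the last chain point before $x=1-\rho$ rather than with the curve itself. The argument above handles this because $f_\cC$ is nondecreasing and agrees with the points at the corners.

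If one prefers not to apply Lemma~\ref{lem_not_too_dense} directly, there is a fallback through Claim~\ref{claim_correlation}. Case~(B) makes $\xi_{i+1}+\dots+\xi_{n+1}$ at most $\rho$ times the total over a block of length at least $n/2$. Case~(A) does the same for the $\zeta$'s. Either event is handled by the same Chernoff bound as in \eqref{bon83x}--\eqref{2ib8w}, with a partition into boundedly many blocks.
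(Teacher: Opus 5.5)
Your proof is correct and follows essentially the paper's route. You show deterministically that a $\rho$-irregular full chain puts at least $\delta n$ points into one of the two boundary strips, so it is $(\eta,\delta)$-bad once $\rho\le\eta$, and then you invoke Lemma~\ref{lem_not_too_dense}. The paper phrases the same dichotomy as a contrapositive pigeonhole: with $\delta<\tfrac12$, an $(\eta,\delta)$-good set has a point in $[0,1-\eta)\times(\eta,1]$, forcing $f_{\cS_n}(1-\eta)>\eta$. Since that inclusion is deterministic, your preliminary reduction to uniform samples on the square is unnecessary.
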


\begin{proof}
Fix any $\delta\in(0,\frac12)$. 
Lemma~\ref{lem_not_too_dense} allows us to choose $\eta = \eta(\epsilon,\delta,\fC/\fc)\in(0,\tfrac12)$ such that
\eeq{ \label{n5c42c}
\P(\text{$\cS_n$ is an $(\eta,\delta)$-bad convex chain})
\le \epsilon^n\cdot\P(\text{$\cS_n$ is a convex chain})
}
for all sufficiently large $n$.
Now suppose $\cS_n$ is an $(\eta,\delta)$-good convex chain.
Let $f = f_{\cS_n}$ as in Definition~\ref{def_regular}.
Since $\delta<\frac12$, \eqref{good_def} implies that the two strips $[1-\eta,1]\times[0,1]$ and $[0,1]\times[0,\eta]$ do not cover all of $\cS_n$.
Therefore, $\cS_n$ contains an element $(x,y)\in[0,1-\eta)\times(\eta,1]$, hence
\eq{
f(1-\eta) \ge f(x) = y > \eta.
}
So $\cS_n$ is $\eta$-regular.
We have thus argued (by contrapositive) that
\eq{
\{\text{$\cS_n$ is an $\eta$-irregular convex chain}\} \subseteq \{\text{$\cS_n$ is an $(\eta,\delta)$-bad convex chain}\}.
}
Consequently, \eqref{ob8hp} follows from \eqref{n5c42c} if we choose $\rho \le \eta$.
\end{proof}

\begin{lemma}[Convex positioning within parallelogram] \label{lem_lozenge}
Let $\lozenge\subset\R^2$ be a parallelogram.
Let $\wh\fp$ be a probability density function on $\lozenge$ such that
\eeq{ \label{ratio_assumption_loz}
\frac{\sup_{(x,y)\in\lozenge}\wh\fp(x,y)}{1/\Area(\lozenge)} \le R\in[1,\infty).
}
Let $\cS_n^\lozenge$
be a set of $n$ independent samples from $\wh\fp$, and define
\eq{
\sL_n^\lozenge \coloneqq \max\{\#\cC:\, \cC\subseteq\cS_n^\lozenge \text{ and $\cC$ is in convex position}\}.
}
Then for every $n\ge 1$ and $t\ge0$, we have the tail bound
\eeq{ \label{lozenge_tail}
\P\big(\sL_n^\lozenge \ge (32R\e^3 n)^{1/3}+t\big)
\le
2^{-(32R\e^3 n)^{1/3}-t}.
}
\end{lemma}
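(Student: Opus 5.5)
The plan is to reduce the parallelogram to the triangle case and then run the union-bound argument from the proof of Proposition~\ref{prop_general}\ref{prop_general_tail}.

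\textbf{Step 1: one triangle per convex set.} Cut $\lozenge$ along one diagonal into two triangles $\triangle_1,\triangle_2$, each of area $\Area(\lozenge)/2$. Any set $\cC$ in convex position has a convex hull. Choose two points $\nA,\nB$ of $\cC$, for instance the extreme points in the direction of the diagonal. They split the hull boundary into an upper chain and a lower chain. Each chain, together with $\{\nA,\nB\}$, is in convex position and lies in the triangle cut out by the two supporting lines at $\nA,\nB$ and the segment $\nA\nB$. Rather than tracking these data-dependent triangles, I would use a cleaner count.

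\textbf{Step 2: direct counting bound.} Count the probability that $k$ specified i.i.d.\ points are in convex position inside $\lozenge$. The density bound $R/\Area(\lozenge)$ reduces this to the uniform case, at a cost of a factor $R^k$. For $k$ uniform points in a parallelogram, Valtr's formula gives probability $\bigl(\binom{2k-2}{k-1}/k!\bigr)^2$. I am assuming this known result can be quoted; otherwise one bounds it by decomposing into four chains as in Valtr's argument. This quantity is at most $16^k/(k!)^2$.

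\textbf{Step 3: union bound.} Then
\[
\P(\sL_n^\lozenge\ge k)\le \binom nk\frac{(16R)^k}{(k!)^2}\le \frac{(16Rn)^k}{(k!)^3}.
\]
Applying $k!\ge (k/\e)^k\sqrt{2\pi k}$ gives
\[
\P(\sL_n^\lozenge\ge k)<\Big(\frac{16R\e^3n}{k^3}\Big)^k.
\]
Take $k=\lceil(32R\e^3n)^{1/3}+t\rceil$. Then the ratio $16R\e^3n/k^3$ is at most $1/2$, which yields \eqref{lozenge_tail}, with the constant $32$ matching exactly.

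\textbf{Alternative and main obstacle.} If Valtr's formula is not to be assumed, the alternative uses only \eqref{1o0nc}. Split a convex $k$-gon inside $\lozenge$ at its leftmost and rightmost vertices into two chains. Each chain lies in a triangle with two distinguished vertices, but those triangles are data-dependent and not subsets of a fixed triangle of area comparable to $\Area(\lozenge)$. Controlling this is the main obstacle. My first attempt would be to apply an affine map sending $\lozenge$ to the unit square and bound the number of points in convex position by the sum of four monotone convex chains, one per corner. I would then reuse the exponential representation \eqref{exp_rep}--\eqref{qbd7v} to bound each chain's probability by $1/(m!)^2$, with the entropy of the splits absorbed into a $C^k$ factor.
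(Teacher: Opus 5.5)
Your Steps~2--3 are the paper's proof: pass to uniform samples at cost $R^k$, quote Valtr's parallelogram formula $\bigl(\binom{2k-2}{k-1}/k!\bigr)^2$, take a union bound over $k$-subsets, apply Stirling, and set $k=\lceil(32R\e^3n)^{1/3}+t\rceil$. The only difference is cosmetic: you bound $\binom{2k-2}{k-1}\le 4^k$ directly, while the paper writes it via $(2k)!/(k!)^3$ and uses two-sided Stirling, reaching the same constant $16$; your Step~1 and the alternative route are unnecessary, since the paper also simply cites Valtr's formula.
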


\begin{proof}
Let us write $\cS_n^\lozenge = \{\nP_1,\ldots,\nP_n\}$. 
By \eqref{ratio_assumption_loz}, the probability density function of $(\nP_1,\ldots,\nP_n)$ with respect to product uniform measure on $\lozenge$ is bounded from above by $R^n$.
If $\cU_n^\lozenge$ is a set of $n$ independent uniform samples from $\lozenge$, then \cite[Theorem~1]{valtr95} gives
\eq{
\P(\text{$\cU_n^\lozenge$ is in convex position}) = \bigg(\frac{{2n-2 \choose n-1}}{n!}\bigg)^2.
}
Hence
\eq{
\P(\text{$\cS_n^\lozenge$ is in convex position}) \le R^n\bigg(\frac{{2n-2 \choose n-1}}{n!}\bigg)^2
= R^n\bigg(\frac{\frac{(2n-2)!}{(n-1)!(n-1)!}}{n!}\bigg)^2
\le R^n\Big(\frac{(2n)!}{(n!)^3}\Big)^2.
}
Taking a union bound over subsets of size $k$, we have
\eq{
\P(\sL_n^\lozenge \ge k)
\le {n \choose k}R^k\Big(\frac{(2k)!}{(k!)^3}\Big)^2
\le \frac{n^k}{k!}R^k\Big(\frac{(2k)!}{(k!)^3}\Big)^2
= (Rn)^k\frac{\big((2k)!\big)^2}{(k!)^7}.
}
Next use the standard estimate $(\frac{k}{\e})^k\sqrt{\e k} \le k! \le (\frac{k}{\e})^k\sqrt{\e^2 k}$ to obtain
\[
\P(\sL_n^\lozenge \ge k)
\le
(Rn)^k\cdot\frac{(\frac{2k}{\e})^{4k}\e^2 2k}{(\frac{k}{\e})^{7k}\e^{7/2}k^{7/2}}
=
\Big(\frac{16R\e^3 n}{k^3}\Big)^k\cdot\frac{2}{\e^{3/2}k^{5/2}}
\le
\Big(\frac{16R\e^3 n}{k^3}\Big)^k.
\]
Now choose $k = \ceil{(32 R\e^3 n)^{1/3} + t}$ to obtain \eqref{lozenge_tail}.
\end{proof}

\begin{lemma}[Irregular convex chains are not competitive] \label{lem_irreg_nonoptimal}
Assume \eqref{p_upper}.
Given $\rho\in(0,\tfrac12)$, define the random variable
\eq{
\sL_n^\irr \coloneqq \max\big\{\#\cC:\, \text{$\cC\subseteq\cS_n$ and $\cC$ is a $\rho$-irregular convex chain}\big\}.
}
For every $\epsilon>0$, there exists $\rho = \rho(\epsilon,\fC)\in(0,\tfrac12)$ small enough that
\eeq{ \label{irreg_epsilon}
\limsup_{n\to\infty}\frac{\sL_n^\irr}{n^{1/3}} \le \epsilon \quad \text{almost surely.}
}
\end{lemma}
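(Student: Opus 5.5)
The plan is to show that a $\rho$-irregular convex chain must spend essentially all of its points inside a thin region near the corner $(1,0)$. Such a region can be covered by a bounded number of parallelograms of small area. Lemma~\ref{lem_lozenge} then bounds the number of points in convex position inside each of them.

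\emph{Geometry.} Let $\cC$ be $\rho$-irregular and $f=f_\cC$. There are two cases.

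\emph{Case $f(1)<1$.} Then $(1,y)\in\cC$ for some $y<1$. Convexity of the chain together with $(1,1)$ forces every other point of $\cC$ to lie below the chord from $(0,0)$ to $(1,y)$. This does not immediately localize the chain, so I handle it by dropping the single point $(1,y)$: the remaining chain $\cC'$ has $f_{\cC'}(1)=1$. If $\cC'$ is still irregular, it falls under the next case. If $\cC'$ is regular, the extra point $(1,y)$ together with $f_{\cC'}(1-\rho)\ge\rho$ forces a contradiction with convexity unless $y\ge \rho$-ish. I expect this case to be mostly cosmetic, costing at most one point.

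\emph{Case $f(1)=1$ and $f(1-\rho)<\rho$.} Convexity with $f(0)=0$ gives $f(x)<\rho$ for all $x\le 1-\rho$. Every point of $\cC$ with $x\le 1-\rho$ therefore lies in the strip $[0,1-\rho]\times[0,\rho]$. Every point with $x\ge 1-\rho$ lies in the strip $[1-\rho,1]\times[0,1]$. Each of these strips is a rectangle (a parallelogram) of area at most $\rho$. Any subset of a convex chain is in convex position, so
\[
\#\cC\le \sL^{\lozenge_1}+\sL^{\lozenge_2}+1,
\]
where $\sL^{\lozenge_i}$ is the largest convex-position subset of $\cS_n\cap\lozenge_i$.

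\emph{Probability.} Fix a strip $\lozenge$ of area $A\le\rho$. Under \eqref{p_upper}, the number of samples $N=\#(\cS_n\cap\lozenge)$ is $\mathsf{Binomial}(n,p)$ with $p\le 2\fC A\le 2\fC\rho$. By Hoeffding, $N\le 3\fC\rho n$ (say, $\le 2\fC\rho n+n^{2/3}$) except with probability $\e^{-2n^{1/3}}$. Conditionally on $N$, the points are i.i.d.\ from the normalized restricted density. That density has sup-ratio $R\le \fC/\fc$ if \eqref{p_lower} is available. Here only \eqref{p_upper} is assumed, so I use the conditional density bound $R=\sup\fp\cdot A/p$. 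This could be large when $p$ is tiny, so I will not condition that way.

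Instead I apply Lemma~\ref{lem_lozenge} directly to the unconditioned sub-configuration, using the union-bound argument from its proof. For $k$ points of $\cS_n$ lying in $\lozenge$ and in convex position, the probability is at most
\[
\binom nk (2\fC A)^k\Big(\tfrac{(2k)!}{(k!)^3}\Big)^2,
\]
since the $k$ points have joint density at most $(2\fC)^k$ relative to Lebesgue measure, and Valtr's formula scales by $A^k$. This yields
\[
\P(\sL^\lozenge\ge k)\le (32\fC A\e^3 n/k^3)^k.
\]
Taking $k=(64\fC\rho\e^3 n)^{1/3}+n^{1/4}$ gives a summable bound.

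\emph{Conclusion.} By Borel--Cantelli, almost surely
\[
\sL_n^\irr\le 2(64\fC\e^3\rho)^{1/3}n^{1/3}+o(n^{1/3}).
\]
Choose $\rho$ so that $2(64\fC\e^3\rho)^{1/3}<\epsilon$.

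The main obstacle is the geometric reduction: checking that every irregular chain, including the case $f(1)<1$, is covered by boundedly many small parallelograms. I expect the $f(1)<1$ case may need the bound $f_{\cC}(1-\rho)<\rho$ replaced by an argument about the chain below the chord to $(1,y)$, with $y<1$ giving points confined to a possibly larger region. If that fails, I would redefine the cover using the strip $[1-\rho,1]\times[0,1]$ plus the region under $f$ restricted to $x\le1-\rho$.
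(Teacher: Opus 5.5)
Your handling of the case $f_\cC(1)<1$ is wrong as written, and no deterministic argument can repair it. Take any long $\rho$-regular convex chain $\cC'$ with last two points $\nP_{m-1},\nP_m$. Append $(1,y)$ with $y<1$ close enough to $1$ that $\Slope(\nP_{m-1},\nP_m)<\Slope(\nP_m,(1,y))$; the final slope to $(1,1)$ is $+\infty$. By Lemma~\ref{lem_position_slopes} the result is a convex chain with $f_\cC(1)=y<1$. It is therefore $\rho$-irregular and has size $\#\cC'+1$. So there is no contradiction with convexity. If such configurations occurred with positive probability, $\sL_n^\irr$ could be of the same order as $\sL_n$.

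The lemma survives only because this case is a null event. Since $\fp$ is a density, almost surely no sample has first coordinate equal to $1$. Hence almost surely every convex chain $\cC\subseteq\cS_n$ has $f_\cC(1)=1$, and $\rho$-irregularity means exactly $f_\cC(1-\rho)<\rho$. The paper uses this when it assumes $\cC_n\subset\Interior(\cT)$. You should replace your dropping-a-point paragraph with this observation.

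With that fix, your main case is correct and takes a genuinely different, somewhat leaner route than the paper.

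\emph{The paper's route.} It splits the chain into three pieces: the triangles $\cT_1$ (vertices $(0,0),(1-\rho,\rho),(1-\rho,0)$) and $\cT_2$ (vertices $(1,1),(1-\rho,\rho),(1,\rho)$), and the square $\cQ=[1-\rho,1]\times[0,\rho]$. Claim~\ref{claim_rho} shows the pieces in $\cT_1$ and $\cT_2$ remain convex chains relative to the new anchor $(1-\rho,\rho)$. The paper then applies the two-vertex triangle tail bound, Proposition~\ref{prop_general}\ref{prop_general_tail}, on $\cT_1$ and $\cT_2$, and Lemma~\ref{lem_lozenge} only on $\cQ$. It conditions on the counts $N_{n,\ell}$ and uses Hoeffding. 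The factor $1/p_\ell$ in the conditional density ratio then cancels against $N_{n,\ell}\le 2np_\ell$. So your concern about tiny $p$ does not actually obstruct conditioning.

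\emph{Your route.} You cover the chain by two fixed rectangles of area at most $\rho$. You use only that subsets of a set in convex position are in convex position. You apply Valtr's parallelogram formula to both rectangles via a direct union bound over $k$-subsets, with the unnormalized bound $\fp\le 2\fC$. This bypasses the anchor argument, the conditioning and Hoeffding step, and the separate $p_\ell=0$ case. The cost is worse constants ($32$ in place of $4$ in the tail, cruder areas), which are irrelevant since $\rho$ is tuned at the end. Your Hoeffding paragraph is then superfluous.
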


\begin{proof}
Our strategy is to show that $\rho$-irregular convex chains are confined to a region whose area is $O(\rho)$.
This will mean that only $O(\rho n)$ sample points are available for forming a $\rho$-irregular convex chain, which in turn means that the longest such chain will be $O((\rho n)^{1/3})$.

More precisely, given $\epsilon>0$, choose $\rho\in(0,\tfrac12)$ small enough that
\eq{
(64\fC\rho\e^3)^{1/3} \le \tfrac{\epsilon}{4}.
}
In particular, we may assume $n$ is large enough that
\eeq{ \label{uebbx}
(64\fC\rho\e^3 n)^{1/3} + n^{1/4} \le \tfrac{\epsilon}{3}n^{1/3}.
}
Using this value of $\rho$ henceforth, let $\cC_n$ be a maximizing set for $\sL_n^\irr$.
Let us assume $\cC_n\subset\Interior(\cT)$, which occurs with probability one.
Let $f \coloneqq f_{\cC_n}$ as in Definition~\ref{def_regular}.
Since $\cC_n$ is $\rho$-irregular, we have 
\eeq{ \label{irreg_consequence}
f(1-\rho)<\rho.
}
Now partition $\cC_n$ into three subsets (see Figure~\ref{fig_irregular} for an illustration):
\eq{
\cC_{n,1} &\coloneqq \cC_n \cap \big([0,1-\rho)\times[0,1]\big), \\
\cC_{n,2} &\coloneqq \cC_n \cap \big([0,1]\times(\rho,1]\big), \\
\cC_{n,3} &\coloneqq \cC_n \cap \big([1-\rho,1]\times[0,\rho]\big).
}
Let $\cT_1$ denote the triangle with vertices $(0,0)$, $(1-\rho,\rho)$, and $(1-\rho,0)$, and let $\cT_2$ denote the triangle with vertices $(1,1)$, $(1-\rho,\rho)$, and $(1,\rho)$.
By convexity of $f$, it follows from \eqref{irreg_consequence} that
\eeq{ \label{w8b3x}
\cC_{n,1} \subset \cT_1
\quad \text{and} \quad
\cC_{n,2} \subset \cT_2.
}
For convenience when discussing $\cC_{n,3}$, we denote the relevant square by $\cQ \coloneqq [1-\rho,1]\times[0,\rho]$.

The following claim is depicted in Figure~\ref{fig_irregular}.

\begin{claim}[Convex positioning for each subset] \label{claim_rho}
The following statements hold:
\begin{enumerate}[label=\textup{(\roman*)}]

\item \label{claim_rho_1}
$\cC_{n,1}\cup\{(0,0),(1-\rho,\rho)\}$ is in convex position.

\item \label{claim_rho_2}
$\cC_{n,2}\cup\{(1,1),(1-\rho,\rho)\}$ is in convex position.

\item \label{claim_rho_3}
$\cC_{n,3}$ is in convex position.

\end{enumerate}
\end{claim}

\begin{proofclaim}
Statement~\ref{claim_rho_3} is trivial, since $\cC_{n,3}\subseteq\cC_n$ and $\cC_n$ is a convex chain.
The proof of \ref{claim_rho_2} is the same as the proof of \ref{claim_rho_1} after applying the reflection $(x,y)\mapsto(1-y,1-x)$.
So we just prove \ref{claim_rho_1}.

Recall from \eqref{w8b3x} that $\cC_{n,1}\subset\cT_1$.
If $\#\cC_{n,1} \in \{0,1\}$, then it is trivially that case that $\cC_{n,1}\cup\{(0,0),(1-\rho,\rho)\}$ is in convex position.
So assume $\#\cC_{n,1} = m \ge 2$, and label the elements of $\cC_{n,1}$ in increasing order of their horizontal coordinate, as $\nP_1,\ldots,\nP_m$.
Since $\cC_{n,1}\subseteq\cC_n$ and $\cC_n$ is a convex chain, we know from Lemma~\ref{lem_position_slopes} that
\eq{
\Slope\big((0,0),\nP_1)<\Slope(\nP_1,\nP_2)<\cdots<\Slope(\nP_{m-1},\nP_m).
}
By the same lemma, we will have \ref{claim_rho_1} upon showing that
\eeq{ \label{2knnp0}
\Slope(\nP_{m-1},\nP_m) < \Slope\big(\nP_m,(1-\rho,\rho)\big).
}
To this end, we write $\nP_m = (x,f(x))$ and observe that
\begin{align*}
\Slope(\nP_{m-1},\nP_m) = \partial^-f(x)
\le \partial^+f(x)
&\stackrefp{irreg_consequence}{\le} \frac{f(1-\rho)-f(x)}{1-\rho-x} \\
&\stackref{irreg_consequence}{<} \frac{\rho-f(x)}{1-\rho-x} 
= \Slope\big(\nP_m,(1-\rho,\rho)\big).
\end{align*}
Indeed, \eqref{2knnp0} is true, so we are done.
\end{proofclaim}

Consider the random variables
\eq{
\sL_{n,1} &\coloneqq \max\big\{\#S:\, \text{$S\subseteq\cS_n\cap\cT_1$ and $S\cup\{(0,0),(1-\rho,\rho)\}$ is in convex position}\big\}, \\
\sL_{n,2} &\coloneqq \max\big\{\#S:\, \text{$S\subseteq\cS_n\cap\cT_2$ and $S\cup\{(1,1),(1-\rho,\rho)\}$ is in convex position}\big\}, \\
\sL_{n,3} &\coloneqq \max\big\{\#S:\, \text{$S\subseteq\cS_n\cap\cQ$ and $S$ is in convex position}\big\}.
}
Claim~\ref{claim_rho} and \eqref{w8b3x} together show
\eeq{ \label{C_by_L}
\#\cC_{n,\ell} \le \sL_{n,\ell} \quad \text{for each $\ell\in\{1,2,3\}$,}
}
so now our strategy is to bound $\sL_{n,1}$ from above.

\begin{claim}[Optimal values are small] \label{claim_optimal_small}
With probability one, we have $\sL_{n,\ell}\le\frac{\epsilon}{3}n^{1/3}$ for each $\ell\in\{1,2,3\}$ and all large $n$.
\end{claim}

\begin{figure}[t]
\tikzset{every picture/.style={line width=0.75pt}} %set default line width to 0.75pt        
\begin{tikzpicture}[x=0.75pt,y=0.75pt,yscale=-1,xscale=1]
%uncomment if require: \path (0,262); %set diagram left start at 0, and has height of 262

%Shape: Square [id:dp7209600006074993] 
\draw  [fill={rgb, 255:red, 184; green, 181; blue, 181 }  ,fill opacity=0.3 ][dash pattern={on 0.84pt off 2.51pt}] (240,180) -- (290,180) -- (290,230) -- (240,230) -- cycle ;
%Straight Lines [id:da3951271614330314] 
\draw [color={rgb, 255:red, 74; green, 144; blue, 226 }  ,draw opacity=1 ][line width=1.5]    (91,230) -- (166.5,222.5) ;
%Straight Lines [id:da20848432242642168] 
\draw [color={rgb, 255:red, 74; green, 144; blue, 226 }  ,draw opacity=1 ][line width=1.5]    (166.5,222.5) -- (218.5,211.5) ;
%Straight Lines [id:da5194751864693146] 
\draw [color={rgb, 255:red, 74; green, 144; blue, 226 }  ,draw opacity=1 ][line width=1.5]    (278.5,125.5) -- (290,30) ;
%Shape: Circle [id:dp18633152540379738] 
\draw  [fill={rgb, 255:red, 0; green, 0; blue, 0 }  ,fill opacity=1 ] (163,222.5) .. controls (163,220.57) and (164.57,219) .. (166.5,219) .. controls (168.43,219) and (170,220.57) .. (170,222.5) .. controls (170,224.43) and (168.43,226) .. (166.5,226) .. controls (164.57,226) and (163,224.43) .. (163,222.5) -- cycle ;
%Straight Lines [id:da836361373607615] 
\draw [color={rgb, 255:red, 74; green, 144; blue, 226 }  ,draw opacity=1 ][line width=1.5]    (268,168.5) -- (278.5,125.5) ;
%Straight Lines [id:da6325762256601438] 
\draw [color={rgb, 255:red, 74; green, 144; blue, 226 }  ,draw opacity=1 ][line width=1.5]    (218.5,211.5) -- (248.5,201.5) ;
%Shape: Circle [id:dp24585632276348746] 
\draw  [fill={rgb, 255:red, 0; green, 0; blue, 0 }  ,fill opacity=1 ] (215,211.5) .. controls (215,209.57) and (216.57,208) .. (218.5,208) .. controls (220.43,208) and (222,209.57) .. (222,211.5) .. controls (222,213.43) and (220.43,215) .. (218.5,215) .. controls (216.57,215) and (215,213.43) .. (215,211.5) -- cycle ;
%Shape: Circle [id:dp8069494144519829] 
\draw  [fill={rgb, 255:red, 0; green, 0; blue, 0 }  ,fill opacity=1 ] (275,125.5) .. controls (275,123.57) and (276.57,122) .. (278.5,122) .. controls (280.43,122) and (282,123.57) .. (282,125.5) .. controls (282,127.43) and (280.43,129) .. (278.5,129) .. controls (276.57,129) and (275,127.43) .. (275,125.5) -- cycle ;
%Shape: Right Triangle [id:dp5018677631567552] 
\draw   (290,30) -- (91,230) -- (290,230) -- cycle ;
%Straight Lines [id:da36955181707363205] 
\draw [color={rgb, 255:red, 74; green, 144; blue, 226 }  ,draw opacity=1 ][line width=1.5]    (248.5,201.5) -- (260.5,188.5) ;
%Straight Lines [id:da16899218716439834] 
\draw [color={rgb, 255:red, 74; green, 144; blue, 226 }  ,draw opacity=1 ][line width=1.5]    (260.5,188.5) -- (268,168.5) ;
%Shape: Circle [id:dp43050451597451367] 
\draw  [fill={rgb, 255:red, 0; green, 0; blue, 0 }  ,fill opacity=1 ] (245,201.5) .. controls (245,199.57) and (246.57,198) .. (248.5,198) .. controls (250.43,198) and (252,199.57) .. (252,201.5) .. controls (252,203.43) and (250.43,205) .. (248.5,205) .. controls (246.57,205) and (245,203.43) .. (245,201.5) -- cycle ;
%Shape: Circle [id:dp01620587857446598] 
\draw  [fill={rgb, 255:red, 0; green, 0; blue, 0 }  ,fill opacity=1 ] (257,188.5) .. controls (257,186.57) and (258.57,185) .. (260.5,185) .. controls (262.43,185) and (264,186.57) .. (264,188.5) .. controls (264,190.43) and (262.43,192) .. (260.5,192) .. controls (258.57,192) and (257,190.43) .. (257,188.5) -- cycle ;
%Shape: Circle [id:dp7901094948101214] 
\draw  [fill={rgb, 255:red, 0; green, 0; blue, 0 }  ,fill opacity=1 ] (264.5,168.5) .. controls (264.5,166.57) and (266.07,165) .. (268,165) .. controls (269.93,165) and (271.5,166.57) .. (271.5,168.5) .. controls (271.5,170.43) and (269.93,172) .. (268,172) .. controls (266.07,172) and (264.5,170.43) .. (264.5,168.5) -- cycle ;
%Shape: Square [id:dp5027171340949559] 
\draw  [fill={rgb, 255:red, 184; green, 181; blue, 181 }  ,fill opacity=0.3 ][dash pattern={on 0.84pt off 2.51pt}] (510,180) -- (560,180) -- (560,230) -- (510,230) -- cycle ;
%Shape: Right Triangle [id:dp8766291774730809] 
\draw  [fill={rgb, 255:red, 184; green, 181; blue, 181 }  ,fill opacity=0.3 ] (510,180) -- (361,230) -- (510,230) -- cycle ;
%Shape: Right Triangle [id:dp25788228062867147] 
\draw  [fill={rgb, 255:red, 184; green, 181; blue, 181 }  ,fill opacity=0.3 ] (560,30) -- (510,180) -- (560,180) -- cycle ;
%Straight Lines [id:da09059366881985464] 
\draw [color={rgb, 255:red, 74; green, 144; blue, 226 }  ,draw opacity=1 ][line width=1.5]    (361,230) -- (436.5,222.5) ;
%Straight Lines [id:da32780212294381306] 
\draw [color={rgb, 255:red, 74; green, 144; blue, 226 }  ,draw opacity=1 ][line width=1.5]    (436.5,222.5) -- (488.5,211.5) ;
%Straight Lines [id:da060494641434161056] 
\draw [color={rgb, 255:red, 74; green, 144; blue, 226 }  ,draw opacity=1 ][line width=1.5]    (548.5,125.5) -- (560,30) ;
%Shape: Circle [id:dp04820141995757632] 
\draw  [fill={rgb, 255:red, 0; green, 0; blue, 0 }  ,fill opacity=1 ] (433,222.5) .. controls (433,220.57) and (434.57,219) .. (436.5,219) .. controls (438.43,219) and (440,220.57) .. (440,222.5) .. controls (440,224.43) and (438.43,226) .. (436.5,226) .. controls (434.57,226) and (433,224.43) .. (433,222.5) -- cycle ;
%Straight Lines [id:da6417568220352599] 
\draw [color={rgb, 255:red, 74; green, 144; blue, 226 }  ,draw opacity=1 ][line width=1.5]    (538,168.5) -- (548.5,125.5) ;
%Straight Lines [id:da516602141832653] 
\draw [color={rgb, 255:red, 74; green, 144; blue, 226 }  ,draw opacity=1 ][line width=1.5]    (488.5,211.5) -- (510,180) ;
%Shape: Circle [id:dp7854888545705256] 
\draw  [fill={rgb, 255:red, 0; green, 0; blue, 0 }  ,fill opacity=1 ] (485,211.5) .. controls (485,209.57) and (486.57,208) .. (488.5,208) .. controls (490.43,208) and (492,209.57) .. (492,211.5) .. controls (492,213.43) and (490.43,215) .. (488.5,215) .. controls (486.57,215) and (485,213.43) .. (485,211.5) -- cycle ;
%Shape: Circle [id:dp6379488787988992] 
\draw  [fill={rgb, 255:red, 0; green, 0; blue, 0 }  ,fill opacity=1 ] (545,125.5) .. controls (545,123.57) and (546.57,122) .. (548.5,122) .. controls (550.43,122) and (552,123.57) .. (552,125.5) .. controls (552,127.43) and (550.43,129) .. (548.5,129) .. controls (546.57,129) and (545,127.43) .. (545,125.5) -- cycle ;
%Shape: Right Triangle [id:dp5049256193891675] 
\draw   (560,30) -- (361,230) -- (560,230) -- cycle ;
%Straight Lines [id:da7906103234041774] 
\draw [color={rgb, 255:red, 74; green, 144; blue, 226 }  ,draw opacity=1 ][line width=1.5]    (518.5,201.5) -- (530.5,188.5) ;
%Straight Lines [id:da3100878026991666] 
\draw [color={rgb, 255:red, 74; green, 144; blue, 226 }  ,draw opacity=1 ][line width=1.5]    (510,180) -- (538,168.5) ;
%Shape: Circle [id:dp5218516581741669] 
\draw  [fill={rgb, 255:red, 0; green, 0; blue, 0 }  ,fill opacity=1 ] (515,201.5) .. controls (515,199.57) and (516.57,198) .. (518.5,198) .. controls (520.43,198) and (522,199.57) .. (522,201.5) .. controls (522,203.43) and (520.43,205) .. (518.5,205) .. controls (516.57,205) and (515,203.43) .. (515,201.5) -- cycle ;
%Shape: Circle [id:dp11604751250838652] 
\draw  [fill={rgb, 255:red, 0; green, 0; blue, 0 }  ,fill opacity=1 ] (527,188.5) .. controls (527,186.57) and (528.57,185) .. (530.5,185) .. controls (532.43,185) and (534,186.57) .. (534,188.5) .. controls (534,190.43) and (532.43,192) .. (530.5,192) .. controls (528.57,192) and (527,190.43) .. (527,188.5) -- cycle ;
%Shape: Circle [id:dp029970101805855354] 
\draw  [fill={rgb, 255:red, 0; green, 0; blue, 0 }  ,fill opacity=1 ] (534.5,168.5) .. controls (534.5,166.57) and (536.07,165) .. (538,165) .. controls (539.93,165) and (541.5,166.57) .. (541.5,168.5) .. controls (541.5,170.43) and (539.93,172) .. (538,172) .. controls (536.07,172) and (534.5,170.43) .. (534.5,168.5) -- cycle ;
%Shape: Square [id:dp772930161384697] 
\draw  [color={rgb, 255:red, 208; green, 2; blue, 27 }  ,draw opacity=1 ][fill={rgb, 255:red, 208; green, 2; blue, 27 }  ,fill opacity=1 ] (503.75,173.75) -- (516.25,173.75) -- (516.25,186.25) -- (503.75,186.25) -- cycle ;

% Text Node
\draw (296,175) node [anchor=north west][inner sep=0.75pt]    {$\rho $};
% Text Node
\draw (222,233.4) node [anchor=north west][inner sep=0.75pt]    {$1-\rho $};
% Text Node
\draw (566,175) node [anchor=north west][inner sep=0.75pt]    {$\rho $};
% Text Node
\draw (492,233.4) node [anchor=north west][inner sep=0.75pt]    {$1-\rho $};
% Text Node
\draw (257,134.4) node [anchor=north west][inner sep=0.75pt]  [color={rgb, 255:red, 74; green, 144; blue, 226 }  ,opacity=1 ]  {$f$};
% Text Node
\draw (439.5,177.4) node [anchor=north west][inner sep=0.75pt]    {$\mathcal{T}_{1}$};
% Text Node
\draw (505.5,107.4) node [anchor=north west][inner sep=0.75pt]    {$\mathcal{T}_{2}$};
% Text Node
\draw (542,209.4) node [anchor=north west][inner sep=0.75pt]    {$\mathcal{Q}$};
\end{tikzpicture}
\caption{Argument for Lemma~\ref{lem_irreg_nonoptimal}.
\textit{Left}: The original convex chain $\cC_n$, which is assumed to be $\rho$-irregular.
\textit{Right}: The same chain partitioned into three subsets.
The subset in $\cT_1$ is in convex position with $(0,0)$ and the new vertex $(1-\rho,\rho)$, shown as a red square.
Similarly, the subset in $\cT_2$ is in convex position with $(1,1)$ and the new vertex.
Finally, the subset in the square $\cQ$ is trivially in convex position.}
\label{fig_irregular}
\end{figure}

\begin{proofclaim}
Consider the number of sample points landing in each region:
\eq{
N_{n,1} \coloneqq \#(\cS_n\cap\cT_1), \quad
N_{n,2} \coloneqq \#(\cS_n\cap\cT_2), \quad
N_{n,3} \coloneqq \#(\cS_n\cap\cQ), \quad
}
Note that $N_{n,\ell}$ is binomially distributed with $n$ trials and success probability $p_\ell$, where
\eq{ 
p_1 &\coloneqq \int_{\cT_1}\fp(x,y)\ \dd x\,\dd y,
\qquad
p_2 \coloneqq \int_{\cT_2}\fp(x,y)\ \dd x\,\dd y,
\qquad
p_3 \coloneqq \int_{\cQ}\fp(x,y)\ \dd x\,\dd y.
}
By Hoeffding's inequality, we have
\eeq{ \label{38ndv1}
\P(N_{n,\ell} \ge 2np_\ell)
= \P(N_{n,\ell}-\E N_{n,\ell} \ge np_\ell)
\le \exp(-2p_\ell^2n).
}
If $p_1 = 0$, then $\sL_{n,1}=N_{n,1} =0$ almost surely, in which case the claim trivially holds for $\ell=1$.
So assume $p_1>0$.
Conditional on $N_{n,1}$, the random set $\cS_n\cap\cT_1$ has the same law as $N_{n,1}$ independent samples from $\fp$ conditioned to be inside $\cT_1$.
Furthermore, if $\wh\fp$ is the conditional density on $\cT_1$, then
\eq{
\sup_{(x,y)\in\cT_1}\frac{\wh\fp(x,y)}{1/\Area(\cT_1)}
= \sup_{(x,y)\in\cT_1}\frac{\fp(x,y)}{p_1}\cdot\frac{\rho(1-\rho)}{2}
\stackref{p_upper}{\le} \frac{\fC\rho(1-\rho)}{p_1} \le \frac{\fC\rho}{p_1}.
}
Therefore, Proposition~\ref{prop_general}\ref{prop_general_tail} gives
\eeq{ \label{ibm35x}
\P\givenk[\Big]{\sL_{n,1}\ge \Big(4\frac{\fC\rho}{p_1}\e^3 N_{n,1}\Big)^{1/3} + n^{1/4}}{N_{n,1}} \le 2^{-n^{1/4}}.
}
We now have
\eq{
\P\Big(\sL_{n,1} \ge \tfrac{\epsilon}{3}n^{1/3}\Big)
&\stackrefpp{uebbx}{ibm35x,38ndv1}{\le} \P\Big(\sL_{n,1} \ge (8\fC\rho\e^3 n)^{1/3} + n^{1/4}\Big) \\
&\stackrefp{ibm35x,38ndv1}{\le} \P\Big(\sL_{n,1} \ge \Big(4\frac{\fC\rho}{p_1}\e^3 N_{n,1}\Big)^{1/3} + n^{1/4}\Big) + \P(N_{n,1} \ge 2np_1) \\
&\stackref{ibm35x,38ndv1}{\le}
2^{-n^{1/4}}+\exp(-2p_1^2n).
}
By the first Borel--Cantelli lemma, we conclude that $\sL_{n,1}<\tfrac{\epsilon}{3}n^{1/3}$ for all large $n$.
The argument for $\sL_{n,2}$ is exactly the same.

We finally consider $\sL_{n,3}$.
If $p_3 = 0$, then $\sL_{n,3}=N_{n,3}=0$ almost surely, so the claim trivially holds.
So assume $p_3>0$.
Conditional on $N_{n,3}$, the random set $\cS_n\cap\cQ$ has the same law as $N_{n,3}$ independent samples from $\fp$ conditioned to be inside $\cQ$.
Furthermore, if $\wh\fp$ is the conditional density on $\cQ$, then
\eq{
\sup_{(x,y)\in\cQ}\frac{\wh\fp(x,y)}{1/\Area(\cQ)}
= \sup_{(x,y)\in\cT_1}\frac{\fp(x,y)}{p_3}\cdot\rho^2
\stackref{p_upper}{\le} \frac{2\fC\rho^2}{p_3} \le \frac{\fC\rho}{p_3}.
}
Therefore, Lemma~\ref{lem_lozenge} (with $\lozenge=\cQ$ and $t=n^{1/4}$) yields
\eeq{ \label{ibm35y}
\P\givenk[\Big]{\sL_{n,3} \ge \Big(32\frac{\fC\rho}{p_3}\e^3 N_{n,3}\Big)^{1/3}+n^{1/4}}{N_{n,3}} \le 2^{-n^{1/4}}.
}
We now have
\eq{
\P\Big(\sL_{n,3} \ge \tfrac{\epsilon}{3}n^{1/3}\Big)
&\stackrefpp{uebbx}{ibm35y,38ndv1}{\le} \P\Big(\sL_{n,3} \ge (64\fC\rho\e^3 n)^{1/3} + n^{1/4}\Big) \\
&\stackrefp{ibm35y,38ndv1}{\le} \P\Big(\sL_{n,3} \ge \Big(32\frac{\fC\rho}{p_3}\e^3 N_{n,3}\Big)^{1/3} + n^{1/4}\Big) + \P(N_{n,3} \ge 2np_3) \\
&\stackref{ibm35y,38ndv1}{\le}
2^{-n^{1/4}}+\exp(-2p_3^2n).
}
Using Borel--Cantelli once more, we deduce that $\sL_{n,3}<\tfrac{\epsilon}{3}n^{1/3}$ for all large $n$.
\end{proofclaim}

In summary, with probability one we have the following for all large $n$:
\[
\sL_n^\irr = \#\cC_n
= \#\cC_{n,1} + \#\cC_{n,2} + \#\cC_{n,3}
\stackref{C_by_L}{\le}
\sL_{n,1} + \sL_{n,2} + \sL_{n,3}
\stackrel{\mbox{\footnotesize(Claim~\ref{claim_optimal_small})}}{\le}
\epsilon n^{1/3}. \qedhere
\]
\end{proof}

\subsection{{Proof of Proposition~\ref{prop_upper_bound}}} \label{subsec_upper_proof}
Throughout the proof, we will just write ``$\cC$ is convex'' instead of ``$\cC$ is a convex chain.''

\medskip

\noindent \textbf{Step 0: parameter selection.}
Set
\eeq{ \label{parameter_selection}
L_n \coloneqq \floor{n^{1/157}} \qquad \text{and} \qquad \eta_n \coloneqq n^{-1/158}.
}
Recall the constants $\fC$ and $\fc$ from \eqref{p_upper} and \eqref{p_lower}.
Let $\epsilon = \epsilon(\fp) >0$ be small enough that
\begin{align}
\log(54\fC/\fc) + \log \epsilon &< \log(27 J_\star^3/4),  \label{10noll} \\
\text{and} \quad  \epsilon/\alpha &< J_\star/2. \label{11noll}
\end{align}
Then choose $\delta_n>0$ as in Lemma~\ref{lem_bad_unlikely}, take $\rho_1 = \rho_1(\epsilon,\fC/\fc)\in(0,\frac12)$ as in Lemma~\ref{lem_irreg_unlikely}, and take $\rho_2 = \rho_2(\epsilon,\fC/\fc)\in(0,\frac12)$ as in Lemma~\ref{lem_irreg_nonoptimal}.
Set $\rho = \min\{\rho_1,\rho_2\}$ so that both \eqref{ob8hp} and \eqref{irreg_epsilon} are in effect.
Henceforth we simply write $L,\eta,\delta$ for $L_n,\eta_n,\delta_n$.
Since $n^{-1}\ll L^{-1} \ll \eta \ll 1$, we may assume $n$ is large enough that
\begin{subequations} \label{parameter_dominance}
\begin{align}
(\tfrac{16}{\rho})n^{-1} &\le L^{-1}, \label{parameter_dominance1} \\
4L^{-1} &\le \eta, \label{parameter_dominance2} \\
4\eta &\le \rho. \label{parameter_dominance3}
\end{align}
\end{subequations}
One particular consequence is
\eeq{ \label{particular_dominance}
\max\{\tfrac{4}{\rho}L,4\eta^{-1}\}
\stackref{parameter_dominance2}{\le} \max\{\tfrac{4}{\rho}L,L\} 
= \tfrac{4}{\rho}L 
\stackref{parameter_dominance1}{\le} \tfrac{1}{4}n.
}
We also assume $n$ is large enough that $L\ge2$.

\medskip

\noindent \textbf{Step 1a: for parts~\ref{prop_upper_bound_typical} and \ref{prop_upper_bound_far_typical}, reduce to regular realizations.}
Having fixed $\rho$, we define the following ``regular'' versions of $\sL_n$ and $\sL_n^\far(\eps)$:
\eeq{ \label{regular_lengths}
\sL_n^\reg &\coloneqq \max\big\{\#\cC:\, \text{$\cC\subseteq\cS_n$, $\cC$ is convex and $\rho$-regular}\big\}, \\
\sL_n^{\reg,\far}(\eps) &\coloneqq \max\left\{\#\cC:\, \parbox{0.4\textwidth}{$\cC\subseteq\cS_n$, $\cC$ is convex and $\rho$-regular, and $\displaystyle\inf_{h\in\argmax J}\max_{(x,y)\in\cC}|h(x)-y|\ge\eps$}\right\}.
}
\begin{claim}[Reduction to regular realizations] \label{claim_suffice_typical}
To prove Propositions~\ref{prop_upper_bound}\ref{prop_upper_bound_typical} and \ref{prop_upper_bound}\ref{prop_upper_bound_far_typical}, respectively, it suffices to show
\begin{subequations} \label{upper_reg}
\begin{align}
\limsup_{n\to\infty}\frac{\sL_n^\reg}{\alpha n^{1/3}} &\le \mathrlap{\frac{J_\star}{2}}\hphantom{\frac{J_\star-\theta}{2}} \quad \text{almost surely,} \label{upper_reg1} \\
\limsup_{n\to\infty}\frac{\sL_n^{\reg,\far}(\eps)}{\alpha n^{1/3}} &\le \frac{J_\star-\theta}{2} \quad \text{almost surely, for some $\theta>0$.} \label{upper_reg2}
\end{align}
\end{subequations}
\end{claim}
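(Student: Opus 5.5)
The plan is to split every convex chain according to whether it is $\rho$-regular, and to dispose of the irregular part with Lemma~\ref{lem_irreg_nonoptimal}. Since each convex chain is either $\rho$-regular or $\rho$-irregular, we have the deterministic inequalities
\[
\sL_n \le \max\{\sL_n^\reg,\sL_n^\irr\}
\qquad\text{and}\qquad
\sL_n^\far(\eps) \le \max\{\sL_n^{\reg,\far}(\eps),\sL_n^\irr\},
\]
where $\sL_n^\irr$ is defined with the same $\rho$ fixed in Step~0. That $\rho$ satisfies $\rho\le\rho_2$, so \eqref{irreg_epsilon} is in force and gives $\limsup_{n} \sL_n^\irr/n^{1/3}\le\epsilon$ almost surely. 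One small point is that $\rho$-irregularity is monotone in $\rho$: a $\rho$-irregular chain is also $\rho'$-irregular for $\rho'\ge\rho$. Hence, if Lemma~\ref{lem_irreg_nonoptimal} produced some $\rho_2$ and we then use a smaller $\rho$, the set of $\rho$-irregular chains only shrinks, and the bound still holds.

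For part~\ref{prop_upper_bound_typical}, take $\limsup$ of the first inequality divided by $\alpha n^{1/3}$. Assuming \eqref{upper_reg1}, this gives almost surely
\[
\limsup_{n}\frac{\sL_n}{\alpha n^{1/3}} \le \max\Big\{\frac{J_\star}{2},\ \frac{\epsilon}{\alpha}\Big\} = \frac{J_\star}{2},
\]
where the equality uses \eqref{11noll}. This is exactly \eqref{upper_almost_sure}.

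For part~\ref{prop_upper_bound_far_typical}, the same argument gives the bound $\max\{(J_\star-\theta)/2,\ \epsilon/\alpha\}$. This step needs a little care, because \eqref{11noll} only guarantees $\epsilon/\alpha<J_\star/2$. The $\theta$ supplied by \eqref{upper_reg2} might be so large that $\epsilon/\alpha$ exceeds $(J_\star-\theta)/2$. The fix is to replace $\theta$ by
\[
\theta' \coloneqq \min\{\theta,\ J_\star-2\epsilon/\alpha\}>0.
\]
Then \eqref{upper_reg2} still holds with $\theta'$ in place of $\theta$ (a smaller $\theta$ only weakens the bound), and now $\epsilon/\alpha\le(J_\star-\theta')/2$. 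The maximum therefore equals $(J_\star-\theta')/2$, which yields \eqref{upper_far_almost_sure}.

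No step here is a serious obstacle; the argument is bookkeeping. The points that need care are the monotonicity of irregularity in $\rho$ noted above and the adjustment of $\theta$ in part~\ref{prop_upper_bound_far_typical}.
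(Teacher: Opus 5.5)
Your proposal is correct and follows the paper's argument. You split the chains into $\rho$-regular and $\rho$-irregular ones, bound the irregular part using Lemma~\ref{lem_irreg_nonoptimal} together with \eqref{11noll}, and conclude; your replacement of $\theta$ by $\min\{\theta, J_\star-2\epsilon/\alpha\}$ and your remark on monotonicity in $\rho$ simply make explicit details that the paper leaves implicit.
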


\begin{proofclaim}
By Definition~\ref{def_regular}, every convex chain is either $\rho$-regular or $\rho$-irregular.
The irregular chains satisfy
\eq{
\limsup_{n\to\infty}\frac{\sL_n^\irr}{\alpha n^{1/3}} 
\stackref{irreg_epsilon}{\le} \frac{\epsilon}{\alpha} 
\stackref{11noll}{<} %\frac{\bar J(\phi,\eps)}{2} 
\frac{J_\star}{2}.
}
Therefore, \eqref{upper_almost_sure} follows from \eqref{upper_reg1}, and \eqref{upper_far_almost_sure} follows from \eqref{upper_reg2}. 
\end{proofclaim}

\medskip

\noindent \textbf{Step 1b: for parts~\ref{prop_upper_bound_full} and \ref{prop_upper_bound_far_full}, reduce to good regular realizations.}
Let $(\Omega,\mathfrak{S},\P)$ denote the probability space on which the sample set $\cS_n = \{(X_1,Y_1),\ldots,(X_n,Y_n)\}$ is defined.
Define the event
\eeq{ \label{H_eps_def}
\Omega_\eps &\coloneqq \Big\{\inf_{h\in\argmax J} \max_{i\in\{1,\ldots,n\}} |h(X_i)-Y_i|\ge\eps\Big\}.
}

\begin{claim}[Reduction to good regular realizations] \label{claim_suffice_full}
To prove Propositions~\ref{prop_upper_bound}\ref{prop_upper_bound_full} and \ref{prop_upper_bound}\ref{prop_upper_bound_far_full}, respectively, it suffices to show
\begin{subequations} \label{29bx4r}
\begin{align}
\label{29bx4r_a}
\limsup_{n\to\infty}\frac{1}{n}\log\Big[\frac{(3n)!}{n!} \P(\text{$\cS_n$ is convex, $\rho$-regular, $(\eta,\delta)$-good})\Big] 
&\le \log\Big(\frac{27 J_\star^3}{4}\Big), \\
\label{29bx4r_b}
\limsup_{n\to\infty}\frac{1}{n}\log\Big[\frac{(3n)!}{n!} \P\big(\{\text{$\cS_n$ is convex, $\rho$-regular, $(\eta,\delta)$-good}\}\cap\Omega_\eps\big)\Big]
&< \log\Big(\frac{27 J_\star^3}{4}\Big).
\end{align}
\end{subequations}
\end{claim}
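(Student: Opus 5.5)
The idea is to decompose the event $\{\sL_n=n\}$, i.e.\ $\{\cS_n$ is convex$\}$, according to $\rho$-regularity and $(\eta,\delta)$-goodness, and to show that the discarded pieces are negligible at the exponential scale relative to the target. The discarded pieces are the $\rho$-irregular convex realizations and the $(\eta,\delta)$-bad convex realizations. We have
\[
\{\text{$\cS_n$ convex}\}\subseteq\{\text{$\cS_n$ convex, $\rho$-regular, $(\eta,\delta)$-good}\}\cup\{\text{$\cS_n$ convex, $\rho$-irregular}\}\cup\{\text{$\cS_n$ convex, $(\eta,\delta)$-bad}\}.
\]
By Lemma~\ref{lem_irreg_unlikely} (our choice $\rho\le\rho_1$) and Lemma~\ref{lem_bad_unlikely} (our choice of $\delta_n$ with $\eta_n\to0$), the last two events each have probability at most $\epsilon^n\,\P(\text{$\cS_n$ convex})$ for large $n$. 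The same decomposition applies after intersecting with $\Omega_\eps$; note that $\{\sL_n^\far(\eps)=n\}=\{\text{$\cS_n$ convex}\}\cap\Omega_\eps$.

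Next I bound $\P(\text{$\cS_n$ convex})$ crudely. Proposition~\ref{prop_general}\ref{prop_general_likelihood}, applied on $\cT$ with $R=\fC$, gives $\P(\text{$\cS_n$ convex})\le \fC^n 2^n/(n!(n+1)!)$. Multiplying by $(3n)!/n!$ and applying Stirling,
\[
\frac1n\log\Big[\frac{(3n)!}{(n!)^2(n+1)!}\Big]\to\log 27 .
\]
So the error terms contribute at most $\log(54\fC)+\log\epsilon$ at exponential scale. Since $\fc\le1$ (as $\fp$ integrates to $1$ on a set of area $\tfrac12$), this is at most $\log(54\fC/\fc)+\log\epsilon$, which by \eqref{10noll} is strictly less than $\log(27J_\star^3/4)$.

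Then I combine the pieces. Using $\frac1n\log(a+b+c)\le\max$ of the individual rates plus $o(1)$, the limsup for the full event is at most
\[
\max\Big\{\text{LHS of \eqref{29bx4r_a}},\ \log(54\fC)+\log\epsilon\Big\}.
\]
This yields \eqref{upper_probability} from \eqref{29bx4r_a}, and \eqref{prop_upper_bound_eps} from \eqref{29bx4r_b}. The strict inequality survives because both terms of the maximum are strictly below $\log(27J_\star^3/4)$.

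The only delicate point is ensuring that the $\epsilon$-error is measured against a fixed exponential rate rather than against the unknown quantity $\P(\text{$\cS_n$ convex})$. That is why I use the crude upper bound from Proposition~\ref{prop_general}\ref{prop_general_likelihood}, rather than the lower bound from Proposition~\ref{prop_lower_bound}, and rely on \eqref{10noll}.
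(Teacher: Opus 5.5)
Your proposal is correct and follows the paper's proof essentially step for step. It uses the same three-way union bound via Lemmas~\ref{lem_irreg_unlikely} and \ref{lem_bad_unlikely}, the same crude Stirling estimate from Proposition~\ref{prop_general}\ref{prop_general_likelihood}, and \eqref{10noll} to make the $2\epsilon^n\,\P(\text{$\cS_n$ convex})$ error strictly subcritical. The differences are cosmetic: you take $R=\fC$ and then use $\fc\le1$, where the paper uses the weaker constant $\fC/\fc$ directly, and you state the final comparison as a maximum of exponential rates rather than the paper's two-case split on which term dominates.
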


\begin{proofclaim}
Using the notation of \eqref{H_eps_def}, Propositions~\ref{prop_upper_bound}\ref{prop_upper_bound_full} and \ref{prop_upper_bound}\ref{prop_upper_bound_far_full}
assert that
\eeq{ \label{28bx4r}
\limsup_{n\to\infty}\frac{1}{n}\log\Big[\frac{(3n)!}{n!}\P(\{\text{$\cS_n$ is convex}\}\cap A)\Big] 
\begin{cases} 
\le \log(27 J_\star^3/4) &\text{if $A = \Omega$} \\
< \log(27 J_\star^3/4) &\text{if $A = \Omega_\eps$}.
\end{cases}
}
So we will prove \eqref{28bx4r} assuming \eqref{29bx4r}.
By the upper bound in Proposition~\ref{prop_general}\ref{prop_general_likelihood}, we have
\eeq{ \label{2ob99n}
\frac{1}{n}\log\Big[\frac{(3n)!}{n!}\P(\text{$\cS_n$ is convex})\Big]
&\le \frac{1}{n}\log\Big[\frac{(3n)!}{n!}\cdot(\fC/\fc)^n\cdot\frac{2^n}{n!(n+1)!}\Big] \\
&= \log(54\fC/\fc) + o(1),
}
where the equality comes from Stirling's formula.
For all large $n$, a trivial union bound gives
\eeq{ \label{ekb7x}
\P(\{\text{$\cS_n$ is convex}\}\cap A)
&\stackrefp{ob8hp,vo48xv}{\le} \P(\{\text{$\cS_n$ is convex, $\rho$-regular, $(\eta,\delta)$-good}\}\cap A) \\
&\phantom{\stackrefp{ob8hp,vo48xv}{\le}}+ \P(\text{$\cS_n$ is convex and $\rho$-irregular}) \\
&\phantom{\stackrefp{ob8hp,vo48xv}{\le}}+ \P(\text{$\cS_n$ is convex and $(\eta,\delta)$-bad}) \\
&\stackref{ob8hp,vo48xv}{\le}  \P(\{\text{$\cS_n$ is convex, $\rho$-regular, $(\eta,\delta)$-good}\}\cap A) \\
&\phantom{\stackrefp{ob8hp,vo48xv}{\le}} + 2\epsilon^n\cdot\P(\text{$\cS_n$ is convex}).
}
There are two cases to consider, depending on which term in the final line is largest.
If the first term is largest, then we simply multiply it by two to obtain
\eq{
&\frac{1}{n}\log\Big[\frac{(3n)!}{n!}\P(\{\text{$\cS_n$ is convex}\}\cap A)\Big] \\
&\stackrefp{29bx4r}{\le} \frac{1}{n}\log\Big[\frac{(3n)!}{n!}\cdot 2 \P(\{\text{$\cS_n$ is convex, $\rho$-regular, $(\eta,\delta)$-good}\}\cap A)\Big] \\ 
&\hspace{-1.9ex}\begin{cases} 
\stackref{29bx4r}{\le} \log(27 J_\star^3/4) + o(1) &\text{if $A = \Omega$} \\
\stackref{29bx4r}{<} \log(27 J_\star^3/4) + o(1) &\text{if $A = \Omega_\eps$}.
\end{cases}
}
Otherwise the second term $2\epsilon^n\cdot\P(\text{$\cS_n$ is convex})$ is largest, in which case \eqref{ekb7x} implies
\eq{
\frac{1}{n}\log\Big[\frac{(3n)!}{n!}\P(\{\text{$\cS_n$ is convex}\}\cap A)\Big]
&\stackrefp{2ob99n}{\le} \frac{1}{n}\log\Big[\frac{(3n)!}{n!}\cdot 4\epsilon^n\cdot\P(\text{$\cS_n$ is convex})\Big] \\
&\stackref{2ob99n}{\le} \log(54\fC/\fc) + \log\epsilon + o(1) \\
&\stackref{10noll}{<}\log(27 J_\star^3/4) + o(1).
}
Allowing either case, we obtain \eqref{28bx4r}.
\end{proofclaim}

\medskip

\noindent \textbf{Step 2: identify function from data set.}
In this step, we fix $n$ and a $\rho$-regular convex chain $\cC\subset\Interior(\cT)$ with $\#\cC\le n$.
We will construct a function $\tilde f = \tilde f_{\cC,n} \colon[0,1]\to[0,1]$ that is completely determined by $\cC$ and $n$.
Ultimately we only care about the case when $\cC$ is a maximizer for one of the lengths in \eqref{regular_lengths}, but optimality is not needed here.
In fact, there is no randomness at all in this step.

Let $f$ be the piecewise linear function $f_\cC$ from Definition~\ref{def_regular}.
Since $\cC$ is $\rho$-regular, we have
\eeq{ \label{regular_consequence}
f(1-\rho) \ge \rho.
}
Since every element of $\cC$ lies on the graph of $f$, it follows from \eqref{graph_in_triangle} that
\eeq{ \label{v28bi9}
\cC \cap ([a,b]\times[0,1]) \subset \Tri_f(a,b) \quad \text{for every $a<b$ in $[0,1]$}.
}
Roughly speaking, the goal of Step 2 is to construct a function $\tilde f$ such that $J(\tilde f)$ is as large as possible subject to the constraint that $\tilde f \approx f$ in the sense that \eqref{v28bi9} still holds for $\tilde f$ (not necessarily for every interval $[a,b]$, but rather a set of intervals whose union covers $[0,1]$).
The challenging part is that we must limit ourselves to a small number of choices for $\tilde f$ (see \eqref{cardinality_upper}), so that union bounds in Steps 3 and 5 are not too loose.

\medskip

\noindent \textbf{Step 2a: horizontal discretization.}
Since $f(0)=0$ and $f(1)=1$, the intermediate value theorem gives some $u\in(0,1)$ such that 
\eeq{ \label{u_def}
f(u) = \eta/4.
}
Since $f(1-\rho)\ge\rho>\eta/4$ by \eqref{regular_consequence} and \eqref{parameter_dominance3}, and $f$ is nondecreasing, we have $u < 1-\rho$.
Define $a_1 = n^{-1}\ceil{n u}$, and note that
\eeq{ \label{a1_upper}
a_1 
\le u + n^{-1} 
< 1 - \rho + n^{-1} 
\stackref{parameter_dominance1}{\le} 1 - \rho + L^{-1} 
\stackref{parameter_dominance2}{\le} 1 - \rho + \eta
\stackref{parameter_dominance3}{\le} 1 - 3\rho/4.
}
Now divide $[0,1]$ into $L+1$ subintervals $[x_0,x_1],[x_1,x_2],\ldots,[x_{L},x_{L+1}]$, as follows.
Set $x_0=0$ and $x_{L+1}=1$.
To decide the remaining values $x_1,\ldots,x_{L}$, we consider the following equally spaced reference points:
\eeq{ \label{a_ell_def}
a_\ell \coloneqq a_1 + \frac{\ell-1}{L-1}(1-\eta-a_1), \quad \ell\in\{2,\ldots,L\}.
}
Here we have used the assumption $L\ge2$ to ensure $L-1\ne0$.
Note that
\eeq{ \label{a_ell_spacing_lower}
a_{\ell+1}-a_\ell = \frac{1-\eta-a_1}{L-1} 
\stackref{a1_upper}{\ge} \frac{3\rho/4-\eta}{L} 
\stackref{parameter_dominance3}{\ge} \frac{\rho/2}{L}
\quad \text{for every $\ell\in\{1,\ldots,L-1\}$}.
}
We also have a trivial upper bound:
\eeq{ \label{a_ell_spacing_upper}
a_{\ell+1} - a_{\ell} \le (L-1)^{-1} \le 2L^{-1} \quad \text{for every $\ell\in\{1,\ldots,L-1\}$}.
}
For each $\ell\in\{1,\ldots,L\}$, partition the interval $[a_\ell,a_\ell+\tfrac{\rho}{4}L^{-1}]$ into $n+1$ disjoint subintervals of equal length.
Since $\#\cC\le n$, at least one of these $n+1$ subintervals has empty intersection with the set
\eq{
\cX \coloneqq \big\{x:\, \text{$(x,y)\in\cC$ for some $y\in[0,1]$}\big\}.
}
Choose such a subinterval (in any manner), and denote its midpoint by $x_\ell$ so that
\eeq{ \label{28buf}
\mathrm{dist}(x_\ell,\cX) 
\ge \frac{\rho}{8(n+1)}L^{-1}
\ge \frac{\rho}{16 n}L^{-1}
\stackref{parameter_dominance1}{\ge} n^{-2}
\quad \text{for every $\ell\in\{1,\ldots,L\}$.}
}
Furthermore, since $x_\ell\notin\cX$ and $f$ is locally affine (hence differentiable) at every $x\in(0,1)\setminus\cX$, we know
\eeq{ \label{f_diff_at_x_ell}
\text{$f$ is differentiable at $x_\ell$ for every $\ell\in\{1,\ldots,L\}$.}
}
Since $a_1=n^{-1}\ceil{nu}$ can take at most $n$ different values, and $x_1,\ldots,x_L$ were each chosen from among $n+1$ possibilities (given the value of $a_1$), we have that
\eeq{ \label{x_ell_count}
\text{the number of possible outcomes for the sequence $(x_\ell)_{\ell=0}^{L+1}$ is at most $n(n+1)^{L}$.}
}
We complete Step 2a by recording estimates on the length of the intervals $[x_\ell,x_{\ell+1}]$.
For $\ell\in\{1,\ldots,L\}$, we have $x_\ell\in(a_\ell,a_\ell+\frac{\rho}{4}L^{-1})$.
In the particular, the $\ell=0$ interval $[0,x_1]$ obeys the following bounds:
\begin{align}
\label{x1_lower}
x_1 &> a_1 = n^{-1}\ceil{n u} \ge n^{-1}, \\
\label{x1_upper}
\text{and} \quad x_1 &< a_1 + \tfrac{\rho}{4}L^{-1} \stackref{a1_upper}{\le} 1 - 3\rho/4 + \tfrac{\rho}{4} = 1-\rho/2.
\end{align}
Meanwhile, the interior intervals $[x_\ell,x_{\ell+1}]$ satisfy
\begin{align}
\label{increment_lower}
x_{\ell+1}-x_\ell 
&> a_{\ell+1}-(a_\ell+\tfrac{\rho}{4}L^{-1})
\stackref{a_ell_spacing_lower}{\ge}\tfrac{\rho}{4}L^{-1} \quad \text{for all $\ell\in\{1,\ldots,L-1\}$}, \\
\label{increment_upper}
\text{and} \quad
x_{\ell+1}-x_\ell
&<\mathrlap{(a_{\ell+1}+\tfrac{\rho}{4}L^{-1}) - a_\ell
\stackref{a_ell_spacing_upper}{\le} 3L^{-1}}\hphantom{a_{\ell+1}-(a_\ell+\tfrac{\rho}{4}L^{-1})
\stackref{a_ell_spacing_lower}{\ge}\tfrac{\rho}{4}L^{-1}} \quad \text{for all $\ell\in\{1,\ldots,L-1\}$}.
\end{align}
Finally, the last interval $[x_L,1]$ has length $\asymp \eta$, since
\begin{align}
\label{last_lower}
&x_L > a_L \stackref{a_ell_def}{=} 1-\eta, \\
\label{last_upper}
\text{and} \quad
&x_L 
< a_L + \tfrac{\rho}{4}L^{-1} 
\stackref{a_ell_def}{=} 1-\eta + \tfrac{\rho}{4}L^{-1} 
\le 1-\eta + L^{-1}
\stackref{parameter_dominance2}{\le} 1-\eta/2.
\end{align}

\medskip

\noindent \textbf{Step 2b: vertical coordinates and slopes.}
See Figure~\ref{fig_step2b} for an illustration of the following definitions.
First define the following discretized vertical coordinate:
\eeq{ \label{y_ell_def}
y_\ell \coloneqq n^{-3}\ceil{n^3 f(x_\ell)} \wedge x_\ell.
}
In particular, $y_0=0$ and $y_{L+1}=1$.
For each $\ell\in\{1,\ldots,L\}$ we have $f(x_\ell)\in(0,1)$, hence $y_\ell\in\{n^{-3},2n^{-3},\ldots,(n^3-1)n^{-3},x_\ell\}$.
Therefore,
\eeq{ \label{y_ell_count}
\parbox{0.8\textwidth}{\centering the number of possible outcomes for the sequence $(y_\ell)_{\ell=0}^{L+1}$ is at most $n^{3L}$ (given the values of $x_1,\ldots,x_L$).}
}
Furthermore, the sequence $(y_\ell)_{\ell=0}^{L+1}$ is nondecreasing, and 
\eeq{ \label{y_ell_consequence}
f(x_\ell)\le y_\ell\le (f(x_\ell)+n^{-3})\wedge x_\ell
\quad \text{for every $\ell\in\{0,\ldots,L+1\}$.}
}
Also note that
\eeq{ \label{y1_upper}
y_1 
\stackref{y_ell_consequence}{\le} f(x_1) + n^{-3}
&\stackrefpp{u_def}{x1_upper,a1_upper}{=} \eta/4 + \frac{f(x_1) - f(u)}{x_1-u}\cdot(x_1-u) + n^{-3} \\
&\stackrefp{x1_upper,a1_upper}{\le} \eta/4 + \frac{f(1)-f(x_1)}{1-x_1}\cdot(x_1-u) + n^{-3} \\
&\stackrefp{x1_upper,a1_upper}{\le} \eta/4 + \frac{1}{1-x_1}\cdot[(x_1-a_1) + (a_1 - u)] + n^{-3} \\
&\stackref{x1_upper,a1_upper}{\le} \eta/4 + (2/\rho)\cdot[\tfrac{\rho}{4}L^{-1} + n^{-1}] + n^{-3} \\
&\stackrefpp{parameter_dominance1}{x1_upper,a1_upper}{\le} \eta/4 + [\tfrac12L^{-1} + L^{-1}] + L^{-1}
\stackref{parameter_dominance2}{\le} \eta.
}
Because of \eqref{f_diff_at_x_ell}, we can define the following discretized slopes (to be used when approaching $x_\ell$ from the left or right):
\eeq{ \label{s_ell_def}
s_\ell^- &\coloneqq \begin{cases}
n^{-1}(\ceil{n f'(x_\ell)}+1) &\text{if $\ell\in\{1,\ldots,L\}$} \\
\infty &\text{if $\ell=L+1$},
\end{cases} \\
s_\ell^+ &\coloneqq \begin{cases}
0 &\text{if $\ell=0$} \\
n^{-1}(\floor{n f'(x_\ell)}-1) &\text{if $\ell\in\{1,\ldots,L\}$}.
\end{cases}
}
Note the following immediate consequences of these definitions:
\begin{align}
s_\ell^- &\ge f'(x_\ell)+n^{-1} \quad \text{for every $\ell\in\{1,\ldots,L\}$,}\label{s_ell_minus_consequence} \\
s_\ell^+ &\le f'(x_\ell)-n^{-1} \quad \text{for every $\ell\in\{1,\ldots,L\}$,} \label{s_ell_plus_consequence} \\
s_\ell^- - s_\ell^+ &\le \mathrlap{3n^{-1}}\hphantom{f'(x_\ell)+n^{-1}}
\quad \text{for every $\ell\in\{1,\ldots,L\}$.} \label{s_ell_jump}
\end{align}
For every $\ell\in\{1,\ldots,L\}$, we have
\eeq{ \label{ho95c}
f'(x_\ell) \ge f'(x_1) \ge \frac{f(x_1)-f(0)}{x_1-0} \ge f(x_1) \ge f(u) \stackref{u_def}{=} \eta/4 \stackref{parameter_dominance2}{\ge} L^{-1}
\stackref{parameter_dominance1}{>} n^{-1}.
}
Considering the definition \eqref{s_ell_def} of $s_\ell^-$ and $s_\ell^+$, \eqref{ho95c} implies
\eeq{ \label{s_ell_lower}
s_\ell^- \ge 3n^{-1} \quad \text{and} \quad s_\ell^+ \ge 0 \quad \text{for every $\ell\in\{1,\ldots,L\}$}.
}
On the other hand, for every $\ell\in\{1,\ldots,L\}$ we have
\eeq{ \label{6rg7jc}
f'(x_\ell) \le f'(x_L) \le \frac{f(1) - f(x_L)}{1-x_L} \le \frac{1}{1-x_L} \stackref{last_upper}{\le} 2\eta^{-1}.
}
Therefore, $(s_\ell^-,s_\ell^+)$ is equal to $(K n^{-1},k n^{-1})$ for some integer $K\in\{3,\ldots,\ceil{2\eta^{-1}n}+1\}$ and some $k\in\{K-3,K-2\}$.
We conclude that
\eeq{ \label{s_ell_count}
\parbox{0.5\textwidth}{\centering the number of possible outcomes for the sequence $(s_{\ell+1}^-,s_\ell^+)_{\ell=0}^{L}$ is at most $(4\eta^{-1} n)^{L}$.}
}

\begin{figure}[t]

\tikzset{every picture/.style={line width=0.75pt}} %set default line width to 0.75pt        

\begin{tikzpicture}[x=0.75pt,y=0.75pt,yscale=-1,xscale=1]
%uncomment if require: \path (0,236); %set diagram left start at 0, and has height of 236

%Straight Lines [id:da7117754098717279] 
\draw [color={rgb, 255:red, 74; green, 144; blue, 226 }  ,draw opacity=1 ][line width=1.5]    (330.5,177.5) -- (365.5,154.5) ;
%Straight Lines [id:da329687852412648] 
\draw [color={rgb, 255:red, 74; green, 144; blue, 226 }  ,draw opacity=1 ][line width=1.5]    (220,210) -- (330.5,177.5) ;
%Straight Lines [id:da9079643072377225] 
\draw [color={rgb, 255:red, 74; green, 144; blue, 226 }  ,draw opacity=1 ][line width=1.5]    (419.5,99.5) -- (460,40) ;
%Straight Lines [id:da5367717713231114] 
\draw [color={rgb, 255:red, 74; green, 144; blue, 226 }  ,draw opacity=1 ][line width=1.5]    (398.5,122.5) -- (419.5,99.5) ;
%Straight Lines [id:da4867394135187899] 
\draw [color={rgb, 255:red, 74; green, 144; blue, 226 }  ,draw opacity=1 ][line width=1.5]    (365.5,154.5) -- (398.5,122.5) ;
%Shape: Polygon [id:ds23055359346385795] 
\draw  [color={rgb, 255:red, 74; green, 144; blue, 226 }  ,draw opacity=1 ][fill={rgb, 255:red, 74; green, 144; blue, 226 }  ,fill opacity=0.1 ][dash pattern={on 1.5pt off 1.5pt}] (220,210) -- (371.82,102.46) -- (460,40) -- (374.5,164) -- cycle ;
%Shape: Square [id:dp9935078945992281] 
\draw  [color={rgb, 255:red, 208; green, 2; blue, 27 }  ,draw opacity=1 ][fill={rgb, 255:red, 208; green, 2; blue, 27 }  ,fill opacity=1 ] (216.75,196.75) -- (223.25,196.75) -- (223.25,203.25) -- (216.75,203.25) -- cycle ;
%Shape: Square [id:dp24002980172429378] 
\draw  [color={rgb, 255:red, 208; green, 2; blue, 27 }  ,draw opacity=1 ][fill={rgb, 255:red, 208; green, 2; blue, 27 }  ,fill opacity=1 ] (456.75,26.75) -- (463.25,26.75) -- (463.25,33.25) -- (456.75,33.25) -- cycle ;
%Shape: Circle [id:dp4045661464065464] 
\draw  [fill={rgb, 255:red, 0; green, 0; blue, 0 }  ,fill opacity=1 ] (327,177.5) .. controls (327,175.57) and (328.57,174) .. (330.5,174) .. controls (332.43,174) and (334,175.57) .. (334,177.5) .. controls (334,179.43) and (332.43,181) .. (330.5,181) .. controls (328.57,181) and (327,179.43) .. (327,177.5) -- cycle ;
%Shape: Circle [id:dp38607427759060686] 
\draw  [fill={rgb, 255:red, 0; green, 0; blue, 0 }  ,fill opacity=1 ] (362,154.5) .. controls (362,152.57) and (363.57,151) .. (365.5,151) .. controls (367.43,151) and (369,152.57) .. (369,154.5) .. controls (369,156.43) and (367.43,158) .. (365.5,158) .. controls (363.57,158) and (362,156.43) .. (362,154.5) -- cycle ;
%Shape: Circle [id:dp8432551989495121] 
\draw  [fill={rgb, 255:red, 0; green, 0; blue, 0 }  ,fill opacity=1 ] (395,122.5) .. controls (395,120.57) and (396.57,119) .. (398.5,119) .. controls (400.43,119) and (402,120.57) .. (402,122.5) .. controls (402,124.43) and (400.43,126) .. (398.5,126) .. controls (396.57,126) and (395,124.43) .. (395,122.5) -- cycle ;
%Shape: Circle [id:dp8840797968540758] 
\draw  [fill={rgb, 255:red, 0; green, 0; blue, 0 }  ,fill opacity=1 ] (416,99.5) .. controls (416,97.57) and (417.57,96) .. (419.5,96) .. controls (421.43,96) and (423,97.57) .. (423,99.5) .. controls (423,101.43) and (421.43,103) .. (419.5,103) .. controls (417.57,103) and (416,101.43) .. (416,99.5) -- cycle ;
%Straight Lines [id:da9555770219860351] 
\draw [color={rgb, 255:red, 208; green, 2; blue, 27 }  ,draw opacity=1 ]   (220,200) -- (415.5,184.5) ;
%Shape: Circle [id:dp947486115533901] 
\draw  [fill={rgb, 255:red, 255; green, 255; blue, 255 }  ,fill opacity=1 ] (217,210) .. controls (217,208.34) and (218.34,207) .. (220,207) .. controls (221.66,207) and (223,208.34) .. (223,210) .. controls (223,211.66) and (221.66,213) .. (220,213) .. controls (218.34,213) and (217,211.66) .. (217,210) -- cycle ;
%Shape: Circle [id:dp29575278884959155] 
\draw  [fill={rgb, 255:red, 255; green, 255; blue, 255 }  ,fill opacity=1 ] (457,40) .. controls (457,38.34) and (458.34,37) .. (460,37) .. controls (461.66,37) and (463,38.34) .. (463,40) .. controls (463,41.66) and (461.66,43) .. (460,43) .. controls (458.34,43) and (457,41.66) .. (457,40) -- cycle ;
%Straight Lines [id:da9397953416857351] 
\draw [color={rgb, 255:red, 208; green, 2; blue, 27 }  ,draw opacity=1 ]   (415.5,184.5) -- (460,30) ;

% Text Node
\draw (369,120) node [anchor=north west][inner sep=0.75pt]  [color={rgb, 255:red, 74; green, 144; blue, 226 }  ,opacity=1 ]  {$f$};
% Text Node
\draw (169,177.4) node [anchor=north west][inner sep=0.75pt]  [color={rgb, 255:red, 208; green, 2; blue, 27 }  ,opacity=1 ]  {$( x_{\ell } ,y_{\ell })$};
% Text Node
\draw (262,95) node [anchor=north west][inner sep=0.75pt]  [color={rgb, 255:red, 74; green, 144; blue, 226 }  ,opacity=1 ]  {$\Tri_{f}( x_{\ell } ,x_{\ell +1})$};
% Text Node
\draw (380,7.4) node [anchor=north west][inner sep=0.75pt]  [color={rgb, 255:red, 208; green, 2; blue, 27 }  ,opacity=1 ]  {$( x_{\ell +1} ,y_{\ell +1})$};
% Text Node
\draw (222,213.4) node [anchor=north west][inner sep=0.75pt]    {$( x_{\ell } ,f( x_{\ell }))$};
% Text Node
\draw (462,43.4) node [anchor=north west][inner sep=0.75pt]    {$( x_{\ell +1} ,f( x_{\ell +1}))$};
% Text Node
\draw (345,190) node [anchor=north west][inner sep=0.75pt]  [color={rgb, 255:red, 208; green, 2; blue, 27 }  ,opacity=1 ] [align=left] {slope $\displaystyle s_{\ell }^{+}$};
% Text Node
\draw (428,149) node [anchor=north west][inner sep=0.75pt]  [color={rgb, 255:red, 208; green, 2; blue, 27 }  ,opacity=1 ] [align=left] {slope $\displaystyle s_{\ell +1}^{-}$};

\end{tikzpicture}
\caption{
Illustration of Step 2b. 
The piecewise linear function $f$ is shown in solid blue.  The shaded region (outlined in dashed blue) is the tangency triangle of $f$ between $x_\ell$ and $x_{\ell+1}$. 
The vertical coordinates $y_\ell$ and $y_{\ell+1}$ are chosen slightly above $f(x_\ell)$ and $f(x_{\ell+1})$, respectively, according to \eqref{y_ell_def}.
The slope $s_\ell^+$ is chosen slightly smaller than $f'(x_{\ell})$, while $s_{\ell+1}^-$ is chosen slightly larger than $f'(x_{\ell+1})$, according to \eqref{s_ell_def}.
Claim~\ref{claim_boundary_good} shows that these slopes flank that of the line segment between $(x_\ell,y_\ell)$ and $(x_{\ell+1},y_{\ell+1})$ (not shown).
}
\label{fig_step2b}

\end{figure}

Finally, we record the following fact which will be needed in Step 2c.

\begin{claim}[Slopes make sense] \label{claim_boundary_good}
For every $\ell\in\{0,\ldots,L\}$, we have
\eeq{ \label{cx64mg}
s_\ell^+ < \frac{y_{\ell+1}-y_\ell}{x_{\ell+1}-x_\ell} < s_{\ell+1}^-.
}
\end{claim}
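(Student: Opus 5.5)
The plan is to compare everything with the secant slope of $f$ itself. Write $\sigma_\ell \coloneqq \frac{f(x_{\ell+1})-f(x_\ell)}{x_{\ell+1}-x_\ell}$. By convexity and \eqref{f_diff_at_x_ell}, we have $f'(x_\ell)\le\sigma_\ell\le f'(x_{\ell+1})$ for $\ell\in\{1,\ldots,L-1\}$. Combined with \eqref{s_ell_minus_consequence} and \eqref{s_ell_plus_consequence}, this gives a margin of at least $n^{-1}$ on each side:
\[
s_\ell^+\le\sigma_\ell-n^{-1} \quad\text{and}\quad s_{\ell+1}^-\ge\sigma_\ell+n^{-1}.
\]
It then suffices to show that replacing $f(x_\ell),f(x_{\ell+1})$ by $y_\ell,y_{\ell+1}$ moves the slope by strictly less than $n^{-1}$. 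By \eqref{y_ell_consequence}, both vertical perturbations lie in $[0,n^{-3}]$, so the slope changes by at most $n^{-3}/(x_{\ell+1}-x_\ell)$. Using \eqref{increment_lower} and \eqref{parameter_dominance1}, we get $x_{\ell+1}-x_\ell>\tfrac{\rho}{4}L^{-1}\ge 4n^{-1}$, so the change is at most $n^{-2}/4<n^{-1}$. The only catch is the truncation $\wedge\, x_\ell$ in \eqref{y_ell_def}, but it only lowers $y_\ell$ toward $f(x_\ell)$ (since $f(x_\ell)\le x_\ell$), so \eqref{y_ell_consequence} still holds.

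The boundary cases $\ell=0$ and $\ell=L$ need separate treatment.

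\emph{Case $\ell=0$.} Here $s_0^+=0$ and $y_0=0$, so the left inequality is $y_1/x_1>0$. This holds because $y_1\ge f(x_1)\ge f(u)=\eta/4>0$. For the right inequality, $\sigma_0=f(x_1)/x_1\le f'(x_1)$, and the perturbation is only in $y_1$, of size at most $n^{-3}/x_1\le n^{-2}$ by \eqref{x1_lower}. This is less than $n^{-1}$, so $y_1/x_1<f'(x_1)+n^{-1}\le s_1^-$.

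\emph{Case $\ell=L$.} Here $s_{L+1}^-=\infty$, so only the left inequality matters. We have $y_{L+1}=1=f(1)$ by $\rho$-regularity, and $1-x_L\ge\eta/2$ by \eqref{last_upper}. The slope therefore drops by at most $n^{-3}\cdot 2/\eta$, which is less than $n^{-1}$ by \eqref{parameter_dominance}. Together with $\sigma_L\ge f'(x_L)\ge s_L^++n^{-1}$, this gives the strict inequality.

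The step needing most care is this bookkeeping of the denominators, so that $n^{-3}$ divided by the interval length is $o(n^{-1})$. The bounds \eqref{increment_lower}, \eqref{x1_lower}, and \eqref{last_upper} supply exactly this. One must also note that $\rho$ and $\eta$ are fixed or polynomially small, so the required inequalities hold for large $n$ under \eqref{parameter_dominance}.
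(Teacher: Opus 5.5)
Your proof is correct and follows essentially the same approach as the paper: compare with the secant slope of $f$, use the $n^{-1}$ margins from \eqref{s_ell_minus_consequence}--\eqref{s_ell_plus_consequence}, and absorb the $n^{-3}$ vertical discretization error using the interval-length lower bounds \eqref{x1_lower}, \eqref{increment_lower}, and \eqref{last_upper}. The only difference is organizational: you split into interior and boundary values of $\ell$, while the paper treats each of the two inequalities separately across all $\ell$.
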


\begin{proofclaim}
For $\ell=0$, we have $s_0^+ = 0$ while $y_1 - y_0 = y_1 > 0$, so the first inequality in \eqref{cx64mg} holds.
For $\ell\in\{1,\ldots,L\}$, we argue as follows:
\eq{
s_\ell^+ 
\stackref{s_ell_plus_consequence}{\le} f'(x_\ell) - n^{-1}
&\stackrefp{last_upper,increment_lower}{\le} \frac{f(x_{\ell+1})-f(x_\ell)}{x_{\ell+1}-x_\ell} - n^{-1} \\
&\stackrefpp{y_ell_consequence}{last_upper,increment_lower}{\le} \frac{y_{\ell+1} - (y_\ell-n^{-3})}{x_{\ell+1}-x_\ell} - n^{-1} \\
&\stackref{increment_lower,last_upper}{<} \frac{y_{\ell+1} - y_\ell}{x_{\ell+1}-x_\ell} + (\tfrac{4}{\rho}L\vee 2\eta^{-1})n^{-3} - n^{-1} \\
&\stackrefpp{particular_dominance}{last_upper,increment_lower}{\le}\frac{y_{\ell+1} - y_\ell}{x_{\ell+1}-x_\ell} + (\tfrac14n) n^{-3} - n^{-1}
<\frac{y_{\ell+1} - y_\ell}{x_{\ell+1}-x_\ell}.
}
The second inequality in \eqref{cx64mg} is verified similarly: it is trivial for $\ell=L$ since $s_{L+1}^-=\infty$, while for $\ell\in\{0,\ldots,L-1\}$ we have
\begin{align*}
s_{\ell+1}^-
\stackref{s_ell_minus_consequence}{\ge} f'(x_{\ell+1}) + n^{-1}
&\stackrefp{x1_lower,increment_lower}{\ge} \frac{f(x_{\ell+1})-f(x_\ell)}{x_{\ell+1}-x_\ell} + n^{-1} \\
&\stackrefpp{y_ell_consequence}{x1_lower,increment_lower}{\ge} \frac{y_{\ell+1}-n^{-3} - y_\ell}{x_{\ell+1}-x_\ell} + n^{-1} \\
&\stackref{x1_lower,increment_lower}{>} \frac{y_{\ell+1} - y_\ell}{x_{\ell+1}-x_\ell} - (n \vee \tfrac{4}{\rho}L)n^{-3} + n^{-1} \\
&\stackrefpp{parameter_dominance1}{x1_lower,increment_lower}{=}\frac{y_{\ell+1} - y_\ell}{x_{\ell+1}-x_\ell} - n\cdot n^{-3} + n^{-1}
> \frac{y_{\ell+1} - y_\ell}{x_{\ell+1}-x_\ell}. 
\end{align*}
We have now proved both inequlities in \eqref{cx64mg}.
\end{proofclaim}

\medskip

\noindent \textbf{Step 2c: the induced function.}
We finally define the desired function $\tilde f = \tilde f_{\cC,n}\colon[0,1]\to[0,1]$, using a piecewise construction.
On the interval $[x_\ell,x_{\ell+1}]$, let $\tilde f$ be the optimizer from Proposition~\ref{prop_unif_case} with boundary data
\eeq{ \label{tilde_f_def}
\tilde f(x_\ell) = y_\ell, \quad
\tilde f(x_{\ell+1}) = y_{\ell+1}, \quad
\partial^+\tilde f(x_\ell) = s_\ell^+, \quad
\partial^-\tilde f(x_{\ell+1}) = s_{\ell+1}^-.
}
More specifically, we use the formula \eqref{17xxih} when $\ell\in\{0,\ldots,L-1\}$ (since $s_{\ell+1}^-<\infty$), and we use the formula \eqref{29cwq} when $\ell=L$ (since $s_{L+1}^-=\infty$).
These formulas are well-defined for the boundary data \eqref{tilde_f_def} because the required hypothesis \eqref{eq:feasible} was verified in Claim~\ref{claim_boundary_good}.
We are using $s_\ell^+\ge 0$ from \eqref{s_ell_lower} to ensure $\tilde f(x)\ge 0$.
Because $y_\ell\le x_\ell$ and $y_{\ell+1}\le x_{\ell+1}$ by \eqref{y_ell_consequence}, we also have $\tilde f(x) \le x$ by convexity.
Therefore, $\tilde f(x)$ belongs to $[0,x]$, meaning $(x,f(x))$ belongs to the triangle $\cT$.
This, together with the fact that $\tilde f''(x)$ exists and is nonnegative at every $x\in(0,1)\setminus\{x_1,\ldots,x_L\}$, allows us to define $J(\tilde f)$ as in \eqref{J_def1}.
Finally, note that Corollary~\ref{cor_unif_case_characterization} gives
\eeq{ \label{tilde_f_area}
\int_{x_\ell}^{x_{\ell+1}}\tilde f''(x)^{1/3}\ \dd x = 2\cdot\Area\big(\Tri_{\tilde f}(x_\ell,x_{\ell+1})\big)^{1/3}.
}

Let $\wt\cF_n$ denote the set of possible outcomes of $\tilde f$; that is,
\eq{
\wt\cF_n \coloneqq \{\tilde f_{\cC,n}:\, \text{$\cC$ is a $\rho$-regular convex chain with $\#\cC\le n$}\}.
}
Putting together \eqref{x_ell_count}, \eqref{y_ell_count}, and \eqref{s_ell_count}, we have
\eeq{ \label{cardinality_upper}
\#\wt\cF_n 
&\stackrefp{particular_dominance}{\le} n (n+1)^{L} \cdot n^{3L} \cdot (4\eta^{-1}n)^{L} \\
&\stackref{particular_dominance}\le n(2n)^L\cdot n^{3L} \cdot (\tfrac14n^2)^L
\le n^{7L}.
}
In particular, because $L = o(n/\log n)$ as is obvious from \eqref{parameter_selection}, we have
\eeq{ \label{cardinality_subexp}
\lim_{n\to\infty}\frac{\log \#\wt\cF_n}{n} = 0.
}

\begin{remark}[Small convexity defects of $\tilde f$] \label{rem_jumps}
By positivity of the second derivatives in \eqref{17xxih} and \eqref{29cwq}, the function $\tilde f$ is strictly convex on $[x_\ell,x_{\ell+1}]$, but not convex on $[0,1]$ because $s_\ell^->s_\ell^+$ for every $\ell\in\{1,\ldots,L\}$.
On the other hand, \eqref{s_ell_jump} gives $s_\ell^--s_\ell^+\le 3n^{-1}$, so these issues become negligible as $n\to\infty$.
This slight failure of convexity will be accounted for in Step 4.
\qedrem
\end{remark}

\begin{remark}[Extension of $J$ is still bounded] \label{rem_extension_bound}
It is shown in Step 4 that
\eeq{ \label{extension_bound}
\lim_{n\to\infty}\sup_{\tilde f\in\wt\cF_n}J(\tilde f)
\le \sup_{g\in\cF} J(g) = J_\star.
}
Specifically, see the second limit in \eqref{step_4_goal}.
Because of \eqref{extension_bound}, we can treat $J(\tilde f)$ as a uniformly bounded quantity, which will be useful for the estimates in Step 3.
\qedrem
\end{remark}

The remaining conclusions of Step 2 are in the next two claims. 
Denote the tangency triangles of $\tilde f$, and associated vertices, by
\eeq{ \label{tilde_triangle}
\triangle_\ell \coloneqq \Tri_{\tilde f}(x_\ell,x_{\ell+1}),
\quad
\nA_\ell \coloneqq (x_\ell,\tilde f(x_\ell)),
\quad
\nB_\ell \coloneqq \nA_{\ell+1} = (x_{\ell+1},\tilde f(x_{\ell+1})).
}
The subset of $\cC$ corresponding to the horizontal interval $[x_\ell,x_{\ell+1}]$ will be denoted by
\eeq{ \label{Cell_def}
\cC_\ell \coloneqq \cC \cap \big([x_\ell,x_{\ell+1}]\times[0,1]\big).
}

\begin{claim}[Capturing the convex chain] \label{claim_capture} \phantom{x}  \noindent
\begin{enumerate}[label=\textup{(\alph*)}]

\item \label{claim_capture_tri}
$\cC_\ell \subset \triangle_\ell\setminus\{\nA_\ell,\nB_\ell\}$, for every $\ell\in\{0,\ldots,L\}$.

\item \label{claim_capture_convex}
$\{\nA_\ell,\nB_\ell\}\cup\cC_\ell$ is in convex position, for every $\ell\in\{0,\ldots,L\}$.

\item \label{claim_capture_pt}
If $(x,y)\in\cC$ and $x \ge x_L$, then $y \ge \tilde f(x_L) - n^{-3}$.

\end{enumerate}
\end{claim}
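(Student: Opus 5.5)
The plan is to compare the piecewise linear function $f=f_\cC$ with the data $(x_\ell,y_\ell,s_\ell^\pm)$ from Step~2b. Every element of $\cC$ lies on the graph of $f$. Every element of $\cC_\ell$ has horizontal coordinate at distance at least $n^{-2}$ from $x_\ell$ and $x_{\ell+1}$ by \eqref{28buf}, except that no such separation is needed at the endpoints $x_0=0$ and $x_{L+1}=1$. Since $\cC\subset\Interior(\cT)$, no element of $\cC_\ell$ can coincide with $\nA_\ell$ or $\nB_\ell$: for interior $\ell$ the horizontal coordinates differ, and the points $(0,0)$ and $(1,1)$ are not in $\Interior(\cT)$.

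For part~\ref{claim_capture_tri}, I would describe $\triangle_\ell$ (typical case of Definition~\ref{def_triangle}) as the set of points in the strip $[x_\ell,x_{\ell+1}]\times\R$ that lie on or below the secant through $\nA_\ell=(x_\ell,y_\ell)$ and $\nB_\ell=(x_{\ell+1},y_{\ell+1})$, and on or above the two tangent lines with slopes $s_\ell^+$ and $s_{\ell+1}^-$. For $\ell=L$ the right tangent line is vertical and imposes nothing inside the strip. Take $(x,f(x))\in\cC_\ell$. It lies below the secant of $f$ on $[x_\ell,x_{\ell+1}]$ by convexity. That secant lies below the secant of $\tilde f$, because $y_\ell\ge f(x_\ell)$ and $y_{\ell+1}\ge f(x_{\ell+1})$ by \eqref{y_ell_consequence}.

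For the left tangent with $\ell\ge1$, $f$ is differentiable at $x_\ell$ by \eqref{f_diff_at_x_ell}. Convexity then gives
\[
f(x)\ge f(x_\ell)+f'(x_\ell)(x-x_\ell)\ge y_\ell-n^{-3}+\big(s_\ell^++n^{-1}\big)(x-x_\ell),
\]
using \eqref{y_ell_consequence} and \eqref{s_ell_plus_consequence}. Since $x-x_\ell\ge n^{-2}$, the surplus $n^{-1}(x-x_\ell)\ge n^{-3}$ absorbs the discretization error. For $\ell=0$ the left tangent is the line $y=0$, and $f(x)>0$ holds because the point is interior. The right tangent is handled symmetrically with \eqref{s_ell_minus_consequence}. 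The main obstacle is exactly this bookkeeping of errors at the scales $n^{-1}$, $n^{-2}$, $n^{-3}$. A boundary point is still in the closed triangle, so the non-strict inequalities suffice.

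For part~\ref{claim_capture_convex}, I would apply Lemma~\ref{lem_position_slopes}. Order $\cC_\ell$ as $\nP_1,\dots,\nP_m$ with $m\ge1$; the empty case is trivial. The internal slopes increase strictly because $\cC$ is a convex chain. For the first inequality, let $\nP_0$ be the element of $\cC$ preceding $\nP_1$, or $(0,0)$ if there is none; it lies left of $x_\ell$. The function $f$ is affine between $\nP_0$ and $\nP_1$, since its corners are elements of $\cC$, and $x_\ell$ lies in that range. Raising the left endpoint from $f(x_\ell)$ to $y_\ell$ only lowers the slope, so
\[
\Slope(\nA_\ell,\nP_1)\le\Slope((x_\ell,f(x_\ell)),\nP_1)=\Slope(\nP_0,\nP_1)<\Slope(\nP_1,\nP_2),
\]
and in the case $m=1$ the last comparison is with the next segment instead. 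The right end is symmetric, using $y_{\ell+1}\ge f(x_{\ell+1})$. For $\ell=L$ we use $\nB_L=(1,1)$ and $f(1)=1$, which holds by $\rho$-regularity.

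Part~\ref{claim_capture_pt} is immediate. Since $f$ is nondecreasing, for $(x,y)\in\cC$ with $x\ge x_L$ we get
\[
y=f(x)\ge f(x_L)\ge y_L-n^{-3}=\tilde f(x_L)-n^{-3}
\]
by \eqref{y_ell_consequence} and \eqref{tilde_f_def}.
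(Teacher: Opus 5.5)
Your proof is correct and follows essentially the paper's argument. Part (c) is monotonicity of $f$. Part (b) raises the end values from $f(x_\ell), f(x_{\ell+1})$ to $y_\ell, y_{\ell+1}$ in the end slopes and then invokes Lemma~\ref{lem_position_slopes}. Part (a) compares the secant and the two tangent lines of $f$ and $\tilde f$, using \eqref{y_ell_consequence}, \eqref{s_ell_plus_consequence}, \eqref{s_ell_minus_consequence} and the $n^{-2}$ gap from \eqref{28buf}. The differences are cosmetic. You check the tangent-line inequalities pointwise at each element of $\cC_\ell$, whereas the paper works on the region $[I_\ell^{\mathsf{left}},I_\ell^{\mathsf{right}}]\times[0,1]$ via monotonicity of $\gamma_{\mathsf{left}}-\tilde\gamma_{\mathsf{left}}$. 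You also spell out two points the paper leaves implicit: that the vertices $\nA_\ell,\nB_\ell$ are excluded, and why the end-slope inequalities hold.
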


\begin{claim}[Properties of the tangency triangles] \phantom{x} \label{claim_triangles}
\begin{enumerate}[label=\textup{(\alph*)}]

\item \label{claim_triangles_valid}
For every $\ell\in\{0,\ldots,L\}$, we have
\eeq{ \label{tri_in_tri}
\triangle_\ell\subset\cT\cap\big([x_\ell,x_{\ell+1}]\times[\tilde f(x_\ell),\tilde f(x_{\ell+1})]\big).
}

\item \label{claim_triangles_small}
There exists a sequence $(c_n)_{n\ge1}$ not depending on $\cC$ such that $\lim_{n\to\infty}c_n = 0$ and
\eeq{ \label{tri_in_small_rec}
\triangle_\ell\subset[x_\ell,x_\ell+c_n]\times[\tilde f(x_\ell),\tilde f(x_\ell)+c_n] \quad \text{for every $\ell\in\{1,\ldots,L-1\}$}.
}

\end{enumerate}
\end{claim}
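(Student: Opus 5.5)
Both parts follow from convexity of $\tilde f$ on each piece $[x_\ell,x_{\ell+1}]$ together with the quantitative spacing estimates from Steps 2a and 2b; no randomness is involved.

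\textbf{Part (a).} Fix $\ell\in\{0,\ldots,L\}$. By Step 2c, $\tilde f|_{[x_\ell,x_{\ell+1}]}$ is convex with the boundary data \eqref{tilde_f_def}, and $\partial^+\tilde f(x_\ell)=s_\ell^+\ge0$ by \eqref{s_ell_lower}.
\begin{itemize}
\item In the typical case of Definition~\ref{def_triangle}, the vertices of $\triangle_\ell$ are $\nA_\ell$, $\nB_\ell$, and the intersection point $(x,y)$ of the two tangent lines. For $\ell\le L-1$, $(x,y)$ is given by \eqref{y_def} and \eqref{c_def}, so $x\in(x_\ell,x_{\ell+1})$. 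For $\ell=L$ we have $s_{L+1}^-=\infty$, so $x=x_{L+1}$.
\item In both cases, $y=\tilde f(x_\ell)+s_\ell^+(x-x_\ell)\ge\tilde f(x_\ell)$ because $s_\ell^+\ge0$.
\item The vertex $(x,y)$ lies on or below the secant line, so $y\le\tilde f(x_{\ell+1})$.
\end{itemize}
All three vertices therefore lie in the rectangle $[x_\ell,x_{\ell+1}]\times[\tilde f(x_\ell),\tilde f(x_{\ell+1})]$, and by convexity of the rectangle so does $\triangle_\ell$.

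For the containment in $\cT$:
\begin{itemize}
\item Every point of $\triangle_\ell$ has $0\le x\le1$ and $y\ge\tilde f(x_\ell)=y_\ell\ge0$.
\item Every point of $\triangle_\ell$ lies on or below the secant through $(x_\ell,y_\ell)$ and $(x_{\ell+1},y_{\ell+1})$.
\item Since $y_\ell\le x_\ell$ and $y_{\ell+1}\le x_{\ell+1}$ by \eqref{y_ell_consequence}, this secant lies on or below the diagonal $y=x$ on $[x_\ell,x_{\ell+1}]$.
\end{itemize}
Hence $y\le x$, and the triangle lies in $\cT$, which gives \eqref{tri_in_tri}.

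\textbf{Part (b).} Fix $\ell\in\{1,\ldots,L-1\}$. By part (a), it suffices to bound the width and height of the rectangle uniformly in $\cC$.
\begin{itemize}
\item \emph{Width.} By \eqref{increment_upper}, $x_{\ell+1}-x_\ell\le3L^{-1}$.
\item \emph{Height.} By \eqref{y_ell_consequence}, $\tilde f(x_{\ell+1})-\tilde f(x_\ell)=y_{\ell+1}-y_\ell\le f(x_{\ell+1})-f(x_\ell)+n^{-3}$. Convexity of $f$ and its differentiability at $x_{\ell+1}$ (see \eqref{f_diff_at_x_ell}) give $f(x_{\ell+1})-f(x_\ell)\le f'(x_{\ell+1})(x_{\ell+1}-x_\ell)$. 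Using \eqref{6rg7jc} and \eqref{increment_upper}, this is at most $2\eta^{-1}\cdot3L^{-1}$.
\end{itemize}
So we may take
\[
c_n\coloneqq 3L_n^{-1}+6\eta_n^{-1}L_n^{-1}+n^{-3},
\]
which depends only on $n$. With the choices \eqref{parameter_selection}, $\eta_n^{-1}L_n^{-1}\asymp n^{1/158-1/157}\to0$, so $c_n\to0$.

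The only delicate point is the height bound. The slope of $f$ may be as large as order $\eta^{-1}$, so the argument needs $L$ to grow faster than $\eta^{-1}$; this is exactly what the exponents in \eqref{parameter_selection} provide. The same reasoning fails for $\ell=L$, where the interval has length of order $\eta$, and for $\ell=0$, where it may have length of order $1$. This is why the claim excludes those two indices.
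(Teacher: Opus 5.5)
Your proposal is correct and follows the paper's proof essentially step for step. Part (b) is identical: the width bound comes from \eqref{increment_upper}, and the height bound from \eqref{y_ell_consequence}, convexity of $f$ at $x_{\ell+1}$, and \eqref{6rg7jc}, so that $c_n$ is of order $\eta^{-1}L^{-1}+n^{-3}$. Part (a) uses the same inputs as the paper ($s_\ell^+\ge0$, $y_\ell\le y_{\ell+1}$, $y_\ell\le x_\ell$, $y_{\ell+1}\le x_{\ell+1}$); you check them on the three vertices plus convexity, whereas the paper checks them on the vertical sections $[\tilde\gamma_{\mathsf{left}}\vee\tilde\gamma_{\mathsf{right}},\tilde\gamma_{\mathsf{top}}]$. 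The only point you leave implicit is that each $\triangle_\ell$ falls in the typical case of Definition~\ref{def_triangle}. This follows from $s_\ell^+<s_{\ell+1}^-$ (Claim~\ref{claim_boundary_good}) together with the finiteness of $s_\ell^+$.
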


\begin{proofclaim}[Proof of Claim~\ref{claim_capture}]
Recall that every element of $\cC$ lies on the graph of $f$, specifically at a corner.
First we prove part~\ref{claim_capture_pt}. 
If $(x,y)\in\cC$ and $x \ge x_L$, then
\eq{
y = f(x) \ge f(x_L)
\stackref{y_ell_consequence}{\ge} y_L - n^{-3}
\stackref{tilde_f_def}{=} \tilde f(x_L) - n^{-3}.
}
Next we prove part~\ref{claim_capture_convex}.
Label the elements of $\cC_\ell$ in increasing order of their horizontal coordinates, as $\nP_1,\ldots,\nP_{m}$.
Since $\nP_1,\ldots,\nP_m$ are all corners in the graph of $f$, we have
\eeq{ \label{3on9xh}
\Slope\big((x_\ell,f(x_\ell)),\nP_1\big) < \Slope(\nP_1,\nP_2) < \cdots &< \Slope(\nP_{m-1},\nP_m) \\ 
&< \Slope\big(\nP_m,(x_{\ell+1},f(x_{\ell+1}))\big).
}
The first and last inequalities of \eqref{3on9xh} remain true if we replace $f(x_\ell)$ and $f(x_{\ell+1})$ with $\tilde f(x_\ell)$ and $\tilde f(x_{\ell+1})$ respectively, since \eqref{y_ell_consequence} gives $f(x_\ell) \le y_\ell = \tilde f(x_\ell)$ and $f(x_{\ell+1})\le y_{\ell+1} = \tilde f(x_{\ell+1})$.
Therefore, $\cC_\ell\cup\{(x_\ell,\tilde f(x_\ell)),(x_{\ell+1},\tilde f(x_{\ell+1}))\}$ is in convex position by Lemma~\ref{lem_position_slopes}.

\begin{figure}

\tikzset{every picture/.style={line width=0.75pt}} %set default line width to 0.75pt        

\begin{tikzpicture}[x=0.75pt,y=0.75pt,yscale=-0.92,xscale=0.92]
%uncomment if require: \path (0,267); %set diagram left start at 0, and has height of 267

%Shape: Polygon [id:ds739952034564482] 
\draw  [color={rgb, 255:red, 74; green, 144; blue, 226 }  ,draw opacity=1 ][dash pattern={on 1.5pt off 1.5pt}] (380,210) -- (620,40) -- (534.5,164) -- cycle ;
%Straight Lines [id:da14957986456043604] 
\draw [color={rgb, 255:red, 74; green, 144; blue, 226 }  ,draw opacity=1 ][line width=1.5]    (150.5,177.5) -- (185.5,154.5) ;
%Straight Lines [id:da3547603248346124] 
\draw [color={rgb, 255:red, 74; green, 144; blue, 226 }  ,draw opacity=1 ][line width=1.5]    (40,210) -- (150.5,177.5) ;
%Straight Lines [id:da9297144685408857] 
\draw [color={rgb, 255:red, 74; green, 144; blue, 226 }  ,draw opacity=1 ][line width=1.5]    (239.5,99.5) -- (280,40) ;
%Straight Lines [id:da3624172044694799] 
\draw [color={rgb, 255:red, 74; green, 144; blue, 226 }  ,draw opacity=1 ][line width=1.5]    (218.5,122.5) -- (239.5,99.5) ;
%Straight Lines [id:da5729301506789737] 
\draw [color={rgb, 255:red, 74; green, 144; blue, 226 }  ,draw opacity=1 ][line width=1.5]    (185.5,154.5) -- (218.5,122.5) ;
%Straight Lines [id:da3809182005369419] 
\draw [color={rgb, 255:red, 184; green, 181; blue, 181 }  ,draw opacity=1 ][dash pattern={on 0.84pt off 0.84pt}]   (380,210) -- (380,235) ;
%Straight Lines [id:da6873061762711743] 
\draw [color={rgb, 255:red, 184; green, 181; blue, 181 }  ,draw opacity=1 ][dash pattern={on 0.84pt off 0.84pt}]   (620,40) -- (620,235) ;
%Straight Lines [id:da37797117877016373] 
\draw [color={rgb, 255:red, 184; green, 181; blue, 181 }  ,draw opacity=1 ][dash pattern={on 0.84pt off 0.84pt}]   (490.5,177.5) -- (491,234.5) ;
%Straight Lines [id:da3806667690580239] 
\draw [color={rgb, 255:red, 184; green, 181; blue, 181 }  ,draw opacity=1 ][dash pattern={on 0.84pt off 0.84pt}]   (579.5,99.5) -- (579,234) ;
%Straight Lines [id:da6720665070079442] 
\draw [color={rgb, 255:red, 184; green, 181; blue, 181 }  ,draw opacity=1 ][dash pattern={on 0.84pt off 0.84pt}]   (40,210) -- (40,235) ;
%Straight Lines [id:da5079291405976082] 
\draw [color={rgb, 255:red, 184; green, 181; blue, 181 }  ,draw opacity=1 ][dash pattern={on 0.84pt off 0.84pt}]   (280,40) -- (280,235) ;
%Straight Lines [id:da1040184925795018] 
\draw [color={rgb, 255:red, 184; green, 181; blue, 181 }  ,draw opacity=1 ][dash pattern={on 0.84pt off 0.84pt}]   (150.5,177.5) -- (151,234.5) ;
%Straight Lines [id:da18766363560799715] 
\draw [color={rgb, 255:red, 184; green, 181; blue, 181 }  ,draw opacity=1 ][dash pattern={on 0.84pt off 0.84pt}]   (239.5,99.5) -- (239,234) ;
%Shape: Polygon [id:ds3323657581110937] 
\draw  [color={rgb, 255:red, 74; green, 144; blue, 226 }  ,draw opacity=1 ][fill={rgb, 255:red, 74; green, 144; blue, 226 }  ,fill opacity=0.1 ][dash pattern={on 1.5pt off 1.5pt}] (40,210) -- (280,40) -- (194.5,164) -- cycle ;
%Curve Lines [id:da35926669942472245] 
\draw [color={rgb, 255:red, 208; green, 2; blue, 27 }  ,draw opacity=1 ][line width=1.5]    (40,200) .. controls (118,194) and (265,82) .. (280,30) ;
%Shape: Square [id:dp3184820001890556] 
\draw  [color={rgb, 255:red, 208; green, 2; blue, 27 }  ,draw opacity=1 ][fill={rgb, 255:red, 208; green, 2; blue, 27 }  ,fill opacity=1 ] (36.75,196.75) -- (43.25,196.75) -- (43.25,203.25) -- (36.75,203.25) -- cycle ;
%Shape: Square [id:dp07702834841438977] 
\draw  [color={rgb, 255:red, 208; green, 2; blue, 27 }  ,draw opacity=1 ][fill={rgb, 255:red, 208; green, 2; blue, 27 }  ,fill opacity=1 ] (276.75,26.75) -- (283.25,26.75) -- (283.25,33.25) -- (276.75,33.25) -- cycle ;
%Shape: Circle [id:dp9236588693362585] 
\draw  [fill={rgb, 255:red, 0; green, 0; blue, 0 }  ,fill opacity=1 ] (147,177.5) .. controls (147,175.57) and (148.57,174) .. (150.5,174) .. controls (152.43,174) and (154,175.57) .. (154,177.5) .. controls (154,179.43) and (152.43,181) .. (150.5,181) .. controls (148.57,181) and (147,179.43) .. (147,177.5) -- cycle ;
%Shape: Circle [id:dp521588561305091] 
\draw  [fill={rgb, 255:red, 0; green, 0; blue, 0 }  ,fill opacity=1 ] (182,154.5) .. controls (182,152.57) and (183.57,151) .. (185.5,151) .. controls (187.43,151) and (189,152.57) .. (189,154.5) .. controls (189,156.43) and (187.43,158) .. (185.5,158) .. controls (183.57,158) and (182,156.43) .. (182,154.5) -- cycle ;
%Shape: Circle [id:dp35246118043328123] 
\draw  [fill={rgb, 255:red, 0; green, 0; blue, 0 }  ,fill opacity=1 ] (215,122.5) .. controls (215,120.57) and (216.57,119) .. (218.5,119) .. controls (220.43,119) and (222,120.57) .. (222,122.5) .. controls (222,124.43) and (220.43,126) .. (218.5,126) .. controls (216.57,126) and (215,124.43) .. (215,122.5) -- cycle ;
%Shape: Circle [id:dp9180379981108868] 
\draw  [fill={rgb, 255:red, 0; green, 0; blue, 0 }  ,fill opacity=1 ] (236,99.5) .. controls (236,97.57) and (237.57,96) .. (239.5,96) .. controls (241.43,96) and (243,97.57) .. (243,99.5) .. controls (243,101.43) and (241.43,103) .. (239.5,103) .. controls (237.57,103) and (236,101.43) .. (236,99.5) -- cycle ;
%Shape: Circle [id:dp025042177486653494] 
\draw  [fill={rgb, 255:red, 255; green, 255; blue, 255 }  ,fill opacity=1 ] (37,210) .. controls (37,208.34) and (38.34,207) .. (40,207) .. controls (41.66,207) and (43,208.34) .. (43,210) .. controls (43,211.66) and (41.66,213) .. (40,213) .. controls (38.34,213) and (37,211.66) .. (37,210) -- cycle ;
%Shape: Circle [id:dp739931831566322] 
\draw  [fill={rgb, 255:red, 255; green, 255; blue, 255 }  ,fill opacity=1 ] (277,40) .. controls (277,38.34) and (278.34,37) .. (280,37) .. controls (281.66,37) and (283,38.34) .. (283,40) .. controls (283,41.66) and (281.66,43) .. (280,43) .. controls (278.34,43) and (277,41.66) .. (277,40) -- cycle ;
%Curve Lines [id:da901692668375046] 
\draw [color={rgb, 255:red, 208; green, 2; blue, 27 }  ,draw opacity=1 ][line width=1.5]    (380,200) .. controls (458,194) and (605,82) .. (620,30) ;
%Shape: Square [id:dp6728201229549492] 
\draw  [color={rgb, 255:red, 208; green, 2; blue, 27 }  ,draw opacity=1 ][fill={rgb, 255:red, 208; green, 2; blue, 27 }  ,fill opacity=1 ] (376.75,196.75) -- (383.25,196.75) -- (383.25,203.25) -- (376.75,203.25) -- cycle ;
%Shape: Square [id:dp07215264948831723] 
\draw  [color={rgb, 255:red, 208; green, 2; blue, 27 }  ,draw opacity=1 ][fill={rgb, 255:red, 208; green, 2; blue, 27 }  ,fill opacity=1 ] (616.75,26.75) -- (623.25,26.75) -- (623.25,33.25) -- (616.75,33.25) -- cycle ;
%Shape: Circle [id:dp3701497346758722] 
\draw  [fill={rgb, 255:red, 0; green, 0; blue, 0 }  ,fill opacity=1 ] (487,177.5) .. controls (487,175.57) and (488.57,174) .. (490.5,174) .. controls (492.43,174) and (494,175.57) .. (494,177.5) .. controls (494,179.43) and (492.43,181) .. (490.5,181) .. controls (488.57,181) and (487,179.43) .. (487,177.5) -- cycle ;
%Shape: Circle [id:dp2164863177344577] 
\draw  [fill={rgb, 255:red, 0; green, 0; blue, 0 }  ,fill opacity=1 ] (522,154.5) .. controls (522,152.57) and (523.57,151) .. (525.5,151) .. controls (527.43,151) and (529,152.57) .. (529,154.5) .. controls (529,156.43) and (527.43,158) .. (525.5,158) .. controls (523.57,158) and (522,156.43) .. (522,154.5) -- cycle ;
%Shape: Circle [id:dp5082048659389655] 
\draw  [fill={rgb, 255:red, 0; green, 0; blue, 0 }  ,fill opacity=1 ] (555,122.5) .. controls (555,120.57) and (556.57,119) .. (558.5,119) .. controls (560.43,119) and (562,120.57) .. (562,122.5) .. controls (562,124.43) and (560.43,126) .. (558.5,126) .. controls (556.57,126) and (555,124.43) .. (555,122.5) -- cycle ;
%Shape: Circle [id:dp6973179094016229] 
\draw  [fill={rgb, 255:red, 0; green, 0; blue, 0 }  ,fill opacity=1 ] (576,99.5) .. controls (576,97.57) and (577.57,96) .. (579.5,96) .. controls (581.43,96) and (583,97.57) .. (583,99.5) .. controls (583,101.43) and (581.43,103) .. (579.5,103) .. controls (577.57,103) and (576,101.43) .. (576,99.5) -- cycle ;
%Straight Lines [id:da6266566142562249] 
\draw [color={rgb, 255:red, 208; green, 2; blue, 27 }  ,draw opacity=1 ]   (380,200) -- (575.5,184.5) ;
%Straight Lines [id:da7630218171628408] 
\draw [color={rgb, 255:red, 208; green, 2; blue, 27 }  ,draw opacity=1 ]   (575.5,184.5) -- (620,30) ;
%Shape: Polygon [id:ds5869701557586918] 
\draw  [color={rgb, 255:red, 208; green, 2; blue, 27 }  ,draw opacity=1 ][fill={rgb, 255:red, 208; green, 2; blue, 27 }  ,fill opacity=0.1 ] (380,200) -- (620.25,30) -- (575.75,184.5) -- (536.68,187.59) -- cycle ;
%Shape: Circle [id:dp5932997626208711] 
\draw  [fill={rgb, 255:red, 255; green, 255; blue, 255 }  ,fill opacity=1 ] (617,40) .. controls (617,38.34) and (618.34,37) .. (620,37) .. controls (621.66,37) and (623,38.34) .. (623,40) .. controls (623,41.66) and (621.66,43) .. (620,43) .. controls (618.34,43) and (617,41.66) .. (617,40) -- cycle ;
%Shape: Circle [id:dp05953547181040075] 
\draw  [fill={rgb, 255:red, 255; green, 255; blue, 255 }  ,fill opacity=1 ] (377,210) .. controls (377,208.34) and (378.34,207) .. (380,207) .. controls (381.66,207) and (383,208.34) .. (383,210) .. controls (383,211.66) and (381.66,213) .. (380,213) .. controls (378.34,213) and (377,211.66) .. (377,210) -- cycle ;
%Straight Lines [id:da9529227718594218] 
\draw [color={rgb, 255:red, 184; green, 181; blue, 181 }  ,draw opacity=1 ]   (183.5,202) -- (175.34,173.88) ;
\draw [shift={(174.5,171)}, rotate = 73.81] [fill={rgb, 255:red, 184; green, 181; blue, 181 }  ,fill opacity=1 ][line width=0.08]  [draw opacity=0] (8.93,-4.29) -- (0,0) -- (8.93,4.29) -- cycle    ;
%Straight Lines [id:da6635392441480985] 
\draw [color={rgb, 255:red, 184; green, 181; blue, 181 }  ,draw opacity=1 ]   (246.5,154) -- (217.15,138.41) ;
\draw [shift={(214.5,137)}, rotate = 27.98] [fill={rgb, 255:red, 184; green, 181; blue, 181 }  ,fill opacity=1 ][line width=0.08]  [draw opacity=0] (8.93,-4.29) -- (0,0) -- (8.93,4.29) -- cycle    ;
%Straight Lines [id:da9269456621830368] 
\draw [color={rgb, 255:red, 184; green, 181; blue, 181 }  ,draw opacity=1 ]   (218.5,181) -- (197.15,148.51) ;
\draw [shift={(195.5,146)}, rotate = 56.69] [fill={rgb, 255:red, 184; green, 181; blue, 181 }  ,fill opacity=1 ][line width=0.08]  [draw opacity=0] (8.93,-4.29) -- (0,0) -- (8.93,4.29) -- cycle    ;

% Text Node
\draw (220,178.4) node [anchor=north west][inner sep=0.75pt]  [color={rgb, 255:red, 74; green, 144; blue, 226 }  ,opacity=1 ]  {$f$};
% Text Node
\draw (16,177.4) node [anchor=north west][inner sep=0.75pt]  [color={rgb, 255:red, 208; green, 2; blue, 27 }  ,opacity=1 ]  {$\mathsf{A}_{\ell }$};
% Text Node
\draw (256,7.4) node [anchor=north west][inner sep=0.75pt]  [color={rgb, 255:red, 208; green, 2; blue, 27 }  ,opacity=1 ]  {$\mathsf{B}_{\ell }$};
% Text Node
\draw (33,247) node [anchor=north west][inner sep=0.75pt]    {$x_{\ell }$};
% Text Node
\draw (264,247) node [anchor=north west][inner sep=0.75pt]    {$x_{\ell +1}$};
% Text Node
\draw (142,239.4) node [anchor=north west][inner sep=0.75pt]    {$I_{\ell }^{\mathsf{left}}$};
% Text Node
\draw (225,239.4) node [anchor=north west][inner sep=0.75pt]    {$I_{\ell }^{\mathsf{right}}$};
% Text Node
\draw (531,125.4) node [anchor=north west][inner sep=0.75pt]  [color={rgb, 255:red, 208; green, 2; blue, 27 }  ,opacity=1 ]  {$\tilde f$};
% Text Node
\draw (356,177.4) node [anchor=north west][inner sep=0.75pt]  [color={rgb, 255:red, 208; green, 2; blue, 27 }  ,opacity=1 ]  {$\mathsf{A}_{\ell }$};
% Text Node
\draw (596,7.4) node [anchor=north west][inner sep=0.75pt]  [color={rgb, 255:red, 208; green, 2; blue, 27 }  ,opacity=1 ]  {$\mathsf{B}_{\ell }$};
% Text Node
\draw (373,247) node [anchor=north west][inner sep=0.75pt]    {$x_{\ell }$};
% Text Node
\draw (604,247) node [anchor=north west][inner sep=0.75pt]    {$x_{\ell +1}$};
% Text Node
\draw (482,239.4) node [anchor=north west][inner sep=0.75pt]    {$I_{\ell }^{\mathsf{left}}$};
% Text Node
\draw (565,239.4) node [anchor=north west][inner sep=0.75pt]    {$I_{\ell }^{\mathsf{right}}$};
% Text Node
\draw (177,206) node [anchor=north west][inner sep=0.75pt]  [color={rgb, 255:red, 74; green, 144; blue, 226 }  ,opacity=1 ]  {$\gamma _{\mathrm{left}}$};
% Text Node
\draw (247,148) node [anchor=north west][inner sep=0.75pt]  [color={rgb, 255:red, 74; green, 144; blue, 226 }  ,opacity=1 ]  {$\gamma _{\mathrm{right}}$};
% Text Node
\draw (132,105) node [anchor=north west][inner sep=0.75pt]  [color={rgb, 255:red, 74; green, 144; blue, 226 }  ,opacity=1 ]  {$\gamma _{\mathrm{top}}$};
% Text Node
\draw (472,94) node [anchor=north west][inner sep=0.75pt]  [color={rgb, 255:red, 208; green, 2; blue, 27 }  ,opacity=1 ]  {$\tilde\gamma _{\mathrm{top}}$};
% Text Node
\draw (602,105.4) node [anchor=north west][inner sep=0.75pt]  [color={rgb, 255:red, 208; green, 2; blue, 27 }  ,opacity=1 ]  {$\tilde\gamma _{\mathrm{right}}$};
% Text Node
\draw (510,193.4) node [anchor=north west][inner sep=0.75pt]  [color={rgb, 255:red, 208; green, 2; blue, 27 }  ,opacity=1 ]  {$\tilde\gamma _{\mathrm{left}}$};

\end{tikzpicture}
\caption{Illustration of Claim~\ref{claim_capture}\ref{claim_capture_tri}. The elements of $\cC_\ell$ are shown as solid black circles. 
$\Tri_f(x_\ell,x_{\ell+1})$ is the shaded region on the left (outlined in dashed blue), while $\triangle_\ell = \Tri_{\tilde f}(x_\ell,x_{\ell+1})$ is the shaded region on the right (outlined in solid red).
The containment $\cC_\ell\subset\triangle_\ell\setminus\{\nA_\ell,\nB_\ell\}$ is implied by the inequalities \eqref{top_ineq}, \eqref{left_ineq}, and \eqref{right_ineq}.
Although $\tilde\gamma_{\mathsf{left}}$ exceeds $\gamma_{\mathsf{left}}$ at $x_\ell$, the inequality is reversed for $x\ge I_\ell^{\mathsf{left}}$.
Similarly, although $\tilde\gamma_{\mathsf{right}}$ exceeds $\gamma_{\mathsf{right}}$ at $x_{\ell+1}$, the inequality is reversed for $x\le I_\ell^{\mathsf{right}}$.}
\label{fig_claim_capture}
\end{figure}

Finally, we prove part~\ref{claim_capture_tri}.
The argument involves quite a bit of notation, but Figure~\ref{fig_claim_capture} provides a useful aid. 
If $\cC_\ell$ is empty, then the statement is trivial.
So let us assume $\cC_\ell$ is nonempty; let $I_\ell^{\mathsf{left}}$ and $I_\ell^{\mathsf{right}}$ denote the horizontal coordinates of its leftmost and rightmost elements, respectively.
By \eqref{28buf}, we have
\eeq{ \label{w9bknp}
I_\ell^{\mathsf{left}} &\ge \mathrlap{x_\ell + n^{-2}}\phantom{x_{\ell+1}-n^{-2}} \quad \text{for every $\ell\in\{1,\ldots,L\}$,}  \\
I_\ell^{\mathsf{right}}&\le x_{\ell+1}-n^{-2} \quad \text{for every $\ell\in\{0,\ldots,L-1\}$.}
}
We wish to show $\cC_\ell \subset \triangle_\ell$.
Since $\cC_\ell\subset[I_\ell^{\mathsf{left}},I_\ell^{\mathsf{right}}]\times[0,1]$, and also $\cC_\ell\subset\Tri_f(x_{\ell},x_{\ell+1})$ by \eqref{v28bi9}, it suffices to show
\eeq{ \label{3nl2x}
\big([I_\ell^{\mathsf{left}},I_\ell^{\mathsf{right}}]\times[0,1]\big)\cap\Tri_{f}(x_\ell,x_{\ell+1}) \subset \triangle_\ell.
}
To this end, denote the three boundary functions of $\Tri_f(x_\ell,x_{\ell+1})$ by
\eq{
\gamma_{\mathsf{top}}(x) &\coloneqq \Big(\frac{x_{\ell+1}-x}{x_{\ell+1}-x_\ell}\Big)f(x_\ell)+\Big(\frac{x-x_\ell}{x_{\ell+1}-x_\ell}\Big)f(x_{\ell+1}), \\
\gamma_{\mathsf{left}}(x) &\coloneqq f(x_\ell) + f'(x_\ell)\cdot(x-x_\ell), \\
\gamma_{\mathsf{right}}(x) &\coloneqq f(x_{\ell+1})  + f'(x_{\ell+1})\cdot(x-x_{\ell+1}).
}
Figure~\ref{fig_claim_capture} explains the notation; to be precise, we mean the vertical sections of $\Tri_f(x_\ell,x_{\ell+1})$ are given by
\eeq{ \label{2jbk8f}
\{y\in\R:\, (x,y)\in\Tri_f(x_\ell,x_{\ell+1})\} = [\gamma_{\mathsf{left}}(x)\vee \gamma_{\mathsf{right}}(x),\gamma_{\mathsf{top}}(x)] \quad \text{$\forall$ $x\in[x_\ell,x_{\ell+1}]$}.
}
Similarly, the three boundary functions of $\triangle_\ell = \Tri_{\tilde f}(x_\ell,x_{\ell+1})$ are
\eq{
\tilde\gamma_{\mathsf{top}}(x) &\coloneqq \Big(\frac{x_{\ell+1}-x}{x_{\ell+1}-x_\ell}\Big)y_\ell+\Big(\frac{x-x_\ell}{x_{\ell+1}-x_\ell}\Big)y_{\ell+1}, \\
\tilde\gamma_{\mathsf{left}}(x) &\coloneqq y_\ell + s_\ell^+\cdot(x-x_\ell), \\
\tilde\gamma_{\mathsf{right}}(x) &\coloneqq \begin{cases}
y_{\ell+1}  + s_{\ell+1}^-\cdot(x-x_{\ell+1}) &\text{if $\ell<L$} \\
-\infty &\text{if $\ell = L$,}
\end{cases}
}
by which we mean the vertical sections of $\triangle_\ell$ are given by
\eeq{ \label{3jbk8f}
\{y\in\R:\,(x,y)\in\triangle_\ell\} = [\tilde\gamma_{\mathsf{left}}(x)\vee \tilde\gamma_{\mathsf{right}}(x),\tilde\gamma_{\mathsf{top}}(x)] \quad \text{$\forall$ $x\in[x_\ell,x_{\ell+1}]$}.
}
Comparing \eqref{2jbk8f} and \eqref{3jbk8f}, we see that \eqref{3nl2x} will be verified once we establish the following inequalities:
\begin{subequations} \label{tlr}
\begin{align}
\gamma_{\mathsf{top}}(x) &\le \mathrlap{\tilde\gamma_{\mathsf{top}}(x)}\hphantom{\tilde\gamma_{\mathsf{right}}(x)} \quad \text{for all $x\in[I_\ell^{\mathsf{left}},I_\ell^{\mathsf{right}}]$,}  \label{top_ineq} \\
\gamma_{\mathsf{left}}(x) &\ge \mathrlap{\tilde\gamma_{\mathsf{left}}(x)}\hphantom{\tilde\gamma_{\mathsf{right}}(x)} \quad \text{for all $x\in[I_\ell^{\mathsf{left}},I_\ell^{\mathsf{right}}]$,}  \label{left_ineq} \\
\gamma_{\mathsf{right}}(x) &\ge \tilde\gamma_{\mathsf{right}}(x) \quad \text{for all $x\in[I_\ell^{\mathsf{left}},I_\ell^{\mathsf{right}}]$.} \label{right_ineq}
\end{align}
\end{subequations}
The rest of the proof is establishing \eqref{tlr}.

To prove \eqref{top_ineq}, note that $f(x_\ell)\le y_\ell$ and $f(x_{\ell+1})\le y_{\ell+1}$ by \eqref{y_ell_consequence}, so $\gamma_{\mathsf{top}}(x) \le \tilde\gamma_{\mathsf{top}}(x)$ for all $x\in[x_\ell,x_{\ell+1}]$.

To prove \eqref{left_ineq}, we consider two cases.
If $\ell=0$, then $\tilde\gamma_{\mathsf{left}}\equiv0$, so \eqref{left_ineq} is trivial.
Otherwise $\ell\in\{1,\ldots,L\}$, and we have
\eeq{ \label{3hi8p}
\gamma_{\mathsf{left}}(I_\ell^{\mathsf{left}})
&\stackrefp{y_ell_consequence,s_ell_plus_consequence}{=} f(x_\ell) + f'(x_\ell)\cdot(I_\ell^{\mathsf{left}} - x_\ell) \\
&\stackref{y_ell_consequence,s_ell_plus_consequence}{\ge} y_\ell - n^{-3} + (s_\ell^+ + n^{-1})\cdot (I_\ell^{\mathsf{left}} - x_\ell) \\
&\stackrefpp{w9bknp}{y_ell_consequence,s_ell_plus_consequence}{\ge} y_\ell + s_\ell^+\cdot(I_\ell^{\mathsf{left}} - x_\ell)
= \tilde\gamma_{\mathsf{left}}(I_\ell^{\mathsf{left}}).
}
Since $\gamma_{\mathsf{left}}-\tilde\gamma_{\mathsf{left}}$ is affine with slope $f'(x_\ell)-s_\ell^+\ge n^{-1}$ by \eqref{s_ell_plus_consequence}, it is increasing. Therefore, \eqref{3hi8p} implies $\gamma_{\mathsf{left}}(x) \ge \tilde\gamma_{\mathsf{left}}(x)$ for every $x\ge I_\ell^{\mathsf{left}}$, which proves \eqref{left_ineq}.

The proof of \eqref{right_ineq} is analogous to the previous paragraph, in the following way.
The trivial case is now $\ell=L$ (since $\tilde\gamma_{\mathsf{right}}\equiv-\infty$ in this case) instead of $\ell=0$, and for $\ell\in\{0,\ldots,L-1\}$ we simply make the replacements
\eq{
\big(\mathsf{left},x_\ell,y_\ell,s_\ell^+ + n^{-1},\eqref{s_ell_plus_consequence}\big)
\to
\big(\mathsf{right},x_{\ell+1},y_{\ell+1},s_{\ell+1}^- - n^{-1},\eqref{s_ell_minus_consequence}\big)
}
to obtain $\gamma_{\mathsf{right}}(x) \ge \tilde\gamma_{\mathsf{right}}(x)$ for all $x\le I_\ell^{\mathsf{right}}$.
\end{proofclaim}

\begin{proofclaim}[Proof of Claim~\ref{claim_triangles}]
We continue using the notation from the proof of Claim~\ref{claim_capture}.
In view of \eqref{3jbk8f}, part~\ref{claim_triangles_valid} requires us to show
\begin{align}
\tilde\gamma_{\mathsf{top}}(x) &\le y_{\ell+1}\wedge x \quad \text{for all $x\in[x_\ell,x_{\ell+1}]$}, \label{jr4cv_1} \\
\text{and} \quad \tilde\gamma_{\mathsf{left}}(x)\vee \tilde\gamma_{\mathsf{right}}(x) &\ge  \mathrlap{y_\ell}\hphantom{x\vee y_{\ell+1}} \quad \text{for all $x\in[x_\ell,x_{\ell+1}]$}. \label{jr4cv_2}
\end{align}
To verify \eqref{jr4cv_1}, note that $\tilde\gamma_{\mathsf{top}}(x)$ is a convex combination of $y_\ell$ and $y_{\ell+1}$.
Since $y_\ell \le y_{\ell+1}$, we get $\tilde\gamma_{\mathsf{top}}(x) \le y_{\ell+1}$.
Since $y_\ell\le x_\ell$ and $y_{\ell+1}\le x_{\ell+1}$ by \eqref{y_ell_consequence}, we also have $\tilde\gamma_{\mathsf{top}}(x) \le x$.
Meanwhile, \eqref{jr4cv_2} holds because $s_\ell^+\ge0$ by \eqref{s_ell_lower}, hence $\tilde\gamma_{\mathsf{left}}(x)\ge y_\ell$ for all $x\ge x_\ell$.

For part~\ref{claim_triangles_small}, we must restrict to $\ell\in\{1,\ldots,L-1\}$ so that we can use \eqref{increment_upper} and \eqref{6rg7jc} below.
By part~\ref{claim_triangles_valid} we have
\eq{
\triangle_\ell
&\subset[x_\ell,x_{\ell+1}]\times[y_\ell,y_{\ell+1}].
}
Therefore, the desired containment \eqref{tri_in_small_rec} will follow if we can choose $c_n$ such that
\eeq{ \label{d6bkkh}
x_{\ell+1} \le x_\ell + c_n \quad \text{and} \quad
y_{\ell+1} \le y_\ell + c_n.
}
To this end, observe that
\eq{
x_{\ell+1} \stackref{increment_upper}{\le} x_\ell + 3L^{-1}
}
while
\eq{
y_{\ell+1} \stackref{y_ell_consequence}{\le} f(x_{\ell+1})+n^{-3}
&\stackrefp{y_ell_consequence,6rg7jc,increment_upper}{\le} f(x_\ell) + f'(x_{\ell+1})\cdot(x_{\ell+1}-x_\ell) +n^{-3} \\
&\stackref{y_ell_consequence,6rg7jc,increment_upper}{\le} y_\ell + 2\eta^{-1}\cdot 3L^{-1} + n^{-3}.
}
Therefore, both inequalities in \eqref{d6bkkh} are satisfied with $c_n = 6\eta^{-1}L^{-1} + n^{-3}$, which tends to $0$ thanks to our parameter choices in \eqref{parameter_selection}.
\end{proofclaim}

\medskip

\noindent \textbf{Step 3a: parameters induced by $\tilde f$.}
In Step 2 we were given a convex chain $\cC$ such that $\#\cC\le n$, and we constructed a function $\tilde f = \tilde f_{\cC,n}$.
In Step 3 we are given some $\tilde f\in\wt\cF_n$ (equivalently, we are given the sequence $(x_\ell,y_\ell,s_\ell^-,s_{\ell}^+)_{\ell=1}^L$), and we study random convex chains that are captured by the tangency triangles of $\tilde f$.
To prepare for the main probabilistic estimates in Steps 3b and 3c, here we define and analyze certain deterministic quantities associated with $\tilde f$.
Throughout the rest of the proof, $o(1)$ denotes a quantity that tends to $0$ as $n\to\infty$, uniformly in $\tilde f\in\wt\cF_n$ and $\ell\in\{0,\ldots,L\}$.

Using the notation from \eqref{tilde_triangle}, we define
\eq{
p_\ell \coloneqq \int_{\triangle_\ell}\fp(x,y)\ \dd x\, \dd y,
\qquad
r_\ell \coloneqq \frac{\inf_{(x,y)\in\triangle_\ell}\fp(x,y)}{p_\ell/\Area(\triangle_\ell)},
\qquad
R_\ell \coloneqq \frac{\sup_{(x,y)\in\triangle_\ell}\fp(x,y)}{p_\ell/\Area(\triangle_\ell)}.
}
These quantities make sense because of the containment $\triangle_\ell\subset\cT$ from \eqref{tri_in_tri}.
Note that
\eeq{ \label{p_boundary}
p_0 &\stackref{p_upper}{\le} 2\fC\cdot\Area(\triangle_0) \stackref{tri_in_tri}{\le} 2\fC\cdot \tilde f(x_1) = 2\fC\cdot y_1 \stackref{y1_upper}{\le} 2\fC\eta, \\
\text{and}\quad
p_L &\stackref{p_upper}{\le} 2\fC\cdot\Area(\triangle_L) \stackref{tri_in_tri}{\le} 2\fC\cdot(1-x_L) \stackref{last_lower}{\le} 2\fC\eta.
}
Since $\eta\to0$ as $n\to\infty$ by \eqref{parameter_selection}, we conclude from \eqref{p_boundary} that
\eeq{ \label{p_boundary_zero}
p_0 = o(1) \quad \text{and} \quad
p_L = o(1).
}
In addition, because of \eqref{p_upper} and \eqref{p_lower}, we trivially have
\eeq{ \label{Rr_trivial}
\frac{\fc}{\fC} \le r_\ell \le 1 \le R_\ell \le \frac{\fC}{\fc} \quad \text{for all $\ell\in\{0,\ldots,L\}$.}
}
For $\ell\in\{1,\ldots,L-1\}$, we can do much better than \eqref{p_boundary_zero} and \eqref{Rr_trivial}, as follows.
Since $\fp$ is continuous by assumption \eqref{p_cont}, and $c_n\to0$, the containment \eqref{tri_in_small_rec} implies
\eeq{ \label{39n0nd}
\lim_{n\to\infty}\sup_{\tilde f\in\wt\cF_n}\sup_{\ell\in\{1,\ldots,L-1\}}\sup_{(x,y)\in\triangle_\ell}\Big|\frac{p_\ell}{\Area(\triangle_\ell)}-\fp(x,y)\Big| = 0.
}
Since $\fp$ is bounded away from $0$ because of \eqref{p_lower}, \eqref{39n0nd} implies
\eeq{ \label{prechoice}
\lim_{n\to\infty}\sup_{\tilde f\in\wt\cF_n}\sup_{\ell\in\{1,\ldots,L-1\}}\sup_{(x,y)\in\triangle_\ell}\Big|\frac{p_\ell}{\Area(\triangle_\ell)\fp(x,y)}-1\Big| = 0.
}
If we choose $\fp(x,y)$ as small as possible, then \eqref{prechoice} implies
\begin{align}
\label{pbulk}
\text{for $\ell\in\{1,\ldots,L-1\}$,} \quad
p_\ell &\le (1+o(1))\Area(\triangle_\ell)\inf_{(x,y)\in\triangle_\ell}\fp(x,y), \\
\label{rbulk}
\text{or equivalently} \quad 
r_\ell &\ge (1+o(1))^{-1}.
\end{align}
If we choose $\fp(x,y)$ as large as possible, then \eqref{prechoice} implies
\eeq{ \label{R_bulk} 
R_\ell \le 1 + o(1) \quad \text{for $\ell\in\{1,\ldots,L-1\}$.}
}
Putting together various inequalities, we have
\eeq{ \label{psum_upper}
\sum_{\ell=0}^L p_\ell^{1/3}
&\stackref{p_boundary_zero,pbulk}{\le} o(1) + (1+o(1))\sum_{\ell=1}^{L-1}\Big(\Area(\triangle_\ell)\inf_{(x,y)\in\triangle_\ell}\fp(x,y)\Big)^{1/3} \\
&\stackref{tilde_f_area}{=} o(1) + \frac{(1+o(1))}{2}\sum_{\ell=1}^{L-1}\Big(\int_{x_\ell}^{x_{\ell+1}}\tilde f''(x)^{1/3}\ \dd x\Big)\Big(\inf_{(x,y)\in\triangle_\ell}\fp(x,y)\Big)^{1/3} \\
&\stackrefpp{graph_in_triangle}{tilde_f_area}{\le} o(1) + \frac{(1+o(1))}{2}\sum_{\ell=1}^{L-1}\int_{x_\ell}^{x_{\ell+1}}\big[\tilde f''(x)\cdot\fp(x,\tilde f(x))\big]^{1/3}\ \dd x \\
&\stackrefpp{J_def1}{tilde_f_area}{\le} o(1) + (1+o(1))\frac{J(\tilde f)}{2}
\stackrel{\mbox{\footnotesize(Remark~\ref{rem_extension_bound})}}{=} \frac{J(\tilde f)}{2} + o(1).
}

\medskip

\noindent\textbf{Step 3b: typical convex chains near $\tilde f$.}
For each $\tilde f\in\wt\cF_n$ and $\ell\in\{0,\ldots,L\}$, define the following random variable, which records the length of the longest convex chain in $\triangle_\ell$:
\eeq{ \label{sublcc_tilde_f}
\sL_{n,\ell}^{\tilde f} \coloneqq \max\Big\{ \#S:\, S\subseteq\cS_n\cap\triangle_\ell, \  \text{$\{\nA_\ell,\nB_\ell\}\cup S$ is in convex position}\Big\}.
}

\begin{claim}[De-concatenation] \label{claim_deconcatenation}
For any $\rho$-regular convex chain $\cC\subseteq\cS_n$, if $\tilde f = \tilde f_{\cC,n}$ is the function from Step 2, then
\eeq{ \label{3onxx}
\#\cC \le \sum_{\ell=0}^L \sL_{n,\ell}^{\tilde f}.
}
\end{claim}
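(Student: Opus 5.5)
The plan is to split $\cC$ along the vertical strips determined by the points $x_0<x_1<\cdots<x_{L+1}$ from Step 2a, and to show that each piece is admissible in the maximization \eqref{sublcc_tilde_f} defining $\sL_{n,\ell}^{\tilde f}$. Recall that $\cC_\ell = \cC\cap([x_\ell,x_{\ell+1}]\times[0,1])$ from \eqref{Cell_def}. These sets cover $\cC$ because every element of $\cC\subset\Interior(\cT)$ has horizontal coordinate in $[0,1]$. They are pairwise disjoint because, by \eqref{28buf}, no element of $\cC$ has horizontal coordinate equal to any $x_\ell$ with $\ell\in\{1,\ldots,L\}$. The endpoints $x_0=0$ and $x_{L+1}=1$ pose no problem, since $\cC\subset\Interior(\cT)$ excludes them. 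Hence
\[
\#\cC = \sum_{\ell=0}^L \#\cC_\ell .
\]

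Next, fix $\ell\in\{0,\ldots,L\}$. Since $\cC\subseteq\cS_n$, Claim~\ref{claim_capture}\ref{claim_capture_tri} gives $\cC_\ell\subseteq\cS_n\cap\triangle_\ell$. Claim~\ref{claim_capture}\ref{claim_capture_convex} gives that $\{\nA_\ell,\nB_\ell\}\cup\cC_\ell$ is in convex position. Here $\triangle_\ell$, $\nA_\ell$, $\nB_\ell$ are exactly the objects in \eqref{tilde_triangle} built from $\tilde f=\tilde f_{\cC,n}$, so they match the data in \eqref{sublcc_tilde_f}. Thus $S=\cC_\ell$ is a competitor in the definition of $\sL_{n,\ell}^{\tilde f}$, which yields $\#\cC_\ell\le\sL_{n,\ell}^{\tilde f}$. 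Summing over $\ell$ proves \eqref{3onxx}.

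Essentially all the geometric content has already been handled in Claim~\ref{claim_capture}, so the only point needing care is bookkeeping. One must check that $\rho$-regularity and $\#\cC\le n$ are the hypotheses under which Step 2 constructs $\tilde f_{\cC,n}$; the bound $\#\cC\le n$ holds automatically since $\cC\subseteq\cS_n$. One must also check that the strip decomposition neither double counts nor loses points at the division points $x_\ell$, which is what \eqref{28buf} guarantees. I expect no real obstacle beyond this.
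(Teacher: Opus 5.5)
Your proposal is correct and follows essentially the same route as the paper: you decompose $\cC$ into the strips $\cC_\ell$ and use Claim~\ref{claim_capture} to show each $\cC_\ell$ is admissible in \eqref{sublcc_tilde_f}. You also spell out, via \eqref{28buf}, why the decomposition is disjoint; the paper leaves that step implicit.
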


\begin{proofclaim}
Using the notation $\cC_\ell = \cC\cap([x_\ell,x_{\ell+1}]\times[0,1])$ from \eqref{Cell_def}, we have $\cC = \biguplus_{\ell=0}^L \cC_\ell$.
Hence $\#\cC=\sum_{\ell=0}^L \#\cC_\ell$.
Claim~\ref{claim_capture} shows that $\cC_\ell$ is a candidate set for \eqref{sublcc_tilde_f}, hence $\#\cC_\ell \le \sL_{n,\ell}^{\tilde f}$.
The two previous sentences together yield \eqref{3onxx}.
\end{proofclaim}

\begin{claim}[Almost sure upper bound] \label{claim_as_upper}
For any $\lambda>0$, the following holds almost surely.
For all large $n$ and every $\tilde f\in\wt\cF_n$, we have
\eeq{ \label{eq_as_upper}
\sum_{\ell=0}^L\frac{\sL_{n,\ell}^{\tilde f}}{\alpha n^{1/3}}
\le (1+\lambda)\frac{J(\tilde f)}{2} + o(1),
}
where $o(1)\to0$ as $n\to\infty$, uniformly in $\tilde f\in\wt\cF_n$.
\end{claim}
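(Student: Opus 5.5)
The plan is to bound each $\sL_{n,\ell}^{\tilde f}$ individually with high probability, uniformly over $\tilde f\in\wt\cF_n$ and $\ell$. Then take a union bound, using $\#\wt\cF_n\le n^{7L}$ from \eqref{cardinality_upper} together with $L\le n^{1/157}$, and finish with Borel--Cantelli.

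\emph{Fixed $\tilde f$ and $\ell$.} Set $N_{n,\ell}\coloneqq\#(\cS_n\cap\triangle_\ell)\sim\mathsf{Binomial}(n,p_\ell)$. Conditional on $N_{n,\ell}$, the points in $\triangle_\ell$ are i.i.d.\ from the conditional density, whose ratios are $r_\ell,R_\ell$. Three bounds then apply:
\begin{itemize}
\item Hoeffding gives $N_{n,\ell}\le np_\ell+n^{2/3}$ except with probability $\exp(-2n^{1/3})$.
\item For bulk $\ell\in\{1,\ldots,L-1\}$, Proposition~\ref{prop_general}\ref{prop_general_compare} gives $\E[\sL_{n,\ell}^{\tilde f}\mid N_{n,\ell}]\le(1+\lambda)(R_\ell/r_\ell)^{1/3}\alpha N_{n,\ell}^{1/3}$ once $N_{n,\ell}\ge\fn$. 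Here $\fn$ depends only on $\fC/\fc$ and $\lambda$. When $N_{n,\ell}<\fn$ the trivial bound $\fn$ suffices.
\item Proposition~\ref{prop_general}\ref{prop_general_concentration}, with $\beta=\tfrac14$ and $t=n^{1/13}$, bounds the upward fluctuation by $n^{1/13}n^{1/4}$, except with probability $C\exp(-\fa n^{1/13})$. The constant $\fa$ is uniform because $R_\ell\le\fC/\fc$ by \eqref{Rr_trivial}.
\end{itemize}

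For the boundary indices $\ell\in\{0,L\}$, I would instead use the deep tail Proposition~\ref{prop_general}\ref{prop_general_tail} with $R=\fC/\fc$ and $t=n^{1/4}$. This gives $\sL_{n,\ell}^{\tilde f}\le(4(\fC/\fc)\e^3N_{n,\ell})^{1/3}+n^{1/4}$, and combined with $N_{n,\ell}\le 2\fC\eta n+n^{2/3}$ from \eqref{p_boundary} the contribution is $o(n^{1/3})$.

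\emph{Summing.} On the good event, for the bulk terms I use $(np_\ell+n^{2/3})^{1/3}\le n^{1/3}p_\ell^{1/3}+n^{2/9}$, and $(R_\ell/r_\ell)^{1/3}=1+o(1)$ by \eqref{rbulk} and \eqref{R_bulk}. Summing over $\ell$ gives
\[
\sum_{\ell}\frac{\sL^{\tilde f}_{n,\ell}}{\alpha n^{1/3}}\le(1+\lambda)(1+o(1))\sum_\ell p_\ell^{1/3}+\frac{(L+1)\bigl(n^{2/9}+\fn+n^{17/52}\bigr)}{n^{1/3}}+o(1).
\]
Because $L\le n^{1/157}$ and $17/52+1/157<1/3$, the middle term is $o(1)$. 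Applying \eqref{psum_upper}, $\sum_\ell p_\ell^{1/3}\le J(\tilde f)/2+o(1)$, and since $J(\tilde f)$ is uniformly bounded (Remark~\ref{rem_extension_bound}) the factor $(1+o(1))$ is absorbed into the additive $o(1)$. This yields \eqref{eq_as_upper}.

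\emph{Union bound.} The total failure probability is at most
\[
n^{7L}(L+1)\bigl(C\exp(-\fa n^{1/13})+\exp(-2n^{1/3})+2^{-n^{1/4}}\bigr),
\]
which is summable since $7L\log n\ll n^{1/13}$.

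The main obstacle is uniformity. The $o(1)$ in \eqref{rbulk} and \eqref{R_bulk} is already uniform over $\tilde f$, but the threshold $\fn$ from part~\ref{prop_general_compare} must also not depend on $\tilde f$ or $\ell$. I would handle this by invoking part~\ref{prop_general_compare} with the fixed ratio bound $R/r\le(\fC/\fc)^2$, which only costs a constant. The exponent bookkeeping must also keep $L\cdot n^{17/52}=o(n^{1/3})$, which is exactly where the choice $L=\floor{n^{1/157}}$ is used.
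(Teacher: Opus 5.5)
This is essentially the paper's proof. Both use Hoeffding for $N_{n,\ell}$, Proposition~\ref{prop_general}\ref{prop_general_compare} for the conditional mean, Proposition~\ref{prop_general}\ref{prop_general_concentration} with $\beta=\tfrac14$ and $t=n^{1/13}$, a union bound over the at most $n^{7L}$ elements of $\wt\cF_n$, Borel--Cantelli, and finally \eqref{rbulk}, \eqref{R_bulk}, \eqref{psum_upper}. The only structural difference is at $\ell\in\{0,L\}$. There you use the deep-tail bound Proposition~\ref{prop_general}\ref{prop_general_tail} together with $p_\ell\le 2\fC\eta$. The paper instead runs the same mean-plus-concentration bound at the boundary, with the crude factor $(R_\ell/r_\ell)^{1/3}\le(\fC/\fc)^{2/3}$, and then uses $p_0,p_L=o(1)$. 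Both routes work.

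One caution concerns your fix for the uniformity of $\fn$.
\begin{itemize}
\item If you literally invoke part~\ref{prop_general_compare} in the bulk with $R=\fC/\fc$ and $r=\fc/\fC$, the conclusion carries the factor $(\fC/\fc)^{2/3}$ instead of $(R_\ell/r_\ell)^{1/3}=1+o(1)$. That loses the sharp constant $J(\tilde f)/2$, so the step fails.
\item If you instead mean that the threshold $\fn$ chosen at the worst ratio $(\fC/\fc)^2$ also serves every smaller ratio, that uniformity is not part of the statement of part~\ref{prop_general_compare}. It is true on inspecting the proof, and the paper tacitly relies on the same fact.
\end{itemize}

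A clean way around this uses \eqref{rbulk} and \eqref{R_bulk}. For all large $n$, every bulk triangle of every $\tilde f\in\wt\cF_n$ satisfies \eqref{ratio_assumption} with the fixed pair $r=(1+\lambda)^{-1}$ and $R=1+\lambda$. Part~\ref{prop_general_compare} then gives a single threshold $\fn((1+\lambda)^2,\lambda)$, at the cost of the factor $(1+\lambda)^{5/3}$. This is harmless, since $\lambda>0$ is arbitrary.
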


\begin{proofclaim}
Fix $\lambda>0$.
Denote the number of sample points that land in $\triangle_\ell$ by
\eq{
N_{n,\ell} \coloneqq \#(\cS_n\cap\triangle_\ell)\sim\mathsf{Binomial}(n,p_\ell).
}
By Hoeffding's inequality, we have
\eeq{ \label{nk3cb}
\P\big(N_{n,\ell} \le np_\ell + n^{2/3}\big)
\ge 1 - \exp(-2n^{1/3}).
}
Conditional on $N_{n,\ell}$, the random set $\cS_n\cap\triangle_\ell$ has the same law as $N_{n,\ell}$ independent samples from $\fp$ conditioned to be inside $\triangle_\ell$. %, and these sets are independent across $\ell$.
Therefore, Proposition~\ref{prop_general}\ref{prop_general_compare} (with $\wh\fp$ equal to the conditional density on $\triangle_\ell$) gives
\eq{
\E\givenk[\big]{\sL_{n,\ell}^{\tilde f}}{N_{n,\ell}}\cdot \one\{N_{n,\ell}\ge\fn\} \le (1+\lambda)\Big(\frac{R_\ell}{r_\ell}\Big)^{1/3}\alpha N_{n,\ell}^{1/3},
}
where $\fn$ depends only on $\fC/\fc$ and $\lambda$. 
Since $\sL_{n,\ell}^{\tilde f} \le N_{n,\ell}$, we can remove the indicator on the left-hand side by instead writing
\eq{
\E\givenk[\big]{\sL_{n,\ell}^{\tilde f}}{N_{n,\ell}}
\le (1+\lambda)\Big(\frac{R_\ell}{r_\ell}\Big)^{1/3}\alpha N_{n,\ell}^{1/3} + \fn.
}
Meanwhile, Proposition~\ref{prop_general}\ref{prop_general_concentration} (with $t_0=1$, $\beta=\tfrac14$, $t=n^{1/13}$) gives
\eq{
\P\givenk[\Big]{\sL_{n,\ell}^{\tilde f} \le \E\givenk[\big]{\sL_{n,\ell}^{\tilde f}}{N_{n,\ell}} + n^{1/13}N_{n,\ell}^{1/4}}{N_{n,\ell}}
\ge 1 - C\exp\big(-\fa n^{1/13}\big),
}
where $C$ is a universal constant, and $\fa$ depends only on $\fC/\fc$.
Since $N_{n,\ell} \le n$, it follows from the two previous displays that
\eq{
\P\Big(\sL_{n,\ell}^{\tilde f} \le 
(1+\lambda)\Big(\frac{R_\ell}{r_\ell}\Big)^{1/3}\alpha N_{n,\ell}^{1/3} + \fn 
+ n^{1/13}\cdot n^{1/4}\Big)
\ge 1 - C\exp(-\fa n^{1/13}).
}
By a union bound with \eqref{nk3cb}, we have
\eq{
&\P\bigg(\bigcap_{\tilde f\in\wt\cF_n}\bigcap_{\ell=0}^L\Big\{\sL_{n,\ell}^{\tilde f} \le 
(1+\lambda)\Big(\frac{R_\ell}{r_\ell}\Big)^{1/3}\alpha\big(np_\ell + n^{2/3})^{1/3} + \fn 
+ n^{17/52}\Big\}\bigg) \\
&\stackrefp{cardinality_upper}{\ge} 1 - (\#\wt\cF_n)(L+1)\Big(\exp(-2n^{1/3})+C\exp(-\fa n^{1/13})\Big) \\
&\stackref{cardinality_upper}{\ge} 1 - n^{7L}(L+1)\Big(\exp(-2n^{1/3})+C\exp(-\fa n^{1/13})\Big) \\
&\stackref{parameter_selection}{\ge} 1- n^{7n^{1/157}}(n^{1/157}+1)\Big(\exp(-2n^{1/3})+C\exp(-\fa n^{1/13})\Big).
}
Since $1/157 < 1/13$, we can use the first Borel--Cantelli lemma to conclude that with probability one, we have
\eeq{  \label{2ih5d}
\sL_{n,\ell}^{\tilde f} 
&\le (1+\lambda)\Big(\frac{R_\ell}{r_\ell}\Big)^{1/3}\alpha\Big(np_\ell+n^{2/3}\Big)^{1/3} + \fn + n^{17/52} \\
&\le (1+\lambda)\Big(\frac{R_\ell}{r_\ell}\Big)^{1/3}\alpha\Big(n^{1/3}p_\ell^{1/3}+n^{2/9}\Big) + 2n^{17/52}
}
for every $\tilde f\in\wt\cF_n$, $\ell\in\{0,\ldots,L\}$, and all large $n$.
Now divide by $\alpha n^{1/3}$ and sum over $\ell$ to obtain
\begin{align*}
&\sum_{\ell=0}^L\frac{\sL_{n,\ell}^{\tilde f}}{\alpha n^{1/3}}
\stackrefpp{2ih5d}{rbulk,R_bulk}{\le} (1+\lambda)\sum_{\ell=0}^{L}\Big(\frac{R_\ell}{r_\ell}\Big)^{1/3}\big(p_\ell^{1/3}+n^{-1/9}\big) + \tfrac{2}{\alpha}(L+1)n^{-1/156}\\
&\stackrefpp{Rr_trivial,parameter_selection}{rbulk,R_bulk}{\le} (1+\lambda)\Big(\frac{\fC}{\fc}\Big)^{2/3}\big(p_0^{1/3}+p_L^{1/3}+(L+1)n^{-1/9}\big) + (1+\lambda)\sum_{\ell=1}^{L-1}\Big(\frac{R_\ell}{r_\ell}\Big)^{1/3}p_\ell^{1/3} + o(1) \\
&\stackrefpp{p_boundary_zero,parameter_selection}{rbulk,R_bulk}{=} o(1) + (1+\lambda)\sum_{\ell=1}^{L-1}\Big(\frac{R_\ell}{r_\ell}\Big)^{1/3}p_\ell^{1/3} \\
&\stackref{rbulk,R_bulk}{\le} o(1) + (1+\lambda)(1+o(1))\sum_{\ell=1}^{L-1}p_\ell^{1/3} \\
&\stackrefpp{psum_upper}{rbulk,R_bulk}{\le} o(1) + (1+\lambda)(1+o(1))\Big(\frac{J(\tilde f)}{2}+o(1)\Big)
\stackrel{\mbox{\footnotesize(Remark~\ref{rem_extension_bound})}}{=} (1+\lambda)\frac{J(\tilde f)}{2} + o(1).
\end{align*}
This verifies \eqref{eq_as_upper}.
\end{proofclaim}

\medskip

\noindent \textbf{Step 3c: probability of full convex chain near $\tilde f$.}
Continuing to use the notation from \eqref{tilde_triangle}, we define the following events:
\begin{gather}
E_n^{\tilde f} 
\coloneqq 
\bigg\{\sum_{\ell=0}^L \#(\cS_n\cap\triangle_\ell) = n\bigg\},
\label{tilde_E_def} \\
F_n^{\tilde f} 
\coloneqq \bigcap_{\ell=0}^L\big\{\text{$(\cS_n\cap\triangle_\ell)\cup\{\nA_\ell,\nB_\ell\}$ is in convex position}\big\}. \nonumber
\end{gather}
Claim~\ref{claim_capture} gives the following containment:
\eeq{ \label{step2_outcome}
\{\text{$\cS_n$ is convex and $\rho$-regular}\} \subseteq %\bigcup_{\tilde f\in\wt\cF_n} 
\big(E_n^{\tilde f_{\cS_n,n}}\cap F_n^{\tilde f_{\cS_n,n}}\big).
}
Our goal in this step is to prove the following estimate.

\begin{claim}[Probability upper bound] \label{claim_prob_upper}
For every $\tilde f\in\wt\cF_n$, we have
\eq{
\frac{1}{n}\log\Big[\frac{(3n)!}{n!}\P\Big(\{\text{$\cS_n$ is $(\eta,\delta)$-good}\}\cap E_n^{\tilde f}\cap F_n^{\tilde f}\Big)\Big] 
\le \log\frac{27}{4} + 3\log\Big(J(\tilde f)+o(1)\Big) + o(1),
}
where $o(1)\to 0$ as $n\to\infty$, uniformly in $\tilde f\in\wt\cF_n$.
\end{claim}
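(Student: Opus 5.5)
The plan mirrors the lower-bound computation in the proof of Proposition~\ref{prop_lower_bound}\ref{prop_lower_bound_full}, run in reverse. Fix $\tilde f\in\wt\cF_n$. On $E_n^{\tilde f}$ every sample point lies in $\bigcup_\ell\triangle_\ell$. We decompose according to the occupation numbers $(n_0,\ldots,n_L)$ with $\sum_\ell n_\ell=n$, where $n_\ell=\#(\cS_n\cap\triangle_\ell)$. There are at most $(n+1)^{L+1}=\e^{o(n)}$ such vectors, since $L=\floor{n^{1/157}}$, so this sum costs nothing at the exponential scale. For a fixed vector, the multinomial formula gives probability $n!\prod_\ell p_\ell^{n_\ell}/n_\ell!$ that the occupation numbers are $(n_\ell)$. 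Conditional on this, the sets $\cS_n\cap\triangle_\ell$ are independent samples from the conditional densities. The upper bound in Proposition~\ref{prop_general}\ref{prop_general_likelihood} then bounds the conditional probability of $F_n^{\tilde f}$ by $\prod_\ell R_\ell^{n_\ell}2^{n_\ell}/(n_\ell!(n_\ell+1)!)\le 2^n\prod_\ell R_\ell^{n_\ell}/(n_\ell!)^2$.

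Next we handle the factors $R_\ell$ and the boundary cells. For $1\le\ell\le L-1$, \eqref{R_bulk} gives $R_\ell\le 1+o(1)$, contributing $(1+o(1))^n$. For $\ell\in\{0,L\}$ we only have $R_\ell\le\fC/\fc$, so here we use $(\eta,\delta)$-goodness. By \eqref{tri_in_tri}, \eqref{y1_upper} and \eqref{last_lower}, $\triangle_0\subset[0,1]\times[0,\eta]$ and $\triangle_L\subset[1-\eta,1]\times[0,1]$. Hence on the good event $n_0,n_L\le\delta n$, and the boundary factors contribute at most $(\fC/\fc)^{2\delta n}=\e^{o(n)}$ because $\delta=\delta_n\to0$.

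Combining these bounds, the probability is at most
\[
\e^{o(n)}\,2^n n!\sum_{(n_\ell)}\prod_{\ell}\frac{p_\ell^{n_\ell}}{(n_\ell!)^3}.
\]
We rewrite $\prod_\ell p_\ell^{n_\ell}/(n_\ell!)^3$ exactly as in the lower-bound proof, as
\[
\frac{1}{(3n)!}\Big(3\sum_k p_k^{1/3}\Big)^{3n}\binom{3n}{n_0,n_0,n_0,\ldots,n_L,n_L,n_L}\prod_\ell \bar q_\ell^{3n_\ell},
\]
where $\bar q_\ell=p_\ell^{1/3}/(3\sum_kp_k^{1/3})$. The multinomial term times $\prod\bar q_\ell^{3n_\ell}$ is a probability, hence at most $1$. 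This gives
\[
\frac{(3n)!}{n!}\P(\cdots)\le \e^{o(n)}\,2^n\Big(3\sum_\ell p_\ell^{1/3}\Big)^{3n}.
\]
By \eqref{psum_upper}, $\sum_\ell p_\ell^{1/3}\le J(\tilde f)/2+o(1)$ uniformly in $\tilde f$. Taking $\frac1n\log$ yields $\log 2+3\log\big(\tfrac32 J(\tilde f)+o(1)\big)+o(1)=\log\tfrac{27}{4}+3\log(J(\tilde f)+o(1))+o(1)$.

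The main obstacle is uniformity of the boundary cells. The factors $R_0,R_L$ cannot be made close to $1$, which is why goodness enters, and one must check that the cells $\triangle_0,\triangle_L$ really sit inside the strips in Definition~\ref{def_good} with the given $\eta$. One should also confirm that the occupation-vector sum, the factor $(n_\ell+1)!\ge n_\ell!$, and the constants from Proposition~\ref{prop_general}\ref{prop_general_likelihood} are all $\e^{o(n)}$ uniformly in $\tilde f\in\wt\cF_n$. This holds because every $o(1)$ used comes from \eqref{R_bulk}, \eqref{psum_upper} and $\delta_n\to0$, each uniform in $\tilde f$.
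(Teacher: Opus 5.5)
Your proposal is correct and follows the paper's proof essentially step for step. The shared steps are: the multinomial decomposition over occupation numbers; the conditional application of Proposition~\ref{prop_general}\ref{prop_general_likelihood}; goodness together with \eqref{tri_in_tri}, \eqref{y1_upper} and \eqref{last_lower} to force $n_0,n_L\le\delta n$; the rewriting as a $3n$-trial multinomial with weights $\bar q_\ell$; and finally \eqref{psum_upper}.

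The only cosmetic difference is how the sum over occupation vectors is handled. You bound each multinomial term by $1$ and count the vectors, giving a factor $(n+1)^{L+1}=\e^{o(n)}$. The paper instead observes directly that these multinomial probabilities sum to at most $1$. Both routes give the same bound.
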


\begin{proofclaim}
Recall Definition~\ref{def_good} of $(\eta,\delta)$-goodness.
We have the following sequence of implications:
\begin{subequations} \label{goodness}
\begin{align}
\begin{split}
\text{$\cS_n$ is $(\eta,\delta)$-good} \quad
&\stackref{good_def}{\implies} \quad \#\big(\cS_n\cap([0,1]\times[0,\eta])\big)\le\delta n \\
&\stackref{y1_upper}{\implies} \quad \#\big(\cS_n\cap([0,1]\times[0,y_1])\big)\le\delta n \\
&\stackref{tri_in_tri}{\implies} \quad \#\big(\cS_n\cap\triangle_0\big) \le \delta n.
\end{split}
\intertext{Similarly,}
\begin{split} \label{goodness1}
\text{$\cS_n$ is $(\eta,\delta)$-good} \quad
&\stackref{good_def}{\implies} \quad \#\big(\cS_n\cap([1-\eta,1]\times[0,1])\big)\le\delta n \\
&\stackref{last_lower}{\implies} \quad \#\big(\cS_n\cap([x_L,1]\times[0,1])\big)\le\delta n \\
&\stackref{tri_in_tri}{\implies} \quad \#\big(\cS_n\cap\triangle_L\big) \le \delta n.
\end{split}
\end{align}
\end{subequations}
As before, denote the number of sample points that land in $\triangle_\ell$ by
\eq{ 
N_{n,\ell} \coloneqq \#(\cS_n\cap\triangle_\ell).
}
The random vector $(N_{n,0},\ldots,N_{n,L},n-\sum_{\ell=0}^{L}N_{n,\ell})$ has the multinomial distribution with $n$ trials and probability parameters $(p_0,\ldots,p_{L},1-\sum_{\ell=0}^{L}p_\ell)$.
Therefore, for any nonrandom vector $(n_0,\ldots,n_L)$ such that $n_0+\cdots+n_L=n$, we have
\eeq{ \label{393nlc}
\P\bigg(\bigcap_{\ell=0}^{L}\{N_{n,\ell}=n_\ell\}\bigg) = {n \choose n_0,\ldots,n_L}p_0^{n_0}\cdots p_{L}^{n_{L}}
= n!\prod_{\ell=0}^L\frac{p_\ell^{n_\ell}}{n_\ell!}.
}
Conditional on $\bigcap_{\ell=0}^{L}\{N_{n,\ell}=n_\ell\}$, the random set $\cS_n\cap\triangle_\ell$ has the same law as $n_\ell$ independent samples from $\fp$ conditioned to be inside $\triangle_\ell$, and these sets are independent across $\ell$.
Therefore, Proposition~\ref{prop_general}\ref{prop_general_likelihood} (with $\wh\fp$ equal to the conditional density on $\triangle_\ell$) implies
\eeq{ \label{27nku}
\P\givenp[\bigg]{F_n^{\tilde f}}{\bigcap_{\ell=0}^{L}\{N_{n,\ell}=n_\ell\}}
\le \prod_{\ell=0}^L R_\ell^{n_\ell}\frac{2^{n_\ell}}{n_\ell!(n_\ell+1)!}.
}
We now have
\eq{
&\P\Big(\{\text{$\cS_n$ is $(\eta,\delta)$-good}\}\cap E_n^{\tilde f}\cap F_n^{\tilde f}\Big) 
\stackref{tilde_E_def,goodness}{\le} \sum_{\substack{n_0+\cdots+n_{L}=n \\ n_0\le\delta n,\ n_L\le\delta n}}\P\bigg(\bigcap_{\ell=0}^{L}\{N_{n,\ell}=n_\ell\}\cap F_n^{\tilde f}\bigg) \\
&\stackref{393nlc,27nku}{\le}  \sum_{\substack{n_0+\cdots+n_{L}=n \\ n_0\le\delta n,\ n_L\le\delta n}}n!\prod_{\ell=0}^LR_\ell ^{n_\ell}\frac{(2p_\ell)^{n_\ell}}{(n_\ell!)^2(n_\ell+1)!} \\
&\stackrefpp{Rr_trivial,R_bulk}{27nku,393nlc}{\le}  \sum_{\substack{n_0+\cdots+n_{L}=n \\ n_0\le\delta n,\ n_L\le\delta n}}n!(1+o(1))^n(\fC/\fc)^{2\delta n}\prod_{\ell=0}^L\frac{(2 p_\ell)^{n_\ell}}{(n_\ell!)^3} \\
&\stackrefp{393nlc,27nku}{=}  \frac{2^nn!}{(3n)!}(1+o(1))^n(\fC/\fc)^{2\delta n}\sum_{\substack{n_0+\cdots+n_{L}=n \\ n_0\le\delta n,\ n_L\le\delta n}}{3n \choose n_0,n_0,n_0,\ldots,n_L,n_L,n_L}\prod_{\ell=0}^L p_\ell^{n_\ell}.
}
Since $p_\ell>0$ for every $\ell$ by \eqref{p_lower}, we can define
\eq{
\bar q_\ell \coloneqq \frac{p_\ell^{1/3}}{3\sum_{k=0}^Lp_k^{1/3}}.
}
We can now rewrite the previous estimate as
\eq{
&\frac{(3n)!}{n!}\P\Big(\{\text{$\cS_n$ is $(\eta,\delta)$-good}\}\cap E_n^{\tilde f}\cap F_n^{\tilde f}\Big) \\
&\le 2^n(1+o(1))^n(\fC/\fc)^{2\delta n}\Big(3\sum_{\ell=0}^L p_\ell^{1/3}\Big)^{3n}\sum_{\substack{n_0+\cdots+n_{L}=n \\ n_0\le\delta n,\ n_L\le\delta n}}{3n \choose n_0,n_0,n_0,\ldots,n_L,n_L,n_L}\prod_{\ell=0}^L(\bar q_\ell)^{3n_\ell}.
}
By considering the multinomial distribution with $3n$ trials, $3(L+1)$ types of outcomes, and probability parameters $(\bar q_0,\bar q_0,\bar q_0,\ldots,\bar q_L,\bar q_L,\bar q_L)$, we infer that
\eq{
\sum_{n_0+\cdots+n_L = n}{3n \choose n_0,n_0,n_0,\ldots,n_L,n_L,n_L}\prod_{\ell=0}^L(\bar q_\ell)^{3n_\ell} \le 1.
}
We conclude from the two previous displays that
\eq{
\frac{(3n)!}{n!}\P\Big(\{\text{$\cS_n$ is $(\eta,\delta)$-good}\}\cap E_n^{\tilde f}\cap F_n^{\tilde f}\Big)
&\stackrefp{psum_upper}{\le} 2^n(1+o(1))^n(\fC/\fc)^{2\delta n}\Big(3\sum_{\ell=0}^L p_\ell^{1/3}\Big)^{3n} \\
&\stackref{psum_upper}{\le} 2^n(1+o(1))^n(\fC/\fc)^{2\delta n}\Big(\frac{3J(\tilde f)}{2} + o(1)\Big)^{3n} \\
&\stackrefp{psum_upper}{=} \Big(\frac{27}{4}\Big)^n(1+o(1))^n(\fC/\fc)^{2\delta n}\Big(J(\tilde f)+o(1)\Big)^{3n}.
}
Now use the fact that $\delta\to0$ as $n\to\infty$ (see Step 0) to complete the proof.
\end{proofclaim}

\medskip

\noindent \textbf{Step 4: repairing defects.}
This step is purely deterministic.
We modify a given $\tilde f\in\wt\cF_n$ to obtain a fully convex function $g_{\tilde f}\in\cF$.
The goal to construct $g_{\tilde f}$ in such a way that
\eeq{ \label{step_4_goal}
\lim_{n\to\infty}\sup_{\tilde f\in\wt\cF_n}\|\tilde f-g_{\tilde f}\|_{[0,1]} = 0 \qquad \text{and} \qquad
\lim_{n\to\infty}\sup_{\tilde f\in\wt\cF_n}|J(\tilde f) - J(g_{\tilde f})| = 0.
}
The next paragraph will construct $g_{\tilde f}$ and prove \eqref{step_4_goal}.

Recall that $\tilde f$ is continuous on $[0,1]$.
On each subinterval $[x_\ell,x_{\ell+1}]$, $\tilde f$ is strictly convex and smooth, with boundary data given by \eqref{tilde_f_def}.
For $\ell\in\{1,\ldots,L\}$, the incoming slope at $x_\ell$ is $\partial^-\tilde f(x_\ell) = s_\ell^-$, while the outgoing slope is $\partial^+ \tilde f(x_\ell) = s_\ell^+$.
The only reason $\tilde f$ is not convex on all of $[0,1]$ is that $s_\ell^- > s_\ell^+$, so the first derivative experiences a negative jump at $x_\ell$.
Accounting for these jumps, we have
\eq{
\tilde f'(x) = \int_0^x \tilde f''(u)\ \dd u - \sum_{\ell=1}^L (s_\ell^- - s_\ell^+)\one\{x_\ell<x\} \quad \text{for all $x\in(0,1)\setminus\{x_1,\ldots,x_{L}\}$}.
}
Define a continuous convex function $\tilde g\colon[0,1]\to[0,\infty)$ by
\eq{
\tilde g(0) = 0 \quad \text{and} \quad \tilde g'(x) = \int_0^x\tilde f''(u)\ \dd u \quad \text{for all $x\in(0,1)$}.
}
Note that the second derivative is unchanged:
\eeq{ \label{second_same}
\tilde f''(x) = \tilde g''(x) \quad \text{for all $x\in(0,1)\setminus\{x_1,\ldots,x_{L}\}$}.
}
Furthermore, the following inequalities hold for every $x\in[0,1]$:
\eeq{ \label{tenbc0}
\tilde f(x) 
\le \tilde g(x)
\le \tilde f(x) + \sum_{\ell=1}^L (s_\ell^- - s_\ell^+)
\stackref{s_ell_jump}{\le} \tilde f(x) + L\cdot 3n^{-1}.
}
In particular, $\tilde g(1) \le \tilde f(1) + 3Ln^{-1} = 1 + 3Ln^{-1}$, so the following rescaled function takes values in $[0,1]$:
\eeq{ \label{g_no_tilde_def}
g(x) \coloneqq \frac{\tilde g(x)}{1+3Ln^{-1}}, \quad x\in[0,1].
}
Hence $g\in\cF$, and we set $g_{\tilde f} \coloneqq g$. 
The first limit in \eqref{step_4_goal} is obtained as follows:
\eeq{ \label{vjhpe}
\|\tilde f-g\|_{[0,1]} \le \|\tilde f - \tilde g\|_{[0,1]} + \|\tilde g - g\|_{[0,1]}
\stackref{tenbc0,g_no_tilde_def}{\le} 3Ln^{-1} + 3Ln^{-1} \stackref{parameter_selection}{=} o(1).
}
By continuity of $\fp$ from assumption \eqref{p_cont}, it follows from \eqref{vjhpe} that
\eeq{ \label{p_close}
\big\|\fp(\boldsymbol\cdot,\tilde f(\boldsymbol\cdot)) - \fp(\boldsymbol\cdot, g(\boldsymbol\cdot))\big\|_{[0,1]} = o(1).
}
Now the second limit in \eqref{step_4_goal} can be obtained as follows:
\eq{
J(\tilde f) 
&\stackrefpp{J_def1}{g_no_tilde_def,p_close}{=}\int_0^1\big[\tilde f''(x)\cdot\fp(x,\tilde f(x))\big]^{1/3}\ \dd x \\
&\stackrefpp{second_same}{g_no_tilde_def,p_close}{=} \int_0^1\big[\tilde g''(x)\cdot\fp(x,\tilde f(x))\big]^{1/3}\ \dd x \\
&\stackref{g_no_tilde_def,p_close}{=} \frac{1}{(1+3Ln^{-1})^{1/3}} \int_0^1\Big[g''(x)\cdot\big(\fp(x,g(x)) + o(1)\big)\Big]^{1/3}\ \dd x \\
&\stackrefpp{parameter_selection,p_lower}{g_no_tilde_def,p_close}{=}(1+o(1))\int_0^1\big[g''(x)\cdot\fp(x, g(x))\big]^{1/3}\ \dd x
\stackref{J_def1}{=} J(g) + o(1),
}
where the final equality uses the fact that $J$ is bounded.
This completes the construction of $g_{\tilde f} = g$ and the proof of \eqref{step_4_goal}.

A key consequence of the construction is the following.

\begin{claim}[Penalty for separation from optimizers] \label{claim_distance_consequence}
For every $\eps>0$, there exists $\theta>0$ such that the following implication holds for all large $n$ and every $\rho$-regular convex chain $\cC$ of cardinality at most $n$:
\eeq{ \label{force_subopt}
\inf_{h\in\argmax J}\max_{(x,y)\in\cC}|h(x)-y| \ge \eps 
\quad \implies \quad J\big(g_{\tilde f_{\cC,n}}\big) \le J_\star - \theta.
}
\end{claim}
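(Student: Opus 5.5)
The plan is to reduce the claim to Proposition~\ref{prop_far_from_optimizer}. Given $\eps>0$, apply that proposition with $\eps/2$ in place of $\eps$ to get $\theta>0$ such that every $g\in\cF$ with $\inf_{h\in\argmax J}\|g-h\|_{[0,1]}\ge\eps/2$ satisfies $J(g)\le J_\star-\theta$. It then suffices to show that, for all large $n$ (uniformly in $\cC$), the hypothesis of \eqref{force_subopt} forces $\|g_{\tilde f}-h\|_{[0,1]}\ge\eps/2$ for every $h\in\argmax J$, where $\tilde f=\tilde f_{\cC,n}$.

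To show this, fix $h\in\argmax J$. By hypothesis there is $(x,y)\in\cC$ with $|h(x)-y|\ge\eps$, and then
\[
|g_{\tilde f}(x)-h(x)|\ge \eps-|g_{\tilde f}(x)-\tilde f(x)|-|\tilde f(x)-y|.
\]
The first error term is $o(1)$ uniformly by the first limit in \eqref{step_4_goal}. So the main task is a uniform estimate
\[
\sup_{\cC}\max_{(x,y)\in\cC}|\tilde f(x)-y| = o(1).
\]
Once this holds, the right-hand side exceeds $\eps/2$ for large $n$, and the claim follows.

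The bound $|\tilde f(x)-y|=o(1)$ is the main step, and I would split it by which interval $[x_\ell,x_{\ell+1}]$ contains $x$.
\begin{itemize}
\item For $\ell\in\{1,\ldots,L-1\}$: by Claim~\ref{claim_capture}\ref{claim_capture_tri} the point lies in $\triangle_\ell$, and by \eqref{graph_in_triangle} so does $(x,\tilde f(x))$. Claim~\ref{claim_triangles}\ref{claim_triangles_small} puts $\triangle_\ell$ inside a square of side $c_n\to0$, so $|\tilde f(x)-y|\le c_n$.
\item For $\ell=0$: \eqref{tri_in_tri} confines $\triangle_0$ to heights in $[0,\tilde f(x_1)]=[0,y_1]$. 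By \eqref{y1_upper}, $y_1\le\eta=o(1)$, so both $y$ and $\tilde f(x)$ lie in $[0,\eta]$.
\item For $\ell=L$: by \eqref{tri_in_tri} both $y$ and $\tilde f(x)$ lie in $[\tilde f(x_L),1]$. This interval need not be short, because the slope near $1$ can be large, so I would instead compare both $y$ and $\tilde f(x)$ to $1$.
\end{itemize}

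The case $\ell=L$ is the expected obstacle, because Claim~\ref{claim_capture}\ref{claim_capture_pt} alone gives only $y\ge\tilde f(x_L)-n^{-3}$. My first attempt is to avoid bounding $|\tilde f(x)-y|$ there and to use Proposition~\ref{prop_equicontinuity}\ref{prop_equicontinuity_a} instead. Choose $\eta_0$ with $h(1-\eta_0)\ge1-\eps/4$ for all $h\in\argmax J$. Since $x\ge x_L>1-\eta\ge1-\eta_0$ for large $n$, we get $h(x)\ge1-\eps/4$, so $|h(x)-y|\ge\eps$ forces $y\le1-\tfrac34\eps$. The chain is convex and $f_\cC(x)=y$ with $f_\cC(1)=1$, so $f_\cC(x_L)\le y$. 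Then $g_{\tilde f}(x_L)\le \tilde f(x_L)+o(1)=y_L+o(1)\le f(x_L)+o(1)\le 1-\tfrac34\eps+o(1)$. Since $x_L\le1-\eta/2$ still lies in the region where $h(x_L)\ge1-\eps/4$, this gives $\|g_{\tilde f}-h\|\ge\eps/2$ directly at the point $x_L$.

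So the cases $\ell\le L-1$ go through the displayed error estimate, and the case $\ell=L$ is handled by this separate comparison at $x_L$.
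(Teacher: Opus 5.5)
Your proposal is correct and follows the same route as the paper. You reduce to Proposition~\ref{prop_far_from_optimizer}, bound $|\tilde f(x)-y|$ through the tangency triangles for $\ell\le L-1$, and in the case $x\ge x_L$ compare $g_{\tilde f}(x_L)$ with $h(x_L)$ using Proposition~\ref{prop_equicontinuity}\ref{prop_equicontinuity_a}; your bound $\tilde f(x_L)=y_L\le f_\cC(x_L)+n^{-3}\le y+n^{-3}$ is exactly Claim~\ref{claim_capture}\ref{claim_capture_pt}. One bookkeeping fix is needed: as written, your last case gives only $h(x_L)-g_{\tilde f}(x_L)\ge\eps/2-o(1)$, not $\ge\eps/2$. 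Either use the sharper bound $y\le h(x)-\eps\le 1-\eps$ (which holds since $h\le 1$), or apply Proposition~\ref{prop_far_from_optimizer} with a smaller threshold such as $\eps/4$; the paper uses $\eps/6$.
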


\begin{proofclaim}
Fix $\eps>0$.
Before proceeding with the main argument, we make several assumptions about $n$.
By the first limit in \eqref{step_4_goal}, we may assume $n$ is large enough that
\eeq{ \label{g22cbx}
\|\tilde f-g_{\tilde f}\|_{[0,1]}\le \eps/6
\quad \text{for every $\tilde f\in\wt\cF_n$.}
}
Second, since $\tilde f(x_1)\le \eta$ by \eqref{y1_upper}, and $\eta\to0$ as $n\to\infty$ by \eqref{parameter_selection}, we may assume
\eeq{ \label{o9cg2x}
\tilde f(x_1) \le \eps/6
\quad \text{for every $\tilde f\in\cF_n$.}
}
Third, since $x_L \ge 1-\eta$ by \eqref{last_lower}, and $\eta\to0$, we may assume by Proposition~\ref{prop_equicontinuity}\ref{prop_equicontinuity_a} that
\eeq{ \label{3nl3x}
h(x_L) \ge 1 - \eps/2 
\quad \text{for every $h\in\argmax J$ and every $\tilde f\in\cF_n$}.
}
Fourth, recall the sequence $(c_n)_{n\ge1}$ from Claim~\ref{claim_triangles}, and assume $n$ is large enough that $c_n \le \eps/6$.
Fifth and finally, assume $n^{-3} \le \eps/6$.

By Proposition~\ref{prop_far_from_optimizer}, we may choose $\theta>0$ such that the following implication holds for all $g\in\cF$:
\eeq{ \label{b3xs42}
\inf_{h\in\argmax J}\|g-h\|_{[0,1]}\ge \eps/6 \quad \implies \quad J(g) \le J_\star - \theta.
}
Now let $\tilde f = \tilde f_{\cC,n}$ and $g = g_{\tilde f}$, and assume the hypothesis in \eqref{force_subopt}.
Consider any $h\in\argmax J$.
By assumption, there exists $(x,y)\in\cC$ such that $|h(x)-y| \ge \eps$.
There are two cases to consider:
\begin{itemize}

\item Suppose $x\in[x_\ell,x_{\ell+1}]$ for some $\ell\le L-1$.
We know
\eeq{ \label{gjnc5o}
(x,\tilde f(x))\stackref{graph_in_triangle}{\in}\triangle_\ell 
\quad \text{and} \quad 
(x,y)\stackrel{\mbox{\footnotesize(Claim~\ref{claim_capture}\ref{claim_capture_tri})}}{\in}\triangle_\ell.
}
If $\ell=0$, then $\triangle_\ell \subset [0,x_1]\times[0,\tilde f(x_1)]$ by Claim~\ref{claim_triangles}\ref{claim_triangles_valid}, so \eqref{gjnc5o} implies the first inequality below:
\eq{
|\tilde f(x)-y| \le \tilde f(x_1) \stackref{o9cg2x}{\le} \eps/6.
}
If $\ell\in\{1,\ldots,L-1\}$, then $\triangle_\ell\subset[x_\ell,x_\ell+c_n]\times[\tilde f(x_\ell),\tilde f(x_\ell)+c_n]$ by Claim~\ref{claim_triangles}\ref{claim_triangles_small}, so \eqref{gjnc5o} implies $|\tilde f(x)-y|\le c_n \le \eps/6$.
In either sub-case, we have
\eq{
|g(x)-h(x)| 
&\stackrefp{g22cbx}{\ge} |h(x) - y| - |y - \tilde f(x)| - |\tilde f(x) - g(x)| \\
&\stackref{g22cbx}{\ge} \eps - \eps/6 - \eps/6 = 2\eps/3.
}
In particular, $\|g-h\|_{[0,1]} \ge 2\eps/3$.

\item Suppose $x \ge x_L$. 
Then Claim~\ref{claim_capture}\ref{claim_capture_pt} justifies the first inequality below:
\eeq{ \label{ubo8n}
\tilde f(x_L) \le y+n^{-3} \le y + \eps/6.
}
Furthermore, since $|h(x)-y| \ge \eps$, we must have either
\eeq{ \label{hk4cd3}
y \ge h(x)+\eps
\quad\text{or}\quad
y \le h(x)-\eps.
}
But $h(x) \ge h(x_L)\ge 1-\eps/2$ by \eqref{3nl3x}, so only the latter scenario in \eqref{hk4cd3} is possible, and thus $y \le 1-\eps$.
Using this in \eqref{ubo8n} yields $\tilde f(x_L) \le 1-5\eps/6$, hence
\eq{
g(x_L) \stackref{g22cbx}{\le} \tilde f(x_L) + \eps/6 \le 1 - 4\eps/6.
}
Since $h(x_L)\ge 1-\eps/2$ by \eqref{3nl3x}, it follows that $\|g-h\|_{[0,1]} \ge \eps/6$.
\end{itemize}
We have now shown $\|g-h\|_{[0,1]} \ge \eps/6$ for every $h\in\argmax J$, so we can invoke \eqref{b3xs42} to obtain $J(g)\le J_\star - \theta$.
\end{proofclaim}

\medskip

\noindent \textbf{Step 5a: final synthesis for parts~\ref{prop_upper_bound_typical} and \ref{prop_upper_bound_far_typical}.}
By Claim~\ref{claim_suffice_typical}, we just need to verify \eqref{upper_reg}.
Fix $\lambda>0$.
With probability one, the following inequalities hold for all large $n$:
\eq{
\frac{\sL_n^\reg}{\alpha n^{1/3}} 
\stackrel{\mbox{\footnotesize (Claim~\ref{claim_deconcatenation})}}{\le} 
\sup_{\tilde f\in\wt\cF_n}\sum_{\ell=0}^L \frac{\sL_{n,\ell}^{\tilde f}}{\alpha n^{1/3}}
&\stackrel{\mbox{\footnotesize (Claim~\ref{claim_as_upper})}}{\le}
(1+\lambda)\sup_{\tilde f\in\wt\cF_n}\frac{J(\tilde f)}{2} + o(1) \\
&\stackrel{\parbox{\widthof{\footnotesize (Claim~\ref{claim_as_upper})}}{\centering\footnotesize \eqref{step_4_goal}}}{\le}
(1+\lambda)\sup_{g\in\cF}\frac{J(g)}{2} + o(1)
= (1+\lambda)\frac{J_\star}{2} + o(1).
}
As $\lambda>0$ is arbitrary, we can send $\lambda\searrow0$ to obtain \eqref{upper_reg1}.

For \eqref{upper_reg2}, we will follow the same reasoning but add an appeal to Claim~\ref{claim_distance_consequence} in the first inequality below.
Choosing $\theta>0$ from Claim~\ref{claim_distance_consequence}, and recalling the definition of $\sL_n^{\reg,\far}(\eps)$ from \eqref{regular_lengths}, we have
\eq{
\frac{\sL_n^{\reg,\far}(\eps)}{\alpha n^{1/3}} 
&\stackrel{\mbox{\footnotesize (Claims~\ref{claim_deconcatenation}, \ref{claim_distance_consequence})}}{\le} 
\sup_{\tilde f\in\wt\cF_n:\, J(g_{\tilde f}) \le J_\star-\theta}\sum_{\ell=0}^L \frac{\sL_{n,\ell}^{\tilde f}}{\alpha n^{1/3}} \\
&\stackrel{\parbox{\widthof{\footnotesize (Claims~\ref{claim_deconcatenation}, \ref{claim_distance_consequence})}}{\centering\footnotesize (Claim~\ref{claim_as_upper})}}{\le}
(1+\lambda)\sup_{\tilde f\in\wt\cF_n:\, J(g_{\tilde f})\le J_\star-\theta}\frac{J(\tilde f)}{2} + o(1)
\stackref{step_4_goal}{\le} (1+\lambda)\frac{J_\star-\theta}{2} + o(1)
}
for all large $n$, with probability one.
As $\lambda>0$ is arbitrary, we can send $\lambda\searrow0$ to obtain 
\eq{
\limsup_{n\to\infty}\frac{\sL_n^{\reg,\far}(\eps)}{\alpha n^{1/3}} \le \frac{J_\star-\theta}{2}.
}
This proves \eqref{upper_reg2}.

\medskip

\noindent \textbf{Step 5b: final synthesis for parts~\ref{prop_upper_bound_full} and \ref{prop_upper_bound_far_full}.}
By Claim~\ref{claim_suffice_full}, we just need to verify \eqref{29bx4r}.
Together with Claim~\ref{claim_prob_upper}, the second limit in \eqref{step_4_goal} implies
\eeq{ \label{step4_outcome}
\frac{1}{n}\log\Big[\frac{(3n)!}{n!}\P\Big(\{\text{$\cS_n$ is $(\eta,\delta)$-good}\}\cap E_n^{\tilde f}\cap F_n^{\tilde f}\Big)\Big]
\le \log\frac{27}{4} + 3\log\Big( J(g_{\tilde f}) + o(1)\Big) + o(1).
}
We now have
\eq{
&\frac{1}{n}\log\Big[\frac{(3n)!}{n!} \P(\text{$\cS_n$ is convex, $\rho$-regular, and $(\eta,\delta)$-good})\Big] \\
&\stackrefpp{step2_outcome}{step4_outcome,cardinality_subexp}{\le} \frac{1}{n}\log\bigg[\frac{(3n)!}{n!}\sum_{\tilde f\in\wt\cF_n}\P\Big(\{\text{$\cS_n$ is $(\eta,\delta)$-good}\}\cap E_n^{\tilde f}\cap F_n^{\tilde f}\Big)\bigg] \\
&\stackrefp{step4_outcome,cardinality_subexp}{\le} \frac{1}{n}\log\bigg[\frac{(3n)!}{n!}\sup_{\tilde f\in\wt\cF_n}\P\Big(\{\text{$\cS_n$ is $(\eta,\delta)$-good}\}\cap E_n^{\tilde f}\cap F_n^{\tilde f}\Big)\bigg] + \frac{\log \#\wt\cF_n}{n} \\
&\stackref{step4_outcome,cardinality_subexp}{\le} \log\frac{27}{4} + 3\log\Big( J_\star + o(1)\Big) + o(1).
}
We have proved \eqref{29bx4r_a}.

For \eqref{29bx4r_b}, choose $\theta>0$ from Claim~\ref{claim_distance_consequence}.
If $\cS_n$ is convex and $\rho$-regular, and the event $\Omega_\eps$ from \eqref{H_eps_def} occurs, then \eqref{force_subopt} yields $J(g_{\tilde f_{\cS_n,n}}) \le J_\star-\theta$.
This observation, together with \eqref{step2_outcome}, shows
\eeq{ \label{step2_outcome_far}
\{\text{$\cS_n$ is convex and $\rho$-regular}\}\cap\Omega_\eps \subseteq \bigcup_{\tilde f\in\wt\cF_n:\,J(g_{\tilde f})\le J_\star-\theta} \big(E_n^{\tilde f}\cap F_n^{\tilde f}\big).
}
We now have
\eq{
&\frac{1}{n}\log\Big[\frac{(3n)!}{n!} \P\big(\{\text{$\cS_n$ is convex, $\rho$-regular, and $(\eta,\delta)$-good}\}\cap \Omega_\eps\big)\Big] \\
&\stackrefpp{step2_outcome_far}{step4_outcome,cardinality_subexp}{\le} \frac{1}{n}\log\bigg[\frac{(3n)!}{n!}\sum_{\tilde f\in\wt\cF_n:\, J(g_{\tilde f})\le J_\star-\theta}\P\Big(\{\text{$\cS_n$ is $(\eta,\delta)$-good}\}\cap E_n^{\tilde f}\cap F_n^{\tilde f}\Big)\bigg] \\
&\stackrefp{step4_outcome,cardinality_subexp}{\le} \frac{1}{n}\log\bigg[\frac{(3n)!}{n!}\sup_{\tilde f\in\wt\cF_n:\, J(g_{\tilde f})\le J_\star-\theta}\P\Big(\{\text{$\cS_n$ is $(\eta,\delta)$-good}\}\cap E_n^{\tilde f}\cap F_n^{\tilde f}\Big)\bigg] + \frac{\log \#\wt\cF_n}{n} \\
&\stackref{step4_outcome,cardinality_subexp}{\le} \log\frac{27}{4} + 3\log\Big( J_\star - \theta + o(1)\Big) + o(1),
}
which proves \eqref{29bx4r_b}. \hfill $\blacksquare$

\section{Proofs of theorems} \label{sec_thm_proofs}

\begin{proof}[Proof of Theorem~\ref{thm_var_formula}]
The desired statement \eqref{variational_formula} claims both almost sure and $L^p$ convergence.
In light of Proposition~\ref{prop_general}\ref{prop_general_convergence} (with $\gamma=\tfrac13$), we just need to prove the almost sure part; that is, it suffices to show
\eeq{ \label{36rbck}
\lim_{n\to\infty}\frac{\sL_n}{n^{1/3}} =  \frac{\alpha}{2}J_\star \quad \text{almost surely}.
}
We assume \eqref{p_cont} throughout.
If we also assume \eqref{p_lower}, then the $\ge$ direction of \eqref{36rbck} follows from Proposition~\ref{prop_lower_bound}\ref{prop_lower_bound_typical} (take $f\in\argmax J$), while the $\le$ direction is Proposition~\ref{prop_upper_bound}\ref{prop_upper_bound_typical}.
The rest of the proof is to show why the assumption \eqref{p_lower} can be dropped.

Given $\eps\in(0,1]$, define the following modified density function:
\eeq{ \label{tilde_p_def}
\wt\fp(x,y) \coloneqq (1-\eps)\fp(x,y) + 2\eps,
\quad (x,y)\in\cT.
}
Since $0\le \fp(x,y)\le2\fC$ and $2\le2\fC$, we have
\eq{
\wt\fp(x,y) \le 2\fC \quad \text{for all $(x,y)\in\cT$.}
}
Associated to $\wt\fp$ is the optimal value
\eq{
\wt J_\star \coloneqq \sup_{f\in\cF}\wt J(f), \quad \text{where} \quad \wt J(f)\coloneqq \int_0^1\big[f''(x)\cdot\wt\fp(x,f(x))\big]^{1/3}\ \dd x.
}

\begin{claim}[Optimal values are close] \label{claim_star_tilde}
$|\wt J_\star - J_\star| \le 2(\eps\fC)^{1/3}$.
\end{claim}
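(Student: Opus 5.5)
The plan is to compare $\wt J(f)$ and $J(f)$ pointwise in $f\in\cF$, and then pass to suprema. For any $f\in\cF$ and almost every $x$, subadditivity of $t\mapsto t^{1/3}$ on $[0,\infty)$ gives
\[
\big[f''(x)\,\wt\fp(x,f(x))\big]^{1/3}
\le (1-\eps)^{1/3}\big[f''(x)\,\fp(x,f(x))\big]^{1/3} + \big(2\eps f''(x)\big)^{1/3}.
\]
In the other direction, monotonicity together with $(1-\eps)^{1/3}\ge 1-\eps^{1/3}$ yields
\[
\big[f''(x)\,\wt\fp(x,f(x))\big]^{1/3}\ge (1-\eps)^{1/3}\big[f''(x)\,\fp(x,f(x))\big]^{1/3}.
\]

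Integrating over $x\in[0,1]$ and using $\int_0^1 f''(x)^{1/3}\,\dd x\le 4^{1/3}$ from Proposition~\ref{prop_unif_case}\ref{prop_unif_case_a} (as in the proof of Proposition~\ref{prop_maximizers}), I obtain
\[
(1-\eps)^{1/3}J(f)\le \wt J(f)\le J(f) + (8\eps)^{1/3} = J(f)+2\eps^{1/3}.
\]
Taking suprema over $f\in\cF$ gives $\wt J_\star\le J_\star+2\eps^{1/3}\le J_\star+2(\eps\fC)^{1/3}$, where the last step uses $\fC\ge1$. The value $\fC\ge1$ holds because $\fp$ is a probability density on a triangle of area $\tfrac12$, so $\sup\fp\ge2$.

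For the lower bound, $\wt J_\star\ge(1-\eps)^{1/3}J_\star\ge J_\star-(1-(1-\eps)^{1/3})J_\star$. Using $1-(1-\eps)^{1/3}\le\eps^{1/3}$ and $J_\star\le2\fC^{1/3}$ from \eqref{J_star_trivial}, this yields $J_\star-\wt J_\star\le 2(\eps\fC)^{1/3}$.

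There is no real obstacle. The only points to check are the elementary inequalities and the bookkeeping of constants. For $1-(1-\eps)^{1/3}\le\eps^{1/3}$, it suffices to note $(1-\eps)^{1/3}+\eps^{1/3}\ge \big((1-\eps)+\eps\big)^{1/3}=1$ by subadditivity. Finally, \eqref{J_star_trivial} only uses \eqref{p_upper}, so the bound $J_\star\le 2\fC^{1/3}$ is available even without \eqref{p_lower}.
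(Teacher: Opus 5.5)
Your proposal is correct and follows essentially the same route as the paper: bound $\wt\fp^{1/3}$ above by subadditivity and below via $\wt\fp\ge(1-\eps)\fp$, integrate using $\int_0^1 f''(x)^{1/3}\,\dd x\le 4^{1/3}$ and $J_\star\le 2\fC^{1/3}$, and finish with $1-(1-\eps)^{1/3}\le\eps^{1/3}$ and $\fC\ge1$. The only cosmetic difference is that you pass to suprema before invoking $J_\star\le 2\fC^{1/3}$, whereas the paper first bounds $|\wt J(f)-J(f)|$ uniformly in $f$.
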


\begin{proofclaim}
By inspection of \eqref{tilde_p_def}, we have the following inequalities:
\eq{
(1-\eps)^{1/3}\fp(x,y)^{1/3} \le \wt\fp(x,y)^{1/3} \le \fp(x,y)^{1/3} + (2\eps)^{1/3}.
}
Now subtract $\fp(x,y)^{1/3}$ throughout and use the fact that $1-(1-\eps)^{1/3} \le \eps^{1/3}$, to obtain
\eq{
-\eps^{1/3}\fp(x,y)^{1/3} \le \wt\fp(x,y)^{1/3} - \fp(x,y)^{1/3} \le (2\eps)^{1/3}.
}
By a suitable integration, we conclude that
\eq{
-\eps^{1/3}J(f) \le \wt J(f) - J(f) \le (2\eps)^{1/3}\int_0^1 f''(x)^{1/3}\ \dd x \quad \text{for every $f\in\cF$}.
}
Furthermore, \eqref{J_star_trivial} gives $-2(\eps\fC)^{1/3} \le  -\eps^{1/3}J_\star \le -\eps^{1/3} J(f)$, while $\int_0^1f''(x)^{1/3}\,\dd x \le 4^{1/3}$ by Proposition~\ref{prop_unif_case}\ref{prop_unif_case_a}.
So the previous display implies
\eq{
-2(\eps\fC)^{1/3} \le \wt J(f) - J(f) \le 2\eps^{1/3}.
}
Since $\fC\ge1$ and $f$ is arbitrary, the claim follows.
\end{proofclaim}

In what follows, $\wt\cS_n = \{(\wt X_1,\wt Y_1),\ldots,(\wt X_n,\wt Y_n)\}$ will denote a set of $n$ independent samples from $\wt\fp$, coupled to the original samples $\cS_n = \{(X_1,Y_1),\ldots,(X_n,Y_n)\}$ from $\fp$ as follows.
Let $\chi_1,\ldots,\chi_n$ be i.i.d.\ Bernoulli($\eps$) random variables that are independent of $\cS_n$.
For each $i\in\{1,\ldots,n\}$, define $(\wt X_i,\wt Y_i)$ in one of two ways:
\begin{itemize}
\item (Replicate) If $\chi_1=0$, then set $(\wt X_i,\wt Y_i) = (X_i,Y_i)$.
\item (Replace with uniform) If $\chi_1=1$, then let $(\wt X_i,\wt Y_i)$ be a uniformly random element of $\cT$, independent of everything else.
\end{itemize}
The resulting collection $\wt\cS_n = \{(\wt X_1,\wt Y_1),\ldots,(\wt X_n,\wt Y_n)\}$ is thus a set of $n$ independent samples from $\wt\fp$.
Denote the longest convex chain in $\wt\cS_n$ by
\eq{
\wt\sL_n \coloneqq \max\{\#\wt\cC:\, \text{$\wt\cC\subseteq\wt\cS_n$ and $\wt\cC$ is a convex chain}\}.
}
Since $\wt\fp$ is continuous and strictly positive, the first paragraph of the proof shows
\eeq{ \label{with_positivity}
\lim_{n\to\infty}\frac{\wt\sL_n}{n^{1/3}} = \frac{\alpha}{2}\wt J_\star \quad \text{almost surely}.
}
For the sake of the following claim, we also define
\eq{
\sL_n^{\mathsf{gain}} &\coloneqq \max\{\#\cC':\, \text{$\cC'\subseteq\wt\cS_n\setminus\cS_n$ and $\cC'$ is a convex chain}\}, \\
\sL_n^{\mathsf{loss}} &\coloneqq \max\{\#\cC':\, \text{$\cC'\subseteq\cS_n\setminus\wt\cS_n$ and $\cC'$ is a convex chain}\}.
}

\begin{claim}[Comparing maximum lengths] \label{claim_loss_gain}
$-\sL_n^{\mathsf{loss}} \le \wt\sL_n-\sL_n \le \sL_n^{\mathsf{gain}}$.
\end{claim}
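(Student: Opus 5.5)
The plan is a purely deterministic set-theoretic comparison, using only that any subset of a convex chain is again a convex chain (immediate from Lemma~\ref{lem_position_slopes}, or directly from the definition, since deleting points from a set in convex position leaves a set in convex position).

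For the upper bound, let $\wt\cC\subseteq\wt\cS_n$ be a longest convex chain, so $\#\wt\cC=\wt\sL_n$. Split $\wt\cC=(\wt\cC\cap\cS_n)\uplus(\wt\cC\setminus\cS_n)$. The first piece is a subset of $\wt\cC$ contained in $\cS_n$, hence a convex chain in $\cS_n$, so its size is at most $\sL_n$. The second piece is a subset of $\wt\cC$ contained in $\wt\cS_n\setminus\cS_n$, hence a convex chain there, so its size is at most $\sL_n^{\mathsf{gain}}$. Adding gives $\wt\sL_n\le\sL_n+\sL_n^{\mathsf{gain}}$.

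For the lower bound, argue symmetrically: take a longest convex chain $\cC\subseteq\cS_n$, split it as $(\cC\cap\wt\cS_n)\uplus(\cC\setminus\wt\cS_n)$, bound the first piece by $\wt\sL_n$ and the second by $\sL_n^{\mathsf{loss}}$, giving $\sL_n\le\wt\sL_n+\sL_n^{\mathsf{loss}}$, i.e.\ $-\sL_n^{\mathsf{loss}}\le\wt\sL_n-\sL_n$.

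There is no real obstacle; the only point needing care is interpreting $\cS_n$ and $\wt\cS_n$ as sets, so that a replaced point $(\wt X_i,\wt Y_i)$ lies in $\wt\cS_n\setminus\cS_n$ (almost surely it coincides with no point of $\cS_n$), and that convex chains are sets, so ties and coincidences occur with probability zero and can be ignored.
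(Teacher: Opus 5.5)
Your proposal is correct and is essentially the paper's proof. The paper takes a maximizer $\cC$ for $\sL_n$ and splits it as $(\cC\cap\wt\cS_n)\uplus(\cC\setminus\wt\cS_n)$. It bounds the two pieces by $\wt\sL_n$ and $\sL_n^{\mathsf{loss}}$ using that subsets of convex chains are convex chains, and gets the other inequality by exchanging the roles of $\cS_n$ and $\wt\cS_n$. Your closing caveat about coincident points is not needed for this claim, since the decomposition is a pure set identity and holds deterministically.
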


\begin{proofclaim}
Let $\cC\subseteq\cS_n$ be maximizer for $\sL_n$.
Write $\cC = \wt\cC\uplus \cC'$, where $\wt\cC = \cC\cap\wt\cS_n$ and $\cC' = \cC\setminus\wt\cS_n$.
Since subsets of convex chains are convex chains, we have $\#\wt\cC \le \wt\sL_n$ and $\#\cC'\le\sL_n^{\mathsf{loss}}$.
Hence
\eq{
\sL_n = \#\cC = \#\wt\cC + \#\cC' \le  \wt\sL_n + \sL_n^{\mathsf{loss}}.
}
This proves the first inequality in the claim.
The second inequality is proved analogously by exchanging the roles of $\cS_n$ and $\wt\cS_n$.
\end{proofclaim}

\begin{claim}[Errors are small] \label{claim_small_error}
With probability one, we have
\eq{
\sL_n^{\mathsf{loss}} \le (9\fC\e^3\eps n)^{1/3} 
\quad \text{and} \quad 
\sL_n^{\mathsf{gain}} \le (9\fC\e^3\eps n)^{1/3} 
\quad \text{for all large $n$.}
}
\end{claim}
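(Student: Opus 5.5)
The plan is to bound both $\sL_n^{\mathsf{loss}}$ and $\sL_n^{\mathsf{gain}}$ via the deep tail bound of Proposition~\ref{prop_general}\ref{prop_general_tail}, applied conditionally on the number of replaced indices. Let $M_n\coloneqq\chi_1+\cdots+\chi_n\sim\mathsf{Binomial}(n,\eps)$. The set $\cS_n\setminus\wt\cS_n$ consists of the points $(X_i,Y_i)$ with $\chi_i=1$. Since the $\chi_i$ are independent of $\cS_n$, conditional on $M_n=m$ these points are $m$ i.i.d.\ samples from $\fp$ on $\cT$. Likewise, $\wt\cS_n\setminus\cS_n$ consists of $m$ i.i.d.\ uniform samples on $\cT$. (Almost surely no replaced point coincides with an original one, so these set differences are exactly the replaced or replacing points.) Consequently, $\sL_n^{\mathsf{loss}}$ and $\sL_n^{\mathsf{gain}}$ are, conditionally, distributed as the longest convex chain among $m$ samples from a density on $\cT$ with sup ratio at most $R=\fC$, since $\sup\fp\le 2\fC=\fC/\Area(\cT)^{-1}\cdot 1$, or at most $R=1$ for the uniform density.

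First, I would control $M_n$. By Hoeffding, $\P(M_n\ge \tfrac98\eps n)\le\exp(-2(\eps n/8)^2/n)=\exp(-\eps^2n/32)$, which is summable in $n$. Second, I would apply \eqref{tail_deep} conditionally with $R=\fC$ and $t=0$, together with monotonicity in $m$ (the longest chain among $m\le\frac98\eps n$ samples can be coupled to be at most that among $\lfloor\frac98\eps n\rfloor$ samples, or one can simply use the bound directly with $m$ in place of $n$). On $\{M_n\le\frac98\eps n\}$ this gives
\[
\P\big(\sL_n^{\mathsf{loss}}\ge (4\fC\e^3\cdot\tfrac98\eps n)^{1/3}\,\big|\,M_n\big)\le 2^{-(4\fC\e^3 M_n)^{1/3}}\cdot(\text{adjusted}).
\]
To avoid the problem that this bound is weak when $M_n$ is small, I would instead apply \eqref{tail_deep} with $t=(4\fC\e^3)^{1/3}\big((\tfrac98\eps n)^{1/3}-M_n^{1/3}\big)\ge0$. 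The threshold then becomes $(4.5\fC\e^3\eps n)^{1/3}\le(9\fC\e^3\eps n)^{1/3}$, and the tail becomes $2^{-(4.5\fC\e^3\eps n)^{1/3}}$, which is summable. The same argument with $R=1\le\fC$ handles $\sL_n^{\mathsf{gain}}$.

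Finally, combining the two bounds with the first Borel--Cantelli lemma yields both inequalities for all large $n$ almost surely. The only mildly delicate point is this conditioning step: one needs that, given $(\chi_i)$, the removed and added point sets are genuinely i.i.d.\ with the stated densities and independent of $M_n$ apart from their count, and that \eqref{tail_deep} holds for every $n\ge1$ so that it can be applied with the random count $m=M_n$. The slack between $4.5$ and $9$ covers any rounding in this step.
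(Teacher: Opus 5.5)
Your proposal is correct and follows the paper's proof: condition on the binomial number of replaced indices, apply the deep tail bound \eqref{tail_deep} with $R=\fC$ (valid since $\sup\fp\le 2\fC=\fC/\Area(\cT)$; your displayed expression for this ratio is garbled, but the value $R=\fC$ is right), control the count by Hoeffding, and finish with Borel--Cantelli. The only difference is bookkeeping. The paper uses the event $\{T_n\ge 2\eps n\}$ with a fixed $t=n^{1/4}$ and absorbs the $n^{1/4}$ into the slack between $8$ and $9$. Your threshold $\tfrac98\eps n$ together with the $M_n$-dependent choice of $t$ instead yields the deterministic level $(4.5\fC\e^3\eps n)^{1/3}$ directly.
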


\begin{proofclaim}
Denote the number of ``replacements'' in our construction of $\wt\cS_n$ by
\eq{
T_n \coloneqq \chi_1+\cdots+\chi_n\sim\mathsf{Binomial}(n,\eps).
}
By Hoeffding's inequality, we have
\eeq{ \label{07ynic}
\P(T_n \ge 2\eps n) 
= \P(T_n - \E T_n \ge \eps n)
\le \exp(-2\eps^2n).
}
Conditional on $T_n$, the set $\cS_n\setminus\wt\cS_n$ consists of $T_n$ independent samples from $\fp$.
Therefore, Proposition~\ref{prop_general}\ref{prop_general_tail} gives
\eeq{ \label{3jg8nv}
\P\givenk[\big]{\sL_n^{\mathsf{loss}} \ge (4\fC\e^3 T_n)^{1/3} + n^{1/4}}{T_n}
\le 2^{-n^{1/4}}.
}
The only assumption about $\fp$ used here is that $\fp \le 2\fC = \fC/\Area(\cT)$.
We now have
\eq{
\P\Big(\sL_n^{\mathsf{loss}} \ge (4\fC\e^3 2\eps n)^{1/3}+n^{1/4}\Big)
&\stackrefp{3jg8nv,07ynic}\le \P\Big(\sL_n^{\mathsf{loss}} \ge (4\fC\e^3 T_n)^{1/3}+n^{1/4}\Big) + \P(T_n \ge 2\eps n) \\
&\stackref{3jg8nv,07ynic}{\le} 2^{-n^{1/4}} + \exp(-2\eps^2n).
}
Using the first Borel--Cantelli lemma, we conclude
$\sL_n^{\mathsf{loss}} \le (8\fC\e^3\eps n)^{1/3}+n^{1/4} \le (9\fC\e^3\eps n)^{1/3}$
for all large $n$.

Meanwhile, conditional on $T_n$, the set $\wt\cS_n\setminus\cS_n$ consists of $T_n$ independent uniform samples.
Since the uniform density $\fu$ satisfies $\fu \equiv 2 \le 2\fC$, the exact same argument applies to $\sL_n^{\mathsf{gain}}$.
\end{proofclaim}

We are now ready to complete the proof.
For the upper bound, we have
\eq{
\limsup_{n\to\infty}\frac{\sL_n}{n^{1/3}}
&\stackrel{\parbox{\widthof{\footnotesize (\eqref{with_positivity} and Claim~\ref{claim_small_error})}}{\centering\footnotesize (Claim~\ref{claim_loss_gain})}}{\le} \limsup_{n\to\infty}\frac{\wt\sL_n + \sL_n^{\mathsf{loss}}}{n^{1/3}} \\
&\stackrel{\mbox{\footnotesize (\eqref{with_positivity} and Claim~\ref{claim_small_error})}}{\le} \frac{\alpha}{2}\wt J_\star + (9\fC\e^3\eps)^{1/3} \\
&\stackrel{\parbox{\widthof{\footnotesize (\eqref{with_positivity} and Claim~\ref{claim_small_error})}}{\centering\footnotesize (Claim~\ref{claim_star_tilde})}}{\le} \frac{\alpha}{2}J_\star + \alpha (\eps\fC)^{1/3} + (9\fC\e^3\eps)^{1/3}
}
with probability one.
Sending $\eps\searrow0$ proves
\eq{
\limsup_{n\to\infty}\frac{\sL_n}{n^{1/3}} \le \frac{\alpha}{2}J_\star \quad \text{almost surely}.
}
By analogous reasoning using $\sL_n^{\mathsf{gain}}$ instead of $\sL_n^{\mathsf{loss}}$, we also have
\eq{
\liminf_{n\to\infty}\frac{\sL_n}{n^{1/3}} \ge \frac{\alpha}{2}J_\star \quad \text{almost surely}.
}
The two previous displays together prove \eqref{36rbck}.
\end{proof}

\begin{proof}[Proof of Theorem~\ref{thm_limit_curves}]
Fix $\eps>0$.
By Proposition~\ref{prop_upper_bound}\ref{prop_upper_bound_far_typical}, there exists $\delta>0$ such that
\eeq{ \label{gebkxz}
\limsup_{n\to\infty}\frac{\sL_n^\far(\eps)}{n^{1/3}} \le \frac{\alpha}{2}J_\star-5\delta \quad \text{almost surely.}
}
Since $\E\sL_n/n^{1/3}\to(\alpha/2)J_\star$ by Theorem~\ref{thm_var_formula}, there is $n_0$ large enough that
\eq{
\frac{\E\sL_n}{n^{1/3}} \ge \frac{\alpha}{2}J_\star - \delta \quad \text{for all $n\ge n_0$.}
}
By Proposition~\ref{prop_general}\ref{prop_general_concentration} (with $t = t_0 = \delta$ and $\beta=\tfrac13$), it follows that
\eeq{ \label{5xn2d}
\P\Big(\frac{\sL_n}{n^{1/3}} > \frac{\alpha}{2}J_\star - 2\delta\Big) \ge 1-C\exp\big(-\fa\delta n^{1/3}\big) \quad \text{for all $n\ge n_0$,}
}
where the constants $C$ and $\fa>0$ depend only on $\delta$ and $\fC$.

Meanwhile, we can replicate the proof of Proposition~\ref{prop_general}\hyperref[prop_general_concentration]{(d,e)} for $\sL_n^\far(\eps)$ in place $\sL_n$ (see Remark~\ref{rem_concentration}).
Combining Proposition~\ref{prop_general}\ref{prop_general_convergence} with \eqref{gebkxz} yields
\eq{
\limsup_{n\to\infty}\frac{\E\sL_n^\far(\eps)}{n^{1/3}} \le \frac{\alpha}{2}J_\star - 5\delta,
}
so we may assume $n_0$ is large enough that
\eq{
\frac{\E\sL_n^\far(\eps)}{n^{1/3}} \le \frac{\alpha}{2}J_\star - 4\delta \quad \text{for all $n\ge n_0$.}
}
Now Proposition~\ref{prop_general}\ref{prop_general_concentration} (again with $t = t_0 = \delta$ and $\beta=\tfrac13$) yields
\eeq{ \label{6xn2d}
\P\Big(\frac{\sL_n^\far(\eps)}{n^{1/3}} < \frac{\alpha}{2}J_\star - 3\delta\Big) \ge 1-C\exp\big(-\fa\delta n^{1/3}\big) \quad \text{for all $n\ge n_0$.}
}
We conclude from \eqref{5xn2d} and \eqref{6xn2d} that
\eq{
\P\big(\sL_n > \sL_n^\far(\eps)+\delta n^{1/3}\big)
\ge 1-2C\exp\big(-\fa\delta n^{1/3}\big) \quad \text{for all $n\ge n_0$.}
}
By taking complements, this proves \eqref{limit_curves_prob}.

Furthermore, by the first Borel--Cantelli lemma, we have $\sL_n > \sL_n^\far(\eps)+\delta n^{1/3}$ for all large $n$.
Whenever this inequality holds and $\cC_n\subseteq\cS_n$ is a convex chain of length $\#\cC_n \ge \sL_n-\delta n^{1/3}$, 
it must be that $\cC_n$ is \textit{not} a candidate for $\sL_n^\far(\eps)$ as defined in \eqref{L_neps_def}, hence
\eq{
\inf_{f\in\argmax J}\max_{(x,y)\in\cC_n} |f(x)-y| < \eps.
}
This proves \eqref{410bxw}.
\end{proof}

\begin{proof}[Proof of Theorem~\ref{thm_probability}]
The $\le$ direction of \eqref{thm_probability_eq} is stated in Proposition~\ref{prop_upper_bound}\ref{prop_upper_bound_full}, while the $\ge$ direction follows from Proposition~\ref{prop_lower_bound}\ref{prop_lower_bound_full} by choosing $f\in\argmax J$.
\end{proof}

\begin{proof}[Proof of Theorem~\ref{thm_conditional}]
The desired inequality \eqref{thm_conditional_eq} is obtained as follows:
\begin{align*}
&\limsup_{n\to\infty}\frac{1}{n}\log\P\givenp[\Big]{\inf_{f\in\argmax J}\max_{(x,y)\in\cS_n}|f(x)-y| \ge \eps}{\text{$\cS_n$ is a convex chain}} \\
&\stackref{L_neps_def}{=} \limsup_{n\to\infty}\frac{1}{n}\log\left(\frac{\frac{(3n)!}{n!}\P\big(\sL_n^\far(\eps) = n\big)}{\frac{(3n)!}{n!}\P(\sL_n = n)}\right)
\stackref{prop_upper_bound_eps,thm_probability_eq} < 0. \qedhere
\end{align*}
\end{proof}

\section*{Acknowledgments}
E.B. was partially supported by National Science Foundation grant DMS-2412473. 
A.S. was partially supported by Simons Foundation grant MP-TSM-00002716.
We thank \mbox{Ron Peled} for stimulating discussion.
We are also grateful to \mbox{Riddhipratim Basu} and \mbox{Duncan Dauvergne} for pointers to several references.
The simulations in Figure~\ref{fig_simulation} were written by \mbox{Nishant Ajitsaria}, who was an undergraduate student at North Carolina State University.

%: BIBLIOGRAPHY
\bibliographystyle{myacm2}
\bibliography{erikbib}{}

@Article{aldous_diaconis95,
  author           = {Aldous, D. and Diaconis, P.},
  journal          = {Probab. Theory Related Fields},
  title            = {Hammersley's interacting particle process and longest increasing subsequences},
  year             = {1995},
  issn             = {0178-8051},
  number           = {2},
  pages            = {199--213},
  volume           = {103},
  creationdate     = {2025-05-28T18:54:29},
  doi              = {10.1007/BF01204214},
  fjournal         = {Probability Theory and Related Fields},
  modificationdate = {2025-05-28T18:55:00},
  mrclass          = {60C05 (60K35)},
  mrnumber         = {1355056},
  mrreviewer       = {Graham Brightwell},
  url              = {https://doi.org/10.1007/BF01204214},
}

@Article{ambrus20_arxiv,
  author           = {Ambrus, Gergely},
  title            = {Longest {$k$}-monotone chains},
  year             = {2020},
  note             = {Preprint, 15 pp},
  creationdate     = {2026-07-17T14:48:59},
  doi              = {10.48550/arXiv.2009.13887},
  modificationdate = {2026-07-17T14:59:43},
}

@InProceedings{ambrus17,
  author           = {Ambrus, Gergely},
  booktitle        = {Discrete {G}eometry and {C}onvexity in {H}onour of {I}mre {B}{\'a}r{\'a}ny},
  title            = {Longest convex chains and subadditive ergodicity},
  year             = {2017},
  pages            = {125--126},
  creationdate     = {2026-07-17T14:54:52},
  modificationdate = {2026-07-17T14:59:43},
  url              = {http://real.mtak.hu/id/eprint/86263},
}

@Article{ambrus_barany09,
  author           = {Ambrus, Gergely and B\'{a}r\'{a}ny, Imre},
  journal          = {Random Structures Algorithms},
  title            = {Longest convex chains},
  year             = {2009},
  issn             = {1042-9832},
  number           = {2},
  pages            = {137--162},
  volume           = {35},
  creationdate     = {2025-03-30T10:56:15},
  doi              = {10.1002/rsa.20269},
  fjournal         = {Random Structures \& Algorithms},
  modificationdate = {2025-03-30T11:14:12},
  mrclass          = {60D05 (52A22 60F15)},
  mrnumber         = {2544003},
  mrreviewer       = {Werner Nagel},
  url              = {https://doi.org/10.1002/rsa.20269},
}

@InCollection{baik23,
  author           = {Baik, Jinho},
  booktitle        = {I{CM}---{I}nternational {C}ongress of {M}athematicians. {V}ol. 6. {S}ections 12--14},
  publisher        = {EMS Press, Berlin},
  title            = {K{PZ} limit theorems},
  year             = {2023},
  isbn             = {978-3-98547-064-8; 978-3-98547-564-3; 978-3-98547-058-7},
  pages            = {4190--4211},
  creationdate     = {2026-07-13T20:02:13},
  doi              = {10.4171/ICM2022/8},
  modificationdate = {2026-07-16T17:47:01},
  mrclass          = {60K35 (82C22)},
  mrnumber         = {4680400},
  url              = {https://doi.org/10.4171/ICM2022/8},
}

@Article{baik_deift_johansson99,
  author           = {Baik, Jinho and Deift, Percy and Johansson, Kurt},
  journal          = {J. Amer. Math. Soc.},
  title            = {On the distribution of the length of the longest increasing subsequence of random permutations},
  year             = {1999},
  issn             = {0894-0347},
  number           = {4},
  pages            = {1119--1178},
  volume           = {12},
  creationdate     = {2022-08-29T10:10:21},
  doi              = {10.1090/S0894-0347-99-00307-0},
  fjournal         = {Journal of the American Mathematical Society},
  modificationdate = {2024-08-11T19:03:26},
  mrclass          = {05A05 (33D45 45E05 60C05)},
  mrnumber         = {1682248},
  mrreviewer       = {David J. Aldous},
  url              = {https://doi.org/10.1090/S0894-0347-99-00307-0},
}

@Article{barany99,
  author           = {B\'{a}r\'{a}ny, Imre},
  journal          = {Ann. Probab.},
  title            = {Sylvester's question: the probability that {$n$} points are in convex position},
  year             = {1999},
  number           = {4},
  pages            = {2020--2034},
  volume           = {27},
  creationdate     = {2026-01-29T12:23:26},
  doi              = {10.1214/aop/1022874826},
  fjournal         = {The Annals of Probability},
  issn             = {0091-1798,2168-894X},
  modificationdate = {2026-01-29T12:37:04},
  mrclass          = {60D05 (52A22)},
  mrnumber         = {1742899},
  mrreviewer       = {Rodney\ Coleman},
  url              = {https://doi.org/10.1214/aop/1022874826},
}

@Article{barany97,
  author           = {B\'{a}r\'{a}ny, Imre},
  journal          = {J. Reine Angew. Math.},
  title            = {Affine perimeter and limit shape},
  year             = {1997},
  issn             = {0075-4102},
  pages            = {71--84},
  volume           = {484},
  creationdate     = {2025-11-17T12:45:35},
  doi              = {10.1515/crll.1997.484.71},
  fjournal         = {Journal f\"{u}r die Reine und Angewandte Mathematik. [Crelle's Journal]},
  modificationdate = {2025-11-17T12:45:59},
  mrclass          = {52C05 (11H06 52B20)},
  mrnumber         = {1437299},
  mrreviewer       = {Martin Henk},
  url              = {https://doi.org/10.1515/crll.1997.484.71},
}

@Article{barany95,
  author           = {B\'{a}r\'{a}ny, I.},
  journal          = {Discrete Comput. Geom.},
  title            = {{The Limit Shape of Convex Lattice Polygons}},
  year             = {1995},
  number           = {3-4},
  pages            = {279--295},
  volume           = {13},
  creationdate     = {2025-11-17T13:00:51},
  doi              = {10.1007/BF02574045},
  fjournal         = {Discrete \& Computational Geometry. An International Journal of Mathematics and Computer Science},
  issn             = {0179-5376},
  modificationdate = {2026-08-09T14:19:28},
  mrclass          = {52C05 (11H06)},
  mrnumber         = {1318778},
  mrreviewer       = {Martin Henk},
  url              = {https://doi.org/10.1007/BF02574045},
}

@Article{barany_rote_steiger_zhang00,
  author           = {B\'{a}r\'{a}ny, I. and Rote, G. and Steiger, W. and Zhang, C.-H.},
  journal          = {Discrete Comput. Geom.},
  title            = {{A Central Limit Theorem for Convex Chains in the Square}},
  year             = {2000},
  number           = {1},
  pages            = {35--50},
  volume           = {23},
  creationdate     = {2025-11-17T12:16:35},
  doi              = {10.1007/PL00009490},
  fjournal         = {Discrete \& Computational Geometry. An International Journal of Mathematics and Computer Science},
  issn             = {0179-5376},
  modificationdate = {2026-08-09T14:01:36},
  mrclass          = {60D05 (60F05 60F15 60F17)},
  mrnumber         = {1727122},
  mrreviewer       = {Lothar Heinrich},
  url              = {https://doi.org/10.1007/PL00009490},
}

@Article{basdevant_gerin22,
  author           = {Basdevant, Anne-Laure and Gerin, Lucas},
  journal          = {Ann. Inst. Henri Poincar\'{e} Probab. Stat.},
  title            = {Longest increasing paths with {L}ipschitz constraints},
  year             = {2022},
  number           = {3},
  pages            = {1849--1868},
  volume           = {58},
  creationdate     = {2026-07-23T11:21:27},
  doi              = {10.1214/21-aihp1220},
  fjournal         = {Annales de l'Institut Henri Poincar\'{e} Probabilit\'{e}s et Statistiques},
  issn             = {0246-0203,1778-7017},
  modificationdate = {2026-07-23T11:23:05},
  mrclass          = {60K35 (60F15)},
  mrnumber         = {4452654},
  mrreviewer       = {Vladimir\ V.\ Fomichov},
  url              = {https://doi.org/10.1214/21-aihp1220},
}

@Article{basdevant_gerin19,
  author           = {Basdevant, Anne-Laure and Gerin, Lucas},
  journal          = {ALEA Lat. Am. J. Probab. Math. Stat.},
  title            = {Longest increasing paths with gaps},
  year             = {2019},
  number           = {2},
  pages            = {1141--1163},
  volume           = {16},
  creationdate     = {2026-07-23T14:18:09},
  doi              = {10.30757/alea.v16-43},
  fjournal         = {ALEA. Latin American Journal of Probability and Mathematical Statistics},
  issn             = {1980-0436},
  modificationdate = {2026-07-23T14:18:48},
  mrclass          = {60K35 (60F15)},
  mrnumber         = {4030532},
  mrreviewer       = {Irene\ Crimaldi},
  url              = {https://doi.org/10.30757/alea.v16-43},
}

@Article{basu_ganguly_hammond18,
  author           = {Basu, Riddhipratim and Ganguly, Shirshendu and Hammond, Alan},
  journal          = {Comm. Math. Phys.},
  title            = {{The Competition of Roughness and Curvature in Area-Constrained Polymer Models}},
  year             = {2018},
  number           = {3},
  pages            = {1121--1161},
  volume           = {364},
  creationdate     = {2026-07-23T14:05:34},
  doi              = {10.1007/s00220-018-3282-x},
  fjournal         = {Communications in Mathematical Physics},
  issn             = {0010-3616,1432-0916},
  modificationdate = {2026-08-09T14:03:30},
  mrclass          = {82D60 (60K35 82B24)},
  mrnumber         = {3875824},
  url              = {https://doi.org/10.1007/s00220-018-3282-x},
}

@Article{berger_torri21,
  author           = {Berger, Quentin and Torri, Niccol\`o},
  journal          = {Ann. Inst. Henri Poincar\'{e} D},
  title            = {Beyond {H}ammersley's {L}ast-{P}assage {P}ercolation: a discussion on possible local and global constraints},
  year             = {2021},
  number           = {2},
  pages            = {213--241},
  volume           = {8},
  creationdate     = {2026-07-23T14:00:33},
  doi              = {10.4171/aihpd/102},
  fjournal         = {Annales de l'Institut Henri Poincar\'{e} D. Combinatorics, Physics and their Interactions},
  issn             = {2308-5827,2308-5835},
  modificationdate = {2026-08-09T14:06:31},
  mrclass          = {60K35 (82B44)},
  mrnumber         = {4261671},
  mrreviewer       = {Xiaolin\ Zeng},
  url              = {https://doi.org/10.4171/aihpd/102},
}

@Article{berger_torri19b,
  author           = {Berger, Quentin and Torri, Niccol\`o},
  journal          = {Ann. Appl. Probab.},
  title            = {Entropy-controlled last-passage percolation},
  year             = {2019},
  number           = {3},
  pages            = {1878--1903},
  volume           = {29},
  creationdate     = {2026-07-23T14:12:27},
  doi              = {10.1214/18-AAP1448},
  fjournal         = {The Annals of Applied Probability},
  issn             = {1050-5164,2168-8737},
  modificationdate = {2026-07-23T14:18:48},
  mrclass          = {60K35 (60F05 60K37)},
  mrnumber         = {3914559},
  mrreviewer       = {Andrew\ R.\ Wade},
  url              = {https://doi.org/10.1214/18-AAP1448},
}

@Article{bogachev14,
  author           = {Bogachev, Leonid V.},
  journal          = {J. Combin. Theory Ser. A},
  title            = {Limit shape of random convex polygonal lines: {E}ven more universality},
  year             = {2014},
  pages            = {353--399},
  volume           = {127},
  creationdate     = {2026-07-16T19:12:05},
  doi              = {10.1016/j.jcta.2014.07.005},
  fjournal         = {Journal of Combinatorial Theory. Series A},
  issn             = {0097-3165,1096-0899},
  modificationdate = {2026-08-09T14:07:10},
  mrclass          = {60D05 (52A22 52C05 60F99)},
  mrnumber         = {3252668},
  mrreviewer       = {Matthias\ Schulte},
  url              = {https://doi.org/10.1016/j.jcta.2014.07.005},
}

@Article{bogachev_zarbaliev23,
  author           = {Bogachev, Leonid V. and Zarbaliev, Sakhavet M.},
  journal          = {Mathematics},
  title            = {Inverse {L}imit {S}hape {P}roblem for {M}ultiplicative {E}nsembles of {C}onvex {L}attice {P}olygonal {L}ines},
  year             = {2023},
  number           = {2},
  pages            = {article no. 385, 23 pp},
  volume           = {11},
  creationdate     = {2026-07-17T16:35:32},
  doi              = {10.3390/math11020385},
  issn             = {2227-7390},
  modificationdate = {2026-07-23T11:22:53},
  url              = {https://www.mdpi.com/2227-7390/11/2/385},
}

@Article{bogachev_zarbaliev11,
  author           = {Bogachev, Leonid V. and Zarbaliev, Sakhavat M.},
  journal          = {Ann. Probab.},
  title            = {Universality of the limit shape of convex lattice polygonal lines},
  year             = {2011},
  number           = {6},
  pages            = {2271--2317},
  volume           = {39},
  creationdate     = {2026-07-16T18:31:49},
  doi              = {10.1214/10-AOP607},
  fjournal         = {The Annals of Probability},
  issn             = {0091-1798,2168-894X},
  modificationdate = {2026-07-16T18:32:22},
  mrclass          = {60D05 (05A17 52A22 60F05)},
  mrnumber         = {2932669},
  mrreviewer       = {Andrew\ R.\ Wade},
  url              = {https://doi.org/10.1214/10-AOP607},
}

@Article{bogachev_zarbaliev99,
  author           = {Bogachev, L. V. and Zarbaliev, S. M.},
  journal          = {Dokl. Akad. Nauk},
  title            = {On the approximation of convex functions by random polygonal lines},
  year             = {1999},
  issn             = {0869-5652},
  number           = {3},
  pages            = {299--302},
  volume           = {364},
  creationdate     = {2026-07-16T22:17:13},
  fjournal         = {Rossi\u{\i}skaya Akademiya Nauk. Doklady Akademii Nauk},
  modificationdate = {2026-07-17T14:48:39},
  mrclass          = {60F05 (60B12 60K40 82B41)},
  mrnumber         = {1706217},
  mrreviewer       = {B.\ L.\ Granovsky},
}

@Book{bogachev07,
  author           = {Bogachev, V. I.},
  publisher        = {Springer-Verlag, Berlin},
  title            = {{Measure Theory}},
  year             = {2007},
  isbn             = {978-3-540-34513-8; 3-540-34513-2},
  creationdate     = {2025-11-09T22:02:28},
  doi              = {10.1007/978-3-540-34514-5},
  modificationdate = {2025-11-11T13:29:19},
  mrclass          = {28-02 (28Axx 28Cxx 46G12 60G42 60G44)},
  mrnumber         = {2267655},
  mrreviewer       = {Ren\'{e} L. Schilling},
  pages            = {Vol. I: xviii+500 pp., Vol. II: xiv+575},
  url              = {https://doi.org/10.1007/978-3-540-34514-5},
}

@PhdThesis{brosset25,
  author           = {Brosset, Fabien},
  school           = {{Universit{\'e} de Toulouse}},
  title            = {Large deviations for random sums and random convex polygons},
  year             = {2025},
  creationdate     = {2026-07-23T14:35:20},
  hal_id           = {tel-05674111},
  hal_version      = {v1},
  modificationdate = {2026-07-24T09:56:33},
  number           = {2025TLSES329},
  url              = {https://theses.hal.science/tel-05674111},
}

@Article{bureaux_enriquez17,
  author           = {Bureaux, Julien and Enriquez, Nathana\"{e}l},
  journal          = {Israel J. Math.},
  title            = {Asymptotics of convex lattice polygonal lines with a constrained number of vertices},
  year             = {2017},
  number           = {2},
  pages            = {515--549},
  volume           = {222},
  creationdate     = {2026-07-16T18:40:30},
  doi              = {10.1007/s11856-017-1599-3},
  fjournal         = {Israel Journal of Mathematics},
  issn             = {0021-2172,1565-8511},
  modificationdate = {2026-07-16T18:42:38},
  mrclass          = {52C05 (11P21 52A22 52B20 52C45 60D05 60F05)},
  mrnumber         = {3722260},
  mrreviewer       = {Martin\ Henk},
  url              = {https://doi.org/10.1007/s11856-017-1599-3},
}

@Article{calder_esedoglu_hero14,
  author           = {Calder, Jeff and Esedo\={g}lu, Selim and Hero, Alfred O.},
  journal          = {SIAM J. Math. Anal.},
  title            = {A {H}amilton--{J}acobi {E}quation for the {C}ontinuum {L}imit of {N}ondominated {S}orting},
  year             = {2014},
  number           = {1},
  pages            = {603--638},
  volume           = {46},
  creationdate     = {2026-07-24T16:29:15},
  doi              = {10.1137/13092842X},
  fjournal         = {SIAM Journal on Mathematical Analysis},
  issn             = {0036-1410,1095-7154},
  modificationdate = {2026-08-09T14:12:06},
  mrclass          = {35F21 (35D40 49L20 60C05 90C29)},
  mrnumber         = {3163240},
  mrreviewer       = {Messaoud\ Boulbrachene},
  url              = {https://doi.org/10.1137/13092842X},
}

@Article{corwin16,
  author           = {Corwin, I.},
  journal          = {Notices Amer. Math. Soc.},
  title            = {Kardar-{P}arisi-{Z}hang universality},
  year             = {2016},
  issn             = {0002-9920},
  number           = {3},
  pages            = {230--239},
  volume           = {63},
  creationdate     = {2022-08-29T10:23:51},
  doi              = {10.1090/noti1334},
  fjournal         = {Notices of the American Mathematical Society},
  modificationdate = {2024-08-12T10:29:48},
  mrclass          = {82B23 (35K59 35R60 60H15 60K35)},
  mrnumber         = {3445162},
  mrreviewer       = {Flora Koukiou},
  url              = {https://doi.org/10.1090/noti1334},
}

@Article{dauvergne_virag21_arxiv,
  author           = {Dauvergne, Duncan and Vir\'{a}g, B\'{a}lint},
  title            = {The scaling limit of the longest increasing subsequence},
  year             = {2021},
  creationdate     = {2022-09-13T18:02:36},
  doi              = {10.48550/arXiv.2104.08210},
  modificationdate = {2026-07-26T14:58:07},
  note             = {Preprint},
}

@Article{deuschel_zeitouni99,
  author           = {Deuschel, Jean-Dominique and Zeitouni, Ofer},
  journal          = {Combin. Probab. Comput.},
  title            = {On {I}ncreasing {S}ubsequences of {I}.{I}.{D}. {S}amples},
  year             = {1999},
  number           = {3},
  pages            = {247--263},
  volume           = {8},
  creationdate     = {2024-08-27T18:27:42},
  doi              = {10.1017/S0963548399003776},
  fjournal         = {Combinatorics, Probability and Computing},
  issn             = {0963-5483},
  modificationdate = {2026-08-09T14:14:16},
  mrclass          = {60F10 (60C05)},
  mrnumber         = {1702546},
  mrreviewer       = {Timo Sepp\"{a}l\"{a}inen},
  url              = {https://doi.org/10.1017/S0963548399003776},
}

@Article{deuschel_zeitouni95,
  author           = {Deuschel, Jean-Dominique and Zeitouni, Ofer},
  journal          = {Ann. Probab.},
  title            = {Limiting curves for i.i.d. records},
  year             = {1995},
  issn             = {0091-1798},
  number           = {2},
  pages            = {852--878},
  volume           = {23},
  creationdate     = {2024-08-27T18:28:24},
  doi              = {10.1214/aop/1176988293},
  fjournal         = {The Annals of Probability},
  modificationdate = {2026-02-11T17:31:03},
  mrclass          = {60G70 (60F10)},
  mrnumber         = {1334175},
  mrreviewer       = {Charles M. Goldie},
}

@Article{dey_joseph_peled24,
  author           = {Dey, Partha S. and Joseph, Mathew and Peled, Ron},
  journal          = {Israel J. Math.},
  title            = {Longest increasing path within the critical strip},
  year             = {2024},
  number           = {1},
  pages            = {1--41},
  volume           = {262},
  creationdate     = {2026-07-23T14:04:33},
  doi              = {10.1007/s11856-023-2603-8},
  fjournal         = {Israel Journal of Mathematics},
  issn             = {0021-2172,1565-8511},
  modificationdate = {2026-07-23T14:06:08},
  mrclass          = {60K35 (05C80 60C05 60G55)},
  mrnumber         = {4803421},
  url              = {https://doi.org/10.1007/s11856-023-2603-8},
}

@Book{friedli_velenik17,
  author           = {Friedli, S. and Velenik, Y.},
  publisher        = {Cambridge University Press, Cambridge},
  title            = {{Statistical Mechanics of Lattice Systems: A Concrete Mathematical Introduction}},
  year             = {2017},
  isbn             = {978-1-107-18482-4},
  creationdate     = {2025-11-16T21:54:26},
  doi              = {10.1017/9781316882603},
  modificationdate = {2025-11-17T12:16:28},
  mrclass          = {82-01 (82B05)},
  mrnumber         = {3752129},
  pages            = {xix+622},
}

@Article{ganguly_hegde_zhang23_arxiv,
  author           = {Ganguly, Shirshendu and Hegde, Milind and Zhang, Lingfu},
  title            = {Brownian bridge limit of path measures in the upper tail of {KPZ} models},
  year             = {2023},
  note             = {Preprint, 77 pp},
  creationdate     = {2026-07-17T15:19:31},
  doi              = {10.48550/arXiv.2311.12009},
  modificationdate = {2026-07-17T16:04:56},
}

@Book{janson_luczak_rucinski00,
  author           = {Janson, Svante and {\L}uczak, Tomasz and Rucinski, Andrzej},
  publisher        = {Wiley-Interscience, New York},
  title            = {Random {G}raphs},
  year             = {2000},
  isbn             = {0-471-17541-2},
  series           = {Wiley-Interscience Series in Discrete Mathematics and Optimization},
  creationdate     = {2022-08-29T10:23:15},
  doi              = {10.1002/9781118032718},
  modificationdate = {2026-07-16T17:59:05},
  mrclass          = {05C80 (60C05 82B41)},
  mrnumber         = {1782847},
  mrreviewer       = {Mark R. Jerrum},
  pages            = {xii+333},
  url              = {https://doi.org/10.1002/9781118032718},
}

@Article{johansson00b,
  author           = {Johansson, Kurt},
  journal          = {Probab. Theory Related Fields},
  title            = {Transversal fluctuations for increasing subsequences on the plane},
  year             = {2000},
  issn             = {0178-8051},
  number           = {4},
  pages            = {445--456},
  volume           = {116},
  creationdate     = {2022-08-29T10:23:50},
  doi              = {10.1007/s004400050258},
  fjournal         = {Probability Theory and Related Fields},
  modificationdate = {2024-08-12T13:52:40},
  mrclass          = {60K35 (82B24 82C24)},
  mrnumber         = {1757595},
  mrreviewer       = {Timo Sepp\"{a}l\"{a}inen},
  url              = {https://doi.org/10.1007/s004400050258},
}

@Article{logan_shepp77,
  author           = {Logan, B. F. and Shepp, L. A.},
  journal          = {Advances in Math.},
  title            = {A variational problem for random {Y}oung tableaux},
  year             = {1977},
  issn             = {0001-8708},
  number           = {2},
  pages            = {206--222},
  volume           = {26},
  creationdate     = {2026-06-13T14:20:31},
  doi              = {10.1016/0001-8708(77)90030-5},
  fjournal         = {Advances in Mathematics},
  modificationdate = {2026-06-13T14:20:52},
  mrclass          = {05E10 (49Q10 60C05)},
  mrnumber         = {1417317},
  mrreviewer       = {Graham\ Brightwell},
  url              = {https://doi.org/10.1016/0001-8708(77)90030-5},
}

@Book{robert_casella04,
  author           = {Robert, Christian P. and Casella, George},
  publisher        = {Springer-Verlag, New York},
  title            = {{Monte Carlo Statistical Methods}},
  year             = {2004},
  edition          = {Second},
  isbn             = {0-387-21239-6},
  series           = {Springer Texts in Statistics},
  creationdate     = {2026-04-12T14:46:25},
  doi              = {10.1007/978-1-4757-4145-2},
  modificationdate = {2026-08-09T14:16:18},
  mrclass          = {62-01 (60J10 62F15 65Cxx)},
  mrnumber         = {2080278},
  mrreviewer       = {Petru\ P.\ Blaga},
  pages            = {xxx+645},
  url              = {https://doi.org/10.1007/978-1-4757-4145-2},
}

@Book{romik15,
  author           = {Romik, Dan},
  publisher        = {Cambridge University Press, New York},
  title            = {The Surprising Mathematics of Longest Increasing Subsequences},
  year             = {2015},
  isbn             = {978-1-107-42882-9; 978-1-107-07583-2},
  series           = {Institute of Mathematical Statistics Textbooks},
  volume           = {4},
  creationdate     = {2025-04-09T09:42:24},
  doi              = {10.1017/CBO9781139872003},
  modificationdate = {2025-04-09T09:44:16},
  mrclass          = {05-01 (05A05 05D40 60B20 60C05 60K35 82B41 82C41)},
  mrnumber         = {3468738},
  mrreviewer       = {Sergi Elizalde},
  pages            = {xi+353},
}

@Article{sinai94,
  author           = {Sinai, Y. G.},
  journal          = {Funct. Anal. Appl.},
  title            = {{Probabilistic Approach to the Analysis of Statistics for Convex Polygonal Lines}},
  year             = {1994},
  pages            = {108--113},
  volume           = {28},
  creationdate     = {2025-11-17T13:40:27},
  doi              = {10.1007/BF01076497},
  fjournal         = {Functional Analysis and its Applications},
  modificationdate = {2026-08-09T14:21:06},
}

@Article{talagrand96,
  author           = {Talagrand, Michel},
  journal          = {Ann. Probab.},
  title            = {A new look at independence},
  year             = {1996},
  issn             = {0091-1798,2168-894X},
  number           = {1},
  pages            = {1--34},
  volume           = {24},
  creationdate     = {2026-06-26T15:31:56},
  doi              = {10.1214/aop/1042644705},
  fjournal         = {The Annals of Probability},
  modificationdate = {2026-06-26T15:33:10},
  mrclass          = {60E15 (28A35)},
  mrnumber         = {1387624},
  mrreviewer       = {Peter\ Eichelsbacher},
  url              = {https://doi.org/10.1214/aop/1042644705},
}

@InCollection{tolsa11,
  author           = {Tolsa, Xavier},
  booktitle        = {Nonlinear {A}nalysis, {F}unction {S}paces and {A}pplications. {V}ol. 9},
  publisher        = {Acad. Sci. Czech Repub. Inst. Math., Prague},
  title            = {Calder\'{o}n-{Z}ygmund theory with non doubling measures},
  year             = {2011},
  isbn             = {978-80-85823-59-2},
  pages            = {217--260},
  creationdate     = {2026-04-14T22:44:30},
  modificationdate = {2026-07-23T15:12:09},
  mrclass          = {42B20 (30H10 30H35 31A15 42B25)},
  mrnumber         = {3203660},
  mrreviewer       = {Javier\ Duoandikoetxea},
  url              = {https://dml.cz/handle/10338.dmlcz/702642},
}

@Article{valtr96,
  author           = {Valtr, Pavel},
  journal          = {Combinatorica},
  title            = {The probability that {$n$} random points in a triangle are in convex position},
  year             = {1996},
  number           = {4},
  pages            = {567--573},
  volume           = {16},
  creationdate     = {2026-07-16T17:46:18},
  doi              = {10.1007/BF01271274},
  fjournal         = {Combinatorica. An International Journal on Combinatorics and the Theory of Computing},
  issn             = {0209-9683},
  modificationdate = {2026-07-16T17:47:23},
  mrclass          = {60D05 (52A22)},
  mrnumber         = {1433643},
  mrreviewer       = {Rodney\ Coleman},
  url              = {https://doi.org/10.1007/BF01271274},
}

@Article{valtr95,
  author           = {Valtr, P.},
  journal          = {Discrete Comput. Geom.},
  title            = {{Probability that {$n$} Random Points are in Convex Position}},
  year             = {1995},
  number           = {3-4},
  pages            = {637--643},
  volume           = {13},
  creationdate     = {2026-06-10T15:35:35},
  doi              = {10.1007/BF02574070},
  fjournal         = {Discrete \& Computational Geometry. An International Journal of Mathematics and Computer Science},
  issn             = {0179-5376,1432-0444},
  modificationdate = {2026-08-09T14:18:11},
  mrclass          = {60D05 (52A22)},
  mrnumber         = {1318803},
  mrreviewer       = {Allen\ D.\ Rogers},
  url              = {https://doi.org/10.1007/BF02574070},
}

@Article{vershik_zeitouni99,
  author           = {Vershik, A. and Zeitouni, O.},
  journal          = {Israel J. Math.},
  title            = {Large deviations in the geometry of convex lattice polygons},
  year             = {1999},
  pages            = {13--27},
  volume           = {109},
  creationdate     = {2026-07-16T17:48:14},
  doi              = {10.1007/BF02775023},
  fjournal         = {Israel Journal of Mathematics},
  issn             = {0021-2172,1565-8511},
  modificationdate = {2026-07-16T17:48:53},
  mrclass          = {52A22 (60F10)},
  mrnumber         = {1679585},
  mrreviewer       = {Charles\ M.\ Goldie},
  url              = {https://doi.org/10.1007/BF02775023},
}

@Article{vershik94,
  author           = {Vershik, A. M.},
  journal          = {Funct. Anal. Appl.},
  title            = {{The Limit Shape of Convex Lattice Polygons and Related Topics}},
  year             = {1994},
  pages            = {13--20},
  volume           = {28},
  creationdate     = {2025-11-17T13:05:08},
  doi              = {10.1007/BF01079006},
  fjournal         = {Functional Analysis and its Applications},
  modificationdate = {2026-08-09T14:22:01},
}

@Article{vershik_kerov77,
  author           = {Vershik, A. M. and Kerov, S. V.},
  journal          = {Dokl. Akad. Nauk SSSR},
  title            = {Asymptotic behavior of the {P}lancherel measure of the symmetric group and the limit form of {Y}oung tableaux},
  year             = {1977},
  issn             = {0002-3264},
  number           = {6},
  pages            = {1024--1027},
  volume           = {233},
  creationdate     = {2026-06-13T14:19:10},
  fjournal         = {Doklady Akademii Nauk SSSR},
  modificationdate = {2026-07-17T16:04:17},
  mrclass          = {10J20 (20C30)},
  mrnumber         = {480398},
  mrreviewer       = {V.\ M.\ Maksimov},
  url              = {https://www.mathnet.ru/eng/dan40430},
}

\end{document}